\theoremstyle{plain}
\def\cal@symb#1|#2{\expandafter\def\csname #2#1\endcsname{\mathcal{#1}}}
\def\calsymbols#1#2{\@for\@tmpz:=#2\do{\expandafter\cal@symb\@tmpz|{#1}}}
\def\bb@symb#1|#2{\expandafter\def\csname #2#1\endcsname{\mathbb{#1}}}
\def\bbsymbols#1#2{\@for\@tmpz:=#2\do{\expandafter\bb@symb\@tmpz|{#1}}}
\def\bold@symb#1|#2{\expandafter\def\csname #2#1\endcsname{\mathbf{#1}}}
\def\boldsymbols#1#2{\@for\@tmpz:=#2\do{\expandafter\bold@symb\@tmpz|{#1}}}
\def\scr@symb#1|#2{\expandafter\def\csname #2#1\endcsname{\mathscr{#1}}}
\def\scrsymbols#1#2{\@for\@tmpz:=#2\do{\expandafter\scr@symb\@tmpz|{#1}}}
\def\frak@symb#1|#2{\expandafter\def\csname #2#1\endcsname{\mathfrak{#1}}}
\def\fraksymbols#1#2{\@for\@tmpz:=#2\do{\expandafter\frak@symb\@tmpz|{#1}}}
\def\dmth@p#1|{\expandafter\let\csname#1\endcsname\relax
  \expandafter\DeclareMathOperator\csname#1\endcsname{#1}}
\def\operators#1{\@for\@tmpz:=#1\do{\expandafter\dmth@p\@tmpz|}}
\DeclareMathAlphabet{\mathpzc}{OT1}{pzc}{m}{it}
\renewcommand{\Im}{\operatorname{im}}
\newcommand{\qmb}[1]{\quad\mbox{#1}\quad}
\newcommand{\Qp}{\bQ_p}
\newcommand{\hsp}{\hspace{0.1cm}}
\newcommand{\be}{\begin{enumerate}}
\newcommand{\ee}{\end{enumerate}}
\newcommand{\triv}{\mathbb{1}}
\newcommand{\hatOm}{\widehat{\Omega}}
\newcommand{\eps}{\epsilon}
\newcommand{\ok}{\overline{k}}
\newcommand{\Sp}{\mathbf{Sp}}
\DeclareFontFamily{U}{matha}{\hyphenchar\font45}
\DeclareFontShape{U}{matha}{m}{n}{
      <5> <6> <7> <8> <9> <10> gen * matha
      <10.95> matha10 <12> <14.4> <17.28> <20.74> <24.88> matha12
      }{}
\DeclareSymbolFont{matha}{U}{matha}{m}{n}\DeclareMathSymbol{\hash}{0}{matha}{"23}	
\newtheorem{theorem}{Theorem}[subsection]
\newtheorem{corollary}[theorem]{Corollary}
\newtheorem{lemma}[theorem]{Lemma}
\newtheorem{remark}[theorem]{Remark}
\newtheorem{proposition}[theorem]{Proposition}
\newtheorem{definition}[theorem]{Definition}
\newtheorem{example}[theorem]{Example}
\date{\today}
\def\anti{\EuScript J}
\title{\textbf{Stability in the category of smooth mod{-$p$} representations of $\SL_2(\Qp)$}}
\author{Konstantin Ardakov, Peter Schneider}
\date{\today}
\address{Mathematical Institute, Woodstock Road, University of Oxford, Oxford OX2 6GG, UK}
\email{ardakov@maths.ox.ac.uk}
\urladdr{http://people.maths.ox.ac.uk/ardakov/}
\address{ Universit\"at M\"unster,  Mathematisches Institut,  Einsteinstr. 62, 48149 M\"unster, Germany}
\email{pschnei@wwu.de}
\urladdr{https://www.uni-muenster.de/Arithm/schneider/}
\begin{document}
\begin{abstract} Let $p \geq 5$ be a prime number and let $G = \SL_2(\Qp)$. Let $\Xi = \Spec(Z)$ denote the spectrum of the centre $Z$ of the pro-$p$ Iwahori Hecke algebra of $G$ with coefficients in a field $k$ of characteristic $p$. Let $\cR \subset \Xi \times \Xi$ denote the support of the pro-$p$ Iwahori $\Ext$-algebra of $G$, viewed as a $(Z,Z)$-bimodule. We show that the locally ringed space $\Xi/\cR$ is a projective algebraic curve over $\Spec(k)$ with two connected components, and that each connected component is a chain of projective lines. For each Zariski open subset $U$ of $\Xi/\cR$, we construct a stable localising subcategory $\cL_U$ of the category of smooth $k$-linear representations of $G$.
\end{abstract}
\maketitle
\tableofcontents

\section{Introduction}

\subsection{Background}  Let $k$ be a field, let $G$ be a $p$-adic reductive group and let $\Mod_k(G)$ denote the category of smooth $k$-linear representations of $G$. The centre $\frZ(G)$ of the category $\Mod_k(G)$ is called the \emph{Bernstein centre} of $G$. When $k$ is the field of complex numbers, $\frZ(G)$ was studied in detail by Bernstein \cite{Ber} and it plays a fundamental role in the classical local Langlands correspondence. Recently there has been interest in the case where $k$ is a field of characteristic $p$ motivated by considerations from the $p$-adic and mod-$p$ Langlands programmes. However in this case, $\frZ(G)$ turned out to be quite small:
in our previous work \cite{AS23}, we showed that $\frZ(G)$ only depends on the centre $Z(G)$ of $G$, and in particular, that $\frZ(G)$ is isomorphic to the finite dimensional $k$-algebra $k[Z(G)]$ whenever $G$ is assumed to be connected and semisimple.

This situation is reminiscent of the fact that the ring of global regular functions $\cO(X)$ on any projective variety $X$ is also rather small. This observation becomes relevant to the Bernstein centre of $G$ when we recall the work of Gabriel \cite{Gab}, who proved that the structure sheaf of a noetherian scheme $X$ can be reconstructed from the category $\QCoh(X)$ of  quasi-coherent sheaves on $X$, by associating with any open subscheme $U$ of $X$ the localizing subcategory $\cL_U$ of $\QCoh(X)$ consisting of sheaves supported on the complement of $U$ in $X$ and by showing that the ring $\cO(U)$ can be recovered from $\QCoh(X)$ as the centre of the quotient category $\QCoh(X) / \cL_U$. In our recent work \cite{AS24}, we generalised Gabriel's construction to the case of an arbitrary Grothendieck category $\cC$, as follows.

Recall that the localizing subcategory $\cL$ of $\cC$ is said to be \emph{stable} if it is stable under essential extensions. The set $\bfL^{st}(\cC)$ of stable localizing subcategories of $\cC$ forms a partially ordered set under reverse inclusion. For $\cL$ and $\cL_1,\cdots,\cL_n$ in $\bfL^{st}(\cC)$ we call $\{\cL_i\}_{1 \leq i \leq n}$ a \emph{covering} of $\cL$ if $\cL = \bigcap_i \cL_i$. This notion makes $\bfL^{st}(\cC)$ into a Grothendieck site, and we proved in \cite{AS24} Thm. 1.1 that the presheaf $\cL \mapsto Z(\cC/\cL)$ on $\bfL^{st}(\cC)$ is in fact a sheaf. When the category $\cC$ is additionally assumed to be locally noetherian, we showed that there is an order-reversing bijection $U \mapsto \cL_U$ between the so-called \emph{stable subsets} of the injective spectrum $\bf{Sp}(\cC)$ of $\cC$, and $\bfL^{st}(\cC)$. We showed in \cite{AS24} Thm. 1.2 that the corresponding presheaf $U \mapsto Z(\cC/\cL_U)$ on $\bf{Sp}(\cC)$ satisfies the sheaf condition with respect to arbitrary coverings. These results suggest that even though $Z(\Mod_k(G))$ may be small and uninteresting, this only reflects the fact that this centre is the ring of global sections of the sheaf formed by the centres of the quotient categories of $\Mod_k(G)$. Of course, this sheaf is only interesting if one can write down sufficiently many stable localizing subcategories of $\Mod_k(G)$.

\subsection{Main results}
The main goal in this paper is to construct a large family of stable localizing subcategories of $\Mod_k(G)$ in the case where $p \geq 5$ and $G$ is the group $\SL_2(\Qp)$. In order to state our results, it will be convenient to introduce an axiomatic framework as follows. Let $\cA$ be a full abelian subcategory of a locally noetherian Grothendieck category $\cC$. We say that $\cC$ is a \emph{thickening} of $\cA$ if, roughly speaking\footnote{see Definition \ref{def:thickening} for the precise definition}, every noetherian object  in $\cC$ has a finite filtration all of whose subquotients lie in $\cA$, and the inclusion functor $i : \cA \to \cC$ has a left exact right adjoint $r : \cC \to \cA$ which restricts to the identity functor on $\cA$ and which respects non-zero objects in $\cC$. We prove that $r$ induces a bijection $r : \Sp(\cC) \stackrel{\cong}{\longrightarrow} \Sp(\cA)$ between the corresponding injective spectra and that the map $\cL \mapsto \cL \cap \cA$ defines a bijection between the set of localizing subcategories of $\cC$ and the set of localizing subcategories of $\cA$. The bijection $r : \Sp(\cC) \stackrel{\cong}{\longrightarrow} \Sp(\cA)$ respects stable subsets and therefore induces an injective map $\bfr: \bfL^{st}(\cC) \to \bfL^{st}(\cA)$; however $\bfr$ is in general not surjective.

\begin{example} \label{ex:SL2Qp} Let $G = \SL_2(\Qp)$ and let $I$ be the pro-$p$ Iwahori subgroup of $G$. Let $\Mod_k^I(G)$ denote the full subcategory of $\Mod_k(G)$ whose objects are generated by their $I$-fixed vectors and let $r(V) = k[G] \cdot V^I$ for all $V$ in $\Mod_k(G)$. We show in $\S$\ref{sec: ThickEx} below that $\Mod_k(G)$ is a thickening of $\Mod_k^I(G)$, and moreover that the compactly induced representation $\ind_I^G(k)$ is in fact a noetherian projective generator of $\Mod_k^I(G)$.\end{example}
Returning to our axiomatic framework, we assume in addition that $\cA$ has a noetherian projective generator $P$. The functor $\Hom_{\cA}(P,-):\cA \to \Mod(H)$ is then an equivalence of categories with quasi-inverse $P \otimes_H - : \Mod(H) \to \cA$, where $H := \End_{\cA}(P)^{\op}$, and in Thm. \ref{thm:stable-A-L} we show that the image of $\bfr$ consists precisely of those localizing subcategories of $\cA$ that are preserved under the functor $P \otimes_H \Ext^1_{\cC}(P,-) : \cA \to \cA$. To understand this condition better we introduce the graded $(H,H)$-bimodule
\begin{equation}\label{eq:ExtStar}
  \Ext_\cC^*(P,P) := \bigoplus_{i=0}^\infty \Ext_\cC^i(P,P) 
\end{equation}
which is in particular a module for $Z \otimes Z := Z \otimes_\mathbb{Z} Z$ where $Z := Z(H)$ is the centre of $H$. We let $\cJ := \Ann_{Z \otimes Z}(\Ext_\cC^*(P,P))$ be its annihilator ideal  and $\cR := V(\cJ) \subseteq \Spec(Z \otimes Z)$ be the corresponding Zariski closed subset. Let $\pi_1, \pi_2 : \cR \rightarrow \Xi := \Spec(Z)$ denote the restrictions of the two projection maps $\Spec(Z \otimes Z) \rightrightarrows \Xi$. We then have a coequaliser diagram
\begin{equation*}
  \xymatrix{
    \cR \ar@<1ex>[r]^-{\pi_1}  \ar@<-1ex>[r]_-{\pi_2} & \Xi \ar[r]^{\bfq} & \Xi/\frR  }
\end{equation*}
in the category of locally ringed spaces (cf.\ \cite{DG} Prop.\ I.1.1.6). To state our main theorem, we introduce the map $\tau : \Sp(\cC) \to \Xi/\frR$ by the commutativity of the following diagram:
\[ \xymatrix{  \Sp(\cA) \ar[d]_{\cong}&& \Sp(\cC) \ar@{.>}[dd]^\tau\ar[ll]^{r}_{\cong}  \\
\Sp(\Mod(H)) \ar[d] && \\
\Spec(H) \ar[r]_(0.45)\varphi & \Spec(Z) = \Xi \ar[r]_(0.6){\bfq} & \Xi / \frR .}\]
The first vertical arrow on the left comes from the equivalence $\Hom_{\cA}(P,-) : \cA \stackrel{\cong}{\longrightarrow} \Mod(H)$ and the map $\varphi$ is given by $\varphi(P) = P \cap Z$ for all $P \in \Spec(H)$. The second vertical arrow comes from \cite{Gab} V $\S$ 4; it is a bijection if $H$ is finitely generated as a module over its centre. Our first main result is then the following
\begin{theorem}[Prop. \ref{prop:preimoftau}] \label{thm:taucts}   Let $\cC$ be a thickening of $\cA$. Suppose that $\cA$ has a noetherian projective generator $P$ and let $H = \End_{\cA}(P)^{\op}$. Assume furthermore that:
\begin{itemize}
  \item[\textbf{(A1)}]   $H$ is finitely generated as a module over its centre $Z$,
  \item[\textbf{(A2)}]   there is an integer $d$ such that $\Ext_{\cC}^j(P,-)|_{\cA} = 0$ for any $j > d$, and
  \item[\textbf{(A3)}]   $Z \otimes Z/\cJ^{d+1}$ is noetherian.
\end{itemize}
Then $\tau : \Sp(\cC) \to \Xi/\cR$ is continuous with respect to the stable topologies.
\end{theorem}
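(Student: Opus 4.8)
The plan is to verify directly that $\tau$ pulls closed sets back to closed sets. A closed subset $C$ of $\Xi/\cR$ for the stable topology pulls back along the coequaliser map $\bfq$ to an $\cR$-saturated, specialisation-closed subset $Y=\bfq^{-1}(C)\subseteq\Xi$, and since $\tau=\bfq\circ g$ where $g:\Sp(\cC)\to\Xi$ denotes the composite $\Sp(\cC)\xrightarrow{\;r\;}\Sp(\cA)\cong\Sp(\Mod(H))\to\Spec(H)\xrightarrow{\;\varphi\;}\Xi$, one has $\tau^{-1}(C)=g^{-1}(Y)$. So it suffices to prove that $g^{-1}(Y)$ is a stable subset of $\Sp(\cC)$ for every $\cR$-saturated, specialisation-closed $Y\subseteq\Xi$ (these are exactly the preimages under $\bfq$ of closed subsets of $\Xi/\cR$). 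By the thickening dictionary, $g^{-1}(Y)$ corresponds to a localising subcategory $\cL$ of $\cC$, and by Theorem \ref{thm:stable-A-L} this $\cL$ is stable precisely when the localising subcategory $\cL\cap\cA$ of $\cA$ is preserved by the functor $F=P\otimes_H\Ext^1_\cC(P,-)$. Thus the whole statement reduces to a single assertion: the localising subcategory of $\cA$ cut out by $Y$ is $F$-stable.

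Next I would transport everything to $\Mod(H)$ via the equivalence $\Hom_\cA(P,-)$. Using \textbf{(A1)} and \cite{Gab}, V\,\S\,4, the bijection $\Sp(\Mod(H))\cong\Spec(H)$ identifies $r(g^{-1}(Y))$ with $\varphi^{-1}(Y)\subseteq\Spec(H)$, and since $\varphi$ is the finite (hence closed, surjective) map attached to $Z\hookrightarrow H$, the subcategory $\cL\cap\cA$ goes over to $\cT_Y:=\{\,M\in\Mod(H):\operatorname{Supp}_Z M\subseteq Y\,\}$. Here $Z$ is noetherian (a consequence of \textbf{(A1)}), so $\cT_Y$ is generated, as a localising subcategory, by the modules annihilated by some ideal $\fra\subseteq Z$ with $V(\fra)\subseteq Y$; and $F$ commutes with filtered colimits because $P$ is noetherian in $\cC$. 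Therefore it is enough to show: if $\fra M=0$, then $\operatorname{Supp}_Z\Ext^1_\cC(P,P\otimes_H M)\subseteq Y$.

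The technical heart is to compare $\Ext^\ast_\cC(P,-)|_\cA$ with the bimodule $\Ext^\ast_\cC(P,P)$. Resolving $N\in\cA$ injectively in $\cC$, applying $\Hom_\cC(P,-)=\Hom_\cA(P,r(-))$ (adjunction), and using that $\Hom_\cA(P,-)$ is exact and $r\circ i=\mathrm{id}$, one obtains a natural isomorphism $\Ext^i_\cC(P,N)\cong\Hom_\cA(P,R^i r(N))$; in particular $R^j r$ vanishes on $\cA$ for $j>d$ by \textbf{(A2)} and faithfulness of $\Hom_\cA(P,-)$, and $R^i r(P)$ corresponds to $\Ext^i_\cC(P,P)$ under $\cA\cong\Mod(H)$. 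Now resolve $M$ by free $H$-modules, apply $\mathbb Rr$ to the induced resolution $P^{(\bullet)}\to P\otimes_H M$ by direct sums of copies of $P$ (here $R^\ast r$ commutes with direct sums since $r$ preserves filtered colimits, as $i$ preserves noetherian objects), and read off the hyper-$\operatorname{Tor}$ spectral sequence
\[ E_2^{s,t}=\operatorname{Tor}^H_s\bigl(\Ext^t_\cC(P,P),\,M\bigr)\ \Longrightarrow\ \Ext^{t-s}_\cC(P,\,P\otimes_H M), \]
concentrated in $0\le t\le d$. As a module over $Z\otimes Z$, each $E_2^{s,t}$ is annihilated by $\cJ$ — being a $Z\otimes Z$-subquotient of a direct sum of copies of $\Ext^t_\cC(P,P)$ — and by $1\otimes\fra$, since $\fra M=0$; hence it is supported on $\cR\cap\pi_2^{-1}(V(\fra))$. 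Because the spectral sequence is bounded, $\Ext^1_\cC(P,P\otimes_H M)$ carries a filtration of length $\le d+1$ whose subquotients are subquotients of the $E_2^{s,t}$, so it is a module over $Z\otimes Z/(\cJ+1\otimes\fra)^{\,d+1}$, a quotient of the noetherian ring $Z\otimes Z/\cJ^{d+1}$ (\textbf{(A3)}); in particular its $Z\otimes Z$-support lies in $\cR\cap\pi_2^{-1}(V(\fra))$. Passing to the $Z$-action from the first tensor factor — the one inducing the $\Mod(H)$-structure on $\Ext^1_\cC(P,-)$ relevant to membership in $\cT_Y$ — its $Z$-support lies in $\pi_1\bigl(\cR\cap\pi_2^{-1}(V(\fra))\bigr)$, and since $Y$ is $\cR$-saturated and $V(\fra)\subseteq Y$, this is contained in $\pi_1(\pi_2^{-1}(Y))\subseteq Y$. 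As $Z$ is noetherian and $Y$ is specialisation-closed, "support inside $Y$" upgrades to "lies in $\cT_Y$", which is what was needed.

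The main obstacle is exactly the comparison in the third paragraph: establishing the identity $\Ext^i_\cC(P,N)\cong\Hom_\cA(P,R^i r(N))$, the boundedness supplied by \textbf{(A2)}, the hyper-$\operatorname{Tor}$ spectral sequence, and then carrying out the $Z\otimes Z$-support bookkeeping cleanly. The delicate point is that an iterated extension of $\cJ$-torsion modules is only $\cJ^{N}$-torsion, where $N$ is the filtration length; \textbf{(A2)} bounds this length by $d+1$ and \textbf{(A3)} makes $Z\otimes Z/\cJ^{d+1}$ noetherian, so that "$\operatorname{Supp}$-closed" again forces "killed by a power of the defining ideal". Everything before and after this step is a formal manipulation with the thickening dictionary, Gabriel's injective spectrum, and elementary commutative algebra over the noetherian ring $Z$.
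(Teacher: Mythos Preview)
Your overall strategy matches the paper's: reduce via Theorem~\ref{thm:stable-A-L} to showing that $\Ext^1_\cC(P,-)$ preserves the relevant localising subcategory of $\Mod(H)$, then control $Z\otimes Z$-annihilators using $\cJ$ together with the saturation condition coming from $\bfq$. Your hyper-Tor spectral sequence is a legitimate repackaging of the paper's direct dimension-shifting (Lemma~\ref{lem:J-killing}); both arguments yield that $\Ext^j_\cC(P,A)$ is killed by $\cJ^{d+1-j}$ for any $A\in\cA$.

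Two points need tightening. First, a terminological slip: in the paper's conventions a ``stable subset'' of $\Sp(\cC)$ means $A(\cL)$ for a stable localising $\cL$, and these are the \emph{open} sets of the stable topology. What you need is not that $g^{-1}(Y)$ be stable, but that its complement $g^{-1}(\Xi\setminus Y)$ be stable --- equivalently, that the localising $\cL$ with $\cL\cap\cA\leftrightarrow\cT_Y$ be stable. Your subsequent argument in fact proves exactly this, so only the wording is off. Note also that Theorem~\ref{thm:stable-A-L} carries the standing hypothesis that $\cA\cap\cL$ be stable in $\cA$; this follows from \textbf{(A1)} via Lemma~\ref{lem:varrho-stable}.a, but you should say so.

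Second, and more substantively: the step ``its $Z$-support lies in $\pi_1\bigl(\cR\cap\pi_2^{-1}(V(\fra))\bigr)$'' is not justified as written. For a module $N$ over a ring $R'$ regarded as a $Z$-module via some $Z\to R'$, one does \emph{not} in general have $\operatorname{Supp}_Z N$ contained in the image of $\Spec R'\to\Spec Z$, and that image need not be closed. This is precisely where \textbf{(A3)} must do real work, and the paper's argument (proof of Theorem~\ref{thm:MainStability}) makes it explicit: the noetherian ring $(Z\otimes Z)/\cJ^{d+1}$ has only finitely many minimal primes $Q_1,\dots,Q_k$ above the image of $1\otimes\fra$, some product $\prod Q_j^{a_j}$ lies in that ideal, the saturation $\pi_2^{-1}(Y)\subseteq\pi_1^{-1}(Y)$ forces each $\frq_j:=f_1(Q_j)$ into $Y$, and then $\prod\frq_j^{a_j}$ annihilates $\Ext^1_\cC(P,A)$ via the first $Z$-factor, giving $V(\Ann_Z v)\subseteq\bigcup_j V(\frq_j)\subseteq Y$ for every $v$ since $Y$ is specialisation-closed. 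With this correction your argument is complete and agrees with the paper's.
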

In this generality, $\Xi/\cR$ is just a locally ringed space. Section $\S \ref{sec:QuotSpace}$ of our paper is devoted to the computation of this space in the situation of Example \ref{ex:SL2Qp} above, relying crucially on the calculations of \cite{OS22} on the structure of the pro-$p$ Iwahori $\Ext$-algebra (\ref{eq:ExtStar}). Our second main result is then the following

\begin{theorem}[Thm. \ref{thm: XiRscheme}, Cor. \ref{cor: ConnCompsOfQuotSpace}]  \label{thm:QuotSpaceIntro}Suppose that $p \geq 5$, $G = \SL_2(\Qp)$, $\cC = \Mod_k(G)$ and $\cA = \Mod_k^I(G)$.
\begin{itemize}
  \item[a)]  $\Xi/\cR$ is a scheme.
  \item[b)] $\Xi/\cR$ has two connected components.
  \item[c)] Each connected component of $\Xi/\cR$ is a chain of projective lines.
\end{itemize}
\end{theorem}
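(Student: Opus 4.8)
The plan is to carry out an explicit computation of the two ingredients $\Xi$ and $\cR$ in the case at hand. Here $P = \ind_I^G(k)$, so $H = \End_{\cA}(P)^{\op}$ is the pro-$p$ Iwahori--Hecke algebra of $G = \SL_2(\Qp)$ over $k$, $Z = Z(H)$ is its centre, and $\Ext_\cC^*(P,P)$ is the pro-$p$ Iwahori $\Ext$-algebra; since $\Ext_\cC^i(\ind_I^G k, -) = H^i(I,-)$ vanishes for $i > 3$ (as $I$ is torsion-free of dimension $3$), the grading is concentrated in degrees $0 \le i \le 3$, so the $d$ of (A2) is $3$. First I would describe $\Xi = \Spec Z$: using the block decomposition $H = \prod_{[\chi]} H\,\epsilon_{[\chi]}$ indexed by the orbits $[\chi] = \{\chi,\chi^{-1}\}$ of characters $\chi$ of $T_0/T_1 \cong \mathbb{F}_p^\times$ --- the orbit sums $\epsilon_{[\chi]}$ being central idempotents because both affine simple reflections act on $T_0/T_1$ by inversion --- one reads off the centre of each block from the structure theory of \cite{OS22}. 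The hypothesis $p \ge 5$ guarantees exactly two fixed characters (the trivial one and the unique one of order $2$) and $(p-3)/2$ regular orbits, and rules out small-prime degeneracies; the outcome should be an identification of $\Xi$ with an explicit finite disjoint union of affine lines over $\Spec(k)$, some of them already meeting at a node (the special characters producing the more intricate behaviour).

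Next I would compute $\cJ = \Ann_{Z \otimes Z}\!\big(\Ext_\cC^*(P,P)\big)$ and hence $\cR = V(\cJ) \subseteq \Xi \times \Xi$, using the explicit $(Z,Z)$-bimodule structure of the $\Ext$-algebra from \cite{OS22} and tracking the left and right $Z$-actions on each $\Ext^i$ separately. The degree-$0$ summand $\Ext_\cC^0(P,P) = H$ is faithful over $Z$ acting by central multiplication, so $\Ann_{Z\otimes Z}(H) = \ker\!\big(Z \otimes Z \xrightarrow{\ \mathrm{mult}\ } Z\big)$ and thus $\Delta_\Xi \subseteq \cR$; the self-duality of the $\Ext$-algebra established in \cite{OS22} exchanges the two $Z$-actions and shows $\cJ$ is invariant under the coordinate flip of $Z\otimes Z$, so $\cR$ is symmetric. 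The real content is in the off-diagonal components, coming from $\Ext^1$ and $\Ext^2$: branch by branch I expect them to be graphs of isomorphisms between the $\mathbb{G}_m$-loci of the affine charts of $\Xi$, of inversion type $u \mapsto u^{-1}$, implementing $e_\chi \leftrightarrow e_{\chi^{-1}}$ and gluing the two branches at the special characters.

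With $\cR$ in hand, I would identify the coequaliser. Since $\Delta_\Xi \subseteq \cR$, $\cR$ is symmetric, and its off-diagonal components are graphs of isomorphisms of affine opens, the locally ringed space $\Xi/\cR$ can be built from the explicit recipe of \cite{DG} Prop.~I.1.1.6 by gluing the affine (or nodal-affine) charts of $\Xi$ along the prescribed isomorphisms. Each pair of charts glued by an inversion $u \mapsto u^{-1}$ on its $\mathbb{G}_m$-part becomes a $\mathbb{P}^1$, consecutive such $\mathbb{P}^1$'s meet at a node, and I would check directly that $\Gamma(\cO)$ is $k$ on each resulting piece: this shows $\Xi/\cR$ is a scheme --- in fact a proper algebraic curve over $k$ --- and at the same time exhibits each piece as a chain of projective lines, proving (a) and (c). For (b) I would pass to the combinatorial graph whose vertices are the charts and whose edges are the gluings; its connected components should be controlled by the orbits of $\chi\mapsto\chi^{-1}$ on $\mathbb{F}_p^\times$ together with a $\mathbb{Z}/2$-parity on the index in $\mathbb{Z}/(p-1)$ that separates the two chains, and one checks this graph has exactly two components.

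The hard part will be twofold. First, the determination of $\cR$: the pro-$p$ Iwahori $\Ext$-algebra of $\SL_2(\Qp)$ is intricate, and pinning down $\cJ$ exactly --- including any non-reduced behaviour --- requires tracking \emph{both} $Z$-module structures on every $\Ext^i$, not just the ring structure, and then verifying that $\sqrt{\cJ}$ cuts out precisely the union of $\Delta_\Xi$ with the inversion-graphs. Second, the passage to the quotient: a priori $\Xi/\cR$ is only a locally ringed space and colimits in $\LRS$ are delicate, so I must produce an honest affine (or nodal-affine) open cover of $\Xi/\cR$, check the transition maps are morphisms of schemes, and confirm no spurious global sections or nilpotents appear. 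Finally, the combinatorial count behind (b) --- and the verification that each component is a genuine chain rather than a cycle or a more branched configuration --- rests on doing the orbit/parity bookkeeping with care.
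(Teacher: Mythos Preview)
Your overall strategy---describe $\Xi$, compute $\cR$ from the $(Z,Z)$-bimodule structure of $\Ext^*$, then build the coequaliser by gluing affine charts along graph-type isomorphisms---matches the paper's, and your identification of $\Delta_\Xi \subseteq \cR$ from degree $0$ and of the symmetry of $\cR$ from duality is correct. However, there is a genuine error in your expectation for the off-diagonal part of $\cR$.

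You predict that the $\Ext^1$-contribution ``implements $e_\chi \leftrightarrow e_{\chi^{-1}}$''. This is the wrong involution. The explicit bimodule computation (carried out in the paper via \cite{OS22}) shows that on the relevant $(H_\zeta,H)$-bimodule the right $Z$-action satisfies $\mathbf{1}\cdot\epsilon_\alpha = \epsilon_{\mu\alpha^{-1}}\cdot\mathbf{1}$ with $\mu=\id^2$: the character twist is $\alpha\mapsto\mu\alpha^{-1}$, not $\alpha\mapsto\alpha^{-1}$. On the normalisation $\Xi'=\coprod_\alpha\Spec k[t_\alpha]$ the off-diagonal piece of $\cR'$ is therefore $V(t_\alpha\otimes t_\beta-1)$ supported on pairs with $\beta=\mu/\alpha$, not $\beta=\alpha^{-1}$. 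This matters because there are now \emph{two} distinct involutions in play: $\alpha\mapsto\alpha^{-1}$, which is built into $Z$ itself and governs the nodal points of $\Xi$, and $\alpha\mapsto\mu\alpha^{-1}$, coming from $\Ext^1$. Their composite is the shift $\alpha\mapsto\id^2\cdot\alpha$, and it is precisely this shift-by-$2$ on exponents that produces the chain structure and the two connected components (parity of the exponent). If both involutions were $\alpha\mapsto\alpha^{-1}$ as you suppose, the $\Ext^1$-gluing would coincide with the nodal gluing already present in $\Xi$ and no chain would emerge.

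The paper also interposes a device you do not mention: it first passes to the normalisation $\theta:\Xi'\to\Xi$ (a disjoint union of $p-1$ affine lines), lifts $\cR$ to $\cR'\subseteq\Xi'\times\Xi'$, computes $\Xi'/\cR'$ there---obtaining $\tfrac{p+1}{2}$ disjoint copies of $\mathbb{P}^1$, one for each $\{\alpha,\mu/\alpha\}$-orbit---and only then re-glues along the finite singular locus $\Xi_{\sing}$ (the $\{\alpha,\alpha^{-1}\}$-identifications) to recover $\Xi/\cR$. The scheme property of this last pushout is not checked by hand but invoked from Anantharaman's theorem on gluing along closed immersions. Working directly on the nodal $\Xi$ as you plan is possible in principle but messier; in any case you will need the correct $\mu$-twisted involution to get the combinatorics of (b) and (c) right.
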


\begin{example} When $p = 13$, the scheme $\Xi/\cR$ looks as follows:
\begin{center}
    \begin{tikzpicture}
    \draw (3.25,-6.5) circle (0.5cm);
    \draw (4.25,-6.5) circle (0.5cm);
    \draw (5.25,-6.5) circle (0.5cm);

    \draw[blue] (7.5,-6.5) circle (0.5cm);
    \draw (8.5,-6.5) circle (0.5cm);
    \draw (9.5,-6.5) circle (0.5cm);
    \draw[blue] (10.5,-6.5) circle (0.5cm);
    \end{tikzpicture}
\end{center}
\end{example}
In Example \ref{ex:p13} below, we sketch how to obtain this gluing from the scheme $\Xi$.

\begin{corollary} \label{cor: tauIntro} Let $p \geq 5$ and let $G = \SL_2(\Qp)$. For every Zariski open subset $U$ of $\Xi/\cR$, $\tau^{-1}(U)$ is a stable open subset of $\bf{Sp}$$(\Mod_k(G))$.
\end{corollary}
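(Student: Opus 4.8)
The plan is to obtain the corollary by combining Theorem~\ref{thm:taucts} with Theorem~\ref{thm:QuotSpaceIntro}(a); the only new work is to check that the axiomatic hypotheses of Theorem~\ref{thm:taucts} hold for $\cC = \Mod_k(G)$, $\cA = \Mod_k^I(G)$ and $P = \ind_I^G(k)$. Example~\ref{ex:SL2Qp} already records the two structural inputs --- that $\cC$ is a thickening of $\cA$, and that $P$ is a noetherian projective generator of $\cA$ --- so it remains to verify (A1), (A2) and (A3). For (A1): since $\cA$ is a full subcategory of $\cC$ containing $P$, the ring $H = \End_{\cA}(P)^{\op}$ equals $\End_{\cC}(P)^{\op}$, which is the pro-$p$ Iwahori--Hecke algebra $\mathcal{H}_k(G,I)$, and this is finitely generated as a module over its centre $Z$ by the structure theory of pro-$p$ Iwahori--Hecke algebras (Ollivier--Vign\'eras), which in the case of $\SL_2(\Qp)$ is moreover completely explicit. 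For (A3): the same structure theory exhibits $Z$ as a finitely generated commutative $k$-algebra, so $Z \otimes_k Z$ is noetherian by Hilbert's basis theorem and hence so is its quotient $Z \otimes Z / \cJ^{d+1}$ for any $d$.

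Axiom (A2) is the point requiring the most care. Because $I$ is open in $G$, both restriction $\res : \Mod_k(G) \to \Mod_k(I)$ and its left adjoint $\ind_I^G$ are exact; in particular $\res$ preserves injective objects, so applying $\Hom_{\cC}(\ind_I^G k, -) = (-)^I$ to an injective resolution of a given $V \in \cC$ identifies $\Ext^j_{\cC}(P, V)$ with the continuous cohomology $H^j(I, V)$, for all $V$. As $p \geq 5$ the pro-$p$ group $I$ is torsion-free, and it is a compact $p$-adic analytic group of dimension $\dim \SL_2 = 3$, so $\mathrm{cd}_p(I) = 3$ (Lazard/Serre); hence $\Ext^j_{\cC}(P, -) = 0$ for $j > 3$ and (A2) holds with $d = 3$. (The boundedness of degrees is also visible directly from the computation of the pro-$p$ Iwahori $\Ext$-algebra in \cite{OS22}, but one wants the vanishing of $\Ext^j_{\cC}(P,-)$ against all objects of $\cA$, not merely against $P$, so the cohomological-dimension argument is the cleaner route.) With (A1)--(A3) verified, Theorem~\ref{thm:taucts} shows that $\tau : \Sp(\Mod_k(G)) \to \Xi/\cR$ is continuous for the stable topologies: the preimage under $\tau$ of any stable open subset of $\Xi/\cR$ is a stable open subset of $\Sp(\Mod_k(G))$.

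It then remains only to observe that every Zariski open subset $U$ of $\Xi/\cR$ is stable open. This is where Theorem~\ref{thm:QuotSpaceIntro}(a) is used: once $\Xi/\cR$ is known to be a scheme, the canonical map $\bfq : \Xi \to \Xi/\cR$ from the coequaliser diagram is a morphism of schemes, in particular continuous, so $\bfq^{-1}(U)$ is Zariski open in $\Xi = \Spec(Z)$; since the stable topology on the coequaliser $\Xi/\cR$ is, by construction, the quotient of the Zariski topology on $\Xi$ along $\bfq$, this exhibits $U$ as a stable open subset of $\Xi/\cR$. Therefore $\tau^{-1}(U)$ is a stable open subset of $\Sp(\Mod_k(G))$, and the stable localising subcategory of $\Mod_k(G)$ attached to this stable subset is the $\cL_U$ of the abstract. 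I expect the main obstacle not to lie in the corollary itself --- the hard work, namely the identification of $\Xi/\cR$ with a disjoint union of chains of projective lines via the $\Ext$-algebra computations of \cite{OS22}, has already been done in Theorem~\ref{thm:QuotSpaceIntro} --- but rather in the two bookkeeping checks flagged above: the cohomological-dimension input to (A2), and the reconciliation of the ``stable'' topology on $\Xi/\cR$ appearing in Theorem~\ref{thm:taucts} with the Zariski topology supplied by its scheme structure.
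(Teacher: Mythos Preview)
Your proof is correct and follows the same approach as the paper: both deduce the corollary from Theorem~\ref{thm:taucts} after verifying hypotheses \textbf{(A1)}--\textbf{(A3)}, and your verifications coincide with the paper's (the paper explicitly invokes Remark~\ref{rem: A3klinear} to justify passing from $Z \otimes_{\bZ} Z$ to $Z \otimes_k Z$ for \textbf{(A3)}, a reduction you make implicitly).

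The final paragraph of your proposal is superfluous and slightly confused: the paper never defines a ``stable topology'' on $\Xi/\cR$, and in the precise formulation (Proposition~\ref{prop:preimoftau}) the hypothesis on $U$ is merely that it be an intersection of open subsets of the locally ringed space $\Xi/\cR$, which a Zariski-open $U$ trivially is. The topology on $\Xi/\cR$ is already, by the construction of coequalisers in $\LRS$, the quotient of the Zariski topology on $\Xi$ along $\bfq$, so you do not need Theorem~\ref{thm:QuotSpaceIntro}(a) for the argument --- it serves only to make the phrase ``Zariski open'' in the \emph{statement} of the corollary meaningful.
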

\begin{proof} This follows from Thm. \ref{thm:taucts}, once we verify its conditions. Note that $\cC = \Mod_k(G)$ is a thickening of $\cA = \Mod_k^I(G)$, and that $P = \ind_I^G(k)$ is a noetherian projective generator of $\cA$ by Example \ref{ex:SL2Qp}.

\textbf{(A1)}. The pro-$p$ Iwahori-Hecke algebra $H = \End_{\cC}(P)^{\op}$ is finitely generated as a module over its centre by \cite{Vig}; see also \cite{OS18} Cor.\ 3.4 and Remark 3.5.

\textbf{(A2)}. Since $p \geq 5$, the group $I$ has no elements of order $p$, and therefore has finite $p$-cohomological dimension equal to $3$, the dimension of $I$ as a $p$-adic Lie group. Using Frobenius reciprocity, we see that for all $V$ in $\cC$,
\[ \Ext^j_{\cC}(P,V) = \Ext^j_G(\ind_I^G,V) = \Ext^j_I(k,V) = H^j(I,V) = 0\qmb{for all} j > 3.\]

\textbf{(A3)}. In view of Remark \ref{rem: A3klinear} below, we may replace $\textbf{(A3)}$ by the weaker assumption that $Z \otimes_k Z$ is noetherian, because $\cC$ happens to be a $k$-linear category. Prop. \ref{prop: Zcomps} and equation (\ref{eq:Zdecomp}) imply that $Z$ is finitely generated as a $k$-algebra. Hence $Z \otimes_k Z$ is noetherian, by Hilbert's Basis Theorem.  \end{proof}

The paper \cite{DEG} considers, for the group $G = GL(\mathbb{Q}_p)$, the full subcategory $\Mod_{k,\zeta}(G)$ of those representations in $\Mod_k(G)$ which have a fixed central character $\zeta$. They associate with this subcategory a scheme $X$ which is also a chain of projective lines. This scheme is definitively the analogue of a connected component of our quotient space $\Xi/\mathcal{R}$ in this situation. But the idea of \cite{DEG} behind $X$ is completely different: $X$ is viewed as a kind of moduli space of 2-dimensional semisimple Galois representations modulo $p$. From this point of view, the relation between $X$ and the representation theory of $G$ comes from Breuil's semisimple Langlands correspondence modulo $p$. Based on this the paper \cite{DEG} also develops a localisation theory for the category $\Mod_{k,\zeta}(G)$ and shows that the closed points of $X$ parameterize the blocks of the Krull-dimension $0$ subcategory of $\Mod_{k,\zeta}(G)$. We emphasize that in the present paper we work entirely on the representation theoretic side of the full category $\Mod_k(G)$ and our quotient space $\Xi/\mathcal{R}$ arises from the non-vanishing (and the structure) of universal $\Ext$-groups. We therefore like to think that the two papers complement each other in an interesting way.

\textbf{Acknowledgements:} The authors acknowledge support from Deutsche Forschungsgemeinschaft (DFG, German Research Foundation) under SFB 1442, Geometry: Deformations and Rigidity, project-ID 427320536 and  Germany's Excellence Strategy EXC 2044 390685587, Mathematics Münster: Dynamics–Geometry–Structure. The second author also acknowledges support from the Mathematical Institute and Brasenose College, Oxford.

\section{Basic facts about the category $\Mod_k(G)$}
We fix a field $k$ of characteristic $p > 0$. For any locally compact and totally disconnected group $\cG$ we denote by $\Mod_k(\cG)$ the abelian category of smooth $\cG$-representations on $k$-vector spaces. Let $G = SL_2(\mathbb{Q}_p)$. We fix the pro-$p$ Iwahori subgroup $I \subseteq SL_2(\mathbb{Z}_p)$ of all matrices which are upper triangular unipotent mod $p$.

\subsection{Locally noetherian} \label{sec: ThickEx}


Let $\Mod_k^I(G)$ denote the full subcategory of all $V$ in $\Mod_k(G)$ which are generated by their pro-$p$ Iwahori fixed vectors $V^I$. Write $H := \End_{k[G]}(k[G/I])^{\op}$.

\begin{theorem}\label{equivalence}
   The functor
\begin{align*}
  \Mod_k^I(G) & \xrightarrow{\;\simeq\;} \Mod(H) \\
            V & \longmapsto V^I
\end{align*}
is an equivalence of categories with quasi-inverse $M \longmapsto k[G/I] \otimes_{H} M$. Moreover, $k[G/I]$ is projective and faithfully flat as an $H$-module.
\end{theorem}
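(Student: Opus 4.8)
The statement to prove is Theorem \ref{equivalence}: the $I$-invariants functor $V \mapsto V^I$ gives an equivalence $\Mod_k^I(G) \xrightarrow{\simeq} \Mod(H)$ with quasi-inverse $M \mapsto k[G/I] \otimes_H M$, and moreover $k[G/I]$ is projective and faithfully flat as an $H$-module.

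\bigskip

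The plan is to exploit the fact that $P := k[G/I] = \ind_I^G(k)$ is a projective generator of $\Mod_k^I(G)$ and then invoke the standard Morita-type formalism (Gabriel--Popescu). First I would note that $V^I = \Hom_{k[G]}(k[G/I], V)$ by Frobenius reciprocity, so the functor in question is $\Hom_{k[G]}(P,-)$ and $H = \End_{k[G]}(P)^{\op}$ acts on it on the right, making $V^I$ a genuine $H$-module; this is where the definition $H = \End_{k[G]}(k[G/I])^{\op}$ enters. Next I would establish the two essential properties of $P$ inside $\cC = \Mod_k(G)$: (i) $P$ is projective, because $\ind_I^G$ is exact (compact induction from an open subgroup is exact) and its left adjoint being exact makes $P$ projective; (ii) as an object of $\Mod_k^I(G)$, $P$ is a generator, since any $V \in \Mod_k^I(G)$ is by definition generated over $G$ by $V^I$, and each $v \in V^I$ gives a $G$-map $P = k[G/I] \to V$ sending the coset $I$ to $v$, so the images of all such maps sum to all of $V$. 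Crucially $\Mod_k^I(G)$ is closed under arbitrary direct sums and quotients inside $\Mod_k(G)$, and the inclusion is exact, so $\Mod_k^I(G)$ is a Grothendieck category with projective generator $P$.

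\bigskip

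With $P$ a projective generator of the Grothendieck category $\cA = \Mod_k^I(G)$, the general theorem (see e.g. \cite{Gab} or the Gabriel--Popescu theorem) yields that $\Hom_{\cA}(P,-): \cA \to \Mod(H)$ is an equivalence with quasi-inverse $P \otimes_H -$, provided $P$ is also \emph{small} (compact), i.e. $\Hom_{\cA}(P,-)$ commutes with direct sums. That smallness is exactly the statement that $P = \ind_I^G(k)$ is a noetherian — or at least finitely generated — object of $\cA$: a $G$-representation generated by finitely many vectors which are $I$-fixed. This is asserted in Example \ref{ex:SL2Qp}, where we already claim $\ind_I^G(k)$ is a noetherian projective generator of $\Mod_k^I(G)$; combined with finite generation of $H$ over its centre and the latter being a finitely generated $k$-algebra, noetherianity of $P$ follows. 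Once we know $\Hom_{\cA}(P,-)$ commutes with direct sums, the equivalence is immediate: $\Hom_{\cA}(P,-)$ is exact (projectivity), faithful and conservative (generator), and an isomorphism on $P$ itself ($\Hom_{\cA}(P,P) = H$), and a standard argument upgrades this to an equivalence with quasi-inverse $P \otimes_H (-)$. I would also verify directly that $P \otimes_H M$ lands in $\Mod_k^I(G)$: it is a quotient of a direct sum of copies of $P$, hence generated by $I$-fixed vectors.

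\bigskip

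Finally, for the "moreover" clause: projectivity of $P = k[G/I]$ as an $H$-module is immediate once we have the equivalence, because $P$ corresponds under $\Hom_\cA(P,-)$ to $\Hom_\cA(P,P) = H$, which is free of rank one, hence projective, over $H$; equivalently $P \otimes_H H \cong P$ and $H$ is $H$-projective. Faithful flatness then follows from two observations: flatness is part of projectivity, and faithfulness is equivalent to $P \otimes_H M \neq 0$ for every $M \neq 0$, which holds because $P \otimes_H -$ is an equivalence onto $\cA$ (in particular it has zero kernel on objects). Alternatively one can argue faithful flatness directly from the fact that $\Hom_\cA(P, P\otimes_H M) \cong M$ naturally, so no nonzero module is killed.

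\medskip

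The main obstacle is the smallness/noetherianity of $P = \ind_I^G(k)$ as an object of $\Mod_k^I(G)$ — equivalently, that $\Hom_{k[G]}(k[G/I],-)$ commutes with arbitrary direct sums on the subcategory of representations generated by their $I$-invariants. On the full category $\Mod_k(G)$ this can fail, so one genuinely uses that we restrict to $\Mod_k^I(G)$; the cleanest route is to show $P$ is finitely generated there (it is cyclic, generated by the trivial coset) and that $\cA$ is locally noetherian, so that finitely generated implies noetherian implies small. Everything else is the routine Gabriel--Popescu package applied to $(\cA, P, H)$.
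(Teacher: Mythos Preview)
The paper does not prove this theorem at all; it simply cites \cite{Koz} and \cite{OS18} Prop.\ 3.25. So the comparison is between your Morita-type argument and what those references actually establish.

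Your framework is the right one, but there is a genuine gap, and you have misidentified where it lies. The smallness of $P = k[G/I]$ is \emph{not} the obstacle: the functor $(-)^I = \Hom_{k[G]}(k[G/I],-)$ commutes with arbitrary direct sums (and even filtered colimits) on all of $\Mod_k(G)$, simply because an element of a direct sum is $I$-fixed iff each component is. What is genuinely hard, and what you assert without justification, is that $\Mod_k^I(G)$ is an abelian subcategory with exact inclusion into $\Mod_k(G)$ --- i.e.\ that it is closed under \emph{subobjects}. This closure fails for general $p$-adic reductive groups; it is a special feature of $\SL_2(\mathbb{Q}_p)$ (and $\GL_2(\mathbb{Q}_p)$), and establishing it is precisely the content of \cite{Koz} and \cite{OS18}. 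In the paper's own logic, note that Lemma~\ref{abelian-sub} (closure under subobjects) is proved \emph{after} and \emph{using} Theorem~\ref{equivalence}, via the flatness of $k[G/I]$ over $H$. Equivalently, the nontrivial input is the flatness (in fact projectivity) of $k[G/I]$ as a right $H$-module, which in \cite{OS18} is obtained by an explicit structural computation; once you have that, the unit $M \to (k[G/I]\otimes_H M)^I$ and the counit $k[G/I]\otimes_H V^I \to V$ are easily seen to be isomorphisms and the Morita package goes through. Your argument instead tries to \emph{deduce} flatness/projectivity of $P$ over $H$ from the equivalence, which is circular. Likewise, invoking Example~\ref{ex:SL2Qp} for noetherianity is circular: that example is justified in \S\ref{sec: ThickEx} using the present theorem.
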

\begin{proof}
\cite{Koz} and \cite{OS18} Prop.\ 3.25 and its proof.
\end{proof}

\begin{lemma}\label{abelian-sub}
  $\Mod_k^I(G)$ is an abelian subcategory of $\Mod_k(G)$ closed under the formation of subobjects, quotient objects, and arbitrary colimits.
\end{lemma}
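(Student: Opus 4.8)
The plan is to establish the three closure properties separately, in the order: quotient objects, then arbitrary colimits (and finite direct sums), then subobjects, after which the assertion that $\Mod_k^I(G)$ is an abelian subcategory follows formally. The first two properties are elementary and use nothing beyond the definition of $\Mod_k^I(G)$; Theorem \ref{equivalence} is needed, and is essential, only for the subobject case, which I expect to be the one genuine obstacle.

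For quotient objects: if $V = k[G]\cdot V^I$ and $q : V \twoheadrightarrow Q$ is a surjection in $\Mod_k(G)$, then $q(V^I) \subseteq Q^I$, so $Q = q(V) = q(k[G]\cdot V^I) = k[G]\cdot q(V^I) \subseteq k[G]\cdot Q^I \subseteq Q$, whence $Q \in \Mod_k^I(G)$. For arbitrary colimits: if $V$ is a colimit, formed in $\Mod_k(G)$, of a diagram $(V_j)_j$ of objects of $\Mod_k^I(G)$ with structure maps $\iota_j : V_j \to V$, then (as the colimit is a quotient of $\bigoplus_j V_j$) every element of $V$ is a finite sum $\sum_l \iota_{j_l}(v_l)$ with $v_l \in V_{j_l}$; since each $\iota_j$ is $G$-equivariant, $\iota_j(V_j) = \iota_j(k[G]\cdot V_j^I) = k[G]\cdot\iota_j(V_j^I) \subseteq k[G]\cdot V^I$, so $V = \sum_j \iota_j(V_j) \subseteq k[G]\cdot V^I \subseteq V$ and $V \in \Mod_k^I(G)$. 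The same reasoning, or simply $(V_1\oplus V_2)^I = V_1^I\oplus V_2^I$, gives closure under finite direct sums.

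For subobjects, let $V \in \Mod_k^I(G)$ and let $W \subseteq V$ be a subrepresentation; put $Q := V/W$, which lies in $\Mod_k^I(G)$ by the quotient case. The difficulty is that the condition $W = k[G]\cdot W^I$ to be verified is a priori only a surjectivity statement and has to be deduced from the corresponding statement for $V$; this is exactly where the flatness of $k[G/I]$ over $H$ from Theorem \ref{equivalence} enters. Concretely: $W^I = W \cap V^I$, and writing $\bar M$ for the image of $V^I$ in $Q$ we obtain a short exact sequence $0 \to W^I \to V^I \to \bar M \to 0$ of $H$-modules. Applying the exact functor $k[G/I]\otimes_H-$ and composing with the counit isomorphism $k[G/I]\otimes_H V^I \xrightarrow{\sim} V$ (valid since $V \in \Mod_k^I(G)$) yields an exact sequence $0 \to k[G/I]\otimes_H W^I \to V \to k[G/I]\otimes_H\bar M \to 0$. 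I would then check that the canonical map $k[G/I]\otimes_H\bar M \to Q$ is an isomorphism: it is surjective because $\bar M$ generates $Q$ (as $Q$ is a quotient of $V = k[G]\cdot V^I$), and it is injective because $\bar M \hookrightarrow Q^I$ remains injective after applying $k[G/I]\otimes_H-$ by flatness, while $k[G/I]\otimes_H Q^I \xrightarrow{\sim} Q$ is the counit isomorphism for $Q \in \Mod_k^I(G)$. By naturality of the counit this isomorphism is compatible with $V \to Q$, so the exact sequence above identifies $k[G/I]\otimes_H W^I$ with $\ker(V\to Q) = W$; comparing images inside $V$ this says exactly $W = k[G]\cdot W^I$, i.e.\ $W \in \Mod_k^I(G)$.

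Finally, $\Mod_k^I(G)$ is a full subcategory of the abelian category $\Mod_k(G)$ closed under finite direct sums and under the kernels (subobjects) and cokernels (quotients) of its morphisms, hence under images as well; it therefore inherits an abelian structure for which the inclusion functor is exact, and closure under arbitrary colimits has been recorded above. As indicated, the only non-formal point in the whole argument is the subobject case, where the flatness of $k[G/I]$ over $H$ is indispensable: without it there is no reason for a subrepresentation of an object generated by its $I$-invariants to again be generated by its $I$-invariants.
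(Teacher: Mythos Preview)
Your proof is correct and follows essentially the same approach as the paper. The paper handles the subobject case via a commutative diagram comparing $k[G/I]\otimes_H(-)^I$ applied to the short exact sequence with the sequence itself, invoking flatness of $k[G/I]$ over $H$ and the counit isomorphisms from Theorem~\ref{equivalence} for $V$ and $V/W$, then concluding by a four-lemma; your argument unpacks the same diagram slightly more explicitly by factoring through the image $\bar M$ of $V^I$ in $Q$, but the substance is identical.
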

\begin{proof}
Clearly $\Mod_k^I(G)$ is closed under the formation of arbitrary direct sums. Let $0 \rightarrow V_0 \rightarrow V_1 \rightarrow V_2 \rightarrow 0$ be an exact sequence in $\Mod_k(G)$ such that $V_1$ lies in $\Mod_k^I(G)$. Obviously, then also $V_2$ lies in $\Mod_k^I(G)$.  For $V_0$ we consider the commutative diagram
\begin{equation*}
  \xymatrix{
     0 \ar[r] & k[G/I] \otimes_H V_0^I \ar[d] \ar[r] & k[G/I] \otimes_H V_1^I \ar[d]_{\cong} \ar[r] & k[G/I] \otimes_H V_2^I \ar[d]_{\cong} & \\
    0 \ar[r] & V_0 \ar[r] & V_1 \ar[r] & V_2 \ar[r] & 0 .  }
\end{equation*}
The upper horizontal row is exact by the left exactness of the functor $(-)^I$ and the fact that $k[G/I]$ is flat as a (right) $H$-module. By the category equivalence in Thm.\ \ref{equivalence} the middle and right perpendicular arrows are isomorphisms. Hence the left one is an isomorphism as well. This shows that $V_0$ lies in $\Mod_k^I(G)$.
\end{proof}

The $k$-algebra $H$ is affine and finitely generated as a module over its centre (\cite{Vig} or, in an explicit form, \cite{OS18} Cor.\ 3.4 and Remark 3.5) and hence noetherian. Therefore the abelian category $\Mod(H)$ is locally noetherian Grothendieck.

\begin{lemma}\label{fin-filt} Let $\cG$ be any locally compact and totally disconnected group with an open pro-$p$ subgroup $J$.
  Any finitely generated $\cG$-representation $V$ in $\Mod_k(\cG)$ has a finite filtration $\{0\} \subset V_1 \subset \ldots \subset V_\ell = V$ by subrepresentations such that each subquotient $V_i /V_{i-1}$, for $0 < i \leq \ell$, is generated by finitely many $J$-fixed vectors.
\end{lemma}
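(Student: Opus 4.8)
The plan is to reduce the statement to the standard fact that a pro-$p$ group acting smoothly and nontrivially on a $k$-vector space in characteristic $p$ has nonzero fixed vectors, and then to transport a suitable filtration of a finite-dimensional generating subspace up to all of $V$. \emph{Step 1 (a finite-dimensional, $J$-stable generating subspace).} Since $V$ is finitely generated over $\cG$, choose a finite-dimensional $k$-subspace $W \subseteq V$ with $k[\cG] W = V$. By smoothness $W$ is fixed pointwise by an open subgroup of $\cG$, and intersecting with $J$ we obtain an open subgroup $J_0 \leq J$ fixing $W$ pointwise. As $J$ is compact, $[J : J_0] < \infty$, so $W' := k[J] W = \sum_{j \in J/J_0} j W$ is still finite-dimensional; and $k[\cG] W' = V$ because $J \subseteq \cG$. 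Being a finite-dimensional smooth $J$-representation, $W'$ is acted on by $J$ through a finite $p$-group quotient $Q$.

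\emph{Step 2 (unipotence).} Since $\chara(k) = p$ and $Q$ is a finite $p$-group, every nonzero $k[Q]$-submodule has nonzero $Q$-fixed vectors, so an easy induction on $\dim_k W'$ produces a $J$-stable flag $\{0\} = W_0' \subset W_1' \subset \cdots \subset W_m' = W'$ in which each quotient $W_i'/W_{i-1}'$ is one-dimensional with trivial $J$-action. (This is just the statement that $Q$ acts by unipotent operators on $W'$.)

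\emph{Step 3 (propagation).} Set $V_i := k[\cG] W_i'$, giving a chain $\{0\} = V_0 \subseteq V_1 \subseteq \cdots \subseteq V_m = V$ of $\cG$-subrepresentations. The point to verify --- and essentially the only place where a little care is needed --- is that $V_i/V_{i-1}$ is generated as a $\cG$-representation by the image of $W_i'$: this holds because $V_i = \sum_{g \in \cG} g W_i'$. That image is a $J$-equivariant quotient of $W_i'/W_{i-1}'$ (using $W_{i-1}' \subseteq V_{i-1}$), hence is $0$ or one-dimensional with trivial $J$-action. Discarding the indices with $V_i = V_{i-1}$ and renumbering yields a finite filtration of $V$ whose successive quotients are each generated by a \emph{single} $J$-fixed vector, which is slightly stronger than what is asserted.

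I do not anticipate a genuine obstacle: the argument is elementary. The only mild subtleties are the finiteness $[J : J_0] < \infty$ in Step 1 --- which is exactly where compactness of the pro-$p$ group $J$ is used --- and the verification in Step 3 that applying $k[\cG] \cdot (-)$ to the flag of $W'$ preserves, on graded pieces, the property of being generated by $J$-fixed vectors.
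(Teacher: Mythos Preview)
Your proof is correct and follows essentially the same approach as the paper's: both find a finite-dimensional $J$-stable generating subspace, filter it using the unipotence of finite $p$-groups in characteristic $p$, and then propagate via $k[\cG]\cdot(-)$. The paper first reduces to the universal case $V = k[\cG/N]$ and uses the augmentation-ideal filtration of $k[J/N]$, whereas you work directly with an arbitrary $V$ and an arbitrary $J$-stable complete flag in $W'$; this is a cosmetic difference, and your version even gives the slightly sharper conclusion that each subquotient is generated by a single $J$-fixed vector.
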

\begin{proof}
It suffices to consider the case where $V$ is generated by a single vector $v$. We can then find an open normal subgroup $N \subseteq J$ such that $N$ fixes $v$. Then $V$ is a quotient of the $\cG$-representation $k[\cG/N]$. Hence it actually suffices to consider the case $V = k[\cG/N]$. In $V$ we then have the $J$-subrepresentation $k[J/N]$. Let $\frm$ denote the augmentation ideal of the ring $k[J/N]$. Since $J/N$ is a finite $p$-group we find an integer $\ell \geq 0$ such that $\frm^\ell = 0$. Observe that $J$ acts trivially on each subquotient $\frm^i / \frm^{i+1}$. We now define $V_i \subseteq V$ as the $\cG$-subrepresentation generated by $\frm^{\ell - i}$. The subquotient $V_i /V_{i-1}$ then is generated by the image of $\frm^{\ell - i} / \frm^{(\ell - i)+1}$, which is contained in $(V_i/V_{i-1})^J$.
\end{proof}

\begin{proposition}\label{loc-noetherian}
   The abelian category $\Mod_k(G)$ is locally noetherian.
\end{proposition}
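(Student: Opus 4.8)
The plan is to verify that $\Mod_k(G)$ is a Grothendieck category admitting a generating set of noetherian objects. That it is Grothendieck is standard: it is abelian and cocomplete, filtered colimits are exact because they may be computed on underlying $k$-vector spaces, and the representations $k[G/N]$, with $N$ running over the open compact subgroups of $G$, form a generating set --- for any nonzero $v$ in any $V \in \Mod_k(G)$ there is an open compact $N$ with $v \in V^N$, and then $[eN] \mapsto v$ defines a nonzero map $k[G/N] \to V$. So it remains to show that each $k[G/N]$ is a noetherian object.

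First I would record that $k[G/I]$ is a noetherian object of $\Mod_k(G)$. Under the equivalence of Theorem \ref{equivalence} the object $k[G/I]$ corresponds to the free rank-one module $H$, which is noetherian because the ring $H$ is; hence $k[G/I]$ is noetherian in $\Mod_k^I(G)$. By Lemma \ref{abelian-sub}, $\Mod_k^I(G)$ is closed under subobjects in $\Mod_k(G)$, so every sub-$G$-representation of $k[G/I]$ already lies in $\Mod_k^I(G)$; therefore any ascending chain of sub-$G$-representations of $k[G/I]$ stabilises, i.e.\ $k[G/I]$ is noetherian in $\Mod_k(G)$. Since finite direct sums and quotients of noetherian objects are noetherian, it follows that every representation generated by finitely many $I$-fixed vectors --- being a quotient of some $k[G/I]^{\oplus n}$ --- is noetherian.

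Next I would show that every finitely generated $V$ in $\Mod_k(G)$ is noetherian. By Lemma \ref{fin-filt}, applied with $\cG = G$ and $J = I$, such a $V$ has a finite filtration all of whose subquotients are generated by finitely many $I$-fixed vectors, hence noetherian by the previous step; as extensions of noetherian objects are noetherian, induction on the length of the filtration shows $V$ is noetherian. In particular each $k[G/N]$, being cyclic (generated by $[eN]$) and hence finitely generated, is noetherian, which completes the argument.

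The whole proof is a routine assembly of Theorem \ref{equivalence}, Lemma \ref{abelian-sub} and Lemma \ref{fin-filt}; the only point needing a moment's thought is that the noetherian property of $k[G/I]$ passes from $\Mod_k^I(G)$ to $\Mod_k(G)$, which holds precisely because $\Mod_k^I(G)$ is closed under subobjects.
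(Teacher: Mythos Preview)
Your proof is correct and follows essentially the same approach as the paper: both arguments reduce via Lemma \ref{fin-filt} to representations generated by finitely many $I$-fixed vectors, and then use the closure of $\Mod_k^I(G)$ under subobjects (Lemma \ref{abelian-sub}) together with the equivalence of Theorem \ref{equivalence} and noetherianity of $H$ to conclude. The paper phrases the last step for a general such $V$ directly rather than first treating $k[G/I]$, but this is only a cosmetic difference.
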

\begin{proof} By Lemma 1(iv) \cite{Sch}, $\Mod_k(G)$ is a Grothendieck category. We have to show that any finitely generated $G$-representation $V$ in $\Mod_k(G)$ is noetherian. Lemma \ref{fin-filt} reduces us to the case that $V$ is generated by finitely many $I$-fixed vectors. According to Lemma \ref{abelian-sub} any increasing chain of $G$-subrepresentations of $V$ already lies in $\Mod_k^I(G)$. By Thm.\ \ref{equivalence} it therefore corresponds to a chain in the finitely generated $H$-module $V^I$. Hence it must become stationary.
\end{proof}

The exact inclusion functor $\Mod_k^I(G) \subseteq \Mod_k(G)$ is left adjoint to the left exact functor
\begin{align*}
  \Mod_k(G) & \longrightarrow \Mod_k^I(G) \\
          V & \longmapsto V(I) := G\text{-subrepresentation of $V$ generated by $V^I$} (\cong k[G/I] \otimes_{H} V^I) \ .
\end{align*}

\begin{lemma}\label{lem:V(I)}
   The functor $V \mapsto V(I)$
\begin{itemize}
  \item[a)] restricts to the identity functor on $\Mod_k^I(G)$,
  \item[b)] respects non-zero objects,
  \item[c)] respects injective objects, and
  \item[d)] commutes with arbitrary filtered colimits.
\end{itemize}
\end{lemma}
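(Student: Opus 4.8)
The plan is to prove the four parts in order, using throughout the isomorphism $V(I) \cong k[G/I] \otimes_H V^I$, which holds for every $V$ in $\Mod_k(G)$: indeed $V^I \subseteq V(I) \subseteq V$ forces $V(I)^I = V^I$, so $V(I)$ belongs to $\Mod_k^I(G)$ and Theorem \ref{equivalence} applies to it.

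Part a) is immediate from the definitions: if $V$ lies in $\Mod_k^I(G)$ it is generated as a $G$-representation by $V^I$, so the $G$-subrepresentation generated by $V^I$ is all of $V$; and on morphisms $(-)(I)$ is visibly the restriction of the identity functor. For part b), since $V^I \subseteq V(I)$ it is enough to check $V^I \neq 0$ for $V \neq 0$. I would take $0 \neq v \in V$, note that by smoothness $v$ is fixed by an open --- hence finite-index, as $I$ is profinite --- subgroup $N \leq I$, so the $k[I]$-submodule generated by $v$ is a nonzero module over the group algebra over $k$ of the finite $p$-group $I/N$; such a module has nonzero invariants because the augmentation ideal of that group algebra is nilpotent, and hence $V^I \neq 0$.

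For part c) I would use that $(-)(I)$ is right adjoint to the inclusion functor $\Mod_k^I(G) \hookrightarrow \Mod_k(G)$ (as recorded just above the lemma), which is exact by Lemma \ref{abelian-sub}: for $E$ injective in $\Mod_k(G)$ one has $\Hom_{\Mod_k^I(G)}(-,E(I)) \cong \Hom_{\Mod_k(G)}(-,E)$ as functors on $\Mod_k^I(G)$, and the right-hand functor is exact because $E$ is injective and the inclusion is exact, so $E(I)$ is injective. For part d) I would first show that $(-)^I \colon \Mod_k(G) \to \Mod(H)$ commutes with filtered colimits, and then conclude using $V(I) \cong k[G/I] \otimes_H V^I$ and the fact that $k[G/I] \otimes_H -$, being a left adjoint, commutes with all colimits. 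For the commutation of $(-)^I$: given a filtered system $(V_j)$ with colimit $V$, the canonical map $\varinjlim_j V_j^I \to V^I$ is injective because a class of $\varinjlim_j V_j$ that vanishes in $V$ already vanishes at a finite stage; for surjectivity I would lift $v \in V^I$ to $\bar v \in V_j$, fix $\bar v$ by an open finite-index subgroup $N \leq I$ and pick coset representatives $g_1, \dots, g_r$ of $N$ in $I$, then observe that the finitely many elements $g_\ell \bar v - \bar v$ all map to $0$ in $V$, hence to $0$ in $V_{j'}$ for some common $j' \geq j$, so that the image of $\bar v$ in $V_{j'}$ is $N$-invariant and $g_\ell$-invariant for every $\ell$, hence $I$-invariant, and still maps to $v$.

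The only step I expect to require genuine (though routine) work is the commutation of $(-)^I$ with filtered colimits in part d); the essential point is that compactness of $I$ turns the a priori infinite family of conditions defining $V_j^I$ into a finite one, which can then be met at a single stage of the system. Everything else is formal.
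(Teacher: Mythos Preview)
Your proof is correct and follows essentially the same approach as the paper: part a) is immediate, part b) uses that $I$ is pro-$p$ (you spell out the nilpotent-augmentation-ideal argument the paper leaves implicit), and part c) uses the right-adjoint-to-exact-functor observation. For part d), both you and the paper rely on the key fact $V^I = \varinjlim_j V_j^I$; the paper then concludes directly by noting that $\varinjlim_j V_j(I)$ is a $G$-subrepresentation of $V$ containing $V^I$, hence equals $V(I)$, whereas you route through the tensor description $V(I) \cong k[G/I] \otimes_H V^I$ --- a minor stylistic difference with the same content.
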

\begin{proof}
a) is obvious. b) holds since $I$ is pro-$p$. c) is a consequence of the functor being right adjoint to a (left) exact functor. For d) let $V = \varinjlim_j V_j$ be a filtered colimit in $\Mod_k(G)$. Since filtered colimits are exact in the Grothendieck category $\Mod_k(G)$ we have the inclusion $\varinjlim_j V_j(I) \subseteq V(I)$. But $V^I = \varinjlim_j V_j^I$. Hence this inclusion is an equality.
\end{proof}

\subsection{Krull dimension}\label{sec: KdimSec}

We briefly recall Gabriel's notion of a Krull dimension for arbitrary Grothendieck categories $\cC$. Gabriel's dimension filtration of $\cC$ is a filtration by localising subcategories $\cC_\alpha$ of $\cC$ indexed by ordinals $\alpha$. His convention is that $\cC_{-1}$ is the subcategory of all zero objects of $\cC$ and $\cC_0$ is the smallest localising subcategory containing all objects of finite length. The $\cC_\alpha$ then are defined successively as follows. If $\alpha = \beta + 1$ then $\cC_\alpha$ is the preimage of $(\cC/\cC_\beta)_0$ under the quotient functor $q_{\cC_\beta} : \cC \rightarrow \cC/\cC_\beta$; if $\alpha$ is a limit ordinal then $\cC_\alpha$ is the smallest localising subcategory containing all $\cC_\beta$ for $\beta < \alpha$. This process terminates as soon as $\cC_/\cC_\alpha$ has no simple objects. But by \cite{Gab} Prop.\ 7 on p.\ 387 the latter implies that $\cC_\alpha$ must be equal to $\cC$. The Krull dimension $\kappa(\cC)$ of $\cC$ can now be defined as the smallest $\alpha$ such that $\cC = \cC_\alpha$. Similarly the Krull dimension $\kappa(Y)$ of an object $Y$ in $\cC$ is defined to be the smallest $\alpha$ such that $Y$ lies in $\cC_\alpha$. Correspondingly the set
\begin{equation*}
  \Sp(\cC) = \dot{\bigcup}_{\alpha < \kappa(\cC)} \Sp_\alpha(\cC)
\end{equation*}
is stratified by the subsets $\Sp_\alpha(\cC)$ of all $[E] \in \Sp(\cC)$ such that $q_{\cC_\alpha}(E)$ contains a simple object (\cite{Gab} p.\ 383). In general it is not possible to read off the Krull dimension $\kappa(E)$ of an $[E] \in \Sp_\alpha(\cC)$ from the $\alpha$.

\begin{remark}\label{rem:stable-Krull}
  Let $\alpha < \kappa(\cC)$ be an ordinal such that $\cC_{\alpha + 1}$ is a stable localising subcategory; then $\kappa(E) = \alpha + 1$ for any $[E] \in \Sp_\alpha(\cC)$.
\end{remark}
\begin{proof}
By our assumption $E$ is $\cC_\alpha$-closed and contains a nonzero subobject $S$ lying in $\cC_{\alpha + 1}$ (cf. \cite{Gab} p.\ 383). Since $E$ is an essential extension of any of its nonzero subobjects (cf.\ \cite{Gab} Prop.\ 11 on p.\ 361) it follows from the stability of $\cC_{\alpha + 1}$ that $E$ lies in $\cC_{\alpha + 1}$.
\end{proof}

\begin{remark}\label{rem:locnoeth-has-Krull}
(\cite{Gab} Prop.\ 7 on p.\ 387) Suppose that $\cC$ is locally noetherian. Then $\cC$ has a Krull dimension and, for any non-limit ordinal $\alpha$, the category $\cC_\alpha/\cC_{\alpha - 1}$ is locally finite.
\end{remark}

For the convenience of the reader we point out the following elementary consequences of the fact that the subcategories $\cC_\alpha$ are localising:
\begin{itemize}
  \item[--] If $0 \rightarrow U_1 \rightarrow U_2 \rightarrow U_3 \rightarrow 0$ is a short exact sequence in $\cC$ then
\begin{equation*}
  \kappa(U_2) = \sup(\kappa(U_1),\kappa(U_3)) \ .
\end{equation*}
  \item[--] If $U$ in $\cC$ is the inductive limit of a family of subobjects $U_i$ then $\kappa(U) = \sup_i \kappa(U_i)$. In particular, for any $U$, we have $\kappa(U) = \sup_i \kappa(U_i)$ where the $U_i$ run over all noetherian subobjects of $U$.
\end{itemize}
In \cite{MCR} Chap.\ 6 the notion of Krull dimension for the category $\cC = \Mod(R)$ of modules over a noetherian ring $R$ is introduced using posets. By loc.\ cit.\ Lemmas 6.2.4 and 6.2.17 it also has the above two properties. Therefore it follows, e.g., from \cite{GR} Prop.\ 2.3 (beware that this reference shifts the Gabriel definition by 1) that these two notions of Krull dimension coincide for noetherian rings $R$.

\begin{proposition}\label{prop:Krull-G}
   The categories $\Mod_k(G)$ and $\Mod(H)$ have Krull dimension one.
\end{proposition}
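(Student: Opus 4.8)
The plan is to prove Proposition \ref{prop:Krull-G} by first reducing the computation of $\kappa(\Mod_k(G))$ to that of $\kappa(\Mod(H))$, and then computing the latter using the fact that $H$ is a finitely generated module over its centre $Z$. For the reduction: by Lemma \ref{fin-filt} every noetherian object of $\cC = \Mod_k(G)$ has a finite filtration with subquotients in $\cA = \Mod_k^I(G)$, and since the $\cC_\alpha$ are localising, $\kappa$ of such an object is the supremum of $\kappa$ of the subquotients; since $\kappa(\cC)$ is in turn the supremum of $\kappa$ over all noetherian objects, it suffices to understand the Krull dimensions of objects of $\cA$. One then checks that the inclusion $\cA \hookrightarrow \cC$ (being exact, fully faithful, closed under subobjects and quotients by Lemma \ref{abelian-sub}, and admitting the right adjoint $V \mapsto V(I)$ which respects injectives and nonzero objects by Lemma \ref{lem:V(I)}) is compatible with Gabriel's dimension filtration, so that $\cA_\alpha = \cA \cap \cC_\alpha$ and hence $\kappa(\cA) = \kappa(\cC)$. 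Finally the equivalence $\cA \simeq \Mod(H)$ of Theorem \ref{equivalence} gives $\kappa(\cA) = \kappa(\Mod(H))$.

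For the computation of $\kappa(\Mod(H))$: since $H$ is module-finite over its central subring $Z$, and the Krull dimension of a noetherian module over $R$ is unchanged along a module-finite ring extension, one has $\kappa(\Mod(H)) = \kappa(\Mod(Z))$ (more precisely, restriction of scalars along $Z \to H$ together with the fact that every $H$-module is finitely generated over $Z$ iff finitely generated over $H$ shows the two Gabriel filtrations match up). By the remark in the excerpt identifying Gabriel's Krull dimension for $\Mod(R)$ with the deviation-of-poset Krull dimension of \cite{MCR} Chap.\ 6 (up to the usual shift) and with the classical one via \cite{GR}, one gets $\kappa(\Mod(Z)) = \Kdim(Z)$, the classical Krull dimension of the commutative noetherian ring $Z$. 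So everything comes down to showing $\Kdim(Z) = 1$, i.e. that $\Xi = \Spec(Z)$ is one-dimensional.

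The Krull dimension of $Z$ is $1$: this is where the specific structure of the pro-$p$ Iwahori Hecke algebra of $\SL_2(\Qp)$ enters, and one invokes the explicit description of $Z$ (e.g. from \cite{OS18}, \cite{Vig}, or the structural results about $Z$ recalled later in the paper around Prop.\ \ref{prop: Zcomps} and equation (\ref{eq:Zdecomp})). Concretely $Z$ is a finitely generated $k$-algebra of Krull dimension $1$ — it decomposes as a finite product of one-dimensional pieces, each a polynomial-type or affine-curve ring — so $\Kdim(Z) = 1$. One must of course rule out dimension $0$: the algebra $H$ is infinite-dimensional over $k$ (it contains the affine Hecke algebra with its infinite Bernstein basis), so $Z$ cannot be artinian, giving $\Kdim(Z) \geq 1$.

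The main obstacle I expect is the bookkeeping in the reduction step — one has to be careful that Gabriel's dimension filtration is genuinely compatible with the passage between $\cC$, $\cA$, and $\Mod(H)$, and in particular that the finite-filtration argument of Lemma \ref{fin-filt} interacts correctly with the transfinite induction defining the $\cC_\alpha$ (the subtlety being that localising subcategories are closed under extensions, so the filtration argument does give the supremum formula cleanly, but one should state this carefully). By contrast, once the problem is transported to $\Kdim(Z)$, the input $\Kdim(Z) = 1$ is essentially a citation to the known structure of $Z$ together with the elementary observation that $Z$ is neither zero-dimensional nor of dimension $\geq 2$.
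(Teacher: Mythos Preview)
Your proposal is correct and follows essentially the same route as the paper: compute $\kappa(\Mod(H))$ via module-finiteness over the centre (the paper uses the polynomial subring $k[\zeta]\subseteq Z$ together with \cite{MCR} Cor.\ 6.5.3, but this is equivalent to your argument through $Z$), and then transfer to $\Mod_k(G)$ using Lemma~\ref{fin-filt} and closure properties of $\Mod_k^I(G)$. The only simplification in the paper is that it avoids your general claim $\cA_\alpha = \cA\cap\cC_\alpha$ by arguing directly at level~$1$: closure of $\cA$ under subobjects gives $\kappa_\cA(V)=\kappa_\cC(V)$ for $V\in\cA$, and then extension- and colimit-closure of $\cC_1$ together with Lemma~\ref{fin-filt} immediately force $\cC=\cC_1$.
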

\begin{proof}
The computation of $Z(H)$ in \cite{OS18} \S 3.2.4 (together with \cite{MCR} Cor.\ 6.4.8) shows that $Z(H)$ has Krull dimension one. By \cite{OS18} Cor.\ 3.4 the centre $Z(H)$ contains a polynomial ring $k[\zeta]$ over which $H$ is a finitely generated free module. Using \cite{MCR} Cor.\ 6.5.3 it then follows that $H$ has Krull dimension one as well. Hence $\Mod_k^I(G) \simeq \Mod(H)$ has Krull dimension one. By the proof of Lemma \ref{abelian-sub} the subcategory $\Mod_k^I(G)$ is closed under the passage to subobjects in $\Mod_k(G)$. Therefore the Krull dimensions of an object in $\Mod_k^I(G)$, when viewed in $\Mod_k^I(G)$ or in $\Mod_k(G)$, coincide.

Since the subcategory $\Mod_k(G)_1$ is closed under extensions it follows from Lemma \ref{fin-filt} that every finitely generated representation lies in $\Mod_k(G)_1$. The assertion about $\Mod_k(G)$ then is immediate from the fact that $\Mod_k(G)_1$ also is closed under inductive limits.
\end{proof}

\section{Thickenings}\label{sec:thickening}

It will be useful to axiomatize the situation in the previous section.

\subsection{Definitions} For this we fix a locally noetherian Grothendieck category $\cC$ together with a full abelian subcategory $\cA$ of $\cC$.

\begin{definition}\label{def:thickening}
   The category $\cC$ is called a \emph{thickening} of $\cA$ if:
\begin{itemize}
  \item[a)] $\cA$ is closed under the formation of subobjects, quotient objects, and arbitrary colimits in $\cC$;
  \item[b)] the inclusion functor $\cA \subseteq \cC$ has a left exact right adjoint functor $r = r_{\cC, \cA} : \cC \rightarrow \cA$ such that:
\begin{itemize}
  \item[-] the functor $r : \cC \xrightarrow{r} \cA \xrightarrow{\subseteq} \cC$ is a subfunctor of the identity functor $\id_\cC$,
  \item[-] $r | \cA = \id_\cA$,
  \item[-] $r$ preserves non-zero objects, and
  \item[-] $r$ commutes with arbitrary filtered colimits;
\end{itemize}
  \item[c)] every noetherian object $V$ in $\cC$ has a finite filtration $0 = V_0 \subseteq V_1 \subseteq \ldots \subseteq V_m = V$ such that all subquotients $V_j/V_{j-1}$, for $1 \leq j \leq m$, lie in $\cA$.
\end{itemize}
\end{definition}

In the following we fix an $\cA$ as in the above definition. Note that $\cA$ is a strictly full subcategory. For any $V$ in $\cC$ the subobject $r(V)$ is the largest subobject of $V$ which lies in $\cA$. The functor $r$ in fact gives rise to a whole sequence of subfunctors $r_j : \cC \rightarrow \cC$, for $j \geq 0$, of the identity functor $\id_\cC$ which are defined inductively as follows. We define
\begin{equation*}
r_0 : \cC \xrightarrow{r} \cA \xrightarrow{\subseteq} \cC \quad \mbox{and} \quad  r_j(V)/r_{j-1}(V) = r_0(V/r_{j-1}(V))  \quad\text{for $j \geq 1$}.
\end{equation*}
Obviously we have, for any $V$ in $\cC$, the increasing filtration by subobjects
\begin{equation*}
  r(V) \subseteq r_1(V) \subseteq \ldots \subseteq r_j(V) \subseteq \ldots \subseteq V \ .
\end{equation*}

\begin{lemma}\label{lem:preradical}
  For any $j \geq 0$ the functor $r_j$ has the following properties:
\begin{itemize}
  \item[a)] $r_j$ is left exact;
  \item[b)] $r_j \circ r_i = r_i \circ r_j = r_j$ for any $i \geq j$;
  \item[c)] $r_j$ preserves filtered unions of subobjects.
\end{itemize}
\end{lemma}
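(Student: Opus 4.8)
The plan is to establish (a), (b) and (c) simultaneously by induction on $j$, the case $j=0$ being essentially contained in the axioms on $r = r_0$ from Definition~\ref{def:thickening}. The technical heart of the argument is the following compatibility of $r_j$ with subobjects, which I will prove alongside as an extra inductive hypothesis: for every subobject $U' \subseteq U$ in $\cC$ one has $r_j(U') = U' \cap r_j(U)$ — call this property $(\star_j)$. Once $(\star_j)$ is known for all $j$, properties (a), (b) and (c) follow formally, so I regard $(\star_j)$ as the main task. I also need that $r_j$ is a subfunctor of $\id_\cC$; this is part of the setup, and in any case follows by the same induction, since for $g\colon U\to W$ the inclusion $g(r_{j-1}(U))\subseteq r_{j-1}(W)$ produces the induced map $\bar g\colon U/r_{j-1}(U)\to W/r_{j-1}(W)$, and applying the $j=0$ case of functoriality to $\bar g$ together with the identities $r_j(-)=\pi^{-1}\bigl(r_0(-/r_{j-1}(-))\bigr)$ and $\pi_W\circ g=\bar g\circ\pi_U$ gives $g(r_j(U))\subseteq r_j(W)$.

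For the base case, I would first note that since $\cA$ is closed under subobjects and quotient objects in $\cC$, the inclusion $\cA\subseteq\cC$ is exact, so $r_0\colon\cC\to\cC$ — the composite of the left exact $r\colon\cC\to\cA$ with this exact inclusion — is left exact, and by construction $r_0(V)$ is the largest subobject of $V$ lying in $\cA$. Property $(\star_0)$ follows from maximality: $U'\cap r_0(U)$ lies in $\cA$ (closure under subobjects) and inside $U'$, hence inside $r_0(U')$; conversely $r_0(U')$ lies in $\cA$ and inside $U$, hence inside $r_0(U)$, so $r_0(U')\subseteq U'\cap r_0(U)$. The same maximality argument gives functoriality of $r_0$: a morphism $g\colon U\to W$ carries $r_0(U)\in\cA$ onto a quotient of it, which again lies in $\cA$ and inside $W$, hence inside $r_0(W)$.

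The inductive step, deriving $(\star_j)$ from $(\star_{j-1})$, is where I expect the only genuine work. Write $\pi\colon U\to U/r_{j-1}(U)$ for the quotient map. By $(\star_{j-1})$ the kernel of $\pi|_{U'}\colon U'\to U/r_{j-1}(U)$ is $U'\cap r_{j-1}(U)=r_{j-1}(U')$, so by the second isomorphism theorem $\pi|_{U'}$ factors as $U'\twoheadrightarrow U'/r_{j-1}(U')\xrightarrow{\sim}W'\hookrightarrow U/r_{j-1}(U)$, where $W':=(U'+r_{j-1}(U))/r_{j-1}(U)$ is the image of $U'$ under $\pi$. By definition $r_j(U')$ is the preimage in $U'$ of $r_0(U'/r_{j-1}(U'))$, i.e.\ of $r_0(W')$ under $\pi|_{U'}$; by $(\star_0)$, $r_0(W')=W'\cap r_0(U/r_{j-1}(U))$; and since $\pi|_{U'}$ has image exactly $W'$, its preimage of $W'\cap r_0(U/r_{j-1}(U))$ equals its preimage of $r_0(U/r_{j-1}(U))$, which is $U'\cap\pi^{-1}\bigl(r_0(U/r_{j-1}(U))\bigr)=U'\cap r_j(U)$. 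This proves $(\star_j)$; the crux is keeping the two quotient constructions $U/r_{j-1}(U)$ and $U'/r_{j-1}(U')$ correctly aligned, which is exactly what $(\star_{j-1})$ supplies.

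Finally I would deduce the three statements from $(\star_j)$ and functoriality. For (a): $r_j$ preserves monomorphisms since it is a subfunctor of $\id_\cC$, and for a short exact sequence $0\to U'\to U\to U''$ the kernel of $r_j(U)\to r_j(U'')$ is $r_j(U)\cap U'=r_j(U')$ by $(\star_j)$, so $r_j$ is left exact. For (b): taking $U'=r_j(V)$ in $(\star_j)$ gives $r_j(r_j(V))=r_j(V)$, i.e.\ $r_j$ is idempotent; since the filtration gives $r_j(X)\subseteq r_i(X)\subseteq X$ for $i\ge j$ and arbitrary $X$, specializing to $X=r_j(V)$ forces $r_i(r_j(V))=r_j(V)$, while taking $U'=r_i(V)$ in $(\star_j)$ gives $r_j(r_i(V))=r_i(V)\cap r_j(V)=r_j(V)$. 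For (c): if $U=\bigcup_i U_i$ is a filtered union of subobjects, then $(\star_j)$ gives $r_j(U_i)=U_i\cap r_j(U)$, and since filtered unions are exact in the Grothendieck category $\cC$ and hence distribute over intersection with the fixed subobject $r_j(U)$, we get $\bigcup_i r_j(U_i)=\bigl(\bigcup_i U_i\bigr)\cap r_j(U)=U\cap r_j(U)=r_j(U)$.
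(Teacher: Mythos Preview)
Your proof is correct. The paper's own proof simply cites Stenstr\"om, \emph{Rings of Quotients}, Chap.\ VI, Exercises 1 and 2, observing that $r_0$ is a left exact idempotent preradical in the sense of that reference and that the $r_j$ then inherit the stated properties. You have instead written out a direct self-contained argument, and the key device you isolate --- the property $(\star_j)$ that $r_j(U') = U'\cap r_j(U)$ for any subobject $U'\subseteq U$ --- is exactly the standard lemma one proves to establish left exactness and idempotence of the iterated preradicals; this is presumably the content of Stenstr\"om's exercises. So the two proofs agree in substance: yours supplies the details the paper leaves to the reference, and the induction together with $(\star_j)$ is the natural route.
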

\begin{proof}
In \cite{Ste} VI\S 1 the functor $r_0$ is called a \emph{left exact idempotent preradical} of $\cC$ and the asserted properties of the $r_j$ can be found in the Exercises 1 and 2 of that Chap.\ VI.
\end{proof}

Correspondingly we introduce, for any $j \geq 0$, the strictly full subcategory
\begin{equation*}
  \cA_j := \ \text{all objects $V$ such that $r_j(V) = V$}
\end{equation*}
of $\cC$. We have $\cA_0 = \cA$, and $\cA_j$ is a subcategory of $\cA_{j+1}$. Each functor $r_j$ may be viewed as a functor $r_j : \cC \rightarrow \cA_j$.

\begin{lemma}\label{lem:r-A}
  For any $j \geq 0$ we have:
\begin{itemize}
  \item[a)] $\cA_j$ is an abelian subcategory of $\cC$ closed under the formation of subobjects, quotient objects, and arbitrary colimits in $\cC$;
  \item[b)] $r_j : \cC \rightarrow \cA_j$  is right adjoint to the inclusion functor $\cA_j \subseteq \cC$;
  \item[c)] $r_j | \cA_j = \id_{\cA_j}$;
  \item[d)] $r_j$ preserves non-zero objects;
  \item[e)] $r_j$ commutes with arbitrary filtered colimits;
  \item[f)] any object $V$ in $\cA_j$ has the finite filtration $0 \subseteq r_0(V) \subseteq r_1(V) \subseteq \ldots \subseteq r_j(V) = V$ whose subquotients $r_i(V)/r_{i-1}(V)$ all lie in $\cA$.
\end{itemize}
\end{lemma}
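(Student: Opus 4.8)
The plan is to deduce parts (a)--(c) from the general theory of left exact idempotent preradicals, to read off (f) directly from the definitions, and to prove (d) and (e) by induction on $j$ using the axioms of a thickening (Definition \ref{def:thickening}). The starting point is the observation that, by Lemma \ref{lem:preradical}(a) and (b), each $r_j$ is a left exact idempotent preradical of $\cC$ in the sense of \cite{Ste} VI: it is a subfunctor of $\id_\cC$ by construction, left exact by part (a), and idempotent (i.e.\ $r_j\circ r_j = r_j$) by part (b) applied with $i=j$. In particular $r_j(V)\in\cA_j$ for every $V$ in $\cC$, so $r_j$ does define a functor $\cC\to\cA_j$, and $\cA_j$ is precisely the torsion class $\{V : r_j(V)=V\}$ attached to this preradical.

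Granting this, (a), (b) and (c) are formal. For (a): closure of $\cA_j$ under subobjects is immediate from left exactness, since $U\subseteq V$ with $V\in\cA_j$ gives $r_j(U) = U\cap r_j(V) = U$; closure under quotients is the usual preradical argument that a surjection $V\twoheadrightarrow W$ with $V\in\cA_j$ factors as $V = r_j(V)\to r_j(W)\hookrightarrow W$, forcing the monomorphism $r_j(W)\hookrightarrow W$ to be an isomorphism; and the torsion class of any preradical is closed under coproducts (\cite{Ste} VI). Since every colimit is a quotient of a coproduct, $\cA_j$ is closed under arbitrary colimits; and since kernels, cokernels and finite biproducts formed in $\cC$ of morphisms between objects of $\cA_j$ then stay in $\cA_j$, the subcategory $\cA_j$ is abelian with exact inclusion into $\cC$. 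For (b), the same factorisation as in the quotient-closure argument shows that any morphism from an object of $\cA_j$ into $V$ lands inside the largest subobject $r_j(V)$ of $V$ lying in $\cA_j$; equivalently, $r_j$ is right adjoint to the inclusion $\cA_j\subseteq\cC$, with counit the canonical monomorphism $r_j(V)\hookrightarrow V$. Part (c) is immediate from the definition of $\cA_j$ and the fact that $r_j$ is a subfunctor of $\id_\cC$.

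The remaining parts involve the specific structure of a thickening. Part (f) is a direct unwinding: for $V\in\cA_j$ we have $r_j(V)=V$, and each subquotient $r_i(V)/r_{i-1}(V)$ equals $r_0(V/r_{i-1}(V))$ by construction, which lies in $\cA=\cA_0$. For (d), the filtration $r_0(V)\subseteq\cdots\subseteq r_j(V)$ shows $r_0(V)\subseteq r_j(V)$, and since $r_0=r$ preserves non-zero objects by the thickening axioms, $V\neq 0$ forces $r_j(V)\neq 0$. The one part requiring genuine work is (e), which I would prove by induction on $j$, the base case $j=0$ being one of the defining axioms of a thickening. For the inductive step, let $V=\varinjlim_\lambda V_\lambda$ be a filtered colimit in $\cC$; exactness of filtered colimits in the Grothendieck category $\cC$ together with the inductive hypothesis for $r_{j-1}$ identifies $\varinjlim_\lambda\bigl(V_\lambda/r_{j-1}(V_\lambda)\bigr)$ with $V/r_{j-1}(V)$. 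Applying $r_0$, which commutes with filtered colimits by the thickening axioms, and using the defining relation $r_j(-)/r_{j-1}(-)=r_0(-/r_{j-1}(-))$, one obtains $\varinjlim_\lambda\bigl(r_j(V_\lambda)/r_{j-1}(V_\lambda)\bigr)=r_j(V)/r_{j-1}(V)$; combining this with $\varinjlim_\lambda r_{j-1}(V_\lambda)=r_{j-1}(V)$ and exactness of filtered colimits once more yields $\varinjlim_\lambda r_j(V_\lambda)=r_j(V)$. I expect the main obstacle to lie precisely in this last induction: one has to keep careful track of the fact that all the colimits of subobjects occurring are formed compatibly with the canonical monomorphisms into the $V_\lambda$ and into $V$, which is where exactness of filtered colimits --- and its compatibility with intersections of subobjects --- is used.
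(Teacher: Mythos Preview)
Your proposal is correct and follows essentially the same approach as the paper: parts (a)--(c) are deduced from the general theory of left exact idempotent preradicals in \cite{Ste} VI, parts (d) and (f) are read off from the construction, and part (e) is proved by exactly the induction you describe, using exactness of filtered colimits in $\cC$ together with the defining relation $r_{j}(-)/r_{j-1}(-)=r_0(-/r_{j-1}(-))$. Your explicit argument for (d) via $r_0(V)\subseteq r_j(V)$ is a small elaboration of what the paper simply calls ``by construction,'' but the substance is identical.
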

\begin{proof}
a) follows from \cite{Ste} Propositions VI.1.2 and VI.1.7. For b) we observe that for any homomorphism $f : V_0 \rightarrow V_1$ in $\cC$ we have $r_j(f) = f|r_j(V_0)$. c), d), and f) hold by construction. e) We proceed by induction with respect to $j$. For $j = 0$ the claim is part of the Def.\ \ref{def:thickening}. Now assume that the claim holds for some $j$. Let $V = \varinjlim_i V_i$ be a filtered colimit in $\cC$. We then have
\begin{align*}
  r_{j+1}(V)/r_j(V) & = r_0(V/r_j(V)) = r_0(\varinjlim_i V_i / r_j(\varinjlim_i V_i)) = r_0(\varinjlim_i V_i / \varinjlim_i r_j(V_i))  \\
                    & = r_0(\varinjlim_i (V_i / r_j(V_i))) = \varinjlim_i r_0(V_i / r_j(V_i)) = \varinjlim_i (r_{j+1}(V_i) / r_j(V_i))  \\
                    & = \varinjlim_i r_{j+1}(V_i) / \varinjlim_i r_j(V_i) = (\varinjlim_i r_{j+1}(V_i)) / r_j(V) \ ,
\end{align*}
where we use for equations $4$ and $7$ that filtered colimits in $\cC$ are exact. It follows that $r_{j+1}(V) = \varinjlim_i r_{j+1}(V_i)$.
\end{proof}

\begin{lemma}\label{lem:filt}
 Let $0 \rightarrow X \rightarrow Y \rightarrow Z \rightarrow 0$ be a short exact sequence in $\cC$ such that $X$ lies in $\cA_j$ for some $j \geq 0$ and $Z$ lies in $\cA$; then $Y$ lies in $\cA_{j+1}$.
\end{lemma}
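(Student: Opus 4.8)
The plan is to show directly that $r_{j+1}(Y) = Y$. By the inductive definition of the subfunctors $r_i$ we have $r_{j+1}(Y)/r_j(Y) = r_0(Y/r_j(Y))$, so it is enough to prove that the quotient $Y/r_j(Y)$ lies in $\cA$: then $r_0(Y/r_j(Y)) = Y/r_j(Y)$ by Lemma \ref{lem:r-A}(c), hence $r_{j+1}(Y)/r_j(Y) = Y/r_j(Y)$, and therefore $Y = r_{j+1}(Y)$, i.e. $Y \in \cA_{j+1}$.

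The key point is the inclusion of subobjects $X \subseteq r_j(Y)$ inside $Y$. To establish it, I would apply the natural transformation $r_j \Rightarrow \id_\cC$ (which exists since $r_j$ is a subfunctor of the identity) to the monomorphism $X \hookrightarrow Y$. This yields a commuting square whose vertical maps are the canonical inclusions $r_j(X) \hookrightarrow X$ and $r_j(Y) \hookrightarrow Y$ and whose top horizontal map is $r_j(X) \to r_j(Y)$. Because $X$ lies in $\cA_j$ we have $r_j(X) = X$, so the square exhibits the inclusion $X \hookrightarrow Y$ as factoring through $r_j(Y) \hookrightarrow Y$; that is, $X \subseteq r_j(Y)$. (Equivalently one can use that $r_j$ is left exact, by Lemma \ref{lem:preradical}(a), to identify $r_j(X)$ with $X \cap r_j(Y)$, together with the fact that $\cA_j$ is closed under subobjects.)

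Granting $X \subseteq r_j(Y)$, the third isomorphism theorem provides a surjection $Z \cong Y/X \twoheadrightarrow Y/r_j(Y)$. Since $Z$ lies in $\cA$ and $\cA$ is closed under the formation of quotient objects in $\cC$ (Definition \ref{def:thickening}(a), or Lemma \ref{lem:r-A}(a) with $j = 0$), it follows that $Y/r_j(Y)$ lies in $\cA$, which is exactly what was needed in the first paragraph. Everything here is formal manipulation with the functors $r_j$ and their basic properties recorded in Lemmas \ref{lem:preradical} and \ref{lem:r-A}; the only step that genuinely uses the thickening hypotheses — and the one I would regard as the heart of the argument — is the containment $X \subseteq r_j(Y)$, and even that presents no real obstacle.
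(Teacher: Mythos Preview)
Your proof is correct and follows essentially the same route as the paper's. The paper displays a commutative exact diagram to exhibit $Y/r_j(Y)$ as a quotient of $Z$, which amounts to your inclusion $X \subseteq r_j(Y)$; from there both arguments conclude identically via closure of $\cA$ under quotients and the defining relation $r_{j+1}(Y)/r_j(Y) = r_0(Y/r_j(Y))$.
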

\begin{proof}
Since $r_j$ is left exact by Lemma \ref{lem:preradical}.a, we have the commutative exact diagram
\begin{equation*}
  \xymatrix{
             & 0 \ar[d]           & 0 \ar[d]    \\
    0 \ar[r] & X = r_j(X) \ar[d] \ar[r] & X \ar[d] \ar[r] & 0 \ar[d] \ar[r]  & 0 \\
    0 \ar[r] & r_j(Y) \ar[r] & Y \ar[d] \ar[r] & Y/r_j(Y) \ar[r] & 0  \\
    & & Z \ar[d] \ar@{-->}[ur]  \\
    & & 0 ,          }
\end{equation*}
which exhibits $Y/r_j(Y)$ as a quotient object of $Z$. Since $\cA$ is closed under quotient objects it follows that $r_0(Y/r_j(Y)) = Y/r_j(Y)$. On the other hand, by construction we have $r_{j+1}(Y)/r_j(Y) = r_0(Y/r_j(Y))$. It follows that $r_{j+1}(Y) = Y$.
\end{proof}

\begin{lemma}\label{lem:union}
   For any $V$ in $\cC$ we have $V = \bigcup_{j \geq 0} r_j(V)$.
\end{lemma}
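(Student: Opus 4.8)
The plan is to reduce to the case where $V$ is noetherian and then feed the finite filtration supplied by Definition \ref{def:thickening}(c) into Lemma \ref{lem:filt}. The inclusion $\bigcup_{j \geq 0} r_j(V) \subseteq V$ is immediate, since each $r_j$ is a subfunctor of $\id_\cC$, so only the reverse inclusion needs work.

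First I would carry out the reduction. Because $\cC$ is locally noetherian, $V$ is the filtered union $V = \bigcup_i V_i$ of its noetherian subobjects. By Lemma \ref{lem:preradical}(c), each $r_j$ preserves filtered unions of subobjects, so $r_j(V) = \bigcup_i r_j(V_i)$ for every $j$, and hence $\bigcup_{j \geq 0} r_j(V) = \bigcup_i \bigl( \bigcup_{j \geq 0} r_j(V_i) \bigr)$, the interchange of the two directed unions being harmless. Thus it suffices to prove the statement for noetherian $V$: if $V_i = \bigcup_{j \geq 0} r_j(V_i)$ for each $i$, then $\bigcup_{j \geq 0} r_j(V) = \bigcup_i V_i = V$.

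So assume $V$ is noetherian and pick a finite filtration $0 = V_0 \subseteq V_1 \subseteq \ldots \subseteq V_m = V$ with each subquotient $V_\ell / V_{\ell - 1}$ lying in $\cA$, as provided by Definition \ref{def:thickening}(c). I claim $V_\ell$ lies in $\cA_{\ell - 1}$ for all $1 \leq \ell \leq m$, which I would prove by induction on $\ell$. For $\ell = 1$ we have $V_1 = V_1 / V_0 \in \cA = \cA_0$. For the inductive step, the short exact sequence $0 \rightarrow V_{\ell - 1} \rightarrow V_\ell \rightarrow V_\ell / V_{\ell - 1} \rightarrow 0$ has $V_{\ell - 1} \in \cA_{\ell - 2}$ by induction and $V_\ell / V_{\ell - 1} \in \cA$, so Lemma \ref{lem:filt} gives $V_\ell \in \cA_{\ell - 1}$. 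In particular $V = V_m \in \cA_{m - 1}$, i.e.\ $r_{m-1}(V) = V$ by the definition of $\cA_{m-1}$; since the $r_j(V)$ form an increasing chain of subobjects, $\bigcup_{j \geq 0} r_j(V) = V$.

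The argument is essentially bookkeeping, and there is no genuine obstacle: the one place the thickening hypothesis is really used is the finite-filtration axiom \ref{def:thickening}(c), and the only point that needs a moment's care is checking that the reduction step legitimately commutes the two unions (which it does, both being over directed systems), together with invoking the correct ``shift by one'' in the index $j$ when applying Lemma \ref{lem:filt}.
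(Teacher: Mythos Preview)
Your proof is correct and follows essentially the same approach as the paper: reduce to the noetherian case via Lemma \ref{lem:preradical}(c), then use the finite filtration from Definition \ref{def:thickening}(c) together with an iterated application of Lemma \ref{lem:filt} to conclude that $V \in \cA_{m-1}$, i.e.\ $r_{m-1}(V) = V$. The paper states the induction in one sentence; you have written it out in full, but there is no substantive difference.
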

\begin{proof}
Write $V$ as a filtered union of noetherian subobjects. Using Lemma \ref{lem:preradical}.c we see that it suffices to prove the assertion for a noetherian $V$. But in this case it follows from Def.\ \ref{def:thickening}.c and an iterated application of Lemma \ref{lem:filt} that $V = r_j(V)$ for sufficiently large $j$.
\end{proof}

\begin{proposition}\label{prop:Posits}
  $\cA_j$, for any $j \geq 0$, is a locally noetherian Grothendieck category.
\end{proposition}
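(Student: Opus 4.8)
The plan is to check directly that $\cA_j$ satisfies the axioms of a locally noetherian Grothendieck category, using Lemma \ref{lem:r-A}.a as the structural input. That lemma says $\cA_j$ is an abelian subcategory of $\cC$ closed under subobjects, quotient objects and arbitrary colimits formed in $\cC$. The first thing I would extract from this is that kernels, cokernels and all colimits computed inside $\cA_j$ coincide with the corresponding constructions in $\cC$: closure under subobjects and quotients makes the kernel and cokernel of a map between objects of $\cA_j$ again lie in $\cA_j$ (hence they are the kernel and cokernel in $\cA_j$), and closure under arbitrary colimits does the same for colimits. In particular $\cA_j$ is cocomplete, with coproducts as in $\cC$, and a subobject in $\cA_j$ is the same thing as a subobject in $\cC$, so an object of $\cA_j$ is noetherian in $\cA_j$ if and only if it is noetherian in $\cC$.

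Next I would verify exactness of filtered colimits in $\cA_j$. A filtered colimit in any cocomplete abelian category is the cokernel of a canonical map between two coproducts; since coproducts and cokernels in $\cA_j$ agree with those in $\cC$, filtered colimits in $\cA_j$ agree with those in $\cC$. Exactness of a filtered colimit is a statement about the kernels and cokernels occurring in the diagram, which are again the same whether computed in $\cA_j$ or in $\cC$; so the AB5 property of the Grothendieck category $\cC$ is inherited by $\cA_j$.

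It remains to produce a generating set of noetherian objects. Since $\cC$ is locally noetherian, every object of $\cC$ is the filtered union of its noetherian subobjects; if $V$ lies in $\cA_j$, then by closure under subobjects each of these subobjects lies in $\cA_j$ and, by the remark above, is noetherian in $\cA_j$. Because $\cC$ is a Grothendieck category it is well-powered, and each noetherian object of $\cC$, being finitely generated, is a quotient of a finite coproduct of members of a fixed generating set; hence the isomorphism classes of noetherian objects of $\cC$ form a set, and a fortiori so do those lying in $\cA_j$. Choosing a representative $G_i$ of each such class, the coproduct $\bigoplus_i G_i$ exists in $\cA_j$ and is a generator: for a proper subobject $Y \subsetneq X$ in $\cA_j$, the fact that $X$ is the union of its noetherian subobjects forces some noetherian subobject $N \subseteq X$ with $N \not\subseteq Y$, and the composite $G_i \cong N \hookrightarrow X$ does not factor through $Y$. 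Thus $\cA_j$ is a Grothendieck category, and since $\{G_i\}$ is a generating set consisting of noetherian objects, it is locally noetherian.

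The argument is essentially bookkeeping once Lemma \ref{lem:r-A} is available; the only points requiring care are the identification of colimits, kernels and cokernels in $\cA_j$ with those in $\cC$ — which is exactly what lets us transport both exactness of filtered colimits and the generation argument — together with the standard observation that a locally noetherian Grothendieck category has only a set of isomorphism classes of noetherian objects.
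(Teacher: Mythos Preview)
Your argument is correct. The paper's own proof is a one-liner: it cites \cite{BP} Lemma 3.4 to conclude that $\cA_j$ is Grothendieck, and then says the locally noetherian property ``is clear.'' You have instead unpacked this by verifying the Grothendieck axioms directly from the closure properties in Lemma \ref{lem:r-A}.a, and then spelling out the ``clear'' part (noetherian subobjects of $\cA_j$ are the same whether computed in $\cA_j$ or in $\cC$, and their isomorphism classes form a set). The external lemma being cited presumably proves exactly the abstract statement you reproved: a full abelian subcategory of a Grothendieck category closed under subobjects, quotients and colimits is again Grothendieck. So your route is genuinely the same in content, just self-contained rather than appealing to the literature; the trade-off is length against independence from \cite{BP}.
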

\begin{proof}
By \cite{BP} Lemma 3.4 the category $\cA_j$ is Grothendieck. That it is locally noetherian is clear.
\end{proof}

\begin{remark}\label{rem:Krulldim}
 Let $V$ be an object in $\cA_j$ for some $j \geq 0$; then the Krull dimensions of $V$ viewed in $\cA_j$ and viewed in $\cC$ coincide.
\end{remark}
\begin{proof}
This is immediate from $\cA_j$ being closed under the passage to subobjects in $\cC$.
\end{proof}

\subsection{A spectral sequence}\label{subsec:ss}

The Ext-groups in $\cC$ and $\cA$ are related by the following spectral sequence.

\begin{proposition}\label{prop:ss}
  For objects $A$ in $\cA$ and $V$ in $\cC$ we have the Grothendieck spectral sequence
\begin{equation*}
  \Ext_\cA^i(A, R^j r(V)) \Longrightarrow \Ext_\cC^{i+j}(A,V) \ .
\end{equation*}
\end{proposition}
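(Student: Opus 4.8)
The assertion is nothing but the Grothendieck spectral sequence of the composite of the two left exact functors
\[ \cC \xrightarrow{\ r\ } \cA \xrightarrow{\ \Hom_\cA(A,-)\ } (\text{abelian groups}), \]
so the plan is simply to verify the hypotheses under which that machinery applies and to identify the three functors that occur in it. Concretely one needs: (i) that $\cC$ and $\cA$ have enough injectives; (ii) that $r$ and $\Hom_\cA(A,-)$ are left exact and that their composite is $\Hom_\cC(A,-)$; and (iii) that $r$ carries injective objects of $\cC$ to $\Hom_\cA(A,-)$-acyclic objects of $\cA$.

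For (i): $\cC$ is locally noetherian Grothendieck by hypothesis, and $\cA = \cA_0$ is Grothendieck by Proposition \ref{prop:Posits}; both therefore have enough injectives, so the derived functors $R^j r$, $\Ext_\cA^i(A,-)$ and $\Ext_\cC^i(A,-)$ are all defined. For (ii): $r$ is left exact by Definition \ref{def:thickening}.b (equivalently Lemma \ref{lem:preradical}.a), and $\Hom_\cA(A,-)$ is left exact for trivial reasons. Since $A$ lies in the full subcategory $\cA$, the inclusion $i \colon \cA \hookrightarrow \cC$ satisfies $i(A) = A$, so the adjunction between $i$ and $r$ supplies a natural-in-$V$ isomorphism $\Hom_\cA(A, r(V)) \cong \Hom_\cC(A, V)$; hence $\Hom_\cA(A,-) \circ r = \Hom_\cC(A,-)$, whose right derived functors are the $\Ext_\cC^{i+j}(A,-)$.

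For (iii): the inclusion $i$ is exact, because $\cA$ is closed under subobjects and quotient objects in $\cC$ (Definition \ref{def:thickening}.a); consequently its right adjoint $r$ preserves injective objects, and every injective object of $\cA$ is $\Hom_\cA(A,-)$-acyclic. Feeding $V$ into the Grothendieck spectral sequence for $\Hom_\cA(A,-) \circ r$ then yields
\[ \Ext_\cA^i(A, R^j r(V)) \Longrightarrow \Ext_\cC^{i+j}(A, V), \]
which is the claim.

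I do not expect a serious obstacle here: the only place the thickening axioms genuinely enter is step (iii), through the exactness of the inclusion $i$ and hence the fact that $r$ preserves injectives; everything else is the formal theory of Grothendieck spectral sequences. The one point worth spelling out carefully is the naturality of the adjunction isomorphism in (ii), so that it induces an honest isomorphism of $\delta$-functors $R^\bullet(\Hom_\cA(A,-)\circ r) \cong \Ext_\cC^\bullet(A,-)$ rather than merely an abstract identification of the abutment.
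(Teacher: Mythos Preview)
Your proof is correct and follows essentially the same approach as the paper's own proof. The paper's argument is the same two-line verification: the adjunction $\Hom_\cA(A, r(V)) = \Hom_\cC(A,V)$ identifies the composite, and $r$ preserves injectives because it is right adjoint to the (left) exact inclusion; you have simply made the standard hypotheses of the Grothendieck spectral sequence more explicit.
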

\begin{proof}
We have the adjunction $\Hom_\cA(A, r(V)) = \Hom_\cC(A,V)$. Since $r$, being right adjoint to a (left) exact functor, preserves injective objects, this equation extends to the asserted composed functor spectral sequence.
\end{proof}

\begin{corollary}\label{cor:ss-proj}
  Suppose that $P$ is a projective object in $\cA$. For any object $V$ in $\cC$ and any $j \geq 0$ we then have
\begin{equation*}
  \Ext_\cC^j(P,V) = \Hom_\cA(P, R^j r(V)) \ .
\end{equation*}
\end{corollary}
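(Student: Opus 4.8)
The plan is to simply specialise the Grothendieck spectral sequence of Proposition \ref{prop:ss} to the case $A = P$ and exploit projectivity to force it to collapse. Concretely, I would argue as follows. Since $P$ is projective in $\cA$, the functor $\Hom_\cA(P,-) : \cA \to \mathbf{Ab}$ is exact, so its higher derived functors vanish: $\Ext_\cA^i(P,W) = 0$ for every object $W$ of $\cA$ and every $i > 0$. Applying this to $W = R^j r(V)$, which is indeed an object of $\cA$ for each $j$, we see that the $E_2$-page of the spectral sequence
\[ E_2^{i,j} = \Ext_\cA^i(P, R^j r(V)) \Longrightarrow \Ext_\cC^{i+j}(P,V) \]
is concentrated in the single row $i = 0$, where it reads $E_2^{0,j} = \Hom_\cA(P, R^j r(V))$.

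A spectral sequence with only one nonzero row degenerates at $E_2$: every differential $d_r$ for $r \geq 2$ has either source or target in a vanishing row, hence is zero, so $E_2^{0,j} = E_\infty^{0,j}$. Moreover the abutment in total degree $n$ has a one-step filtration whose only nonzero graded piece is $E_\infty^{0,n}$, so the edge morphism gives an isomorphism $\Ext_\cC^n(P,V) \cong E_2^{0,n} = \Hom_\cA(P, R^n r(V))$ for every $n \geq 0$, which is the claimed identity.

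I do not anticipate any real obstacle here; this is the standard collapse of a composite-functor spectral sequence when the outer functor is exact. The only point worth a word of care is that the isomorphism is the natural (edge) one, and that $r$ genuinely preserves injectives — but the latter was already observed in the proof of Proposition \ref{prop:ss} (as $r$ is right adjoint to the exact inclusion $\cA \subseteq \cC$), and the former is automatic from the degeneration. One could alternatively avoid spectral sequences entirely and give a direct $\delta$-functor argument: both sides, as functors of $V$, are effaceable cohomological $\delta$-functors agreeing in degree $0$ via the adjunction $\Hom_\cA(P, r(V)) = \Hom_\cC(P,V)$, hence are uniquely isomorphic; but invoking Proposition \ref{prop:ss} is the shortest route.
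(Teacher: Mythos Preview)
Your proposal is correct and is exactly the argument the paper intends: the corollary is stated without proof immediately after Proposition~\ref{prop:ss}, because the degeneration you describe (the $E_2$-page concentrated in the row $i=0$ by projectivity of $P$, hence edge isomorphisms) is the standard and only natural way to read it off.
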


\subsection{Injective spectra}\label{subsec:inj-spec}

We first recall some standard notations. For any object $Y$ in a locally noetherian Grothendieck category $\cD$ a choice of injective hull of $Y$ is denoted by $E_\cD(Y)$ or simply $E(Y)$. The injective spectrum $\Sp(\cD)$ is the collection of isomorphism classes $[Y]$ of indecomposable injective objects $Y$ of $\cD$; it is a set, for example, by \cite{Her} p.\ 523.

In the following we keep the setting of a thickening $\cC$ of $\cA$.

\begin{lemma}\label{lem:ess-hull-indec}
  For any $V$ in $\cC$ we have:
\begin{itemize}
  \item[a)] The inclusion $r(V) \subseteq V$ is an essential extension. In particular, if $V$ is injective then $V$ is an injective hull of $r(V)$.
  \item[b)] If $V$ is injective in $\cA$ then $V \cong r(E_\cC(V))$.
  \item[c)] If $V$ is injective in $\cC$ then $V$ is indecomposable if and only if $r(V)$ is indecomposable.
\end{itemize}
\end{lemma}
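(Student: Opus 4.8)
The plan is to derive all three parts from the defining properties of the preradical $r$ --- it is left exact, a subfunctor of $\id_\cC$, preserves non-zero objects, and satisfies $r|_{\cA}=\id_{\cA}$ --- together with standard facts about injective hulls, treating a), b), c) in that order. \textbf{For a),} I would first check that $r(V)\subseteq V$ is essential: given a non-zero subobject $W\subseteq V$, applying the left exact subfunctor $r$ to the inclusion $W\hookrightarrow V$ identifies $r(W)$ with $W\cap r(V)$ (the largest subobject of $W$ lying in $\cA$); since $r$ preserves non-zero objects, $r(W)\neq 0$, so $W\cap r(V)\neq 0$. The ``in particular'' clause is then immediate, since an injective object that is an essential extension of $r(V)$ is by definition an injective hull of $r(V)$.

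\textbf{For b),} given $V$ injective in $\cA$, I would form an injective hull $E:=E_\cC(V)$ in $\cC$. As $V\in\cA$ and $r(E)$ is the largest subobject of $E$ lying in $\cA$, we have $V\subseteq r(E)\subseteq E$; since $V\subseteq E$ is essential, so is the sub-extension $V\subseteq r(E)$, and --- $\cA$ being closed under subobjects in $\cC$ --- this is an essential extension \emph{within} $\cA$. An injective object of $\cA$ has no proper essential extension in $\cA$, so $V=r(E)=r(E_\cC(V))$. (Alternatively one may invoke part a) applied to $E$, which says $r(E)\subseteq E$ is essential, and combine it with the essentiality of $V\subseteq E$.)

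\textbf{For c),} I would use the standard fact that an injective object of a Grothendieck category is indecomposable if and only if it is uniform (any two non-zero subobjects meet). If $V$ is injective and indecomposable, hence uniform, then its subobject $r(V)$ --- which is non-zero because $V\neq 0$ --- is uniform as well, and a non-zero uniform object is a fortiori indecomposable; so $r(V)$ is indecomposable. Conversely, if $V=V_1\oplus V_2$ in $\cC$, then, $r$ being a right adjoint and hence additive (equivalently $r(V_i)=V_i\cap r(V)$), we get $r(V)=r(V_1)\oplus r(V_2)$ with both summands non-zero by the non-zero-preservation of $r$; thus if $r(V)$ is indecomposable, so is $V$.

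I do not expect a genuine obstacle here; the work is bookkeeping. The points that need care are: the identification $r(W)=W\cap r(V)$, which is exactly where ``left exact subfunctor of the identity'' is used; the remark that essentiality and uniformity, being statements about lattices of subobjects, transfer freely between $\cC$ and $\cA$ precisely because $\cA$ is closed under subobjects; and, in the converse half of c), the observation that one must argue via additivity of $r$ rather than via uniformity, since an indecomposable object such as $r(V)$ need not be uniform once it is not known to be injective.
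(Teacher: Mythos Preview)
Your proposal is correct and matches the paper's proof in structure and content; parts a), b), and the converse half of c) are argued exactly as in the paper. The only cosmetic difference is in the forward direction of c): you pass through uniformity of the injective indecomposable $V$, whereas the paper argues the contrapositive by decomposing the injective hull of $r(V)=U_1\oplus U_2$ as $E_\cC(U_1)\oplus E_\cC(U_2)$ via \cite{Ste} Prop.\ V.2.6 --- both are standard and equally short.
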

\begin{proof}
i. Let $U \subseteq V$ be a non-zero subobject. Since $r$ preserves non-zero objects we then have $0 \neq r(U) \subseteq r(V)$ and hence $U \cap r(V) \neq 0$.

ii. We obviously have $V = r(V) \subseteq r(E_\cC(V)) \subseteq E_\cC(V)$. Since $r$, as a right adjoint of a (left) exact functor, preserves injective objects both terms in the essential extension $V = r(V) \subseteq r(E_\cC(V))$ are injective in $\cA$. Hence we must have equality.

iii. Suppose that $r(V) = U_1 \oplus U_2$ with $U_i \neq 0$ (and necessarily lying in $\cA$). Using \cite{Ste} Prop.\ V.2.6 we see that $V$, being an injective hull of $r(V)$ by i., is isomorphic to the direct sum of injective hulls of $U_1$ and $U_2$ and hence is decomposable. On the other hand if $V = V_1 \oplus V_2$ with $V_i \neq 0$ then $r(V) = r(V_1) \oplus r(V_2)$ with $r(V_i) \neq 0$.
\end{proof}

The above lemma implies that the map
\begin{align*}
  \Sp(\cC) & \xrightarrow{\;\simeq\;} \Sp(\cA) \\
             [E] & \longmapsto r([E]) := [r(E)]
\end{align*}
is a bijection with inverse $[U] \longmapsto [E_\cC(U)]$. In fact, because of Lemma \ref{lem:r-A} all of the above remains valid for each functor $r_j$. Hence we have the bijections
\begin{align}\label{f:bij}
  \Sp(\cC) & \xrightarrow{\;\simeq\;} \Sp(\cA_j)  \xrightarrow{\;\simeq\;} \Sp(\cA) \\
             [E] & \longmapsto [r_j(E)] \longmapsto [r(E)] \ .        \nonumber
\end{align}

\subsection{Localizing subcategories}\label{subsec:localising}

We recall that a full subcategory $\cL$ of a locally noetherian Grothendieck category $\cD$ is called \emph{localising} if it is closed under the
formation of subobjects, quotient objects, extensions, and arbitrary direct sums. In particular, it is strictly full and contains the zero object (as the empty direct
sum). It will be technically useful to also recall the following fact. Let $\cD^{noeth}$ denote the full subcategory of all noetherian objects in $\cD$.

\begin{proposition}\label{prop:loc-Serre}
The map
\begin{align*}
  \text{collection of all localising\,} & \xrightarrow{\;\simeq\;} \text{collection of all Serre} \\
  \text{subcategories of $\cD$} &                    \text{\qquad subcategories of $\cD^{noeth}$}  \\
                          \cL & \longmapsto \cL \cap \cD^{noeth}
\end{align*}
is a bijection; its inverse sends a Serre subcategory $\cS$ to the smallest localising subcategory $\langle \cS \rangle$ of $\cD$ which contains $\cS$; equivalently $\langle \cS \rangle$ is the full subcategory of all filtered colimits of objects in $\cS$.
\end{proposition}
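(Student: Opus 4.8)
The plan is to construct the inverse map explicitly. For a Serre subcategory $\cS$ of $\cD^{noeth}$, set
\[ \cS^+ := \{\, V \in \cD \ : \ \text{every noetherian subobject of } V \text{ lies in } \cS \,\}. \]
I would show that $\cS^+$ is a localising subcategory of $\cD$, that it coincides both with the smallest localising subcategory $\langle\cS\rangle$ containing $\cS$ and with the class of all filtered colimits of objects of $\cS$, and finally that $\cS \mapsto \cS^+$ and $\cL \mapsto \cL \cap \cD^{noeth}$ are mutually inverse. Throughout I would use two elementary facts about a locally noetherian Grothendieck category $\cD$: every object is the directed union of its noetherian subobjects; and if $V = \varinjlim_i V_i$ is a directed union of subobjects and $N \subseteq V$ is noetherian, then $N \subseteq V_i$ for some $i$ (since $N = \varinjlim_i (N \cap V_i)$ is a directed union inside the noetherian object $N$, hence stationary). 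I would also use without comment that $\cD^{noeth}$ is closed under subobjects, quotients and extensions in $\cD$.

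That $\cL \cap \cD^{noeth}$ is a Serre subcategory of $\cD^{noeth}$ when $\cL$ is localising is immediate, since closure under subobjects, quotients and extensions is inherited from $\cL$. The substance is in showing $\cS^+$ is localising. Closure under subobjects is trivial. For quotients, given $V \in \cS^+$ and an epimorphism $V \twoheadrightarrow W$, a noetherian subobject $W' \subseteq W$ is the image of its preimage $V' \subseteq V$; writing $V'$ as the directed union of its noetherian subobjects and using that $W'$ is noetherian, $W'$ is a quotient of some noetherian subobject of $V'$, which lies in $\cS$, so $W' \in \cS$. For extensions, let $0 \to X \to Y \to Z \to 0$ be exact with $X, Z \in \cS^+$ and let $Y' \subseteq Y$ be noetherian; then $Y' \cap X$ is a noetherian subobject of $X$, hence in $\cS$, and $Y'/(Y' \cap X)$ is a noetherian subobject of $Z$, hence in $\cS$, so $Y'$ — a noetherian extension of an object of $\cS$ by an object of $\cS$ — lies in $\cS$; thus $Y \in \cS^+$. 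In particular $\cS^+$ is closed under finite direct sums, hence under arbitrary ones: a noetherian subobject of $\bigoplus_i V_i$ lies in a finite partial sum $\bigoplus_{i \in F} V_i$ by the directed-union fact, and $\bigoplus_{i \in F} V_i \in \cS^+$. Being closed under quotients and arbitrary direct sums, $\cS^+$ is closed under arbitrary colimits, so it is localising.

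Next I would identify $\cS^+$ with $\langle\cS\rangle$ and with the filtered-colimit description. It contains $\cS$ (a noetherian subobject of an object of $\cS$ lies in $\cS$), so $\langle\cS\rangle \subseteq \cS^+$; conversely any localising subcategory containing $\cS$ contains every directed union of objects of $\cS$, hence contains each $V \in \cS^+$, which is the directed union of its noetherian subobjects — so $\cS^+ \subseteq \langle\cS\rangle$. The same reasoning, applied to the images of the $S_i$ in a filtered colimit $\varinjlim_i S_i$ of objects $S_i \in \cS$ (these images are noetherian quotients of the $S_i$, hence in $\cS$, and their union is the colimit), shows that $\cS^+$ equals the class of all filtered colimits of objects of $\cS$. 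For the bijection: $\langle\cS\rangle \cap \cD^{noeth} = \cS^+ \cap \cD^{noeth} = \cS$, since a noetherian object lying in $\cS^+$ is a noetherian subobject of itself; and for localising $\cL$, with $\cS := \cL \cap \cD^{noeth}$, every $V \in \cS^+$ is a filtered colimit of noetherian subobjects in $\cL$ and $\cL$ is closed under filtered colimits, so $\cS^+ \subseteq \cL$, while every noetherian subobject of an object of $\cL$ lies in $\cL \cap \cD^{noeth} = \cS$, so $\cL \subseteq \cS^+$; hence $\langle\cL \cap \cD^{noeth}\rangle = \cL$.

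The only genuinely load-bearing point, I expect, is the closure of $\cS^+$ under extensions (together with the reduction from arbitrary to finite direct sums), which is precisely where the noetherian-subobject description of $\langle\cS\rangle$ is used; everything else is formal once the two directed-union facts above are in place. This is a standard result, essentially in Gabriel's work, so I would present it as a bookkeeping exercise rather than something requiring a new idea.
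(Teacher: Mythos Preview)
Your proof is correct and self-contained. The paper, by contrast, does not give a proof at all: it simply cites \cite{Her} Thm.\ 2.8. So there is nothing to compare at the level of argument; you have supplied what the paper outsources. Your construction of $\cS^+$ via noetherian subobjects is the standard way this is done (and is essentially how Herzog proceeds), and the load-bearing step you identify --- closure under extensions --- is exactly where the Serre property of $\cS$ is used nontrivially. One very minor expositional remark: when you argue that a filtered colimit $\varinjlim_i S_i$ lies in $\cS^+$, you implicitly use that in a Grothendieck category the colimit is the union of the images of the $S_i$; this is true (filtered colimits are exact), but since you flagged the other ``directed-union facts'' explicitly, you might mention this one too.
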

\begin{proof}
\cite{Her} Thm.\ 2.8.
\end{proof}

\begin{lemma}\label{lem:AcapL}
   If $\cL$ is a localising subcategory of $\cC$ then $\cA \cap \cL$ is localising in $\cA$.
\end{lemma}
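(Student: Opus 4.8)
The plan is to verify directly that $\cA \cap \cL$ satisfies the four closure properties in the definition of a localising subcategory of $\cA$: closure under subobjects, quotient objects, extensions, and arbitrary direct sums. The key point to keep in mind is that $\cA$ is closed under all of these operations as computed in $\cC$ (this is Definition \ref{def:thickening}.a), and that $\cL$, being localising in $\cC$, is also closed under all of them in $\cC$. So for each operation I would argue that the relevant object, formed in $\cC$, lies simultaneously in $\cA$ (because $\cA$ is closed under that operation in $\cC$) and in $\cL$ (because $\cL$ is closed under that operation in $\cC$), hence in the intersection.

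Concretely: if $0 \to X \to Y$ is a monomorphism in $\cA$ with $Y \in \cA \cap \cL$, then since $\cA$ is a full subcategory closed under subobjects in $\cC$, this is also a monomorphism in $\cC$, so $X \in \cL$ (as $\cL$ is closed under subobjects in $\cC$) and $X \in \cA$; thus $X \in \cA \cap \cL$. The argument for quotient objects is identical, using closure of $\cA$ and of $\cL$ under quotients. For extensions, given a short exact sequence $0 \to X \to Y \to Z \to 0$ in $\cA$ with $X, Z \in \cA \cap \cL$: by fullness and closure of $\cA$ under subobjects and quotients, this sequence is exact in $\cC$ as well, so $Y \in \cL$ since $\cL$ is closed under extensions in $\cC$, and $Y \in \cA$ by hypothesis (or by Lemma \ref{lem:r-A}.a), giving $Y \in \cA \cap \cL$. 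For an arbitrary direct sum $\bigoplus_i X_i$ with all $X_i \in \cA \cap \cL$: the coproduct computed in $\cA$ agrees with the one computed in $\cC$ because $\cA$ is closed under arbitrary colimits in $\cC$, so $\bigoplus_i X_i \in \cA$; and it lies in $\cL$ since $\cL$ is closed under direct sums in $\cC$; hence it lies in $\cA \cap \cL$.

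The only subtlety — and it is minor — is to make sure that the ambient-category operations genuinely restrict correctly: that kernels, cokernels, and coproducts formed in $\cA$ coincide with those formed in $\cC$. This is exactly guaranteed by Definition \ref{def:thickening}.a together with Lemma \ref{abelian-sub}-type reasoning (here in the axiomatic setting, Lemma \ref{lem:r-A}.a with $j=0$), which tells us $\cA$ is an abelian subcategory of $\cC$ closed under subobjects, quotients, and arbitrary colimits; so no genuine obstacle arises. I do not expect any hard step here; the lemma is a formal consequence of the closure axioms, and the proof is essentially a bookkeeping exercise confirming that "closed in $\cC$" plus "lies in $\cA$" yields "closed in $\cA$" for each of the four defining properties.
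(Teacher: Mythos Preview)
Your proposal is correct and follows essentially the same approach as the paper: both verify the four closure properties directly by using that $\cA$ and $\cL$ are each closed in $\cC$ under subobjects, quotients, and direct sums, and then handle extensions separately via a short exact sequence argument. Your version is slightly more detailed in checking that operations in $\cA$ agree with those in $\cC$, but the argument is the same bookkeeping exercise the paper gives.
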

\begin{proof}
Each of the categories $\cA$ and $\cL$ is closed under subobjects, quotient objects, and arbitrary direct sums. Therefore the same holds for $\cA \cap \cL$. It remains to consider extensions. Let $0 \rightarrow X \rightarrow Y \rightarrow Z \rightarrow 0$ be a short exact sequence in $\cA$ such that $X$ and $Z$ lie in $\cL$. Then $Z$ must lie in $\cL$ and hence in $\cA \cap \cL$.
\end{proof}

\begin{proposition}\label{prop:loc-A-C}
The map
\begin{align*}
  \text{collection of all localising\,} & \xrightarrow{\;\simeq\;} \text{collection of all localising} \\
  \text{subcategories of $\cC$} &                    \text{\qquad subcategories of $\cA$}  \\
                          \cL & \longmapsto \cA \cap \cL
\end{align*}
is a bijection; its inverse sends a localising subcategory $\cK$ of $\cA$ to the smallest localising subcategory $\langle \cK \rangle$ of $\cC$ which contains $\cK$.
\end{proposition}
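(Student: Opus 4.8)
The plan is to reduce the statement, via the correspondence between localising subcategories and Serre subcategories of noetherian objects (Proposition \ref{prop:loc-Serre}), to a purely ``noetherian'' assertion in which the filtration axiom of Definition \ref{def:thickening}.c does the work. First I would record two preliminary observations: since $\cA$ is closed under subobjects in $\cC$, an object of $\cA$ has the same subobject lattice computed in $\cA$ or in $\cC$, so $\cA^{noeth} = \cA \cap \cC^{noeth}$; and both $\cC$ and $\cA = \cA_0$ are locally noetherian Grothendieck categories (the latter by Proposition \ref{prop:Posits}), so Proposition \ref{prop:loc-Serre} gives bijections between localising subcategories of $\cC$ (resp.\ of $\cA$) and Serre subcategories of $\cC^{noeth}$ (resp.\ of $\cA^{noeth}$), via $\cL \mapsto \cL \cap \cC^{noeth}$ (resp.\ $\cK \mapsto \cK \cap \cA^{noeth}$). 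Using Lemma \ref{lem:AcapL} together with the identity $(\cA \cap \cL) \cap \cA^{noeth} = \cA \cap \cL \cap \cC^{noeth} = (\cL \cap \cC^{noeth}) \cap \cA^{noeth}$, one checks that under these correspondences the map $\cL \mapsto \cA \cap \cL$ becomes the restriction map
\[ \cS \longmapsto \cS \cap \cA^{noeth} \]
on Serre subcategories; so it suffices to show this map is a bijection between Serre subcategories of $\cC^{noeth}$ and of $\cA^{noeth}$.

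For injectivity I would use Definition \ref{def:thickening}.c: any $V \in \cC^{noeth}$ has a finite filtration $0 = V_0 \subseteq \cdots \subseteq V_m = V$ with each $V_j / V_{j-1}$ in $\cA$, hence --- being a subquotient of the noetherian object $V$ --- in $\cA^{noeth}$; since a Serre subcategory is closed under subquotients and extensions, $\cS$ contains $V$ if and only if $\cS \cap \cA^{noeth}$ contains every $V_j/V_{j-1}$, so $\cS$ is determined by $\cS \cap \cA^{noeth}$. For surjectivity, given a Serre subcategory $\cT$ of $\cA^{noeth}$ I would put
\[ \cS := \{\, V \in \cC^{noeth} : V \text{ admits a finite filtration with all subquotients in } \cT \,\}, \]
and verify by a routine diagram chase --- intersecting a filtration with a subobject, pushing it to a quotient, concatenating two filtrations for an extension, and using that a subquotient in $\cC$ of an object of $\cT \subseteq \cA$ again lands in $\cA^{noeth}$, hence in $\cT$ --- that $\cS$ is a Serre subcategory of $\cC^{noeth}$. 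Then $\cT \subseteq \cS \cap \cA^{noeth}$ is clear, and conversely, if $V \in \cS$ lies in $\cA$, with filtration $0 = V_0 \subseteq \cdots \subseteq V_n = V$ and $V_j/V_{j-1} \in \cT$, then each $V_j$ lies in $\cA^{noeth}$ (being a subobject of $V \in \cA$) and an induction on $j$ via closure of $\cT$ under extensions inside $\cA^{noeth}$ yields $V = V_n \in \cT$. Hence $\cS \cap \cA^{noeth} = \cT$, and the restriction map --- and therefore $\cL \mapsto \cA \cap \cL$ --- is a bijection.

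Finally I would identify the inverse with $\cK \mapsto \langle \cK \rangle$. Given $\cK$ localising in $\cA$, let $\cL$ be the localising subcategory of $\cC$ with $\cL \cap \cC^{noeth}$ equal to the Serre subcategory $\cS$ generated as above by $\cK \cap \cA^{noeth}$, so that $\cL = \langle \cS \rangle$ by Proposition \ref{prop:loc-Serre}. Since $\cK$ is the class of filtered colimits --- in $\cA$, equivalently in $\cC$, as $\cA$ is colimit-closed --- of objects of $\cK \cap \cA^{noeth} \subseteq \cS \subseteq \cL$, and $\cL$ is closed under filtered colimits (being quotient- and coproduct-closed), we get $\cK \subseteq \cL$, so $\langle \cK \rangle \subseteq \cL$; conversely every $V \in \cS$ has a finite filtration with subquotients in $\cK \cap \cA^{noeth} \subseteq \langle \cK \rangle$ and $\langle \cK \rangle$ is extension-closed, so $\cS \subseteq \langle \cK \rangle$ and $\cL = \langle \cS \rangle \subseteq \langle \cK \rangle$; thus $\cL = \langle \cK \rangle$. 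The one point needing care --- the ``main obstacle'' --- is that $\cA$ is \emph{not} assumed closed under extensions in $\cC$, so the inductive step in the surjectivity argument works only because the subobjects $V_j$ there automatically lie in $\cA$; one must also track noetherianity throughout so that Proposition \ref{prop:loc-Serre} and the Serre-closure properties stay applicable at each stage.
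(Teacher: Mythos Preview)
Your proof is correct and follows essentially the same approach as the paper: reduce via Proposition~\ref{prop:loc-Serre} to the map $\cS \mapsto \cS \cap \cA^{noeth}$ on Serre subcategories of noetherian objects, use the filtration axiom of Definition~\ref{def:thickening}.c for injectivity, and for surjectivity define $\cS$ as the class of noetherian objects admitting a finite filtration with subquotients in $\cT$. You supply more detail than the paper (which leaves the Serre property of $\cS$ and the equality $\cS \cap \cA^{noeth} = \cT$ as ``straightforward to check'', and does not spell out the identification of the inverse with $\langle \cK \rangle$), and you correctly flag the point that the induction showing $\cS \cap \cA \subseteq \cT$ relies on the $V_j$ lying in $\cA$ by subobject-closure rather than extension-closure.
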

\begin{proof}
We obviously have $\cA^{noeth} = \cA \cap \cC^{noeth}$. Hence Prop.\ \ref{prop:loc-Serre} reduces the asserted bijectivity to the bijectivity of the map
\begin{align*}
  \text{collection of all Serre\,} & \xrightarrow{\;\simeq\;} \text{collection of all Serre} \\
  \text{subcategories of $\cC^{noeth}$} &                    \text{\qquad subcategories of $\cA^{noeth}$}  \\
                          \cS & \longmapsto \cA \cap \cS \ .
\end{align*}
But it follows from Def.\ \ref{def:thickening}.c that any such $\cS$ is the smallest Serre subcategory of $\cC^{noeth}$ which contains $\cA \cap \cS$. On the other hand let $\cT$ be a Serre subcategory of $\cA^{noeth}$. We define $\cS$ to be the full subcategory of $\cC^{noeth}$ whose objects $V$ have a finite filtration $0 = V_0 \subseteq V_1 \subseteq \ldots \subseteq V_m = V$ with $V_j/V_{j-1}$ in $\cT$ for any $1 \leq j \leq m$. It is straightforward to check that $\cS$ is a Serre subcategory of $\cC^{noeth}$ and that $\cA \cap \cS = \cT$.
\end{proof}

The above Prop.\ \ref{prop:loc-A-C}, of course, holds true correspondingly with $\cA$ replaced with $\cA_j$.

For any localising subcategory $\cL$ of a locally noetherian Grothendieck category $\cD$ one defines the subset
\begin{equation*}
  A(\cL) := \{ [E] \in \Sp(\cD) : \Hom_\cD(V,E) = 0\ \text{for any $V \in \ob(\cL)$}\}
\end{equation*}
of $\Sp(\cD)$. These subsets $A(\cL)$ form the closed subsets of a topology on $\Sp(\cD)$ which is called the \emph{Ziegler topology} (cf.\ \cite{Her} Thm.\ 3.4). In fact, by Prop.\ \ref{prop:loc-Serre} and \cite{Her} Thm.\ 3.8, the map
\begin{align}\label{f:loc-classific}
  \text{collection of all localising} & \xrightarrow{\;\simeq\;} \text{set of all Ziegler-closed} \\
  \text{subcategories of $\cD$} &                    \text{\qquad\ subsets of $\Sp(\cD)$}    \nonumber \\
                          \cL & \longmapsto A(\cL)       \nonumber
\end{align}
is an inclusion reversing bijection. This means that the Ziegler closed subsets of $\Sp(\cD)$ classify the Serre subcategories of $\cD^{noeth}$ as well as the localising subcategories of $\cD$. It also implies that $\cL$ can be reconstructed from $A(\cL)$ by
\begin{equation*}
  \ob(\cL) = \{ V \in \ob(\cD) : \Hom_\cD(V,E) = 0 \ \text{for all $[E] \in A(\cL)$}\}.
\end{equation*}

\begin{corollary}\label{cor:Ziegler-homeo}
  The maps $\Sp(\cC) \xrightarrow{\;\simeq\;} \Sp(\cA_j)  \xrightarrow{\;\simeq\;} \Sp(\cA)$ in \eqref{f:bij} are homeomorphisms for the Ziegler topologies.
\end{corollary}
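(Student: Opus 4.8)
The plan is to reduce the statement to the classification result already in place and to the fact that the bijections in \eqref{f:bij} are induced by the functors $r_j$ and $r$, which are restrictions of the identity functor $\id_{\cC}$ on the respective subcategories. First I would recall that by \eqref{f:loc-classific}, for any locally noetherian Grothendieck category $\cD$ the Ziegler-closed subsets of $\Sp(\cD)$ are exactly the sets of the form $A(\cL)$ as $\cL$ runs over the localising subcategories of $\cD$, and $A$ is an inclusion-reversing bijection. Combined with Prop.\ \ref{prop:loc-A-C} (and its analogue with $\cA$ replaced by $\cA_j$), we get inclusion-reversing bijections between the localising subcategories of $\cC$, of $\cA_j$ and of $\cA$, all compatible in the obvious way. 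So it suffices to check that under the set-bijections $\Sp(\cC)\xrightarrow{\sim}\Sp(\cA_j)\xrightarrow{\sim}\Sp(\cA)$ of \eqref{f:bij}, the closed set $A(\cL)$ for a localising $\cL\subseteq\cC$ is carried to $A(\cA_j\cap\cL)$ and then to $A(\cA\cap\cL)$; this, together with the fact that every Ziegler-closed set arises this way, shows both maps and their inverses send closed sets to closed sets, hence are homeomorphisms.

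The key computation is therefore the following compatibility: for a localising subcategory $\cL$ of $\cC$ and an indecomposable injective $E$ of $\cC$,
\[
  [E]\in A(\cL) \iff [r(E)]\in A(\cA\cap\cL),
\]
and similarly with $r_j$ and $\cA_j\cap\cL$. For the direction ``$\Rightarrow$'', suppose $\Hom_{\cC}(V,E)=0$ for all $V\in\cL$; given $W\in\cA\cap\cL$, any map $W\to r(E)$ composes with $r(E)\subseteq E$ to give a map $W\to E$ in $\cC$, which vanishes, and since $r(E)\subseteq E$ this forces $W\to r(E)$ to vanish. For ``$\Leftarrow$'', suppose $\Hom_{\cA}(W,r(E))=0$ for all $W\in\cA\cap\cL$; let $V\in\cL$ and $f:V\to E$ be a morphism in $\cC$. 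Its image $f(V)$ is a subobject of $E$, and $f(V)\in\cL$ since $\cL$ is closed under quotients; moreover $r(f(V))\ne 0$ whenever $f(V)\ne 0$ because $r$ preserves non-zero objects, and $r(f(V))\in\cA\cap\cL$ by Lemma \ref{lem:AcapL}. But $r(f(V))=f(V)\cap r(E)$ by Lemma \ref{lem:ess-hull-indec}.a (this intersection is precisely the largest subobject of $f(V)$ lying in $\cA$), so if $f(V)\ne 0$ then the essentiality of $r(E)\subseteq E$ gives $0\ne f(V)\cap r(E)=r(f(V))$, which is a subobject of $r(E)$ lying in $\cA\cap\cL$, contradicting the hypothesis. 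Hence $f(V)=0$, i.e.\ $f=0$, and $[E]\in A(\cL)$. The argument for $r_j$ and $\cA_j$ is identical, using the analogous properties of $r_j$ recorded in Lemma \ref{lem:r-A} (namely $r_j$ preserves non-zero objects, $\cA_j\cap\cL$ is localising in $\cA_j$ by the $\cA_j$-version of Lemma \ref{lem:AcapL}, and $r_j(E)\subseteq E$ is essential).

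I expect the only genuine subtlety to be the identification $r(f(V))=f(V)\cap r(E)$ used above: one must observe that for a subobject $U\subseteq E$, the largest subobject of $U$ lying in $\cA$ is the same as $U\cap r(E)$, where $r(E)$ is the largest subobject of $E$ lying in $\cA$. This holds because $\cA$ is closed under subobjects (Definition \ref{def:thickening}.a), so $U\cap r(E)\in\cA$ and it clearly contains every subobject of $U$ that lies in $\cA$; since $r$ applied to $U$ is by definition this largest subobject, $r(U)=U\cap r(E)$. Once this is in hand, everything else is formal manipulation with the bijection \eqref{f:loc-classific} and Prop.\ \ref{prop:loc-A-C}. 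Finally, to conclude that both maps in \eqref{f:bij} are homeomorphisms rather than merely continuous, I would note that we have shown each map carries the closed set $A(\cL)$ onto the closed set $A(\cA_j\cap\cL)$ (resp.\ $A(\cA\cap\cL)$), and that the correspondence $\cL\mapsto\cA_j\cap\cL$ (resp.\ $\cL\mapsto\cA\cap\cL$) is a bijection on localising subcategories; since by \eqref{f:loc-classific} every Ziegler-closed subset of each spectrum is of this form, the image of an arbitrary closed set is closed and the preimage of an arbitrary closed set is closed, which is exactly the assertion that the maps are homeomorphisms.
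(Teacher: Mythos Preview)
Your proof is correct, but it takes a different route from the paper's. The paper argues via a \emph{basis} of Ziegler-open sets rather than via closed sets: citing \cite{Her} Prop.\ 3.2 and Cor.\ 3.5 together with Def.\ \ref{def:thickening}.c, it observes that the sets $O_\cC(U)=\{[E]:\Hom_\cC(U,E)\neq 0\}$, as $U$ ranges over objects of $\cA$, already form a base for the Ziegler topology on $\Sp(\cC)$ (and similarly $O_\cA(U)$ for $\Sp(\cA)$); the adjunction $\Hom_\cC(U,E)=\Hom_\cA(U,r(E))$ then instantly gives $r(O_\cC(U))=O_\cA(U)$, and the homeomorphism follows. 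Your approach instead matches Ziegler-\emph{closed} sets $A(\cL)$ with $A(\cA\cap\cL)$ directly, and then invokes the bijection of localising subcategories from Prop.\ \ref{prop:loc-A-C} together with \eqref{f:loc-classific} to conclude. The paper's proof is shorter because the adjunction does all the work once the basis is identified, at the cost of an external reference; your argument is longer but entirely self-contained and in fact re-proves a special case of the basis statement along the way (your ``$\Leftarrow$'' direction is essentially the observation that any nonzero map from $V\in\cL$ into $E$ restricts to a nonzero map from some $W\in\cA\cap\cL$ into $r(E)$).
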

\begin{proof}
Using \cite{Her} Prop.\ 3.2 and Cor.\ 3.5 we see that Def.\ \ref{def:thickening}.c implies that the sets
\begin{align*}
  O_\cC(U) & := \{[E] \in \Sp(\cC) : \Hom_\cC(U,E) \neq 0\} ,  \\
  \text{resp.}\ O_\cA(U) & := \{[r(E)] \in \Sp(\cA) : \Hom_\cA(U,r(E)) \neq 0\},
\end{align*}
for $U \in \ob(\cA)$, form a base for the Ziegler-open subsets of $\Sp(\cC)$, resp.\ $\Sp(\cA)$. But, for such $U$, we have $\Hom_\cC(U,E) = \Hom_\cA(U,r(E))$ since $r$ is right adjoint to the inclusion functor.
\end{proof}

Later on another topology on $\Sp(\cD)$ will be more important for our purposes.

\begin{proposition}\label{prop:qc}
   For a localising subcategory $\cL$ of $\cD$, the following are equivalent:
\begin{itemize}
  \item[a)] there is a noetherian object $C$ in $\cD$ such that $\cL$ is the smallest localising subcategory containing $C$;
  \item[b)] there is a noetherian object $C$ in $\cD$ such that
  \[A(\cL) = \{ [E] \in \Sp(\cD) : \Hom_\cD(C,E) = 0\};\]
  \item[c)] the Ziegler open subset $\Sp(\cD) \setminus A(\cL)$ is quasi-compact.
\end{itemize}
\end{proposition}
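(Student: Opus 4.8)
The plan is to push everything through the inclusion-reversing bijection $\cL \mapsto A(\cL)$ between localising subcategories of $\cD$ and Ziegler-closed subsets of $\Sp(\cD)$ recorded in \eqref{f:loc-classific}, together with Prop.\ \ref{prop:loc-Serre}. The first step is the observation that for any indecomposable injective object $E$ of $\cD$ the full subcategory $\cN_E := \{V \in \cD : \Hom_\cD(V,E) = 0\}$ is localising: closure under quotient objects, extensions and arbitrary direct sums is immediate from the left-exactness of $\Hom_\cD(-,E)$ and the formula $\Hom_\cD(\bigoplus_i V_i, E) = \prod_i \Hom_\cD(V_i,E)$, while closure under subobjects uses that $E$ is injective, so a map $V' \to E$ extends along any monomorphism $V' \hookrightarrow V$. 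Hence for a noetherian object $C$ one has $C \in \cN_E$ if and only if $\langle C\rangle \subseteq \cN_E$ (where $\langle C\rangle$ is the smallest localising subcategory of $\cD$ containing $C$), which gives the identity
\[ A(\langle C\rangle) \;=\; \{[E] \in \Sp(\cD) : \Hom_\cD(C,E) = 0\} \;=\; \Sp(\cD) \setminus O_\cD(C), \]
where $O_\cD(C) := \{[E] : \Hom_\cD(C,E) \neq 0\}$. With this in hand the equivalence (a)$\Leftrightarrow$(b) is immediate, since (a) asserts $\cL = \langle C\rangle$ for some noetherian $C$, (b) asserts $A(\cL) = A(\langle C\rangle)$ for some noetherian $C$, and $\cL \mapsto A(\cL)$ is injective.

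Next I would record, for an arbitrary localising subcategory $\cL$, the description $\Sp(\cD)\setminus A(\cL) = \bigcup_C O_\cD(C)$ with $C$ running over the noetherian objects of $\cL$. This comes from writing $\cL = \langle \cL \cap \cD^{noeth}\rangle = \bigvee_{C \in \cL\cap\cD^{noeth}} \langle C\rangle$ (Prop.\ \ref{prop:loc-Serre}) and observing that, being an order-reversing bijection onto the Ziegler-closed sets, $\cL \mapsto A(\cL)$ converts joins of localising subcategories into intersections. Since every Ziegler-open set is of the form $\Sp(\cD)\setminus A(\cL)$, it follows in particular that the sets $O_\cD(C)$ with $C$ noetherian form a base of the Ziegler topology.

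For (a)$\Rightarrow$(c): by the displayed identity $\Sp(\cD)\setminus A(\cL) = O_\cD(C)$ when $\cL = \langle C\rangle$, so it is enough to show $O_\cD(C)$ is quasi-compact for every noetherian $C$. Given an open cover, refine it to a cover $O_\cD(C) \subseteq \bigcup_\lambda O_\cD(U_\lambda)$ by basic opens; passing to complements and using the join/intersection compatibility yields $\langle C\rangle \subseteq \bigvee_\lambda \langle U_\lambda\rangle$. The point I expect to need most care is the following elementary fact: the Serre subcategory of $\cD^{noeth}$ generated by a family $\{U_\lambda\}$ is the directed union, over the finite subsets $F$, of the Serre subcategories generated by $\{U_\lambda : \lambda \in F\}$ — indeed that directed union is already closed under subquotients and under extensions. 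Through Prop.\ \ref{prop:loc-Serre} this translates into $\bigvee_\lambda \langle U_\lambda\rangle$ being the directed union of the $\bigvee_{\lambda\in F}\langle U_\lambda\rangle$ at the level of noetherian objects, so the noetherian $C$ lies in $\bigvee_{\lambda\in F}\langle U_\lambda\rangle$ for a single finite $F$; applying the order-reversing bijection once more gives $O_\cD(C)\subseteq \bigcup_{\lambda\in F} O_\cD(U_\lambda)$, and replacing each $O_\cD(U_\lambda)$ by a member of the original cover containing it produces a finite subcover.

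For (c)$\Rightarrow$(a), which then also yields (b): assume $\Sp(\cD)\setminus A(\cL)$ is quasi-compact. By the base description of the second step it equals $\bigcup_{C\in\cL\cap\cD^{noeth}} O_\cD(C)$, so quasi-compactness supplies finitely many noetherian objects $C_1,\dots,C_n$ of $\cL$ with $\Sp(\cD)\setminus A(\cL) = \bigcup_{i=1}^n O_\cD(C_i)$. Setting $C := C_1 \oplus \cdots \oplus C_n$, which is again noetherian and satisfies $O_\cD(C) = \bigcup_i O_\cD(C_i)$, we get $A(\langle C\rangle) = \Sp(\cD)\setminus O_\cD(C) = A(\cL)$, hence $\cL = \langle C\rangle$ by injectivity of $\cL\mapsto A(\cL)$. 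The only genuinely non-formal ingredient in the whole argument is the directed-union description of the Serre subcategory generated by an infinite family used in the third step; the rest is bookkeeping with the classification \eqref{f:loc-classific} and Prop.\ \ref{prop:loc-Serre}, and it is here — together with the local noetherianity of $\cD$, which makes "noetherian" behave like "finitely presented" — that the noetherianity hypothesis on $C$ is essential.
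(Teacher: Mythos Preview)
Your proof is correct. For the equivalence of (a) and (b) you argue exactly as the paper does: the key identity $A(\langle C\rangle) = \{[E] : \Hom_\cD(C,E)=0\}$ follows from the observation that $\{V : \Hom_\cD(V,E)=0\}$ is a localising subcategory when $E$ is injective, and then injectivity of $\cL\mapsto A(\cL)$ does the rest.

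The difference lies in the equivalence with (c). The paper simply cites \cite{Her} Cor.~3.9 for (b)$\Leftrightarrow$(c), whereas you supply a self-contained argument: the $O_\cD(C)$ with $C$ noetherian form a base of the Ziegler topology, and quasi-compactness of $O_\cD(C)$ is deduced from the fact that the Serre subcategory of $\cD^{noeth}$ generated by a family is the directed union of the Serre subcategories generated by its finite subfamilies (so a noetherian $C$ already lies in one of the finite joins). This is precisely the content of Herzog's result in the locally noetherian setting, so you are not taking a different route so much as unpacking the cited reference. What you gain is independence from \cite{Her}; what the paper gains is brevity.
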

\begin{proof}
For the equivalence of a. and b. we consider more generally an arbitrary object of $\cD$, and we let $\langle C \rangle$ denote the smallest localising subcategory containing $C$. Let $E$ be any injective object in $\cD$. If $\Hom_\cD(-,E)$ vanishes on $\langle C \rangle$ then obviously $\Hom_\cD(C,E) = 0$. But the injectivity of $E$ easily implies that the converse holds as well. It follows that
\begin{equation*}
  A(\langle C \rangle) = \{ [E] \in \Sp(\cD) : \Hom_\cD(C,E) = 0\} .
\end{equation*}

For the equivalence of b. and c. see \cite{Her} Cor.\ 3.9.
\end{proof}

The topology on $\Sp(\cD)$ which has as a base of open subsets the complements of quasi-compact Ziegler-open subsets is called the \emph{Gabriel-Zariski topology}.

\begin{corollary}\label{cor:Zariski-homeo}
  The maps $\Sp(\cC) \xrightarrow{\;\simeq\;} \Sp(\cA_j)  \xrightarrow{\;\simeq\;} \Sp(\cA)$ in \eqref{f:bij} are homeomorphisms for the Gabriel-Zariski topologies.
\end{corollary}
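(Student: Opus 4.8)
The plan is to deduce this immediately from Corollary \ref{cor:Ziegler-homeo}, which already provides that the two maps in \eqref{f:bij} are homeomorphisms for the Ziegler topologies. The point is that the Gabriel-Zariski topology on $\Sp(\cD)$ is manufactured purely from the Ziegler topology together with the intrinsic notion of quasi-compactness: by definition its open subsets are exactly the arbitrary unions of complements of quasi-compact Ziegler-open subsets, and the complements of quasi-compact Ziegler-open subsets form a base for it. So the first thing to record is the formal fact that quasi-compactness of a subset depends only on the subspace topology, hence is preserved by homeomorphisms and by their inverses.

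Writing $f$ for either of the maps $\Sp(\cC) \xrightarrow{\;\simeq\;} \Sp(\cA_j)$ or $\Sp(\cA_j) \xrightarrow{\;\simeq\;} \Sp(\cA)$, the argument then runs as follows. Since $f$ is a homeomorphism for the Ziegler topologies, both $f$ and $f^{-1}$ send Ziegler-open subsets to Ziegler-open subsets; and since a homeomorphism restricts to a homeomorphism between a subset and its image, both $f$ and $f^{-1}$ send quasi-compact subsets to quasi-compact subsets. Hence $f$ restricts to a bijection between the quasi-compact Ziegler-open subsets of the source and those of the target, and therefore induces a bijection between the distinguished bases of the two Gabriel-Zariski topologies. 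Taking unions, $f^{-1}$ of a Gabriel-Zariski open set is Gabriel-Zariski open, and likewise for $f$; so $f$ is a homeomorphism for the Gabriel-Zariski topologies. Applying this to both maps in \eqref{f:bij} (and to their composite) gives the claim.

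I do not expect any genuine obstacle here: beyond Corollary \ref{cor:Ziegler-homeo} the proof only uses the elementary observation that a topology defined as ``unions of complements of quasi-compact opens'' transports along homeomorphisms. The one place to be slightly careful is to invoke, from the definition recalled just before the corollary, that these complements really do form a base, so that verifying $f$ matches up the two bases is enough to conclude that $f$ is a homeomorphism.
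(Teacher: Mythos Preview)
Your proposal is correct and follows the same approach the paper intends: the corollary is stated without proof precisely because it follows immediately from Corollary~\ref{cor:Ziegler-homeo} together with the definition of the Gabriel--Zariski topology (given just before the statement) as having the complements of quasi-compact Ziegler-open subsets as a base. Your careful unpacking of why a Ziegler homeomorphism automatically yields a Gabriel--Zariski homeomorphism is exactly the intended reasoning.
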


\subsection{Stability}\label{subsec:stability}

We recall that a localising subcategory of a locally noetherian Grothendieck category $\cD$ is called \emph{stable} if it is closed under the passage to essential extensions.

\begin{lemma}\label{lem:stable-1}
   For any localising subcategory $\cL$ of $\cD$ the following are equivalent:
\begin{itemize}
  \item[a)] $\cL$ is stable;
  \item[b)] any indecomposable injective object of $\cD$ either lies in $\cL$ or has no non-zero subobject lying in $\cL$.
\end{itemize}
\end{lemma}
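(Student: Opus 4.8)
The plan is to prove the two implications separately. The implication $(a)\Rightarrow(b)$ is immediate from the fact that an indecomposable injective object is ``uniform'', while $(b)\Rightarrow(a)$ rests on the structure theorem that in a locally noetherian Grothendieck category every injective object splits as a direct sum of indecomposable injectives.

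For $(a)\Rightarrow(b)$, I would argue as follows. Suppose $E$ is an indecomposable injective object of $\cD$ that possesses a non-zero subobject $S$ lying in $\cL$. An indecomposable injective object is an essential extension of each of its non-zero subobjects (this is \cite{Gab} Prop.\ 11 on p.\ 361, already invoked in the proof of Remark \ref{rem:stable-Krull}), so $E$ is an essential extension of $S\in\cL$; stability of $\cL$ then forces $E\in\cL$. Thus every indecomposable injective object either lies in $\cL$ or has no non-zero subobject in $\cL$, which is exactly the alternative in $(b)$.

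For $(b)\Rightarrow(a)$, let $X\subseteq Y$ be an essential extension with $X$ in $\cL$; we must show $Y\in\cL$, and we may assume $X\neq 0$. Choose an injective hull $E=E(Y)$; since $X$ is essential in $Y$ and $Y$ is essential in $E$, $X$ is essential in $E$, so $E$ is also an injective hull of $X$. Because $\cD$ is locally noetherian, $E$ decomposes as a direct sum $E=\bigoplus_{i\in J}E_i$ of indecomposable injective objects (cf.\ \cite{Ste}). Viewing each $E_i$ as a subobject of $E$, essentiality of $X$ gives $X\cap E_i\neq 0$, and $X\cap E_i\subseteq X\in\cL$ lies in $\cL$ since $\cL$ is closed under subobjects. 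Hence each $E_i$ is an indecomposable injective object with a non-zero subobject in $\cL$, so $E_i\in\cL$ by $(b)$. Since $\cL$ is closed under direct sums, $E=\bigoplus_i E_i\in\cL$, and since it is closed under subobjects, $Y\subseteq E$ yields $Y\in\cL$.

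The only non-formal ingredient — and hence the point to get right — is the decomposition of an injective object of a locally noetherian Grothendieck category into indecomposable injectives; everything else is bookkeeping with the defining closure properties of a localising subcategory and with essential and injective hulls. One could instead attempt to use the classification \eqref{f:loc-classific} of localising subcategories by Ziegler-closed subsets directly, testing whether $\Hom_\cD(Y,E)=0$ for each $[E]\in A(\cL)$; but it seems awkward to control the image of a nonzero map $Y\to E$ without first splitting off a torsion summand, so the decomposition argument above is the cleaner route.
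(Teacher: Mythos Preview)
Your proof is correct and follows essentially the same approach as the paper's: for $(a)\Rightarrow(b)$ both use that an indecomposable injective is an essential extension of any non-zero subobject, and for $(b)\Rightarrow(a)$ both decompose an injective hull of the given $\cL$-object into indecomposable injectives and use essentiality to force each summand into $\cL$. The only cosmetic difference is that the paper works directly with $E(V)$ for $V\in\cL$ rather than first passing through an arbitrary essential extension $X\subseteq Y$.
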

\begin{proof}
We argue similarly as in \cite{Gol} Prop.\ 11.3.

$a) \Longrightarrow b)$: Let $E$ be an indecomposable injective object in $\cD$ and let $t_\cL(E)$ denote the largest subobject of $E$ contained in $\cL$. Suppose that $t_\cL(E) \neq 0$. Then $E$ is an injective hull of $t_\cL(E)$ and hence, by stability, is contained in $\cL$.

$b) \Longrightarrow a)$: We write an injective hull $E(V)$ of an object $V$ lying in $\cL$ as a direct sum $E(V) = \oplus_{i \in I} E_i$ of indecomposable injective objects $E_i$. Since $E_i \cap V \neq 0$ lies in $\cL$ for any $i \in I$ we see that all $E_i$ and hence $E(V)$ lie in $\cL$.
\end{proof}

\begin{corollary}\label{cor:stable}
   Let $\cL$ be a stable localising subcategory of $\cD$; then
\begin{equation*}
  A(\cL) = \{[E] \in \Sp(\cD) : E \not\in \ob(\cL)\}  \ .
\end{equation*}
\end{corollary}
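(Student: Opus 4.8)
The claim is a standard fact in torsion theory, and I would prove it by unwinding the definitions and invoking Corollary~\ref{cor:stable} together with the classification (\ref{f:loc-classific}) of localising subcategories via Ziegler-closed subsets. First I would recall that, by definition, $A(\cL) = \{[E] \in \Sp(\cD) : \Hom_\cD(V,E) = 0 \text{ for all } V \in \ob(\cL)\}$, so that certainly $[E] \in A(\cL)$ forces $E \notin \ob(\cL)$ (if $E$ were in $\cL$ then $\Hom_\cD(E,E) \neq 0$ would contradict membership in $A(\cL)$, as $E \neq 0$). This gives the inclusion $A(\cL) \subseteq \{[E] \in \Sp(\cD) : E \notin \ob(\cL)\}$ with no hypotheses needed.

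For the reverse inclusion I would use stability. Suppose $[E] \in \Sp(\cD)$ with $E \notin \ob(\cL)$; I must show $\Hom_\cD(V,E) = 0$ for every $V \in \ob(\cL)$. Let $t_\cL(E)$ be the largest subobject of $E$ lying in $\cL$ (this exists because $\cL$ is localising, hence closed under subobjects and sums, so the sum of all subobjects of $E$ in $\cL$ is again in $\cL$). By Lemma~\ref{lem:stable-1}, since $E$ is indecomposable injective and $E \notin \ob(\cL)$, stability forces $t_\cL(E) = 0$. Now given any $V \in \ob(\cL)$ and any morphism $f : V \to E$, the image $\Im(f)$ is a quotient of $V$, hence lies in $\cL$, hence is a subobject of $E$ lying in $\cL$, hence $\Im(f) \subseteq t_\cL(E) = 0$, so $f = 0$. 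Therefore $\Hom_\cD(V,E) = 0$ and $[E] \in A(\cL)$, completing the proof.

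The argument is entirely formal and I do not anticipate any genuine obstacle; the only point requiring a little care is the existence of the largest $\cL$-subobject $t_\cL(E)$, which relies on $\cD$ being a Grothendieck category (so that the filtered union of the $\cL$-subobjects of $E$ makes sense and again lies in $\cL$ by the colimit-closure of localising subcategories). All of this is already available in the setting fixed in this subsection, so the corollary follows immediately from Lemma~\ref{lem:stable-1}.
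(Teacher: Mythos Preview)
Your argument is correct and is exactly the unwinding of Lemma~\ref{lem:stable-1}(b) that the paper intends: the corollary is stated there without a separate proof precisely because it follows at once from that lemma in the way you describe. One cosmetic slip: in your opening sentence you say you will ``invoke Corollary~\ref{cor:stable}'', which is the very statement you are proving; you presumably meant Lemma~\ref{lem:stable-1}, and indeed that (together with the definition of $A(\cL)$) is all you actually use.
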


The following is a straightforward generalization of \cite{Lou} Prop.\ 4.

\begin{lemma}\label{lem:stable-2}
   For a subset $A \subseteq \Sp(\cD)$ the following are equivalent:
\begin{itemize}
  \item[a)] $A = A(\cL)$ for a stable localising subcategory $\cL$ of $\cD$;
  \item[b)] if $[E] \in \Sp(\cD)$ satisfies $\Hom_\cD(E,E') \neq 0$ for some $[E'] \in A$, then $[E] \in A$.
\end{itemize}
\end{lemma}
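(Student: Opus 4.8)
The plan is to prove the two implications separately, translating everything into the injective-spectrum language supplied by Corollary \ref{cor:stable} and the stability criterion of Lemma \ref{lem:stable-1}. For the implication \emph{a)} $\Rightarrow$ \emph{b)}: assume $A = A(\cL)$ for a stable localising subcategory $\cL$, so that by Corollary \ref{cor:stable} we may rewrite $A = \{[F] \in \Sp(\cD) : F \notin \ob(\cL)\}$. Given $[E] \in \Sp(\cD)$ and $[E'] \in A$ with a nonzero morphism $f : E \to E'$, I want to conclude $E \notin \ob(\cL)$. If $E$ did lie in $\cL$, then the image of $f$, being a nonzero quotient of $E$, would lie in $\cL$ (which is closed under quotients) and would be a nonzero subobject of the indecomposable injective object $E'$; Lemma \ref{lem:stable-1}.b would then force $E' \in \ob(\cL)$, contradicting $[E'] \in A$. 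Hence $[E] \in A$.

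For \emph{b)} $\Rightarrow$ \emph{a)}, the candidate (the subcategory one would "reconstruct" from $A$ if $A$ were already known to be Ziegler-closed) is
\[
  \cL_A := \{\, V \in \ob(\cD) : \Hom_\cD(V,E) = 0 \text{ for all } [E] \in A \,\}.
\]
I would first check that $\cL_A$ is a localising subcategory: closure under quotient objects and extensions follows from left-exactness of the contravariant functors $\Hom_\cD(-,E)$, closure under arbitrary direct sums from $\Hom_\cD(\bigoplus_i V_i, E) = \prod_i \Hom_\cD(V_i, E)$, and closure under subobjects from the injectivity of each $E$ with $[E] \in A$ (a nonzero morphism from a subobject of $V$ to $E$ extends to a nonzero morphism $V \to E$, impossible if $V \in \cL_A$). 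The inclusion $A \subseteq A(\cL_A)$ is immediate from the definition. The two remaining points — that $A(\cL_A) \subseteq A$ and that $\cL_A$ is stable — both hinge on reading hypothesis \emph{b)} contrapositively: if $[E] \in \Sp(\cD) \setminus A$ then $\Hom_\cD(E,E') = 0$ for all $[E'] \in A$, i.e.\ $E \in \ob(\cL_A)$. Indeed, if we had $[E'] \in A(\cL_A) \setminus A$, then $E' \in \ob(\cL_A)$ by this observation, so $\Hom_\cD(E',E') = 0$, contradicting $\id_{E'} \neq 0$ (note $E' \neq 0$ as an indecomposable injective); hence $A(\cL_A) = A$. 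For stability I would verify the criterion of Lemma \ref{lem:stable-1}.b for an arbitrary indecomposable injective object $E$: if $[E] \notin A$ then $E \in \ob(\cL_A)$ by the contrapositive above, while if $[E] \in A$ then $E$ has no nonzero subobject lying in $\cL_A$, since a nonzero $V \subseteq E$ with $V \in \cL_A$ would yield $0 \neq \Hom_\cD(V,E)$ even though $[E] \in A$. Either way the dichotomy holds, so $\cL_A$ is stable, and $A = A(\cL_A)$ exhibits $A$ in the form required by \emph{a)}.

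I do not anticipate a genuine obstacle here. The only steps needing care are the closure of $\cL_A$ under subobjects, which really does use the injectivity built into the definition of $\Sp(\cD)$, and the identity $A(\cL_A) = A$, which is exactly where hypothesis \emph{b)} is consumed (via the contrapositive reformulation above together with $\id_{E'}\neq 0$). Everything else is routine bookkeeping with Corollary \ref{cor:stable} and Lemma \ref{lem:stable-1}, which is why this is, as stated, only a mild generalization of \cite{Lou} Prop.\ 4.
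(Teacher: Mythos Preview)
Your proof is correct and follows essentially the same approach as the paper's: both directions use Lemma \ref{lem:stable-1} and Corollary \ref{cor:stable}, and for b) $\Rightarrow$ a) both take $\cL$ to be the subcategory cogenerated by $A$ and then verify $A = A(\cL)$ and stability via the dichotomy of Lemma \ref{lem:stable-1}.b. The only difference is that you spell out more details (e.g.\ the explicit check that $\cL_A$ is localising, which the paper leaves implicit) and phrase the $A(\cL_A) \subseteq A$ step as a contradiction rather than directly, but the underlying logic is identical.
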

\begin{proof}
$a) \Longrightarrow b)$: Let $[E'] \in A(\cL)$ such that $\Hom_\cD(E,E') \neq 0$. Then $E$ does not lie in $\cL$. Since $\cL$ is stable Lemma \ref{lem:stable-1} applies and tells us that $E$ does not have any non-zero subobject lying in $\cL$. Hence $[E] \in A(\cL) = A$.

$b) \Longrightarrow a)$: Let $\cL$ be the localising subcategory of $\cD$ cogenerated by the $E'$ for $[E'] \in A$. This means that $\cL$ is the full subcategory of those objects $V$ in $\cD$ which satisfy $\Hom_\cD(V,E') = 0$ for any $[E'] \in A$. It is immediate that $A \subseteq A(\cL)$. Consider any $[E] \in A(\cL)$. Then $E$ cannot lie in $\cL$. Hence there must exist an $[E'] \in A$ such that $\Hom_\cD(E,E') \neq 0$. It follows from (b) that $[E] \in A$. This shows that $A = A(\cL)$. To establish that $\cL$ is stable we use Lemma \ref{lem:stable-1}. We have just seen that the $E$ which do not lie in $\cL$ must have $[E] \in A(\cL)$. By the very definition of $A(\cL)$ such $E$ do not have a non-zero subobject lying in $\cL$.
\end{proof}

A subset $A \subseteq \Sp(\cD)$ will be called \emph{stable}, resp.\ \emph{stable-open}, if it is of the form $A = A(\cL)$ for some stable localising subcategory $\cL$ of $\cD$, resp.\ if it is stable and open for the Gabriel-Zariski topology. It is clear, for example from Lemma \ref{lem:stable-2}, that arbitrary intersections and unions of stable subsets are stable again. Therefore the stable, resp.\ stable-open, subsets are the open subsets for a topology which we call the stable, resp.\ stable Zariski, topology of $\Sp(\cD)$.

\begin{corollary}\label{cor:stable-cont}
  The bijections $\Sp(\cC) \xrightarrow{\;\simeq\;} \Sp(\cA_j)  \xrightarrow{\;\simeq\;} \Sp(\cA)$ in \eqref{f:bij} respect stable subsets; in particular, the inverse maps are continuous for the stable topologies.
\end{corollary}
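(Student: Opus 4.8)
The plan is to deduce the statement from Lemma \ref{lem:stable-2}, one elementary remark about $\Hom$-groups, and Corollary \ref{cor:Zariski-homeo} for the Gabriel--Zariski part. Each of the three bijections in \eqref{f:bij} is the map on injective spectra induced by the right adjoint of an inclusion of the kind considered in this section, together with the identification of an indecomposable injective of the larger category with the injective hull of its image (Lemma \ref{lem:ess-hull-indec}); so I will argue for $\Sp(\cC)\xrightarrow{\simeq}\Sp(\cA)$, $[E]\mapsto[r(E)]$, the remaining cases being identical after replacing $(\cC,\cA,r)$ by $(\cC,\cA_j,r_j)$, resp.\ $(\cA_j,\cA,r_0|_{\cA_j})$, and using Lemma \ref{lem:r-A}.

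The remark I need is: for $[E],[E']\in\Sp(\cC)$,
\[ \Hom_\cA\big(r(E),r(E')\big)\neq0 \quad\Longrightarrow\quad \Hom_\cC(E,E')\neq0 . \]
Indeed, $r(E)\in\cA$ and $r$ is right adjoint to the inclusion, so $\Hom_\cA(r(E),r(E'))=\Hom_\cC(r(E),E')$; a nonzero map $r(E)\to E'$ extends along the essential inclusion $r(E)\subseteq E$ (Lemma \ref{lem:ess-hull-indec}.a) to a map $E\to E'$, because $E'$ is injective in $\cC$, and this extension is nonzero since its restriction to $r(E)$ is.

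Now let $A\subseteq\Sp(\cC)$ be stable; I claim $r(A):=\{[r(E)]:[E]\in A\}$ is stable, checking the criterion of Lemma \ref{lem:stable-2}.b. Suppose $[U]\in\Sp(\cA)$ satisfies $\Hom_\cA(U,U')\neq0$ for some $[U']\in r(A)$; write $U'=r(E')$ with $[E']\in A$ and put $E:=E_\cC(U)$, so that $r(E)\cong U$ by Lemma \ref{lem:ess-hull-indec}.b. Then $\Hom_\cA(r(E),r(E'))\neq0$, hence $\Hom_\cC(E,E')\neq0$ by the remark, hence $[E]\in A$ because $A$ is stable and $[E']\in A$ (Lemma \ref{lem:stable-2}); so $[U]=[r(E)]\in r(A)$. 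Therefore $r(A)$ is stable. Equivalently, the inverse bijection $\Sp(\cA)\to\Sp(\cC)$ pulls back stable subsets of $\Sp(\cC)$ to stable subsets of $\Sp(\cA)$, hence is continuous for the stable topologies; together with Corollary \ref{cor:Zariski-homeo} it is continuous for the stable Zariski topologies as well.

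No real obstacle is expected; the subtlety to keep in mind is that the assertion is genuinely one-sided --- the converse implication $\Hom_\cC(E,E')\neq0\Rightarrow\Hom_\cA(r(E),r(E'))\neq0$ can fail, since a nonzero map $E\to E'$ may annihilate $r(E)$, so the bijections themselves need not be continuous --- and accordingly the proof must (and does) use only the "easy half" of the $\Hom$-comparison.
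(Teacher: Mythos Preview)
Your proof is correct and follows essentially the same route as the paper: both arguments use the characterization of stable subsets from Lemma~\ref{lem:stable-2}, the adjunction $\Hom_\cA(r(E),r(E'))=\Hom_\cC(r(E),E')$, and the extension of a nonzero map along the essential inclusion $r(E)\subseteq E$ using injectivity of $E'$. The only cosmetic difference is that you isolate the $\Hom$-implication as a separate remark first, whereas the paper runs the argument inline.
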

\begin{proof}
It suffices to consider the map $\Sp(\cC) \rightarrow \Sp(\cA)$ sending $[E]$ to $[r(E)]$. Let $S \subseteq \Sp(\cC)$ be any stable subset and $[r(E')]$ be any point in the image of $S$. Now suppose that we have $[U] \in \Sp(\cA)$ such that $\Hom_\cA(U,r(E')) \neq 0$. Then $\Hom_\cC(U,E') \neq 0$ and hence $\Hom_\cC(E_\cC(U),E') \neq 0$. Using Lemma \ref{lem:stable-2} we deduce that $[E_\cC(U)] \in S$. But $U \cong r(E_\cC(U))$. It follows that $[U]$ lies in the image of $S$. Using again Lemma \ref{lem:stable-2} we conclude that the image of $S$ is stable.
\end{proof}

\begin{lemma}\label{lem:Rr}
   Suppose that $\cL$ is a stable localising subcategory of $\cC$; then the right derived functors $R^j r$ of $r$, for any $j \geq 0$, map $\cL$ to $\cA \cap \cL$.
\end{lemma}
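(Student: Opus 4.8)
The plan is to compute $R^j r(V)$, for $V$ in $\cL$, from a \emph{minimal} injective resolution of $V$ in $\cC$ and to verify that every term of the resulting complex $r(E^\bullet)$ lies in $\cL$. Before doing so, I would observe that the membership $R^j r(V)\in\cA$ holds for every $V$ in $\cC$ and carries no real information: since $r$ takes values in $\cA$, the functor $R^j r$ is computed from complexes of objects of $\cA$, and $\cA$ is closed under subobjects and quotient objects in $\cC$ by Definition \ref{def:thickening}.a, so the cohomology objects of such a complex, being subquotients of its terms, again lie in $\cA$. The content of the lemma is therefore that $R^j r(V)$ lies in $\cL$ whenever $V$ does.

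So fix $V$ in $\cL$ and choose a minimal injective resolution $0 \to V \xrightarrow{d^{-1}} E^0 \xrightarrow{d^0} E^1 \xrightarrow{d^1} \cdots$ in $\cC$, by which I mean that $E^0 = E_\cC(V)$ and, for every $j \geq 0$, $E^{j+1}$ is an injective hull of $\coker(d^{j-1})$. Such a resolution exists because $\cC$, being a Grothendieck category, admits injective hulls.

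The key step is to prove by induction on $j$ that every $E^j$ lies in $\cL$. For $j = 0$, the inclusion $V \subseteq E_\cC(V)$ is an essential extension, so $E^0$ lies in $\cL$ because $\cL$ is stable. For the inductive step, if $E^j$ lies in $\cL$ then $\coker(d^{j-1})$ is a quotient object of $E^j$, hence lies in $\cL$ as $\cL$ is localising; therefore its injective hull $E^{j+1}$ again lies in $\cL$ by stability. This induction is the only place where the stability hypothesis is used.

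To conclude, recall that $r : \cC \to \cA \subseteq \cC$ is a subfunctor of $\id_\cC$ by Definition \ref{def:thickening}.b, so $r(E^\bullet)$ is a subcomplex of $E^\bullet$; in particular each $r(E^j)$ is a subobject of $E^j \in \cL$ and therefore lies in $\cL$. Hence $R^j r(V) = H^j(r(E^\bullet))$ is a subquotient of $r(E^j)$, so it lies in $\cL$, and combined with the first paragraph it lies in $\cA \cap \cL$. I do not expect a genuine obstacle; the only point that needs care is to use a \emph{minimal} injective resolution rather than an arbitrary one, whose terms generally would not lie in $\cL$. (The same argument with $r$ and $\cA$ replaced by $r_i$ and $\cA_i$ shows that $R^j r_i$ maps $\cL$ into $\cA_i \cap \cL$.)
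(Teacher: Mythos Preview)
Your proof is correct and follows essentially the same approach as the paper: both arguments use stability of $\cL$ to build an injective resolution of $V$ whose terms all lie in $\cL$ (you make the induction explicit via a minimal resolution, the paper just asserts such a resolution exists), then observe that $r$ sends $\cL$ into $\cA\cap\cL$ so that the cohomology of $r(E^\bullet)$ lies in $\cA\cap\cL$. Your remark that the $\cA$-membership is automatic and the real content is membership in $\cL$ is a helpful clarification not spelled out in the paper.
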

\begin{proof}
Let $V$ be any object in $\cL$. Since $\cL$ is stable an injective hull $E(V)$ of $V$ in $\cC$ lies already in $\cL$. Hence we find an injective resolution $V \xrightarrow{\simeq} I^\bullet$ of $V$ in $\cC$ all of whose terms lie in $\cL$. The values $R^j r(V)$ of the right derived functors in question are the cohomology objects of the complex $r(I^\bullet)$. On the other hand, Def.\ \ref{def:thickening}.b easily implies that $r$ maps $\cL$ to $\cA \cap \cL$. It follows that the complex $r(I^\bullet)$ and then also its cohomology objects lie in $\cA \cap \cL$.
\end{proof}

For the rest of this section we impose on our thickening $\cC$ of $\cA$ the additional condition that \textbf{$\cA$ has a noetherian projective generator $P$}. Note that $P$ then also is noetherian as an object in $\cC$. We let $H := \End_\cA(P)^{\op} = \End_\cC(P)^{\op}$ denote the opposite ring of the endomorphism ring of the generator $P$. In this situation one has the equivalence of categories
\begin{align}\label{f:cat-equiv}
  \cA & \xrightarrow{\;\simeq\;} \Mod(H) \\
    A & \longmapsto \Hom_\cA(P,A) = \Hom_\cC(P,A)   \nonumber
\end{align}
(cf.\ \cite{Pop} Cor.\ 5.9.5). In fact, the $\Ext$-functors $\Ext_\cC^j(P,-)$ on $\cC$, for $j \geq 0$, can naturally be viewed as functors
\begin{equation*}
  \Ext_\cC^j(P,-) : \cC \longrightarrow \Mod(H) \ .
\end{equation*}
In particular, the natural isomorphism
\begin{equation}\label{f:ss-proj}
  \Ext_\cC^j(P,V) = \Hom_\cA(P, R^j r(V)) \qquad\text{for any $V$ in $\cC$ and any $j \geq 0$}
\end{equation}
from Cor.\ \ref{cor:ss-proj} is an isomorphism of $H$-modules.

\begin{lemma}\label{lesldt}
Let $A$ and $V$ be objects in $\cA$.
\begin{itemize}
\item[a)] There is a natural exact sequence of abelian groups
\begin{equation*}
  0 \rightarrow \Ext^1_{\cA}(A, V) \rightarrow \Ext^1_{\cC}(A,V) \xrightarrow{\varphi_{A,V}} \Hom_H(\Hom_\cA(P,A),\Ext_\cC^1(P,V)) \rightarrow \Ext^2_{\cA}(A, V) \ .
\end{equation*}
\item[b)] If $V$ is an injective object in $\cA$, then $\varphi_{A,V}$ is a natural isomorphism.
\end{itemize}
\end{lemma}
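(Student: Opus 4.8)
The plan is to build the exact sequence from the Grothendieck spectral sequence of Proposition \ref{prop:ss}, namely $E_2^{i,j} = \Ext_\cA^i(A, R^j r(V)) \Rightarrow \Ext_\cC^{i+j}(A,V)$, and then to identify the term $\Ext^1_\cA(A, R^1 r(V))$ with the $\Hom_H$ group in the statement. First I would write down the five-term (low-degree) exact sequence attached to any first-quadrant cohomological spectral sequence: for the present $E_2$-page this reads
\[ 0 \to E_2^{1,0} \to H^1 \to E_2^{0,1} \xrightarrow{d_2} E_2^{2,0} \to \ker\big(H^2 \to E_2^{0,2}\big) \to 0, \]
which specializes to
\[ 0 \to \Ext^1_\cA(A, r(V)) \to \Ext^1_\cC(A,V) \to \Ext^0_\cA(A, R^1 r(V)) \xrightarrow{d_2} \Ext^2_\cA(A, r(V)) \to \cdots. \]
Since $V$ lies in $\cA$ we have $r(V) = V$ by Lemma \ref{lem:r-A}.c (equivalently Definition \ref{def:thickening}.b), so the first term is $\Ext^1_\cA(A,V)$ and the fourth is $\Ext^2_\cA(A,V)$, matching the statement. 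It remains to identify the middle term $\Ext^0_\cA(A, R^1 r(V)) = \Hom_\cA(A, R^1 r(V))$ with $\Hom_H\big(\Hom_\cA(P,A), \Ext^1_\cC(P,V)\big)$.

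For that identification I would use the equivalence of categories $\Hom_\cA(P,-) : \cA \xrightarrow{\simeq} \Mod(H)$ from \eqref{f:cat-equiv}, which sends $A$ to $\Hom_\cA(P,A)$ and, being an equivalence, gives a natural isomorphism $\Hom_\cA(A, R^1 r(V)) \cong \Hom_H\big(\Hom_\cA(P,A), \Hom_\cA(P, R^1 r(V))\big)$. Finally, by \eqref{f:ss-proj} (i.e.\ Corollary \ref{cor:ss-proj} applied to the projective $P$), $\Hom_\cA(P, R^1 r(V)) \cong \Ext^1_\cC(P,V)$ as $H$-modules, and substituting this yields exactly the claimed group. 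Naturality of $\varphi_{A,V}$ in both variables follows from the naturality of the five-term sequence in the pair $(A,V)$ together with the naturality of the equivalence \eqref{f:cat-equiv} and of the isomorphism in Corollary \ref{cor:ss-proj}; I would spell this out only to the extent of noting that each ingredient is functorial.

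For part b), suppose $V$ is injective in $\cA$. The cleanest route is: since the equivalence \eqref{f:cat-equiv} carries injectives to injectives, $\Hom_\cA(P,V)$ is an injective $H$-module, so the functor $\Hom_H(-, \Ext^1_\cC(P,V))$ need not itself be exact, but instead I would argue that the edge map is an isomorphism by showing the two outer terms vanish. Concretely, I would show $\Ext^1_\cA(A,V) = 0$ and $\Ext^2_\cA(A,V) = 0$: these hold because $V$ is injective in the abelian category $\cA$, so $\Ext^i_\cA(A,V) = 0$ for all $i \geq 1$. Hence the four-term sequence of part a) collapses to an isomorphism $\Ext^1_\cC(A,V) \xrightarrow{\ \varphi_{A,V}\ } \Hom_H\big(\Hom_\cA(P,A), \Ext^1_\cC(P,V)\big)$, and its naturality is inherited from part a).

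The main obstacle I anticipate is purely bookkeeping rather than conceptual: one must check that the edge homomorphism of the spectral sequence, after transport through the equivalence \eqref{f:cat-equiv} and the identification of Corollary \ref{cor:ss-proj}, really is the natural map $\varphi_{A,V}$ one would write down by hand (evaluating an extension class against the functor $\Hom_\cC(P,-)$), and not merely some isomorphic map. I would handle this by describing $\varphi_{A,V}$ intrinsically first — send a class in $\Ext^1_\cC(A,V)$, represented by an extension $0 \to V \to Y \to A \to 0$, to the induced $H$-linear map $\Hom_\cA(P,A) \to \Ext^1_\cC(P,V)$ from the long exact sequence of $\Ext_\cC(P,-)$ — and then checking that this coincides with the spectral-sequence edge map; this compatibility is standard (it is the usual identification of the edge map in a Grothendieck spectral sequence with the map induced on derived functors), so I would cite it rather than reprove it.
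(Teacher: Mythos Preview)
Your proposal is correct and follows essentially the same route as the paper: take the low-degree exact sequence of the Grothendieck spectral sequence from Proposition~\ref{prop:ss}, use $r(V)=V$ for $V\in\cA$, and identify $\Hom_\cA(A,R^1r(V))$ with the $\Hom_H$-group via the equivalence \eqref{f:cat-equiv} together with \eqref{f:ss-proj}; for part b) the paper likewise just observes $\Ext^1_\cA(A,V)=\Ext^2_\cA(A,V)=0$. The paper does not worry about the intrinsic description of $\varphi_{A,V}$ as you do in your final paragraph --- it simply defines $\varphi_{A,V}$ to be the edge map composed with the identifications --- so that extra discussion is unnecessary here.
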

\begin{proof}
a) By Prop.\ \ref{prop:ss} we have the convergent Grothendieck spectral sequence
\begin{equation*}
  \Ext^i_{\cA}(A, R^jr(V)) \Rightarrow \Ext^{i+j}_{\cC}(A,V) \ .
\end{equation*}
The exact sequence of low degree terms is
\begin{equation*}
  0 \rightarrow \Ext^1_{\cA}(A, r(V)) \rightarrow \Ext^1_{\cC}(A,V) \rightarrow \Hom_{\cA}(A, R^1r(V)) \rightarrow \Ext^2_{\cA}(A, r(V)) \ .
\end{equation*}
Since $V$ lies in $\cA$ we have $r(V) = V$. Moreover, using \eqref{f:ss-proj} for the second equality, we obtain
\begin{align*}
  \Hom_{\cA}(A, R^1r(V)) & = \Hom_H(\Hom_\cA(P,A),\Hom_\cA(P, R^1 r(V)))    \\
                         & = \Hom_H(\Hom_\cA(P,A),\Ext_\cC^1(P,V)) \ .
\end{align*}

b) Since $V$ is injective, we have $\Ext^1_\cA(A,V) = \Ext^2_\cA(A,V) = 0$.
\end{proof}

Now let $\cL$ be a localising subcategory of $\cC$. Then $\cL \cap \cA$ is a localising subcategory of $\cA$. We want to investigate how the stability requirements for $\cL$ and $\cA \cap \cL$ are related to each other. It is obvious that if $\cL$ is stable in $\cC$ then $\cA \cap \cL$ is stable in $\cA$. The converse is more subtle. We denote the corresponding localising subcategory of $\Mod(H)$ by
\begin{equation*}
  \cL^H := \ \text{essential image of $\cA \cap \cL$ in $\Mod(H)$ under the functor $\Hom_\cA(P,-)$}.
\end{equation*}

\begin{proposition}\label{TwoStep}
Suppose that $\cA \cap \cL$ is stable in $\cA$ and that $\Ext^1_{\cC}(P,-)$ maps $\cA \cap \cL$ to $\cL^H$. Let $V$ in $\cC$ be a nonzero object such that
\begin{itemize}
\item[a)] $V = r_1(V)$, and
\item[b)] $r(V)$ is the largest subobject of $V$ contained in $\cL$.
\end{itemize}
Then $V = r(V)$.
\end{proposition}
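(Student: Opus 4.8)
The plan is to show that the quotient $Z := V/r(V)$ is zero. First I would unwind hypothesis (a): since $V = r_1(V)$, the defining relation $r_1(V)/r(V) = r_0(V/r(V))$ forces $V/r(V) = r_0(V/r(V))$, which lies in $\cA$ because $r_0$ takes values in $\cA$. Hence there is a short exact sequence $0 \to X \to V \to Z \to 0$ in $\cC$ with both $X := r(V)$ and $Z$ in $\cA$. Next I would record two consequences of hypothesis (b): first, $X = r(V)$ being the largest subobject of $V$ contained in $\cL$ means $X \in \cA \cap \cL$; second, $Z$ has no nonzero subobject lying in $\cL$, because any $W$ with $X \subseteq W \subseteq V$ and $W/X \in \cL$ is an extension of two objects of $\cL$, hence lies in $\cL$, hence equals $X$ by maximality. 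Since $Z \in \cA$, this says in particular that $Z$ has no nonzero subobject in $\cA \cap \cL$.

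The heart of the argument would be to apply $\Hom_\cC(P,-)$ to the short exact sequence and observe that the induced map $\Hom_\cC(P, X) \to \Hom_\cC(P, V)$ is an isomorphism. Indeed, any morphism $P \to V$ in $\cC$ has image a quotient of $P$, hence an object of $\cA$, hence contained in the largest subobject $r(V) = X$ of $V$ lying in $\cA$; so every morphism $P \to V$ factors (uniquely) through $X \hookrightarrow V$. Combined with the long exact $\Ext_\cC(P,-)$-sequence, exactness at $\Hom_\cC(P,V)$ then shows that $\Hom_\cC(P,V) \to \Hom_\cC(P,Z)$ is zero, and therefore that the connecting map $\partial : \Hom_\cC(P, Z) \to \Ext^1_\cC(P, X)$ is injective.

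From here I would conclude as follows. Since $X \in \cA \cap \cL$, the hypothesis that $\Ext^1_\cC(P,-)$ sends $\cA \cap \cL$ into $\cL^H$ gives $\Ext^1_\cC(P, X) \in \cL^H$, and $\cL^H$, being a localising subcategory of $\Mod(H)$, is closed under subobjects; so the injection $\partial$ exhibits $\Hom_\cC(P,Z) = \Hom_\cA(P,Z)$ as an object of $\cL^H$. Transporting back through the equivalence $\Hom_\cA(P,-) : \cA \xrightarrow{\simeq} \Mod(H)$, which restricts to an equivalence $\cA \cap \cL \xrightarrow{\simeq} \cL^H$, I obtain $Z \in \cA \cap \cL$. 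But $Z$ has no nonzero subobject in $\cA \cap \cL$, and applying this to the subobject $Z \subseteq Z$ forces $Z = 0$, i.e. $V = r(V)$, as desired.

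The one step where care is needed — and which I expect to be the crux — is the isomorphism $\Hom_\cC(P, X) \cong \Hom_\cC(P, V)$: it is precisely this that makes $\partial$ \emph{injective}, so that $\Hom_\cA(P, Z)$ appears as a \emph{subobject} of something in $\cL^H$ (and the torsion-freeness of $Z$ coming from (b) can then be used), rather than merely as a module mapping into it. Everything else is formal manipulation of the long exact sequence and of the closure properties of localising subcategories. I note that hypotheses (a) and (b) are both essential to this plan — (a) to place $Z$ in $\cA$ so that the hypothesis on $\cA \cap \cL$ applies, (b) both for $X \in \cA \cap \cL$ and for the torsion-freeness of $Z$ — whereas the stability of $\cA \cap \cL$ in $\cA$ appears not to be needed for this particular implication.
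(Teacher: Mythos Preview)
Your proof is correct and is genuinely simpler than the paper's. The key observation that the inclusion $r(V) \hookrightarrow V$ induces an isomorphism $\Hom_\cC(P, r(V)) \xrightarrow{\cong} \Hom_\cC(P, V)$ is exactly the adjunction between the inclusion $\cA \hookrightarrow \cC$ and $r$ (applied with $P \in \cA$), and this single fact lets you read off directly from the long exact sequence that $\Hom_\cA(P, Z)$ embeds $H$-linearly into $\Ext^1_\cC(P, X) \in \cL^H$, forcing $Z \in \cA \cap \cL$ and hence $Z = 0$ by the torsion-freeness coming from (b).

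The paper takes a more circuitous route: it passes to the injective hull $E_\cC(V)$, invokes the stability of $\cA \cap \cL$ in $\cA$ to ensure that $r(E_\cC(V))$ lies in $\cA \cap \cL$ while also being injective in $\cA$, and then applies Lemma~\ref{lesldt}(b) --- the identification $\Ext^1_\cC(A, r(E)) \cong \Hom_H(\Hom_\cA(P, A), \Ext^1_\cC(P, r(E)))$ valid for injective second argument --- followed by an annihilator-and-splitting argument to produce a contradiction from any nonzero element of $\Hom_\cA(P, (r(E)+V)/r(E))$. Your approach bypasses all of this: you never enlarge to an injective hull, never invoke Lemma~\ref{lesldt}, and, as you correctly note, never use the stability hypothesis on $\cA \cap \cL$. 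In the only application (Theorem~\ref{thm:stable-A-L}) stability is assumed anyway, so nothing is lost there, but your argument does establish Proposition~\ref{TwoStep} under strictly weaker hypotheses than stated.
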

\begin{proof}
We first note that we have $r(V) \neq 0$ by Def.\ \ref{def:thickening}.b.

Let $E_{\cC}(V)$ be an injective hull of $V$ in $\cC$ and let $E = r_1(E_{\cC}(V))$. Then $r(E) = r(E_{\cC}(V))$ is an injective object in $\cA$, and $r(V) \subseteq r(E)$. In fact, we have the inclusions
\begin{equation*}
  \xymatrix{
    r(E)  \ar[rr]^-{=} &  & r(E_\cC(V)) \ar[d]^{\subseteq} \\
    r(V) \ar[u]_{\subseteq} \ar[r]^-{\subseteq} & V  \ar[r]^-{\subseteq} & E_\cC(V) .    }
\end{equation*}
By Lemma \ref{lem:ess-hull-indec}.a the extension $r(V) \subseteq V$ is essential. Hence the extension $r(V) \subseteq r(E)$ is essential as well.
Since $\cA \cap \cL$ is stable in $\cA$ and since $r(V)$ lies $\cA \cap \cL$ by assumption b), we see that $r(E)$ lies in $\cA \cap \cL$.

Now, because $r_1$ is left exact, $V = r_1(V) \subseteq r_1(E_{\cC}(V)) = E$. Next, $(r(E) + V)/r(E) \cong V / (V \cap r(E)) = V / r(V)$ because $r$ is left exact.  Since $r(V)$ is the largest subobject of $V$ contained in $\cL$, we see that $V/r(V)$ is $\cL$-torsion free. Also, $V = r_1(V)$ means that $V / r(V)$ lies in $\cA$. We deduce that $(r(E) + V)/r(E) \cong V / r(V)$ lies in $\cA$ and is $\cL$-torsion free. Hence we need to show that $V \subseteq r(E)$.

Consider any intermediate object $r(E) \subseteq V'' \subseteq V' := r(E) + V \subseteq E$ in $\cC$. We obtain extension classes $e' = [0 \rightarrow r(E) \rightarrow V' \rightarrow V'/r(E) \rightarrow 0] \in \Ext_\cC^1(V'/r(E),r(E))$ and analogously $e'' \in \Ext_\cC^1(V''/r(E),r(E))$. Note that $V'/r(E)$, $V''/r(E)$, and $r(E)$ are objects in $\cA$ with $V'/r(E)$ and $V''/r(E)$ being $\cA \cap \cL$-torsion free and $r(E)$ being injective in $\cA$. Hence Lemma \ref{lesldt} applies. We define the $H$-modules $N'' := \Hom_\cA(P,V''/r(E)) \subseteq N' := \Hom_\cA(P,V'/r(E))$ and obtain the commutative diagram
\begin{equation*}
  \xymatrix{
  \Ext^1_{\cC}(V'/r(E),r(E))\ar[d]_{\subseteq^\ast} \ar[rr]^-{\varphi'}_-{\cong} && \Hom_H(N', \Ext^1_{\cC}(P, r(E)))\ar[d]^{\res}  \\
\Ext^1_{\cC}(V''/r(E),r(E))\ar[rr]_-{\varphi''}^-{\cong} && \Hom_H(N'', \Ext^1_{\cC}(P, r(E))).
}
\end{equation*}
with horizontal isomorphisms. Obviously the left perpendicular arrow maps $e'$ to $e''$. Since $r(E)$ lies in $\cA \cap \cL$ the $H$-module $\Ext^1_{\cC}(P, r(E))$ lies in $\cL^H$ by our assumptions. We have the $H$-linear map $f' := \varphi'(e') : N' \rightarrow \Ext^1_{\cC}(P, r(E))$. Suppose now that $N' \neq 0$ and choose a nonzero element $v \in N'$. Let $J := \Ann_H(f'(v)) \subseteq H$ be the annihilator left ideal. Then $H/J \cong Hf'(v)$ lie in $\cL^H$. We now consider specifically the $H$-submodule $N'' := Jv \subseteq N'$. By the category equivalence there is a unique $V''$ as above such that $N'' = \Hom_\cA(P,V''/r(E))$. We first check that $N'' \neq 0$. Otherwise we would have $J \subseteq \Ann_H(v)$, so that $H/J$ surjects onto $Hv \subseteq N'$, which implies that $Hv$ lies in $\cL^H$. On the other hand the modules $Hv \subseteq N'$ are $\cL^H$-torsion free, which is a contradiction. Hence $N'' \neq 0$. Now
\begin{equation*}
  \res(\varphi'(e'))(N'') = \res(f')(N'') = f'(N'') = f'(Jv) = J f'(v) = 0 \ .
\end{equation*}
The commutativity of the above diagram then implies that the extension class $e'' = 0$, i.e., the short exact sequence $0 \rightarrow r(E) \rightarrow V'' \rightarrow V''/r(E) \rightarrow 0$ in $\cC$ splits. But then because $V''/r(E) \neq 0$ we see $r(E)$ is not essential in $E$, which contradicts Lemma \ref{lem:ess-hull-indec}.a. It follows that $N'$ must be zero, i.e., that $r(E) + V = r(E)$.
\end{proof}

\begin{theorem}\label{thm:stable-A-L}
Suppose that $\cA \cap \cL$ is stable in $\cA$. Then the following are equivalent:
\begin{itemize}
\item[a)] $\cL$ is stable in $\cC$,
\item[b)] $R^jr$ maps $\cL$ to $\cA \cap \cL$ for all $j \geq 0$,
\item[c)] $\Ext^j_{\cC}(P,-)$ maps $\cL$ to $\cL^H$ for all $j \geq 0$,
\item[d)] $\Ext^1_{\cC}(P,-)$ maps $\cL$ to $\cL^H$,
\item[e)] $\Ext^1_{\cC}(P,-)$ maps $\cA \cap \cL$ to $\cL^H$.
\end{itemize}
\end{theorem}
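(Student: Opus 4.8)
The plan is to prove the cycle of implications $a)\Rightarrow b)\Rightarrow c)\Rightarrow d)\Rightarrow e)\Rightarrow a)$, the first four being short and $e)\Rightarrow a)$ carrying all the weight. For $a)\Rightarrow b)$ I would simply cite Lemma \ref{lem:Rr}. For $b)\Rightarrow c)$ I would use the natural $H$-linear isomorphism $\Ext_\cC^j(P,V)=\Hom_\cA(P,R^jr(V))$ of \eqref{f:ss-proj}: if $V\in\cL$ then $R^jr(V)\in\cA\cap\cL$ by $b)$, and since $\cL^H$ is by definition the image of $\cA\cap\cL$ under the equivalence $\Hom_\cA(P,-)\colon\cA\xrightarrow{\simeq}\Mod(H)$, this puts $\Ext_\cC^j(P,V)$ in $\cL^H$. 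The implications $c)\Rightarrow d)$ (specialise to $j=1$) and $d)\Rightarrow e)$ (since $\cA\cap\cL\subseteq\cL$) are immediate.

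For $e)\Rightarrow a)$ I would argue via Lemma \ref{lem:stable-1}: it suffices to show that an indecomposable injective object $E$ of $\cC$ which has a nonzero subobject in $\cL$ must itself lie in $\cL$. Write $t_\cL(E)$ for the largest subobject of $E$ in $\cL$, so $t_\cL(E)\neq 0$. The first point is that $r(E)=r_0(E)\in\cA\cap\cL$: indeed $r(E)\subseteq E$ is essential (Lemma \ref{lem:ess-hull-indec}.a), so $r(E)\cap t_\cL(E)$ is a nonzero subobject of $r(E)$ lying in $\cA\cap\cL$, and since $r(E)$ is indecomposable injective in $\cA$ (Lemma \ref{lem:ess-hull-indec}.c) and $\cA\cap\cL$ is stable in $\cA$ by hypothesis, Lemma \ref{lem:stable-1} applied inside $\cA$ forces $r(E)\in\cA\cap\cL$. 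Because $E=\bigcup_{j\ge 0}r_j(E)$ (Lemma \ref{lem:union}) and $\cL$ is closed under filtered unions of subobjects, it then remains to show by induction on $j$ that $r_j(E)\in\cL$ and $\Ext_\cC^1(P,r_j(E))\in\cL^H$; the case $j=0$ is what was just proved, together with hypothesis $e)$.

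The engine of the inductive step is the injectivity of $E$. Applying $\Hom_\cC(P,-)$ to $0\to r_j(E)\to E\to E/r_j(E)\to 0$, noting that $\Hom_\cC(P,r_j(E))=\Hom_\cA(P,r_0(r_j(E)))=\Hom_\cA(P,r(E))=\Hom_\cC(P,E)$ (the adjunction $\Hom_\cC(P,-)=\Hom_\cA(P,r(-))$ together with $r_0\circ r_j=r_0$) and that $\Ext_\cC^1(P,E)=0$, one obtains a natural $H$-linear isomorphism
\[ \Hom_\cA\bigl(P,\,r_{j+1}(E)/r_j(E)\bigr)\;=\;\Hom_\cC\bigl(P,\,E/r_j(E)\bigr)\;\xrightarrow{\ \simeq\ }\;\Ext_\cC^1(P,r_j(E)), \]
using $r_0(E/r_j(E))=r_{j+1}(E)/r_j(E)$ for the first equality. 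By the inductive hypothesis the right-hand term is in $\cL^H$, hence so is the left-hand term, and since $\Hom_\cA(P,-)$ is an equivalence carrying $\cA\cap\cL$ onto $\cL^H$ this gives $r_{j+1}(E)/r_j(E)\in\cA\cap\cL$. As $\cL$ is closed under extensions and $r_j(E)\in\cL$, we get $r_{j+1}(E)\in\cL$. Finally, the long exact $\Ext$-sequence of $0\to r_j(E)\to r_{j+1}(E)\to r_{j+1}(E)/r_j(E)\to 0$ exhibits $\Ext_\cC^1(P,r_{j+1}(E))$ as an extension of a subobject of $\Ext_\cC^1(P,r_{j+1}(E)/r_j(E))$ by a quotient of $\Ext_\cC^1(P,r_j(E))$; the former is in $\cL^H$ by hypothesis $e)$ (as $r_{j+1}(E)/r_j(E)\in\cA\cap\cL$) and the latter by the inductive hypothesis, so since $\cL^H$ is a localising subcategory of $\Mod(H)$ it contains $\Ext_\cC^1(P,r_{j+1}(E))$, closing the induction.

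The hard part is exactly this propagation up the radical filtration $r_0(E)\subseteq r_1(E)\subseteq\cdots$: the stability hypothesis on $\cA\cap\cL$ only controls the bottom layer, and each further layer $r_{j+1}(E)/r_j(E)$ is governed by an $\Ext^1$-group of the previous stage, which must be forced into $\cL^H$. Proposition \ref{TwoStep} packages precisely this one-step mechanism — its standing hypotheses are nothing but the theorem's assumption together with $e)$ — so an equivalent route would be to feed, at each stage, the object $V=r_1\bigl(E/r_{j-1}(E)\bigr)\in\cA_1$ into Proposition \ref{TwoStep}; identifying $r(V)$ with the largest $\cL$-subobject of $V$ is the same computation performed above through the injectivity of $E$, and is the step I would expect to demand the most care.
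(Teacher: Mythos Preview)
Your proof is correct, and for $e)\Rightarrow a)$ it takes a genuinely different route from the paper's. The paper argues by contradiction: it lets $Y$ be the $\cL$-torsion part of $E$, assumes $Y\neq E$, picks the least $n$ with $r_n(Y)\neq r_n(E)$, manufactures from this a two-step object $X\in\cA_1$ satisfying the hypotheses of Proposition~\ref{TwoStep}, and derives a contradiction. Proposition~\ref{TwoStep} in turn unwinds the low-degree edge map of the spectral sequence (via Lemma~\ref{lesldt}) together with an annihilator/essentiality argument. Your argument instead runs a direct induction up the filtration $r_0(E)\subseteq r_1(E)\subseteq\cdots$, the key observation being the $H$-linear isomorphism
\[
\Hom_\cA\bigl(P,\,r_{j+1}(E)/r_j(E)\bigr)\;\cong\;\Ext_\cC^1\bigl(P,\,r_j(E)\bigr)
\]
obtained from the long exact sequence of $0\to r_j(E)\to E\to E/r_j(E)\to 0$, using that $E$ is injective and that $\Hom_\cC(P,r_j(E))\to\Hom_\cC(P,E)$ is already an isomorphism (any $P\to E$ lands in $r(E)\subseteq r_j(E)$). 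This is cleaner: it bypasses Proposition~\ref{TwoStep} entirely and makes transparent why hypothesis $e)$ is exactly what is needed to climb one step. The price is that you must carry the auxiliary statement $\Ext_\cC^1(P,r_j(E))\in\cL^H$ through the induction, but that is painless. The paper's route has the advantage of isolating the ``one step up'' mechanism as a standalone proposition, which could be reused elsewhere; your route shows that for this theorem that level of generality is not needed.
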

\begin{proof}
a) $\Rightarrow$ b). This is Lemma \ref{lem:Rr}.

b) $\Rightarrow$ c). This follows from \eqref{f:ss-proj} and the definition of $\cL^H$.

c) $\Rightarrow$ d) $\Rightarrow$ e). These are trivial.

e) $\Rightarrow$ a). Let $E$ be an indecomposable injective object in $\cC$ and let $Y$ be its largest subobject in $\cL$. We suppose that $Y \neq 0$ and will aim to show that $Y = E$. Let $E_n := r_n(E)$ for $n \geq 0$ and note that $E = \bigcup\limits_{n=0}^\infty E_n$ by Lemma \ref{lem:union}. Correspondingly $Y = \bigcup\limits_{n=0}^\infty Y_n$ with $Y_n := r_n(Y)$, and note that $Y_n = Y \cap V_n$ for all $n \geq 0$ because each $r_n$ is left exact.

Since $Y$ is the largest subobject of $E$ contained in $\cL$, $E/Y$ is $\cL$-torsion free. Hence for any $n \geq 0$, $(E_n + Y)/Y$ is also $\cL$-torsion free. But $(E_n + Y)/Y \cong E_n / (E_n \cap Y) = E_n / Y_n$. Hence for all $n \geq 0$, $E_n / Y_n$ is $\cL$-torsion free, and $Y_n$ is the largest subobject of $E_n$ contained in $\cL$.

With $Y \neq 0$ also $Y_0 = r(Y)$ is nonzero. Since $E$ is an indecomposable injective in $\cC$, it follows that $Y_0$ is essential in $E$.

Suppose for a contradiction that $Y \neq E$ and let $n$ be minimal such that $Y_n \neq E_n$. Now, $Y_0 \subseteq E_0$ is an essential extension in $\cA$ with $Y_0$ lying in $\cA \cap \cL$; since $\cA \cap \cL$ is stable in $\cA$ we see that $Y_0 = E_0$. Hence $n \geq 1$, and the minimality of $n$ implies that $Y_{n-1} = E_{n-1}$ so that $E_{n-1} \subseteq Y_n$.

Now, consider the short exact sequence
\begin{equation*}
   0 \rightarrow Y_n / E_{n-1} \rightarrow E_n / E_{n-1} \rightarrow E_n / Y_n \rightarrow 0
\end{equation*}
in $\cA$. Note that $Y_n / E_{n-1}$ lies in $\cL$, and that $E_n / Y_n$ is $\cL$-torsion free and nonzero. If $Y_n / E_{n-1}$ is also nonzero, then because $\cA \cap \cL$ is stable in $\cA$, this extension is not essential. This gives us some subobject $W$ of $E_n$ containing $E_{n-1}$, such that $W / E_{n-1}$ is isomorphic to a nonzero subobject of $E_n/Y_n$; thus $W / E_{n-1}$ is also nonzero and $\cL$-torsion free. If $Y_n = E_{n-1}$, then we can take $W = E_n$ to obtain an object with the same properties.

Write $E_{-1} := 0$ and $X := W/E_{n-2} \subseteq E_n / E_{n-2}$, and consider the short exact sequence
\begin{equation*}
  0 \rightarrow E_{n-1} / E_{n-2} \rightarrow X \rightarrow W / E_{n-1} \rightarrow 0 \ .
\end{equation*}
Since the outer terms in this short exact sequence lie in $\cA$, we see that $r_1(X) = X$. Also, $r(X) \subseteq r(E_n / E_{n-2}) = E_{n-1}/E_{n-2} = r( E_{n-1}/E_{n-2} ) \subseteq r(X)$, where the left equality uses Lemma \ref{lem:preradical}.b, shows that
\begin{equation*}
  r(X) = E_{n-1} / E_{n-2} \subsetneqq X \ .
\end{equation*}
Since $W / E_{n-1}$ is $\cL$-torsion free and $E_{n-1} / E_{n-2} = Y_{n-1} / E_{n-2}$ is a subquotient of $Y$, we see that $r(X)$ is the largest subobject of $X$ contained in $\cL$. Applying now Prop.\ \ref{TwoStep} with $X$ leads to a contradiction.
\end{proof}

\subsection{A ring-theoretic case}\label{sec:rings}

In this section, we discuss stability for the category $\cC = \Mod(H)$ of left $H$-modules over a ring $H$ which is assumed to be left noetherian and finitely generated as a module over its centre $Z(H)$. This material will be needed in $\S \ref{sec:Z-scheme}$.

The ring $Z(H)$ is necessarily noetherian by \cite{MCR} Cor.\ 10.1.11(ii). The ring $H$ is also right noetherian, and the abelian categories $\Mod(H)$ and $\Mod(Z(H))$ are locally noetherian Grothendieck categories. Of course, $\Spec(H)$ and $\Spec(Z(H))$ denote the prime ideal spectra of the respective rings equipped with its Zariski topology.

Let $\varrho : \Mod(H) \rightarrow \Mod(Z(H))$ denote the functor of passing to the underlying $Z(H)$-module. By \cite{Gab} Prop.\ 10 on p.\ 428 any localising subcategory of $\Mod(Z(H))$ is stable. We further have the following facts.

\begin{lemma}\phantomsection\label{lem:varrho-stable}\,
\begin{itemize}
  \item[a)]  For any localising subcategory $\cL$ of $\Mod(Z(H))$ the localising subcategory $\varrho^{-1}(\cL)$ of $\Mod(H)$ is stable.
  \item[b)] For any ordinal $\alpha$ we have $\varrho^{-1}(\Mod(Z(H))_\alpha) = \Mod(H)_\alpha$, which therefore is stable.
  \item[c)] For any ordinal $\alpha < \kappa(\Mod(H))$ we have
\begin{equation*}
  \Sp_\alpha(\Mod(H)) = \{[E] \in \Sp(\Mod(H)) : \kappa(E) = \alpha + 1\} .
\end{equation*}
\end{itemize}
\end{lemma}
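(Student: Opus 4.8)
The plan is to take the three parts in order, with b) as the substantive point and c) essentially a formality once b) is available. For a), note first that $\varrho$ is exact and preserves arbitrary direct sums, so $\varrho^{-1}(\cL)$ is automatically a localising subcategory of $\Mod(H)$; the content is its stability, which I would check via criterion b) of Lemma~\ref{lem:stable-1}. So suppose $E$ is an indecomposable injective object of $\Mod(H)$ having a nonzero subobject $U$ with $\varrho U\in\cL$; I must show $\varrho E\in\cL$. Since $U$ is a nonzero subobject of an indecomposable injective it is essential, so $E\cong E_{\Mod(H)}(U)$. The key observation is that $\varrho$ is exact with right adjoint the coinduction functor $\Hom_Z(H,-)\colon\Mod(Z)\to\Mod(H)$, which therefore preserves injective objects. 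Writing $J:=E_{\Mod(Z)}(\varrho U)$, the unit of the adjunction gives an injection $U\hookrightarrow\Hom_Z(H,\varrho U)\hookrightarrow\Hom_Z(H,J)$, realising $E\cong E_{\Mod(H)}(U)$ as a direct summand of the injective $H$-module $\Hom_Z(H,J)$. Restricting to $Z$, $\varrho E$ is a $Z$-direct summand of $\Hom_Z(H,J)$; since $H$ is module-finite over $Z$, a surjection $Z^n\twoheadrightarrow H$ of $Z$-modules embeds $\Hom_Z(H,J)$ into $J^n$; and $J\in\cL$ because $\cL$, being a localising subcategory of $\Mod(Z)$ with $Z$ commutative noetherian, is stable. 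Hence $\varrho E\in\cL$, as needed.

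For b), I would use that in any locally noetherian Grothendieck category $\cD$ one has $\cD_\alpha=\{Y:\kappa_\cD(Y)\le\alpha\}$, so that the asserted equality $\varrho^{-1}(\Mod(Z)_\alpha)=\Mod(H)_\alpha$ reduces to the single statement $\kappa_{\Mod(H)}(N)=\kappa_{\Mod(Z)}(\varrho N)$ for every $H$-module $N$. The inequality $\le$ is soft: the lattice of $H$-submodules of $N$ is a subposet of the lattice of $Z$-submodules of $\varrho N$, and the Gabriel Krull dimension agrees with the deviation of the submodule lattice (as recalled in \S\ref{sec: KdimSec}), which is monotone under passage to subposets. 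For the reverse inequality I would first reduce to finitely generated $N$ using that $\kappa$ is the supremum of the Krull dimensions of the noetherian subobjects: the finitely generated $H$-submodules of $N$ are themselves finitely generated over $Z$, and every finitely generated $Z$-submodule $M$ of $\varrho N$ lies inside the finitely generated $H$-submodule $HM$, so both families compute the same supremum. For finitely generated $N$ the equality is part of the theory of Krull dimension for a ring that is module-finite over a central noetherian subring — the same circle of ideas (\cite{MCR} Ch.~6) already invoked in the proof of Prop.~\ref{prop:Krull-G}. This gives $\Mod(H)_\alpha=\varrho^{-1}(\Mod(Z)_\alpha)$, and since $\Mod(Z)_\alpha$ is a localising subcategory of $\Mod(Z)$, part a) shows it is stable.

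For c), by b) each $\Mod(H)_{\alpha+1}$ is stable, so for $\alpha<\kappa(\Mod(H))$ Remark~\ref{rem:stable-Krull} gives $\kappa(E)=\alpha+1$ for every $[E]\in\Sp_\alpha(\Mod(H))$, which is the inclusion $\subseteq$. For $\supseteq$ I would use the disjointness of the stratification $\Sp(\Mod(H))=\dot{\bigcup}_{\beta<\kappa(\Mod(H))}\Sp_\beta(\Mod(H))$: a given $[E]$ with $\kappa(E)=\alpha+1$ lies in a unique $\Sp_\beta(\Mod(H))$, and then $\kappa(E)=\beta+1$ by the inclusion just established, forcing $\beta=\alpha$.

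The step I expect to be the real obstacle is the inequality $\kappa_{\Mod(Z)}(\varrho N)\le\kappa_{\Mod(H)}(N)$ in b): unlike its converse it is not formal, and it rests on the (standard but genuinely nontrivial) fact that restriction of scalars to a central noetherian subring over which $H$ is module-finite preserves Krull dimension. Once that is in hand, the coinduction argument in a) and the bookkeeping in c) are both short.
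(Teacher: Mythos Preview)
Your proof is correct and follows essentially the same approach as the paper, which simply cites \cite{Gab} Prop.~12 on p.~431 for a), \cite{MCR} Cor.~10.1.10 for the equality in b), and invokes Remark~\ref{rem:stable-Krull} for c). Your self-contained coinduction argument for a) is essentially a reconstruction of Gabriel's proof, and your identification of the nontrivial inequality in b) as the content of the \cite{MCR} result, together with your explicit handling of both inclusions in c) via the stratification, matches and slightly unpacks what the paper leaves to citation.
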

\begin{proof}
a) \cite{Gab} Prop.\ 12 on p.\ 431. b) The equality is \cite{MCR} Cor.\ 10.1.10, and the stability then follows from a). c) Because of b) this follows from Remark \ref{rem:stable-Krull}.
\end{proof}

Next we have:
\begin{itemize}
  \item[(A)] The continuous map $\varphi : \Spec(H) \rightarrow \Spec(Z(H))$ sending $\frp$ to $\frp \cap Z(H)$ is surjective with finite fibers (cf.\ \cite{Gab} Prop.\ 11 on p.\ 429).
  \item[(B)] The map
\begin{align}\label{f:ass}
  \Sp(\Mod(H)) & \xrightarrow{\;\sim\;} \Spec(H) \\
                   [J] & \longmapsto \text{the unique prime ideal associated to $J$}.     \nonumber
\end{align}
   is a homeomorphism w.r.t.\ the Gabriel-Zariski and the Zariski topology on the left hand and right hand side, respectively (\cite{Ste} Thm.\ VII.2.1 recalling the fact that $H$ is fully left and right bounded by \cite{Ste} VII \S2 Example 4), \cite{Gab} V \S4). Let $\frp \in \Spec(H)$. Fix an injective hull $E(H/\frp)$ of $H/\frp$ in $\Mod(H)$. From \cite{Ste} Prop.\ VII.1.9 we know that $E(H/\frp) \cong \oplus_{i \in I} E_{\frp}$ for a single indecomposable injective $H$-module $E_{\frp}$. Using \cite{Ste} Lemma VII.1.7 we see that the inverse of the bijection \eqref{f:ass} is given by $\frp \longmapsto [E_{\frp}]$.
\end{itemize}

First we describe the stable Zariski topology, where we will always make silently the identification \eqref{f:ass}.  Hence on $\Spec(Z(H))$ the stable Zariski topology is the Zariski topology. From \cite{Gab} V \S 4 we know that the Ziegler-open subsets of $\Spec(H)$ are the possibly infinite unions of Zariski-closed subsets. Hence the quasi-compact Ziegler-open subsets are the Zariski-closed subsets. Therefore the open subsets for the stable Zariski topology are the stable Zariski-open subsets.

\begin{lemma}\label{lem:varrho-stable2}
  $A(\varrho^{-1}(\cL)) = \varphi^{-1}(A(\cL))$ for any localising subcategory $\cL$ of $\Mod(Z(H))$.
\end{lemma}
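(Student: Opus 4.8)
The plan is to compute both sides explicitly using Gabriel's classification of localising subcategories over the commutative noetherian ring $Z := Z(H)$. Write $\varrho\colon\Mod(H)\to\Mod(Z)$ for the restriction functor. Every localising subcategory of $\Mod(Z)$ has the form $\cL_T := \{M\in\Mod(Z) : \operatorname{Supp}_Z(M)\subseteq T\}$ for a unique specialization-closed subset $T\subseteq\Spec(Z)$, and then $T = \{\frq\in\Spec(Z) : Z/\frq\in\cL_T\}$ (\cite{Gab} V \S4). Both $\cL_T$ and $\varrho^{-1}(\cL_T)$ are stable --- the first because every localising subcategory of $\Mod(Z)$ is stable, the second by Lemma \ref{lem:varrho-stable}.a --- so Corollary \ref{cor:stable}, combined with the identification \eqref{f:ass} of $\Sp(\Mod(H))$ with $\Spec(H)$ and of $\Sp(\Mod(Z))$ with $\Spec(Z)$, turns the two sides into $A(\varrho^{-1}(\cL_T)) = \{\frp\in\Spec(H) : \varrho(E_\frp)\notin\cL_T\}$ and $\varphi^{-1}(A(\cL_T)) = \{\frp\in\Spec(H) : \frp\cap Z\notin T\}$; here I have used that $A(\cL_T) = \Spec(Z)\setminus T$, which holds because $\cL_T$ is stable and $T$ records exactly which $Z/\frq$ lie in $\cL_T$. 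So the lemma reduces to the statement $\varrho(E_\frp)\in\cL_T\iff\frp\cap Z\in T$.

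The key input is the computation of the $Z$-module support of the indecomposable injective $H$-module $E_\frp$: I claim that $\operatorname{Supp}_Z(E_\frp) = V(\frp\cap Z)$. Since $E_H(H/\frp)\cong\bigoplus_{i\in I}E_\frp$ by \cite{Ste} VII.1.9, it suffices to prove $\operatorname{Supp}_Z(E_H(H/\frp)) = V(\frp\cap Z)$. The inclusion $\supseteq$ is immediate from $H/\frp\hookrightarrow E_H(H/\frp)$ together with the facts that $H/\frp$ is finitely generated over $Z$ and $\Ann_Z(H/\frp) = \frp\cap Z$. For $\subseteq$ I will show that every $z\in\frp\cap Z$ acts locally nilpotently on $E_H(H/\frp)$. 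Write $E_H(H/\frp)$ as the directed union of its finitely generated, hence noetherian, $H$-submodules $M$. For each nonzero such $M$, the submodule $N := M\cap(H/\frp)$ is nonzero and essential in $M$ (because $H/\frp$ is essential in $E_H(H/\frp)$) and is killed by $z$; the ascending chain $\ker_M(z)\subseteq\ker_M(z^2)\subseteq\cdots$ of $H$-submodules of $M$ stabilizes, say at $\ker_M(z^m)$, and then a short argument forces $z^mM = 0$: if not, $z^mM$ is a nonzero submodule, so it meets $N$ in some $0\neq y = z^mu$; but $zy = 0$ gives $z^{m+1}u = 0$, so $u\in\ker_M(z^m)$ and hence $y = z^mu = 0$, a contradiction. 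Thus every element of $E_H(H/\frp)$ is annihilated by a power of $z$, so localising at any prime $\frq$ with $z\notin\frq$ kills $E_H(H/\frp)$; as $z\in\frp\cap Z$ was arbitrary, $\operatorname{Supp}_Z(E_H(H/\frp))\subseteq V(\frp\cap Z)$.

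Granting this claim, the proof finishes formally: since $T$ is specialization-closed and $\frp\cap Z$ is the generic point of $V(\frp\cap Z)$, we get $\varrho(E_\frp)\in\cL_T\iff\operatorname{Supp}_Z(E_\frp)=V(\frp\cap Z)\subseteq T\iff\frp\cap Z\in T$, which is exactly the reduced statement above. The only step involving genuine work is the support computation for $E_\frp$, and within it the local nilpotence of central elements of $\frp$ on $E_H(H/\frp)$; but that is an elementary consequence of the noetherian stabilisation of the kernel chain together with the essentiality of $H/\frp$ in its injective hull. Everything else is bookkeeping built on Corollary \ref{cor:stable}, Lemma \ref{lem:varrho-stable}, the identification \eqref{f:ass}, and Gabriel's classification of localising subcategories over the commutative noetherian ring $Z$.
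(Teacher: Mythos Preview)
Your proof is correct. Both you and the paper begin the same way: use stability of $\cL$ and of $\varrho^{-1}(\cL)$ together with Corollary~\ref{cor:stable} and the identification \eqref{f:ass} to reduce the statement to a prime-by-prime membership question. From there the routes diverge. The paper immediately replaces $E_\frp$ by $H/\frp$ (using stability once more), and then compares $\varrho(H/\frp)$ with $Z/(\frp\cap Z)$ via the obvious inclusion $Z/(\frp\cap Z)\hookrightarrow H/\frp$ and the surjection $H/(\frp\cap Z)H\twoheadrightarrow H/\frp$, module-finiteness of $H$ over $Z$ doing the work. You instead stay with $E_\frp$ and compute its $Z$-support directly, the key step being the local nilpotence of each $z\in\frp\cap Z$ on $E_H(H/\frp)$, which you obtain from stabilization of the kernel chain together with essentiality of $H/\frp$. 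That local-nilpotence statement is exactly the $z\in\frp$ half of the paper's later Lemma~\ref{lem:E_P}.a, where the authors instead invoke the Artin--Rees property from \cite{MCR}; your argument is a self-contained elementary substitute. So the paper's route is marginally shorter here but defers the local-nilpotence computation, while yours absorbs that computation into the present proof and avoids the external Artin--Rees reference.
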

\begin{proof}
(Recall the notation $A(\cL)$ from $\S$\ref{subsec:localising}.) Using Cor.\ \ref{cor:stable} we compute
\begin{align*}
  A(\varrho^{-1}(\cL)) & = \{\frp \not\in \Spec(H) : E_\frp \in \ob(\varrho^{-1}(\cL)) \} \\
   & = \{\frp \in \Spec(H) : H/\frp \not\in \ob(\varrho^{-1}(\cL)) \}    \\
   & = \{\frp \in \Spec(H) : \varrho(H/\frp) \not\in \ob(\cL) \}    \\
   & \supseteq \{\frp \in \Spec(H) : Z(H)/\varphi(\frp) \not\in \ob(\cL) \}  \\
   & = \varphi^{-1}(A(\cL)) \ .
\end{align*}
For the reverse inclusion we suppose that $Z(H)/\varphi(\frp)$ lies in $\cL$. Being a finitely generated $Z(H)/\varphi(\frp)$-module $\varrho(H / \varphi(\frp) H)$ also must lie in $\ob(\cL)$ and hence $H / \varphi(\frp) H \in \ob(\varrho^{-1}(\cL))$. It follows that $H/\frp$, as a quotient of $H / \varphi(\frp) H$, is contained in $\ob(\varrho^{-1}(\cL))$.
\end{proof}

\begin{lemma}\label{lem:E_P}
    For $\frp, \mathfrak{q} \in \Spec(H)$ we have:
\begin{itemize}
  \item[a)] Let $z \in Z(H)$; if $z \in \frp$, then $z$ acts locally nilpotently on $E_\frp$; if $z \not\in \frp$ then $z$ acts invertibly on $E_\frp$.
  \item[b)] If $\Hom_H(E_\frp,E_\mathfrak{q}) \neq 0$ then $\frp \cap Z(H) \subseteq \mathfrak{q} \cap Z(H)$.
\end{itemize}
\end{lemma}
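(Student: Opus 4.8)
The plan is to read everything off from the structure of the indecomposable injective $E_\frp$. Recall from the discussion of \eqref{f:ass} that $E_\frp$ is (up to isomorphism) the unique indecomposable summand of $E(H/\frp)$; since $H/\frp$ is essential in $E(H/\frp)$ and $E_\frp$ is uniform, $N_\frp := E_\frp \cap (H/\frp)$ is a non-zero, hence essential, submodule of $E_\frp$. Writing $N_\frp = L/\frp$ for a left ideal $\frp \subseteq L \subseteq H$, the ideal $\frc := \frp \cap Z(H)$ annihilates $N_\frp$. For $z \in Z(H)$, left multiplication $\mu_z$ is an $H$-linear endomorphism of $E_\frp$, and $\End_H(E_\frp)$ is a local ring (the standard fact for indecomposable injectives).

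\emph{Part a), the case $z \notin \frp$.} I would first show $\mu_z$ is injective. If not, $\ker\mu_z$ is non-zero, hence essential, so it meets $N_\frp$: there is $0 \neq \overline{\ell} \in L/\frp$ with $z\ell \in \frp$. As $z$ is central, $zH\ell = Hz\ell \subseteq \frp$, and primeness of $\frp$ together with $z \notin \frp$ forces $\ell \in \frp$, a contradiction. An injective endomorphism of an indecomposable injective module is an automorphism — its image is injective, hence a direct summand, hence all of $E_\frp$ — so $z$ acts invertibly on $E_\frp$.

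\emph{Part a), the case $z \in \frp$.} Then $z \in \frc$ and $zN_\frp = 0$ with $N_\frp \neq 0$, so $\mu_z$ is not injective; the point is to upgrade this to local nilpotence. Using the previous case, every element of $Z(H)\setminus\frc$ acts invertibly on $E_\frp$, so we may perform the central localisation at $Z(H)\setminus\frc$; by exactness of localisation $E_\frp$ remains an indecomposable injective module over $H_\frc := H \otimes_{Z(H)} Z(H)_\frc$, which is module-finite over the Noetherian \emph{local} ring $Z(H)_\frc$ with maximal ideal $\frm := \frc Z(H)_\frc$. Since $H_\frc/\frm H_\frc$ is finite-dimensional over the field $Z(H)_\frc/\frm$ it is artinian, so the image of $\frp$ is a maximal ideal of $H_\frc$ and $N_\frp$ has finite length; thus $E_\frp$ is the injective hull of a finite length module. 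Over a ring module-finite over a commutative Noetherian ring such an injective hull is artinian (equivalently, this $\frc$-torsionness of $E_\frp$ is part of the standard theory of indecomposable injectives over fully bounded Noetherian rings), and an artinian module over a Noetherian local ring is $\frm$-torsion. Hence every $x \in E_\frp$ is killed by some power of $\frc$, and as $z \in \frc$ we get $z^n x = 0$, i.e. $z$ acts locally nilpotently.

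\emph{Part b).} This is immediate from a). Let $0 \neq f \in \Hom_H(E_\frp, E_\frq)$ and $z \in \frp \cap Z(H)$, and suppose $z \notin \frq$. By a), $z$ acts locally nilpotently on $E_\frp$ and invertibly on $E_\frq$. Choosing $x \in E_\frp$ with $f(x) \neq 0$ and $z^n x = 0$, we get $z^n f(x) = f(z^n x) = 0$, whence $f(x) = 0$ by invertibility of $\mu_z$ on $E_\frq$ — a contradiction. So $z \in \frq$, proving $\frp \cap Z(H) \subseteq \frq \cap Z(H)$. The one genuinely delicate step is the upgrade inside Part a) from ``$\mu_z$ not injective'' to ``$E_\frp$ is $\frc$-torsion'': this uses module-finiteness of $H$ over its centre (the FBN hypothesis), not merely that $H$ is Noetherian, since for general Noetherian rings the $\frc$-torsion functor need not preserve injectivity.
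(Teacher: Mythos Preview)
Your proof is correct. The case $z \notin \frp$ and part (b) match the paper's argument almost verbatim. For the case $z \in \frp$, however, the paper takes a more direct route: rather than localising at $\frc$ and invoking artinianness of injective hulls over module-finite extensions of local rings, the paper observes that since $z$ is central in the noetherian ring $H$, the ideal $zH$ has the left Artin--Rees property (\cite{MCR} Prop.\ 4.2.6); then for any nonzero $x \in E_\frp$, the essential submodule $Hx \cap L$ of the finitely generated module $Hx$ is killed by $z$, so some $z^n$ kills $Hx$ (\cite{MCR} Thm.\ 4.2.2). This argument uses only that $H$ is noetherian and $z$ is central --- module-finiteness over the centre is not needed. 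So your closing remark, while accurate for your own route, overstates what the result itself requires: the Artin--Rees approach shows that part (a) holds for any left noetherian ring and any central element, without the FBN hypothesis. Your localisation argument, on the other hand, yields the stronger conclusion that $E_\frp$ is $\frc$-torsion (not just $z$-torsion for each individual $z \in \frc$), at the cost of the extra hypothesis.
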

\begin{proof}
a) Let $L$ be a uniform left ideal in $H/\frp$ so that $E_\frp$ is the injective hull $E(L)$ of $L$.

Suppose first that $z \notin \frp$. Multiplication by $z$ is an injective $H$-linear map $\ell_z : H/\frp \to H/\frp$: if $x + \frp \in H/\frp$ is such that $z(x+\frp) = \frp$ then $(HzH)(HxH) \subseteq \frp$ since $z$ is central in $H$, so $x \in \frp$ since $\frp$ is prime and $z \notin \frp$. Since $L$ is an $H$-submodule of $H/\frp$, $\ell_z : L \to L$ is also injective. Hence it extends to an injective $H$-linear map $\ell_z : E(L) \to E(L)$ by the injectivity of $E(L)$. The image of $\ell_z$ must admit a complement in $E(L)$ since it is itself injective. Since $E(L) \cong E_\frp$ is indecomposable, $\ell_z : E(L) \to E(L)$ is an isomorphism, as claimed.

Now suppose that $z \in \frp$ and let $x \in E_\frp$ be non-zero. Since $L$ is essential in $E_\frp$, we see that $Hx \cap L$ is essential in $Hx$. Since $H$ is noetherian and $z$ is central, the ideal $zH$ of $H$ satisfies the left Artin-Rees property by \cite{MCR} Prop.\ 4.2.6. Since $z \in \frp$ kills $L$, it also kills $Hx \cap L$. So $z^n$ kills $Hx$ for some $n \geq 1$ by the implication (i) $\Rightarrow$ (iii) of \cite{MCR} Thm. 4.2.2.

b) Let $\phi : E_\frp \rightarrow E_\mathfrak{q}$ be a nonzero map. Suppose that there is a $z \in (\frp \cap Z(H)) \setminus \mathfrak{q}$. Then, by a), $z$ acts invertibly on $E_\mathfrak{q}$, but locally nilpotently on $E_\frp$. Consider any $x \in E_\frp$ such that $\phi(x) \neq 0$. We find a $t \geq 1$ such that $z^t x = 0$. Hence $z^t \phi(x) = \phi(z^t x) = \phi(0) = 0$, which is a contradiction.
\end{proof}

For any subset $A \subseteq \Spec(H)$ the full subcategory $\cL_A$ of all modules $U$ in $\Mod(H)$ such that $\Hom_H(U,E_\frp) = 0$ for any $\frp \in A$ is a localising subcategory of $\Mod(H)$. Obviously we have $A \subseteq A(\cL_A)$.

We now describe explicitly all localising subcategories of $\Mod(H)$ which are of the form $\rho^{-1}(\cL)$ as in Lemma \ref{lem:varrho-stable}.a.

Recall that a subset $S$ of a topological space $T$ is called \emph{generalization-closed} if any point $y \in T$ whose closure $\overline{\{y\}}$ contains a point in $S$ also lies in $S$. Equivalently, $S$ is generalization-closed if and only if it is a (possibly infinite) intersection of open subsets of $T$.

Before Lemma \ref{lem:varrho-stable2} we had noted that the Ziegler-open subsets of $\Spec(H)$ are the unions of Zariski-closed subsets. Hence the Ziegler-closed subsets are the generalization-closed subsets. Therefore the inverse of the bijection \eqref{f:loc-classific} is the map
\begin{align*}
  \text{set of all generalization-closed} & \xrightarrow{\;\simeq\;}\ \ \text{collection of all localising} \\
  \text{subsets of $\Spec(Z(H))$} \qquad & \qquad\ \text{subcategories of $\Mod(Z(H))$}   \\
                                 W & \longmapsto \cL_W^Z := \ \text{all $M$ with $\Hom_{Z(H)}(M,E) = 0$}   \\
                                   &  \qquad\qquad\qquad\qquad \text{ for any $[E] \in W$}.
\end{align*}
We conclude from Lemma \ref{lem:varrho-stable2} that
\begin{equation}\label{f:varrho-stable2}
  \varrho^{-1}(\cL_W^Z) = \cL_{\varphi^{-1}(W)} \ .
\end{equation}

For an ideal $I$ of $Z(H)$, recall that $V(I)$ denotes the subset of $\Spec(Z(H))$ consisting of all prime ideals containing $I$.

\begin{proposition}\label{prop:LHW}
    Let $W$ be a generalization-closed subset of $\Spec(Z(H))$. The following are equivalent for an $H$-module $M$:
\begin{itemize}
\item[a)] $M$ lies in $\cL_{\varphi^{-1}(W)}$;
\item[b)] $V(\Ann_{Z(H)}(v)) \cap W = \emptyset$ for all nonzero $v \in M$;
\item[c)] $V(\Ann_{Z(H)}(N)) \cap W = \emptyset$ for all noetherian submodules $N \subseteq M$.
\end{itemize}
\end{proposition}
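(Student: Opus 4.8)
The plan is to reduce assertion a) to a statement about the underlying $Z(H)$-module of $M$ via the identity \eqref{f:varrho-stable2}, and then to convert the resulting $\Hom$-vanishing into the annihilator conditions b) and c) using Lemma \ref{lem:E_P}.a. Write $Z := Z(H)$. By \eqref{f:varrho-stable2} we have $\cL_{\varphi^{-1}(W)} = \varrho^{-1}(\cL_W^Z)$, so $M$ lies in $\cL_{\varphi^{-1}(W)}$ if and only if its underlying $Z$-module $\varrho(M)$ lies in $\cL_W^Z$, i.e. if and only if $\Hom_Z(\varrho(M), E_\frp) = 0$ for every $\frp \in W$; here $E_\frp$ is the indecomposable injective $Z$-module attached to $\frp \in \Spec(Z)$ by \eqref{f:ass} applied to the commutative noetherian ring $Z$. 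Since $\varrho$ changes only the ring of scalars, not the underlying abelian group or the $Z$-action, the ideals $\Ann_Z(v)$ for $v \in M$ are unaffected by passing to $\varrho(M)$. It therefore suffices to decide, for an arbitrary $Z$-module $N$ and a prime $\frp$, exactly when $\Hom_Z(N, E_\frp) \neq 0$.

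The key local computation I would carry out is the claim that $\Hom_Z(N, E_\frp) \neq 0$ if and only if there is a nonzero $v \in N$ with $\Ann_Z(v) \subseteq \frp$, equivalently with $\frp \in V(\Ann_Z(v))$. For the forward implication, pick a nonzero $f : N \to E_\frp$ and $v$ with $f(v) \neq 0$; if some $z \in \Ann_Z(v)$ lay outside $\frp$, then $z$ would act invertibly on $E_\frp$ by Lemma \ref{lem:E_P}.a applied to the ring $Z$, which is incompatible with $z f(v) = f(zv) = 0$. For the converse, I would note that $Z/\frp$ is an integral domain, hence a uniform $Z$-module, so its injective hull $E(Z/\frp)$ is indecomposable and is therefore isomorphic to $E_\frp$ by the description of $E_\frp$ recalled before \eqref{f:ass}; given a nonzero $v$ with $\Ann_Z(v) \subseteq \frp$, the cyclic module $Zv \cong Z/\Ann_Z(v)$ surjects onto $Z/\frp \hookrightarrow E_\frp$, and this nonzero homomorphism extends along $Zv \subseteq N$ to a nonzero map $N \to E_\frp$ by injectivity of $E_\frp$.

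Putting the two steps together, $M$ lies in $\cL_{\varphi^{-1}(W)}$ precisely when no $\frp \in W$ belongs to $V(\Ann_Z(v))$ for any nonzero $v \in M$, which is exactly condition b). The equivalence of b) and c) is then elementary: a noetherian submodule $N \subseteq M$ is finitely generated, say $N = Hv_1 + \dots + Hv_n$, and centrality of $Z$ gives $\Ann_Z(Hv_i) = \Ann_Z(v_i)$, hence $\Ann_Z(N) = \bigcap_i \Ann_Z(v_i)$ and $V(\Ann_Z(N)) = \bigcup_i V(\Ann_Z(v_i))$; this yields b) $\Rightarrow$ c), and c) $\Rightarrow$ b) follows on taking $N = Hv$.

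The only non-formal ingredient in this argument is the local computation in the second paragraph, and there the substance — the behaviour of central elements on $E_\frp$ — has already been supplied by Lemma \ref{lem:E_P}.a. The remaining point that needs a little attention is the identification $E_\frp \cong E(Z/\frp)$, which I would deduce from the uniformity of the domain $Z/\frp$; everything else is bookkeeping around the scalar-restriction functor $\varrho$.
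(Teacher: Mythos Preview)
Your proof is correct and takes a genuinely different, more economical route than the paper. The paper argues directly at the level of $H$-modules: after reducing to finitely generated $M$, it chooses an essential submodule $N = N_1 \oplus \cdots \oplus N_r$ with each $N_i$ uniform, invokes the FBN property of $H$ to identify $E(N_i) \cong E_{\frp_i}$ via assassinator primes (\cite{GW} Prop.\ 9.14), and then uses both parts of Lemma \ref{lem:E_P} together with the generalization-closedness of $W$ to pass between the $\Hom$-vanishing defining $\cL_{\varphi^{-1}(W)}$ and the annihilator condition b). You instead reduce immediately to the commutative ring $Z$ via \eqref{f:varrho-stable2}, which converts a) into membership of $\varrho(M)$ in $\cL_W^Z$, and then carry out the elementary commutative computation characterizing $\Hom_Z(-,E_\frp)$; this is essentially what the paper records afterwards as Remark \ref{rem:LZW}. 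Your route avoids the FBN machinery entirely and does not use generalization-closedness in the local step (it enters only through the citation of \eqref{f:varrho-stable2}); the paper's route, by contrast, stays at the $H$-level throughout and makes the decomposition $E(M) \cong \bigoplus_i E_{\frp_i}$ explicit, which is of independent structural interest. Your identification $E_\frp \cong E(Z/\frp)$ over $Z$ is standard Matlis theory and is also covered by the description of $E_\frp$ recalled in (B) just before \eqref{f:ass}, applied with $H = Z$.
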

\begin{proof}
Since $\cL_{\varphi^{-1}(W)}$ is localising we may assume that $M$ is finitely generated. If $v_1, \ldots, v_r$ are generators of the $H$-module $M$ then
\begin{equation*}
  V(\Ann_{Z(H)}(M)) = V\left(\bigcap_{i=1}^r \Ann_{Z(H)}(v_i)\right) = \bigcup_{i=1}^r V(\Ann_{Z(H)}(v_i)) \ .
\end{equation*}
This shows the equivalence of b) and c).

For the equivalence of a) and b) we first note that, because $H$ is noetherian, $M$ contains an essential submodule $N$ of the form $N = N_1 \oplus \cdots \oplus N_r$, where each $N_i$ is a uniform $H$-module (\cite{GW} Cor.\ 5.18 and Prop.\ 5.15). Hence $E(M) = E(N) \cong \bigoplus_{i=1}^n E(N_i)$. By \cite{GW} Lemma 5.26, for each $i = 1,\cdots, r$ there is a unique prime ideal $\frp_i$ of $H$ which is equal to the annihilator of some nonzero $H$-submodule $N_i'$ of $N_i$, and which contains the annihilators of all nonzero $H$-submodules of $N_i$. This $\frp_i$ is the \emph{assassinator} of $N_i$ in the sense of \cite{GW} Def.\ on p.\ 102.

Since $H$ is assumed to be a finitely generated module over its centre, it is an FBN-ring by \cite{GW} Prop.\ 9.1(a). Then we can apply \cite{GW} Prop.\ 9.14 to see that the uniform injective $H$-module $E(N_i)$ is isomorphic to the $H$-injective hull $E(L_i)$ of any uniform left ideal $L_i$ of $H/\frp_i$. In other words (\cite{GW} Lemma 5.14), $E(N_i) \cong E_{\frp_i}$, and
\begin{equation*}
   E(M) \cong E(N) \cong \bigoplus_{i=1}^r E(N_i)  \cong \bigoplus_{i=1}^r E_{\frp_i} \ .
\end{equation*}
We identify $M$ with its image inside $E(M)$. Note that $M$ is not contained in the direct sum $\bigoplus_{i \neq j} E_{\frp_i}$ for any $j$, because otherwise we would have $M \cap E_{\frp_j} = 0$, which contradicts the fact that $M$ is essential in $E(M)$. So, $\Hom_H(M, E_{\frp_j})$ is nonzero for any $j$.

a) $\Rightarrow$ b). Suppose that $M$ lies in $\cL_{\varphi^{-1}(W)}$. From $\Hom_H(M,E_{\frp_j}) \neq 0$ for any $j$ we deduce that $\frp_j \notin \varphi^{-1}(W)$ for all $j$. Now consider any nonzero $v \in M$ and any $\frp \in \varphi^{-1}(V(\Ann_{Z(H)}(v)))$. Because $\frp_i \cap Z(H)$ acts locally nilpotently on $E_{\frp_i}$ by Lemma \ref{lem:E_P}.a, we see that
\begin{equation*}
  ((\frp_1 \cap Z(H)) (\frp_2 \cap Z(H)) \cdots (\frp_r \cap Z(H))^k \cdot v = 0
\end{equation*}
for some $k > 0$. This implies that $\frp_j \cap Z(H) \subseteq \frp \cap Z(H)$ for some $j$. Since $W$ is generalization-closed and $\frp_j \cap Z(H) \notin W$, we see that $\frp \cap Z(H) \notin W$. Hence $V(\Ann_{Z(H)}(v)) \cap W = \emptyset$.

b) $\Rightarrow$ a). Suppose that $\Hom_H(M, E_\mathfrak{q}) \neq 0$ for some $\mathfrak{q} \in \varphi^{-1}(W)$. Then $\Hom_H(E_{\frp_i}, E_\mathfrak{q}) \neq 0$ for some $1 \leq i \leq r$ and so $\frp_i \cap Z(H) \subseteq \mathfrak{q} \cap Z(H)$ by Lemma \ref{lem:E_P}.b. Hence $\frp_i \cap Z(H) \in W$ since $W$ is generalization-closed.

Choose a non-zero element $v \in N_i'$; since $\frp_i$ kills $N_i' \supseteq Hv$ we have $\frp_i \subseteq \Ann_H(Hv)$; on the other hand, $\frp_i$ contains $\Ann_H(Hv)$ as we saw above and hence $\frp_i = \Ann_H(Hv)$. Since $Z(H)$ is central in $H$, we have $\Ann_{Z(H)}(v) = \Ann_{Z(H)}(Hv)$. This ideal of $Z(H)$ is equal to $\Ann_H(Hv) \cap Z(H) = \frp_i \cap Z(H)$. Thus $\Ann_{Z(H)}(v) = \frp_i \cap Z(H)$ lies in $W$ and therefore $V(\Ann_{Z(H)}(v)) \cap W \neq \emptyset$.
\end{proof}

Obviously we have a corresponding result for modules over $Z(H)$.

\begin{remark}\label{rem:LZW}
    Let $W$ be a generalization-closed subset of $\Spec(Z(H))$. The following are equivalent for a $Z(H)$-module $M$:
\begin{itemize}
\item[a)] $M$ lies in $\cL_W^Z$;
\item[b)] $V(\Ann_{Z(H)}(v)) \cap W = \emptyset$ for all nonzero $v \in M$;
\item[c)] $V(\Ann_{Z(H)}(N)) \cap W = \emptyset$ for all noetherian submodules $N \subseteq M$.
\end{itemize}
\end{remark}

\subsection{A stability criterion for thickenings}\label{sec:Z-scheme}

Throughout this section let $\cC$ be a locally noetherian Grothendieck category which is a thickening of its full abelian subcategory $\cA$. Recall from  $\S$\ref{sec:thickening} that we then have the increasing filtration $\cA = \cA_0 \hookrightarrow \cA_1 \hookrightarrow \ldots \hookrightarrow \cA_j \hookrightarrow \ldots$ of $\cC$ by full abelian subcategories, which all are locally noetherian Grothendieck categories. By \eqref{f:bij} the corresponding injective spectra all are in bijection
\begin{align*}
  \Sp(\cC) & \xrightarrow{\;\simeq\;} \Sp(\cA_j)  \xrightarrow{\;\simeq\;} \Sp(\cA) \\
             [E] & \longmapsto r_j([E]) \longmapsto r([E]) \ .        \nonumber
\end{align*}
By Cor.\ \ref{cor:Zariski-homeo} and Cor.\ \ref{cor:stable-cont} the inverses of these bijections are continuous for the stable as well as the stable Zariski topologies. Actually it is straightforward to deduce that, for $i \leq j$, the map
\begin{align*}
 \Sp(\cA_j) &  \xrightarrow{\;\simeq\;} \Sp(\cA_i) \\
             x = [E_x] & \longmapsto r_{j,i}(x) := r_i(x) = [r_i(E_x)]
\end{align*}
is a bijection with inverse $[U] \mapsto [E_{\cA_j}(U)]$, and this inverse is continuous for the stable as well as the stable Zariski topologies.

In this section we will present a technique to actually find stable subsets in $\Sp(\cC)$. For this we assume again from now on that \textbf{$\cA$ has a noetherian projective generator $P$}. Recall that $H = \End_\cA(P)^{\op} = \End_\cC(P)^{\op}$ and let $Z = Z(H)$ denote its centre. By the equivalence of categories \eqref{f:cat-equiv} this centre $Z$ acts naturally on any object in $\cA$, i.e., $Z = Z(\cA)$ is the centre of the category $\cA$.

But we need further assumptions. The first one is:
\begin{itemize}
  \item[\textbf{(A1)}] $H$ is finitely generated as a module over its centre $Z$.
\end{itemize}
The central tool for our investigation will be the (graded) $(H,H)$-bimodule
\begin{equation*}
  \Ext_\cC^*(P,P) := \bigoplus_{i=0}^\infty \Ext_\cC^i(P,P) \ .
\end{equation*}
This, in particular, is a module for $Z \otimes Z := Z \otimes_\mathbb{Z} Z$. We let $\cJ := \Ann_{Z \otimes Z}(\Ext_\cC^*(P,P))$ be its annihilator ideal  and $\cR := V(\cJ) \subseteq \Spec(Z \otimes Z)$ be the corresponding Zariski closed subset. Further let $\pi_i : \cR \rightarrow \Xi := \Spec(Z)$, for $i = 1, 2$, denote the restrictions of the two projection maps $\Spec(Z \otimes Z) \rightrightarrows \Xi$. We then have a coequaliser diagram
\begin{equation*}
  \xymatrix{
    \cR \ar@<1ex>[r]^-{\pi_1}  \ar@<-1ex>[r]_-{\pi_2} & \Xi \ar[r]^{\mathbf{q}} & \mathfrak{Z}  }
\end{equation*}
in the category of locally ringed spaces (cf.\ \cite{DG} Prop.\ I.1.1.6). We briefly recall that $\mathbf{q}$ is a topological quotient map which identifies the points $\pi_1(x)$ and $\pi_2(x)$ for all $x \in \cR$, and that the sheaf $\cO_\mathfrak{Z}$ is given, for any open subset $U \subseteq \Xi$, by the equalizer diagram
\begin{equation*}
  \xymatrix{
    \cO_\mathfrak{Z}(U) \ar[r]^-{\mathbf{q}^\hash = \subseteq} & \cO_\Xi(\mathbf{q}^{-1}(U)) \ar@<1ex>[r]^-{\pi_1^\hash}  \ar@<-1ex>[r]_-{\pi_2^\hash} & \cO_{\cR}((\mathbf{q} \pi_i)^{-1}(U)) \ .  }
\end{equation*}

In section \ref{sec:rings} we considered, for any generalization-closed subset $W \subseteq \Xi$, the localising subcategories
$\cL_W^Z$ of $\Mod(Z)$ and $\cL_W^H := \cL_{\varphi^{-1}(W)}$ of $\Mod(H)$. These satisfy $\varrho^{-1}(\cL_W^Z) = \cL_W^H$, by \eqref{f:varrho-stable2}. Under the category equivalence \eqref{f:cat-equiv} the subcategory $\cL_W^H$ corresponds to the localising subcategory
\begin{equation*}
  \cL_W^\cA := \ \text{all $A$ in $\cA$ such that $\Hom_\cA(P,A)$ lies in $\cL_W^H$}
\end{equation*}
of $\cA$. Finally, by Prop.\ \ref{prop:loc-A-C}, we have the localising subcategory $\cL_W^\cC := \langle \cL_W^\cA \rangle$ of $\cC$, which is the unique localising subcategory of $\cC$ such that $\cA \cap \cL_W^\cC = \cL_W^\cA$.

\begin{theorem}\label{thm:MainStability}
   In addition to \textbf{(A1)} we assume:
\begin{itemize}
  \item[\textbf{(A2)}]   There is an integer $d$ such that $\Ext_{\cC}^j(P,-)|_{\cA} = 0$ for any $j > d$.
  \item[\textbf{(A3)}]   $Z \otimes Z/\cJ^{d+1}$ is noetherian.
\end{itemize}
Then, for any generalization-closed subset $W \subseteq \Xi$ with the property that $\pi_2(\pi_1^{-1}(W)) \subseteq W$, the localising subcategory $\cL_W^{\cC}$ is stable.
\end{theorem}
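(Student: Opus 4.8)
The plan is to deduce the statement from the equivalence of conditions a) and e) in Theorem~\ref{thm:stable-A-L}, applied to $\cL=\cL_W^{\cC}$. First I would record the preliminary identifications: by Proposition~\ref{prop:loc-A-C} we have $\cA\cap\cL_W^{\cC}=\cL_W^{\cA}$, which under the equivalence \eqref{f:cat-equiv} corresponds to $\cL_W^H=\varrho^{-1}(\cL_W^Z)$ by \eqref{f:varrho-stable2}; since every localising subcategory of $\Mod(Z)$ is stable, Lemma~\ref{lem:varrho-stable}.a gives that $\cA\cap\cL_W^{\cC}$ is stable in $\cA$, and the subcategory ``$\cL^H$'' of $\Mod(H)$ attached to $\cL_W^{\cC}$ in Theorem~\ref{thm:stable-A-L} is precisely $\cL_W^H$. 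It then remains to verify condition e): that $\Ext^1_{\cC}(P,-)$ maps $\cL_W^{\cA}$ into $\cL_W^H$. Because $P$ is noetherian in the locally noetherian category $\cC$, the functor $\Ext^1_{\cC}(P,-)$ commutes with filtered colimits; as $\cL_W^H$ is closed under filtered colimits and every object of $\cL_W^{\cA}$ is the filtered union of its noetherian subobjects (which again lie in $\cL_W^{\cA}$), I may assume that $A\in\cL_W^{\cA}$ is noetherian. Then $\Hom_{\cA}(P,A)$ is a noetherian $H$-module, hence, by \textbf{(A1)}, a noetherian $Z$-module, so that $\operatorname{Supp}_Z(A):=V(\Ann_Z\Hom_{\cA}(P,A))$ is disjoint from $W$ --- this is just the membership $A\in\cL_W^{\cA}$ restated via Remark~\ref{rem:LZW}.

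The heart of the argument is the claim that, for every $j\geq 0$, $\Ext^j_{\cC}(P,A)$ carries two commuting $Z$-actions making it a $Z\otimes Z$-module annihilated by $\cJ^{d+1}$: a \emph{source} action via $Z\subseteq H=\End_{\cC}(P)^{\op}$ --- which is nothing but its $H$-module structure --- and a \emph{target} action via $Z=Z(\cA)$ acting on $A$. To prove this I would pick a projective resolution $P_\bullet\to A$ in $\cA$; since $A$ is noetherian and $\cA\simeq\Mod(H)$ with $H$ noetherian, one may take $P_i=P^{n_i}$ with $n_i<\infty$. The hyper-$\Ext$ spectral sequence $E_1^{p,q}=\Ext^q_{\cC}(P,P_{-p})\Rightarrow\Ext^{p+q}_{\cC}(P,A)$ is then a spectral sequence of $Z\otimes Z$-modules: compatibility with the source action is formal (functoriality in the first argument), and compatibility with the target action hinges on the observation that, $z\in Z=Z(\cA)$ being \emph{central}, multiplication by $z$ on $P_\bullet$ is itself a chain map lifting $z_A\colon A\to A$, so on the $E_1$-page the target action is identified with the target $z$-action on $\Ext^q_{\cC}(P,P)^{\,n_{-p}}$. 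Now $\cJ$ annihilates $\Ext^*_{\cC}(P,P)$ by definition, hence every page of the spectral sequence; by \textbf{(A2)} one has $E_1^{p,q}=0$ for $q>d$, so for each $n$ the abutment $\Ext^n_{\cC}(P,A)$ admits a filtration of length $\leq d+1$ with subquotients killed by $\cJ$, whence $\cJ^{d+1}\cdot\Ext^n_{\cC}(P,A)=0$. Thus $\Ext^j_{\cC}(P,A)$ is a module over the ring $(Z\otimes Z)/\cJ^{d+1}$ (noetherian by \textbf{(A3)}), and in particular $\operatorname{Supp}_{Z\otimes Z}\Ext^j_{\cC}(P,A)\subseteq V(\cJ)=\cR$.

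Finally I would run a support chase. Write $M:=\Ext^1_{\cC}(P,A)$. By functoriality, $\Ann_Z(A)$ annihilates $M$ through the target action, so $\operatorname{Supp}^{\mathrm{tgt}}_Z(M)\subseteq\operatorname{Supp}_Z(A)$, which is disjoint from $W$. Suppose, for a contradiction, that some $\mathfrak p\in W$ lies in $\operatorname{Supp}^{\mathrm{src}}_Z(M)$. Then the localisation of $M$ at $\mathfrak p$ along the source copy of $Z$ is nonzero, so there is a prime $\mathfrak P\in\operatorname{Supp}_{Z\otimes Z}(M)\subseteq\cR$ with $\pi_1(\mathfrak P)\subseteq\mathfrak p$; since $W$ is generalization-closed, $\pi_1(\mathfrak P)\in W$, i.e.\ $\mathfrak P\in\pi_1^{-1}(W)$, and hence $\pi_2(\mathfrak P)\in W$ by the hypothesis $\pi_2(\pi_1^{-1}(W))\subseteq W$. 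But $\mathfrak P\in\operatorname{Supp}_{Z\otimes Z}(M)$ forces $\pi_2(\mathfrak P)\in\operatorname{Supp}^{\mathrm{tgt}}_Z(M)$, contradicting the previous sentence. Therefore $\operatorname{Supp}^{\mathrm{src}}_Z(M)\cap W=\emptyset$; as the source action is the $H$-module structure on $M$, Proposition~\ref{prop:LHW}, applied to the noetherian submodules of $M$, yields $M\in\cL_{\varphi^{-1}(W)}=\cL_W^H$. This verifies condition e) of Theorem~\ref{thm:stable-A-L}, and so $\cL_W^{\cC}$ is stable.

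The step I expect to be the main obstacle is the middle one: realising the hyper-$\Ext$ spectral sequence as a spectral sequence of $Z\otimes Z$-modules and, above all, checking that the target $Z$-action is transported correctly on the $E_1$-page --- this is exactly where centrality of $Z$ is essential, and it is what lets a condition on $\Ext^*_{\cC}(P,P)$ alone (its annihilator $\cJ$) control the support of $\Ext^1_{\cC}(P,A)$. Everything after that is bookkeeping with the two projections $\pi_1,\pi_2$ and the closure condition $\pi_2(\pi_1^{-1}(W))\subseteq W$.
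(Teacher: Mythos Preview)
Your argument is correct and follows the paper's strategy: reduce to condition (e) of Theorem~\ref{thm:stable-A-L}, show that $\cJ^{d+1}$ annihilates $\Ext^j_{\cC}(P,A)$ by resolving $A$ by copies of $P$ (your hyper-$\Ext$ spectral sequence is a repackaging of the paper's dimension-shifting argument in Lemma~\ref{lem:J-killing}), and then run a prime-ideal chase using \textbf{(A3)} together with the hypothesis $\pi_2(\pi_1^{-1}(W))\subseteq W$. The paper first performs a prime-series reduction so that $\Ann_Z(A)=\frp_Z$ is prime and then works with the finitely many minimal primes $Q_j$ over $\overline{i}_2(\frp_Z)\cO(\cR_d)$, whereas you bypass this reduction and argue directly with a nonzero $v\in M$; just make explicit that the existence of your $\frP$ with $\pi_1(\frP)\subseteq\frp$ comes from taking the finitely many minimal primes $\frP_j$ over $\Ann_{Z\otimes Z}(v)$ (finiteness by \textbf{(A3)}), noting $\bigcap_j\pi_1(\frP_j)=\sqrt{\Ann_Z^{\mathrm{src}}(v)}\subseteq\frp$, and applying prime avoidance.
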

We begin the proof by recalling the following useful fact.
\begin{lemma} \label{lem: ExtCommutes} Let $M$ be a noetherian object in $\cC$. Then $\Ext^j_{\cC}(M,-)$ commutes with filtered colimits for all $j \geq 0$.
\end{lemma}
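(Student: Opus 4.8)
The plan is to argue by induction on $j$, the base case $j=0$ being the assertion that $\Hom_\cC(M,-)$ commutes with filtered colimits. This is standard: in a locally noetherian Grothendieck category every noetherian object is finitely presented in the categorical sense, hence $\Hom_\cC(M,-)$ preserves filtered colimits (cf.\ \cite{Ste} Ch.~V, or \cite{Her}).

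For the inductive step, fix $j\geq 1$, assume the statement for $j-1$, and let $V=\varinjlim_{i\in I}V_i$ be a filtered colimit in $\cC$. The crucial device is to produce injective embeddings of the $V_i$ that vary functorially in $i$; for this I would pass to the functor category $\cC^I$, which is again a Grothendieck category and therefore has enough injectives, and choose a monomorphism $\{V_i\}\hookrightarrow\{J_i\}$ with $\{J_i\}$ injective in $\cC^I$. Each evaluation functor $\mathrm{ev}_i\colon\cC^I\to\cC$ is right adjoint to the left Kan extension along $\{i\}\hookrightarrow I$, which sends $X$ to the diagram $j\mapsto\coprod_{\Hom_I(i,j)}X$ and is exact because coproducts are exact in a Grothendieck category; hence $\mathrm{ev}_i$ preserves injectives and each $J_i$ is injective in $\cC$. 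Setting $W_i:=\coker(V_i\hookrightarrow J_i)$ we obtain a short exact sequence $0\to\{V_i\}\to\{J_i\}\to\{W_i\}\to 0$ in $\cC^I$; applying the exact functor $\varinjlim$ yields a short exact sequence $0\to V\to E\to W\to 0$ in $\cC$ with $E=\varinjlim_iJ_i$ and $W=\varinjlim_iW_i$, and $E$ is injective because a filtered colimit of injective objects in a locally noetherian Grothendieck category is injective (cf.\ \cite{Gab}, \cite{Ste}).

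It then remains to compare the long exact $\Ext_\cC(M,-)$-sequences attached to $0\to V_i\to J_i\to W_i\to 0$ and to $0\to V\to E\to W\to 0$. Since $J_i$ and $E$ are injective, for $j\geq 2$ the connecting maps give natural isomorphisms $\Ext^{j-1}_\cC(M,W_i)\xrightarrow{\simeq}\Ext^j_\cC(M,V_i)$ and $\Ext^{j-1}_\cC(M,W)\xrightarrow{\simeq}\Ext^j_\cC(M,V)$, compatibly along $I$; combining these with the exactness of filtered colimits and the inductive hypothesis applied to $W=\varinjlim_iW_i$ identifies the canonical map $\varinjlim_i\Ext^j_\cC(M,V_i)\to\Ext^j_\cC(M,V)$ as an isomorphism. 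For $j=1$ one argues in the same way starting from the four-term exact sequences $0\to\Hom_\cC(M,V_i)\to\Hom_\cC(M,J_i)\to\Hom_\cC(M,W_i)\to\Ext^1_\cC(M,V_i)\to 0$ and their analogue for $V$, using that $\varinjlim$ is exact, that the three $\Hom$ terms commute with filtered colimits by the $j=0$ case, and the five lemma.

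The main obstacle is really the first half of the inductive step: one needs injective embeddings that are functorial over the index category, since otherwise there is no comparison ladder of long exact sequences on which to run the dimension shift. The passage to $\cC^I$, together with the observation that $\mathrm{ev}_i$ preserves injectives, supplies exactly this (alternatively one could invoke a functorial injective embedding $\mathrm{id}_\cC\hookrightarrow E(-)$, which exists in any Grothendieck category). The other ingredient — that filtered colimits of injectives are injective — is precisely the characteristic feature of locally noetherian Grothendieck categories that makes the dimension shift work, and beyond these two inputs everything else is routine homological bookkeeping.
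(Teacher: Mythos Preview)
Your argument is correct. The paper, however, takes a much shorter route: after recording the same three ingredients you use --- that filtered colimits in $\cC$ are exact, that $\Hom_\cC(M,-)$ commutes with filtered colimits because $M$ is noetherian in a locally noetherian category, and that filtered colimits of injectives are injective --- it simply invokes \cite{KS} Prop.\ 15.3.3, a general statement guaranteeing that under these hypotheses the right derived functors $R^jF$ of a left exact $F$ commute with filtered colimits. Your proof is essentially an explicit, self-contained unwinding of that citation in the present special case: the passage to $\cC^I$ to obtain functorial injective embeddings, together with the observation that filtered colimits of injectives stay injective, is exactly the mechanism that makes such a result work, and your dimension shift is the standard way to propagate it to higher degrees. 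The paper's approach is quicker and offloads the homological bookkeeping to the reference; yours has the advantage of being self-contained and of making visible precisely where the locally noetherian hypothesis enters (namely, in the preservation of injectivity under filtered colimits).
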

\begin{proof} In the Grothendieck category $\cC$ filtered colimits are exact. Moreover, because $\cC$ is locally noetherian and $M$ is a noetherian object in $\cC$,  \cite{Gab} Cor.\ 1 on p.\ 358 tells us that:
\begin{itemize}
  \item[--] the full subcategory of injective objects in $\cC$ is closed under filtered colimits;
  \item[--] the functor $F := \Hom_\cC(M,-)$ from $\cC$ to the category of abelian groups commutes with filtered colimits.
\end{itemize}
In this situation, \cite{KS} Prop.\ 15.3.3 implies that the derived functors $R^jF = \Ext^j_\cC(M,-)$ of $F$ commute with filtered colimits.
\end{proof}
Next, we first need an auxiliary result. Since $Z = Z(\cA)$ the ring $Z \otimes Z$ acts, for any two objects $A_1$ and $A_2$ in $\cA$, naturally on the groups $\Ext^j_\cC(A_1,A_2)$ with the first and second factor in $Z \otimes Z$ acting through endomorphisms in the category $\cC$ on $A_1$ and $A_2$, respectively.

\begin{lemma}\label{lem:J-killing}
  Suppose that \textbf{(A2)} holds true, and let $A$ be an arbitrary object in $\cA$; then, for any $j \geq 0$, the $Z \otimes Z$-module $\Ext^j_\cC(P,A)$ is annihilated by $\cJ^{d+1-j}$.
\end{lemma}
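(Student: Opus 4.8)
The plan is to prove the assertion by downward induction on $j$, anchored at the top by \textbf{(A2)} and descending via a dimension‑shifting argument along short exact sequences whose middle term is a direct sum of copies of $P$.

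Before the induction I would isolate the two facts that do the work. First, by the very definition $\cJ = \Ann_{Z \otimes Z}(\Ext^*_\cC(P,P))$, the ideal $\cJ$ annihilates $\Ext^i_\cC(P,P)$ for every $i \geq 0$; and since $P$ is noetherian in $\cC$, Lemma \ref{lem: ExtCommutes} shows that $\Ext^i_\cC(P,-)$ commutes with arbitrary direct sums, so $\Ext^i_\cC(P,P^{(I)}) \cong \Ext^i_\cC(P,P)^{(I)}$ is annihilated by $\cJ$ for every index set $I$. Second, for any object $A$ of $\cA$, the fact that $P$ generates $\cA$ provides an epimorphism $P^{(I)} \twoheadrightarrow A$; its kernel $K$ lies in $\cA$ because $\cA$ is closed under subobjects (Definition \ref{def:thickening}.a), so we have a short exact sequence $0 \to K \to P^{(I)} \to A \to 0$ in $\cA$. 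Applying $\Ext^*_\cC(P,-)$ yields a long exact sequence which is $Z \otimes Z$-linear throughout: the induced maps and the connecting homomorphisms $\delta_j \colon \Ext^j_\cC(P,A) \to \Ext^{j+1}_\cC(P,K)$ all commute with the $Z \otimes Z$-action, because $Z = Z(\cA)$ acts naturally on the objects and morphisms of this sequence.

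For the induction itself: when $j > d$ we have $\Ext^j_\cC(P,A) = 0$ by \textbf{(A2)} (as $A \in \cA$), which settles $j = d+1$, where the exponent $d+1-j$ is $0$. Assuming the result in degree $j+1$ for all objects of $\cA$, fix $0 \leq j \leq d$ and $x \in \Ext^j_\cC(P,A)$. Applying the inductive hypothesis to $K \in \cA$ gives $\delta_j(\cJ^{d-j} x) = \cJ^{d-j}\delta_j(x) = 0$, so $\cJ^{d-j} x$ lies in $\ker \delta_j$, which by exactness equals the image of $\Ext^j_\cC(P,P^{(I)})$ in $\Ext^j_\cC(P,A)$ and is therefore annihilated by $\cJ$. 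Hence $\cJ^{d+1-j} x = \cJ \cdot (\cJ^{d-j} x) = 0$, and since $x$ was arbitrary, $\cJ^{d+1-j} \Ext^j_\cC(P,A) = 0$, completing the induction.

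I expect the only genuinely delicate point to be the $Z \otimes Z$-linearity of the long exact sequence — the rest is formal — and this follows from the naturality of the central action of $Z$ on $\cA$, so that central endomorphisms furnish an endomorphism of the short exact sequence with respect to which all the maps, in particular the connecting ones, are natural. I note that neither \textbf{(A1)} nor \textbf{(A3)} is used; only \textbf{(A2)}, the standing hypotheses on the thickening, and the existence of the noetherian projective generator $P$ enter the argument.
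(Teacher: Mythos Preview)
Your proof is correct and follows essentially the same approach as the paper: both arguments use that $\cJ$ annihilates $\Ext^*_\cC(P,F)$ for $F$ a direct sum of copies of $P$ (via Lemma~\ref{lem: ExtCommutes}), and then perform a downward dimension-shift from the vanishing range guaranteed by \textbf{(A2)}. The only cosmetic difference is that the paper first writes out a full resolution of $A$ by such $F$'s and then descends along the resulting chain of exact sequences, whereas you build one short exact sequence at a time inside a downward induction on $j$; the content is identical.
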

\begin{proof}
By Lemma \ref{lem: ExtCommutes}, the functors $\Ext^j_\cC(P,-)$, for any $j \geq 0$, commute with arbitrary direct sums. It follows that $\cJ$ annihilates $\Ext_\cC^*(P,F)$ for any $F$ which is a possibly infinite direct sums of copies of $P$. Since $P$ is a generator\footnote{The projectivity of $P$ is not needed for this argument.}  of $\cA$, using \cite{Ste} p. 93 Prop 6.2 repeatedly we find an exact sequence in $\cA$ of the form
\begin{equation*}
  \ldots \longrightarrow F_n \longrightarrow \ldots \longrightarrow F_0 \longrightarrow A \longrightarrow 0
\end{equation*}
where each $F_n$ is a direct sum of copies of $P$. We break it up into short exact sequences
\begin{align*}
   & 0 \rightarrow Y_0 \rightarrow F_0 \rightarrow A \rightarrow 0 \\
   & 0 \rightarrow Y_1 \rightarrow F_1 \rightarrow Y_0 \rightarrow 0 \\
   & \cdots                                                           \\
   & 0 \rightarrow Y_n \rightarrow F_n \rightarrow Y_{n-1} \rightarrow 0 \\
   & \cdots
\end{align*}
Applying $\Ext^*_\cC(P,-)$ we obtain exact sequences
\begin{align*}
   & \Ext^i_\cC(P,F_0) \rightarrow \Ext^i_\cC(P,A) \xrightarrow{\delta} \Ext^{i+1}_\cC(P,Y_0) \\
   & \Ext^{i+1}_\cC(P,F_1) \rightarrow \Ext^{i+1}_\cC(P,Y_0) \xrightarrow{\delta} \Ext^{i+2}_\cC(P,Y_1) \\
   & \cdots                                                                                                            \\
   & \Ext^{i+n}_\cC(P,F_n) \rightarrow \Ext^{i+n}_\cC(P,Y_{n-1}) \xrightarrow{\delta} \Ext^{i+n+1}_\cC(P,Y_n) \ , \\
   & \cdots
\end{align*}
where $\delta$ denotes the connecting homomorphisms. These are sequences of $Z\otimes Z$-bimodules. The first term in each sequence is annihilated by $\cJ$. Now choose $n := d - i$. Then $\Ext^{i+n+1}_\cC(P,Y_n) = 0$. The assertion now follows by downward induction along the above sequences.
\end{proof}

\begin{proof}[Proof of Theorem \ref{thm:MainStability}]
Since $\cL_W^H = \varrho^{-1}(\cL_W^Z)$, it follows from Lemma \ref{lem:varrho-stable} that $\cL_W^H$ is stable in $\Mod(H)$. Hence $\cL_W^{\cA} = \cL_W^{\cC} \cap \cA$ is stable in $\cA$. Therefore, by Thm.\ \ref{thm:stable-A-L}, it is enough to show that the functor $\Ext^1_{\cC}(P,-)$ maps $\cL_W^{\cA}$ to $\cL_W^H$. By Lemma \ref{lem: ExtCommutes}, the functor $\Ext^1_{\cC}(P,-)$ commutes with filtered colimits. Hence it suffices to show that for any noetherian object $A$ in $\cL_W^{\cC} \cap \cA$, the $H$-module $M := \Ext^1_{\cC}(P,A)$ lies in $\cL_W^H$. Since the functor $\Ext^1_{\cC}(P,-)$ is also half-exact,  we may assume further, using a prime series for $\Hom_\cA(P,A)$ as in \cite{GW} Prop.\ 3.13, that the annihilator of any nonzero $H$-submodule of $\Hom_\cA(P,A)$ is a prime ideal $\frp$ of $H$. Then necessarily $\frp = \Ann_H(\Hom_\cA(P,A))$, and we write $\frp_Z := \frp \cap Z \in \Xi$.

By Lemma \ref{lem:J-killing} the $Z \otimes Z$-module $M = \Ext^1_{\cC}(P,A)$ is killed by $\cJ^{d+1}$. Letting $\cR_d := \Spec((Z \otimes Z) / \cJ^{d+1})$, we can then regard $M$ as an $\cO(\cR_d) = (Z \otimes Z) / \cJ^{d+1}$-module in a natural way. Next, consider the commutative diagram
\begin{equation}\label{f:pr1pr2}
\xymatrix{
\cR \ar@<0.5ex>[rrd]^{\pi_1}\ar@<-1ex>[rrd]_{\pi_2}\ar[rrrr]^i &&&& \cR_d \ar@<1ex>[lld]^{f_2}\ar@<-0.5ex>[lld]_{f_1}  \\
 && \Xi && }
\end{equation}
where $i$ is the closed embedding, and the $f_j$ are, similarly as before, the restrictions of the two projection maps.

Let $S := \Xi \setminus W$; then our assumption $\pi_2(\pi_1^{-1}(W)) \subseteq W$ is equivalent to $\pi_1^{-1}(W) \subseteq \pi_2^{-1}(W)$, which is the same as $\pi_2^{-1}(S) \subseteq \pi_1^{-1}(S)$. Since $\pi_j = f_j \circ i$, this is equivalent to $i^{-1} f_2^{-1}(S) \subseteq i^{-1} f_1^{-1}(S)$. However, $i$ is a bijection because $\cJ$ is nilpotent modulo $\cJ^{d+1}$, so this is in turn equivalent to $f_2^{-1}(S) \subseteq f_1^{-1}(S)$.

Now, $\frp$ is the annihilator of any nonzero $H$-submodule of $\Hom_\cA(P,A)$, so $\frp_Z = \frp \cap Z = \Ann_Z(v)$ for all nonzero $v \in \Hom_\cA(P,A)$. Since $\Hom_\cA(P,A)$ lies in $\cL_W^H$, this means that $V(\frp_Z) \cap W = \emptyset$ by Prop.\ \ref{prop:LHW}, and hence $V(\frp_Z) \subseteq S = \Xi \setminus W$.

By assumption $\cO(\cR_d) = (Z \otimes Z) / \cJ^{d+1}$ is noetherian. Let $\overline{i}_2 : Z \to (Z \otimes Z) / \cJ^{d+1}$, sending $z$ to $1 \otimes z + \cJ^{d+1}$, be the ring homomorphism which induces $f_2$. Let $Q_1,\cdots, Q_k \subseteq \cO(\cR_d)$ be the minimal prime ideals lying above $\overline{i}_2(\frp_Z) \cO(\cR_d)$; then $Q_1^{a_1}\cdots Q_k^{a_k} \subseteq \overline{i}_2(\frp_Z) \cO(\cR_d)$ for some positive integers $a_1,\cdots, a_k$. Since $Q_j \in f_2^{-1}(V(\frp_Z)) \subseteq f_2^{-1}(S) \subseteq f_1^{-1}(S)$, we see that $\mathfrak{q}_j := f_1(Q_j)$ lies in $S$ for all $j = 1,\cdots, k$. Hence $V(\mathfrak{q}_j) \subseteq S$ for all $j$, because $S$ is specialization-closed.

With $\Hom_\cA(P,A)$ also $A$ is killed by $\frp_Z$. Hence $M = \Ext^1_{\cC}(P, A)$ is killed by $Z \otimes \frp_Z$ as well as $\cJ^{d+1}$. We see that $M$ is killed by $\overline{i}_2(\frp_Z) \cO(\cR_d) \supseteq Q_1^{a_1}\cdots Q_k^{a_k}$. Hence $\mathfrak{q}_1^{a_1} \cdots \mathfrak{q}_k^{a_k} \cdot v = 0$ for any $v \in M$, so $V(\Ann_Z(v)) \subseteq V(\mathfrak{q}_1) \cup \cdots \cup V(\mathfrak{q}_k) \subseteq S$. Therefore the $H$-module $M$ (where $H$ acts on $M$ through its action on $P$) lies in $\cL^H_W$ by Prop.\   \ref{prop:LHW} as required.
\end{proof}
Now we form the following diagram, which defines the map $\tau$:
\[ \xymatrix{  \Sp(\cA) \ar[rr]^{r^{-1}}_{\cong} \ar[d]_{\cong}&& \Sp(\cC) \ar@{.>}[dd]^\tau \\
\Sp(\Mod(H)) \ar[d]_{\cong} && \\
\Spec(H) \ar[r]_(0.45)\varphi & \Spec(Z) = \Xi \ar[r]_(0.55){\bfq} & \frZ = \Xi / \frR }\]
\begin{proposition} \label{prop:preimoftau} Assume \textbf{(A1), (A2), (A3)}.  If $U \subseteq \frZ$ is an intersection of open subsets, then $\tau^{-1}(U)$ is stable.
\end{proposition}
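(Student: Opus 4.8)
The plan is to identify $\tau^{-1}(U)$ with $A(\cL_W^{\cC})$ for a suitable subset $W\subseteq\Xi$ and then invoke Theorem \ref{thm:MainStability}, which already contains all the substantive work.

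\emph{Constructing $W$.} Write $U$ as an intersection $U=\bigcap_\alpha U_\alpha$ of open subsets of $\mathfrak{Z}=\Xi/\cR$ and put $W:=\mathbf{q}^{-1}(U)=\bigcap_\alpha\mathbf{q}^{-1}(U_\alpha)\subseteq\Xi$. Each $\mathbf{q}^{-1}(U_\alpha)$ is open in $\Xi$ since $\mathbf{q}$ is continuous, so $W$ is an intersection of open subsets of $\Xi$, hence generalization-closed. Since $\mathbf{q}$ is a topological quotient map identifying $\pi_1(x)$ with $\pi_2(x)$ for every $x\in\cR$, each $\mathbf{q}^{-1}(U_\alpha)$ is a union of fibres of $\mathbf{q}$, so $\pi_1(x)\in\mathbf{q}^{-1}(U_\alpha)$ forces $\pi_2(x)\in\mathbf{q}^{-1}(U_\alpha)$; as $\pi_1^{-1}(W)\subseteq\pi_1^{-1}(\mathbf{q}^{-1}(U_\alpha))$ for each $\alpha$, this gives $\pi_2(\pi_1^{-1}(W))\subseteq\bigcap_\alpha\mathbf{q}^{-1}(U_\alpha)=W$. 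Thus $W$ satisfies the hypotheses of Theorem \ref{thm:MainStability} (where conditions \textbf{(A1)}, \textbf{(A2)}, \textbf{(A3)} are used), and we conclude that $\cL_W^{\cC}$ is stable in $\cC$. It now suffices to prove the set-theoretic identity \[\tau^{-1}(U)=A(\cL_W^{\cC}).\]

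\emph{Unwinding $\tau$.} For $[E]\in\Sp(\cC)$ let $\frp=\frp([E])\in\Spec(H)$ be the image of $[E]$ under the composite of the bijection $[E]\mapsto[r(E)]$ from \eqref{f:bij}, the equivalence \eqref{f:cat-equiv}, and the homeomorphism \eqref{f:ass}; concretely, $\frp$ is the prime associated to the $H$-module $\Hom_\cA(P,r(E))$. By the defining diagram $\tau([E])=\mathbf{q}(\varphi(\frp))$, so $[E]\in\tau^{-1}(U)$ if and only if $\varphi(\frp)\in W$, i.e. if and only if $\frp\in\varphi^{-1}(W)$. Next I would rewrite $\varphi^{-1}(W)$ as an $A$-set: since the inverse of the bijection \eqref{f:loc-classific} for $\Mod(Z)$ sends a generalization-closed set $W'$ to $\cL_{W'}^Z$, we have $A(\cL_W^Z)=W$; combining this with Lemma \ref{lem:varrho-stable2} and the identity $\varrho^{-1}(\cL_W^Z)=\cL_W^H$ of \eqref{f:varrho-stable2} yields $A(\cL_W^H)=\varphi^{-1}(A(\cL_W^Z))=\varphi^{-1}(W)$ inside $\Spec(H)$. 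Hence $[E]\in\tau^{-1}(U)$ if and only if $\frp([E])\in A(\cL_W^H)$.

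\emph{Transporting back to $\Sp(\cC)$.} The equivalence \eqref{f:cat-equiv} sends $\cL_W^{\cA}$ to $\cL_W^H$ by the definition of $\cL_W^{\cA}$, and being an equivalence it matches $A(\cL_W^{\cA})\subseteq\Sp(\cA)$ with $A(\cL_W^H)\subseteq\Spec(H)$; so the previous step says $[E]\in\tau^{-1}(U)$ if and only if $[r(E)]\in A(\cL_W^{\cA})$. It remains to check $A(\cL_W^{\cC})=r^{-1}(A(\cL_W^{\cA}))$ for the bijection $r\colon\Sp(\cC)\to\Sp(\cA)$, $[E]\mapsto[r(E)]$, where I recall $\cA\cap\cL_W^{\cC}=\cL_W^{\cA}$ by Proposition \ref{prop:loc-A-C}. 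If $[E]\in A(\cL_W^{\cC})$ then $\Hom_\cA(V,r(E))=\Hom_\cC(V,E)=0$ for all $V\in\cL_W^{\cA}\subseteq\cL_W^{\cC}$, so $[r(E)]\in A(\cL_W^{\cA})$. Conversely, suppose $[r(E)]\in A(\cL_W^{\cA})$ and let $f\colon V\to E$ be a nonzero map with $V\in\cL_W^{\cC}$; replacing $V$ by its image (still in $\cL_W^{\cC}$) we may assume $V\subseteq E$ is a nonzero subobject. Then $r(V)\subseteq r(E)$, $r(V)\neq0$ because $r$ preserves nonzero objects, and $r(V)\in\cA\cap\cL_W^{\cC}=\cL_W^{\cA}$ because $\cL_W^{\cC}$ is closed under subobjects; hence $\Hom_\cA(r(V),r(E))\neq0$, contradicting $[r(E)]\in A(\cL_W^{\cA})$. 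So no such $f$ exists and $[E]\in A(\cL_W^{\cC})$. This proves $\tau^{-1}(U)=A(\cL_W^{\cC})$, which is stable.

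I expect the only genuinely delicate points to be the identity $A(\cL_W^H)=\varphi^{-1}(W)$ and the compatibility $A(\cL_W^{\cC})=r^{-1}(A(\cL_W^{\cA}))$ of the $A$-construction with the thickening bijection $\cL\mapsto\cA\cap\cL$; everything else is a matter of matching up the identifications set up in Sections \ref{sec:thickening}--\ref{sec:rings}, the real content having been delegated to Theorem \ref{thm:MainStability}.
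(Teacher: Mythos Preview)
Your proof is correct and follows exactly the paper's approach: set $W=\mathbf{q}^{-1}(U)$, verify $\pi_2(\pi_1^{-1}(W))\subseteq W$, and invoke Theorem~\ref{thm:MainStability}. The paper's own proof is three lines and stops after applying that theorem, leaving the identification $\tau^{-1}(U)=A(\cL_W^{\cC})$ entirely implicit; your ``Unwinding $\tau$'' and ``Transporting back'' paragraphs supply precisely this verification, and your argument for $A(\cL_W^{\cC})=r^{-1}(A(\cL_W^{\cA}))$ is clean. One minor simplification: since $\mathbf{q}\pi_1=\mathbf{q}\pi_2$, you get $\pi_1^{-1}(W)=\pi_2^{-1}(W)$ directly for $W=\mathbf{q}^{-1}(U)$, so there is no need to pass through the individual $U_\alpha$.
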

\begin{proof} Put $W = \bfq^{-1}(U)$. Then
\[\pi_2(\pi_1^{-1}(W)) = \pi_2(\pi_1^{-1} \bfq^{-1}(U)) = \pi_2(\pi_2^{-1}\bfq^{-1}(U)) \subseteq \bfq^{-1}(U) = W.\]
Now apply Thm. \ref{thm:MainStability}.
\end{proof}

\begin{remark}\label{rem: A3klinear} Suppose that our category $\cC$ is $k$-linear. Then the $Z \otimes_{\bZ} Z$-action on the $\Ext$-groups $\Ext^j_{\cC}(M,N)$ for $M,N$ in $\cA$, factors through $Z \otimes_k Z$. Therefore the proof of Theorem \ref{thm:MainStability} goes through if we replace $\mathbf{(A3)}$ by the weaker assumption that $(Z \otimes_k Z) / \overline{\cJ}^{d+1}$ is noetherian, where $\overline{\cJ}$ is the image of $\cJ$ under the surjection $Z \otimes_{\bZ} Z \twoheadrightarrow Z \otimes_k Z$.
\end{remark}

We also apply Thm. \ref{thm:MainStability} to give a sufficient criterion for the stability of the Krull-dimension filtration of the locally noetherian category $\cC$. For every ordinal $\alpha$, the set
\[W_\alpha := \{\frp \in \Xi : \kappa(Z/\frp) > \alpha\}\]
is a generalization-stable subset of $\Xi$. We identify the corresponding localising subcategory $\cL^{\cC}_{W_\alpha}$ of $\cC$ in Lemma \ref{lem: LCalpha} below, but first we recall a standard fact about Krull dimension.

\begin{lemma}\label{lem: KdimFormula} Let $M$ be a finitely generated module over a commutative noetherian ring $A$. Then
\[\kappa(M) = \sup_{v \in M} \kappa(Av) = \sup_{v \in M} \quad \sup_{\frp \in V(\ann(v))} \kappa(A / \frp).\]
\end{lemma}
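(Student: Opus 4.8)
The plan is to deduce both equalities from the two properties of the Gabriel Krull dimension recalled just after Remark \ref{rem:locnoeth-has-Krull}: that for a short exact sequence $0\to U_1\to U_2\to U_3\to0$ one has $\kappa(U_2)=\sup(\kappa(U_1),\kappa(U_3))$, and that $\kappa(U)=\sup_i\kappa(U_i)$ whenever $U$ is the inductive limit of a family of subobjects $U_i$. Two immediate consequences, used repeatedly, are that submodules and quotient modules have Krull dimension at most that of the ambient module, and that a finite direct sum has the supremum of the dimensions of its summands.

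For the first equality I would argue as follows. Each cyclic submodule $Av$ sits inside $M$, so $\kappa(Av)\le\kappa(M)$ and hence $\sup_{v\in M}\kappa(Av)\le\kappa(M)$. Conversely, since $M$ is finitely generated we may write $M=Av_1+\dots+Av_n$, so that $M$ is a quotient of $\bigoplus_{i=1}^n Av_i$ via $(w_1,\dots,w_n)\mapsto w_1+\dots+w_n$; therefore $\kappa(M)\le\kappa\bigl(\bigoplus_i Av_i\bigr)=\max_i\kappa(Av_i)\le\sup_{v\in M}\kappa(Av)$. (Without finite generation the same conclusion follows by writing $M$ as the filtered union of its finitely generated submodules and using that $\kappa$ commutes with such unions.)

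For the second equality I would fix $v\in M$, say $v\neq0$ — the term $v=0$ affects the outer supremum only in the trivial case $M=0$ — and use the isomorphism $Av\cong A/\ann(v)$. Choose a prime series for $A/\ann(v)$, i.e.\ a finite filtration $0=N_0\subsetneq N_1\subsetneq\dots\subsetneq N_t=Av$ with $N_j/N_{j-1}\cong A/\frq_j$ for prime ideals $\frq_j$ of $A$; this is the commutative case of the prime-series statement already invoked in the proof of Theorem \ref{thm:MainStability} (cf.\ \cite{GW} Prop.\ 3.13). Since $N_j/N_{j-1}$ is a subquotient of $Av$, it is annihilated by $\ann(Av)=\ann(v)$, so $\frq_j=\ann(N_j/N_{j-1})\in V(\ann(v))$. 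Applying the short exact sequence property along the filtration gives $\kappa(Av)=\max_{1\le j\le t}\kappa(A/\frq_j)\le\sup_{\frp\in V(\ann(v))}\kappa(A/\frp)$. For the reverse inequality, every $\frp\in V(\ann(v))$ yields a surjection $Av\cong A/\ann(v)\twoheadrightarrow A/\frp$, so $\kappa(A/\frp)\le\kappa(Av)$; taking the supremum over such $\frp$ proves $\kappa(Av)=\sup_{\frp\in V(\ann(v))}\kappa(A/\frp)$, and then taking the supremum over $v\in M$, combined with the first equality, gives the statement.

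The verifications here are routine: the iterated use of the short exact sequence property and the reduction using finite generation are bookkeeping, and the only external ingredient is the existence of a prime series over a commutative noetherian ring. I do not expect a real obstacle; the single point that needs care is checking that every prime $\frq_j$ occurring in the prime series of $Av$ contains $\ann(v)$ — this is precisely what forces the middle and right-hand suprema to coincide, and it relies on the $\frq_j$ being annihilators of subquotients of $Av$ itself rather than of subquotients of a larger module.
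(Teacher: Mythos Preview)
Your proof is correct. The first equality is handled essentially as in the paper: both arguments reduce $\kappa(M)$ to the cyclic case by using a finite set of generators and the short-exact-sequence property (the paper phrases this as ``induction on the number of generators'', you write $M$ as a quotient of $\bigoplus_i Av_i$; these are the same idea).

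For the second equality you take a genuinely different route. The paper argues with the radical: it shows $\kappa(A/I)=\kappa(A/\sqrt{I})$ using $(\sqrt{I})^m\subseteq I$ for large $m$, then embeds $A/\sqrt{I}\hookrightarrow\bigoplus_i A/\frp_i$ where the $\frp_i$ are the minimal primes over $I$, and reads off $\kappa(A/I)=\sup_{\frp\in V(I)}\kappa(A/\frp)$. You instead take a prime series for $Av\cong A/\ann(v)$ and observe that each associated prime $\frq_j$ contains $\ann(v)$ because the corresponding factor is a subquotient of $Av$. Both methods are standard; the paper's is more specifically commutative-algebraic, while yours aligns with the prime-series machinery already invoked in the proof of Theorem~\ref{thm:MainStability} and would generalise more readily to the FBN setting. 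Your cautionary remark about checking $\frq_j\supseteq\ann(v)$ is well placed and correctly handled.
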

\begin{proof}
 An induction on the number of generators of $M$ together with \cite{MCR} Lemma 6.2.4 shows the first equality, namely $\kappa(M) = \sup_{v \in M} \kappa(Av)$.

Let $I$ be an ideal of $A$. Since $A$ is noetherian,  \cite{MCR} Lemma 6.2.4 implies that $\kappa(A/I) = \kappa(A / I^m)$ for all $m \geq 0$. Since $(\sqrt{I})^m \subseteq I$ for sufficiently large $m$, we have $\kappa(A/I) \geq \kappa(A/\sqrt{I}) = \kappa(A/(\sqrt{I})^m) \geq \kappa(A/I)$, so in fact we have the equality $\kappa(A/I) = \kappa(A/\sqrt{I})$. Since $\sqrt{I}$ is equal to the intersection of the finitely many minimal primes $\frp_1, \cdots, \frp_n$ containing it, there is a natural $A$-linear embedding $A / \sqrt{I} \hookrightarrow \oplus_{i=1}^n A / \frp_i$. So,
\[\kappa(A/I) = \kappa(A/\sqrt{I}) \leq \sup_{1\leq i \leq n} \kappa(A/\frp_i) \leq \sup_{\frp \in V(I)} \kappa(A/\frp) \leq \kappa(A/I).\] Therefore for any $v \in M$, $\kappa(Av) = \kappa(A/\Ann_A(v)) = \sup\limits_{\frp \in V(\ann(v))} \kappa(A/\frp)$.
\end{proof}
Recall ($\S \ref{sec: KdimSec}$) that $\cC_\alpha$ is the full subcategory of $\cC$ consisting of objects $V$ such that $\kappa(V) \leq \alpha$.
\begin{lemma}\label{lem: LCalpha} For any ordinal $\alpha$, we have $\cL^{\cC}_{W_\alpha} = \cC_\alpha$.
\end{lemma}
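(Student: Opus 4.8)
The plan is to strip away the definitions of $\cL^{\cC}_{W_\alpha}$ and of $\cC_\alpha$ one layer at a time, until the claimed equality becomes a statement about modules over the commutative noetherian ring $Z$, which will then follow directly from Lemma \ref{lem: KdimFormula}.

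First note that both $\cC_\alpha$ and $\cL^{\cC}_{W_\alpha}$ are localising subcategories of $\cC$: the former by Gabriel's construction recalled in $\S$\ref{sec: KdimSec}, the latter because $\cL^{\cC}_W = \langle\cL^{\cA}_W\rangle$. So by the bijection of Proposition \ref{prop:loc-A-C} it suffices to check the equality after intersecting with $\cA$, that is, to prove $\cL^{\cA}_{W_\alpha} = \cA\cap\cC_\alpha$ (using $\cA\cap\cL^{\cC}_{W_\alpha} = \cL^{\cA}_{W_\alpha}$ by construction). Then I would transport this along the equivalence $\Hom_\cA(P,-)\colon\cA\xrightarrow{\simeq}\Mod(H)$ of \eqref{f:cat-equiv}: by definition of $\cL^{\cA}_W$ it carries $\cL^{\cA}_{W_\alpha}$ to $\cL^{H}_{W_\alpha}$, while it carries $\cA\cap\cC_\alpha$ to $\Mod(H)_\alpha$, because an equivalence preserves the Gabriel dimension filtration and, by Remark \ref{rem:Krulldim}, the Krull dimension of an object of $\cA$ agrees with its Krull dimension in $\cC$. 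So the task reduces to $\cL^{H}_{W_\alpha} = \Mod(H)_\alpha$ in $\Mod(H)$. Finally, applying $\varrho^{-1}$ costs nothing here: $\cL^{H}_{W_\alpha} = \varrho^{-1}(\cL^{Z}_{W_\alpha})$ by \eqref{f:varrho-stable2} and $\Mod(H)_\alpha = \varrho^{-1}(\Mod(Z)_\alpha)$ by Lemma \ref{lem:varrho-stable}.b, so it is enough to prove the ``commutative'' identity $\cL^{Z}_{W_\alpha} = \Mod(Z)_\alpha$ inside $\Mod(Z)$.

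For this last step I would argue as follows. Since $W_\alpha$ is generalization-closed, Remark \ref{rem:LZW} identifies $\cL^{Z}_{W_\alpha}$ with the class of $Z$-modules $M$ such that $V(\Ann_Z(N))\cap W_\alpha = \emptyset$ for every noetherian submodule $N\subseteq M$. Any such $N$ is finitely generated, so the formula of Lemma \ref{lem: KdimFormula}, applied together with the fact that $V(\Ann_Z(N)) = \bigcup_{i=1}^{r} V(\Ann_Z(v_i))$ for a finite generating set $v_1,\dots,v_r$ of $N$, yields $\kappa(N) = \sup_{\frp\in V(\Ann_Z(N))}\kappa(Z/\frp)$. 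Hence the condition $V(\Ann_Z(N))\cap W_\alpha = \emptyset$, which says that every prime containing $\Ann_Z(N)$ has $\kappa(Z/\frp)\le\alpha$, is precisely the condition $\kappa(N)\le\alpha$. Since $\kappa(M)$ is the supremum of $\kappa(N)$ over the noetherian submodules $N$ of $M$ (an elementary property of Krull dimension recorded in $\S$\ref{sec: KdimSec}), we get $M\in\cL^{Z}_{W_\alpha}$ iff $\kappa(N)\le\alpha$ for all noetherian $N\subseteq M$ iff $\kappa(M)\le\alpha$ iff $M\in\Mod(Z)_\alpha$, as required.

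I do not expect a serious obstacle here: each reduction is justified by a result already in hand, and the commutative core is essentially Lemma \ref{lem: KdimFormula}. The only delicate points are bookkeeping ones: making sure the equivalence in the second paragraph genuinely matches $\cA\cap\cC_\alpha$ with $\Mod(H)_\alpha$ (which is what forces the appeal to Remark \ref{rem:Krulldim}, so that ``Krull dimension $\le\alpha$'' is unambiguous across $\cA$, $\cC$, $\Mod(H)$ and $\Mod(Z)$), and checking that all four reductions are genuine equalities rather than one-sided inclusions.
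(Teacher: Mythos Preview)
Your proposal is correct and follows essentially the same approach as the paper: both reduce the claimed equality to a statement about $Z$-modules and then finish with Lemma~\ref{lem: KdimFormula}. The only cosmetic difference is that the paper reduces from $\cC$ to $\cA$ by restricting to noetherian objects and invoking the thickening filtration directly, whereas you use the bijection of Prop.~\ref{prop:loc-A-C}; and the paper works with a single finitely generated module $\rho(M)$ rather than proving the intermediate identity $\cL^{Z}_{W_\alpha} = \Mod(Z)_\alpha$ in full, but the content is identical.
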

\begin{proof} By Gabriel's definition of Krull dimension found at \cite{Gab} p. 382, we see that $\kappa(V) = \sup_i \kappa(V_i)$, if we write $V = \varinjlim V_i$ as a filtered colimit of its noetherian subobjects $V_i$. Therefore, it is enough to show that for noetherian $V \in \cC$, we have $V \in \cL^{\cC}_{W_\alpha}$ if and only if $\kappa(V) \leq \alpha$. Since $\cC$ is a thickening of $\cA$, we may further assume that $V \in \cA$. Hence we are reduced to showing that for finitely generated $H$-modules $M$, we have
\[ M \in \cL^H_{W_\alpha}  \quad \Leftrightarrow \quad \kappa(M) \leq \alpha.\]
However since $H$ is a finitely generated $Z$-module, $\kappa(M) = \kappa(\rho(M))$ by \cite{MCR} Cor. 10.1.10, whereas $M \in \cL^H_{W_\alpha}$ if and only if $\rho(M) \in \cL^Z_{W_\alpha}$ by equation (\ref{f:varrho-stable2}). Next, $\rho(M) \in \cL^Z_{W_\alpha}$ if and only if for all $v \in M$ and all $\frp \in \Spec(Z)$ containing $\Ann_Z(v)$, we have $\kappa(Z/\frp) \leq \alpha$. As $\rho(M)$ is a finitely generated $Z$-module, this is equivalent to $\kappa(\rho(M)) \leq \alpha$ by Lemma \ref{lem: KdimFormula}. \end{proof}

We can now establish the stability of the Krull-dimension filtration on $\cC$ under certain fairly restrictive hypotheses on $\Xi$ and $\cR$.

\begin{proposition}\label{prop:KdimStable}  Suppose that $\cC$ is $k$-linear, that $Z$ is a finitely generated $k$-algebra, and that there is an integer $d$ such that $\Ext^j_{\cC}(P,.)|_{\cA} = 0$ for any $j > d$. \textbf{Suppose further that $\pr_2 : \cR \to \Xi$ is quasi-finite}. Then $\cL_{W_\alpha}^{\cC}$ is a stable localising subcategory of $\cC$ for all $\alpha$.
\end{proposition}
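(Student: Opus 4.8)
The plan is to obtain this from Theorem~\ref{thm:MainStability} applied to the generalization-closed subset $W := W_\alpha$ (recall $W_\alpha$ is generalization-closed, as observed just above Lemma~\ref{lem: KdimFormula}). Three of the four hypotheses of that theorem are essentially free. Hypothesis \textbf{(A1)} is in force throughout this section; \textbf{(A2)} is part of our assumption; and for \textbf{(A3)} I would invoke Remark~\ref{rem: A3klinear}, which applies since $\cC$ is $k$-linear, and instead check that $(Z\otimes_k Z)/\overline{\cJ}^{\,d+1}$ is noetherian: this holds because $Z\otimes_k Z$ is a finitely generated $k$-algebra (as $Z$ is), hence noetherian by Hilbert's Basis Theorem, and so is its quotient. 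Thus the proof reduces to verifying the inclusion $\pi_2(\pi_1^{-1}(W_\alpha)) \subseteq W_\alpha$, which is where quasi-finiteness of $\pr_2$ must enter.

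Before the main step I would record the dimension-theoretic inputs. Since $Z$ is a finitely generated $k$-algebra, for any $\frp\in\Xi$ the Gabriel Krull dimension $\kappa(Z/\frp)$ coincides (see $\S$\ref{sec: KdimSec}) with the ordinary Krull dimension of the domain $Z/\frp$, i.e.\ with $\dim V(\frp)$, the dimension of the integral closed subscheme $V(\frp)\subseteq\Xi$, which equals the transcendence degree over $k$ of its function field. Moreover, since $\cC$ is $k$-linear, Remark~\ref{rem: A3klinear} lets us regard $\cR$ as a closed subscheme of $\Spec(Z\otimes_k Z)$, hence as a noetherian scheme of finite type over $k$; then $\pi_1,\pi_2:\cR\to\Xi$ are morphisms of finite-type $k$-schemes, and $\pi_2=\pr_2$ is quasi-finite by hypothesis.

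For the core step, fix $x\in\pi_1^{-1}(W_\alpha)$ and set $\frp:=\pi_1(x)$, $\frq:=\pi_2(x)$, so that $\kappa(Z/\frp)>\alpha$ and we must show $\kappa(Z/\frq)>\alpha$. Let $C:=\overline{\{x\}}\subseteq\cR$ with its reduced structure: an integral closed subscheme of finite type over $k$ whose generic point is $x$. Since each $\pi_j$ is continuous and $x$ is the generic point of $C$, $\pi_j$ sends $C$ into $V(\pi_j(x))$ with dense image, so $\pi_j|_C:C\to V(\pi_j(x))$ is a dominant morphism of integral finite-type $k$-schemes. For $j=1$ this induces a field inclusion $k(V(\frp))\hookrightarrow k(C)$, whence $\dim V(\frp)\le\dim C$. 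For $j=2$ the composite $C\hookrightarrow\cR\to\Xi$ (closed immersion followed by the quasi-finite $\pi_2$) is quasi-finite, hence so is the morphism $C\to V(\frq)$ through which it factors; being a dominant quasi-finite morphism of integral finite-type $k$-schemes it induces a finite field extension $k(V(\frq))\hookrightarrow k(C)$, so $\dim V(\frq)=\dim C$. Combining, $\alpha<\kappa(Z/\frp)=\dim V(\frp)\le\dim C=\dim V(\frq)=\kappa(Z/\frq)$, so $\frq\in W_\alpha$. This proves $\pi_2(\pi_1^{-1}(W_\alpha))\subseteq W_\alpha$, and Theorem~\ref{thm:MainStability} now gives the stability of $\cL^{\cC}_{W_\alpha}$.

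The only genuinely substantive point is this last dimension comparison: turning quasi-finiteness of $\pi_2$ into the inequality $\dim(Z/\frp)\le\dim(Z/\frq)$ along the point $x$, for which passing to the integral closed subscheme $C=\overline{\{x\}}$ and comparing transcendence degrees of function fields is the key device. (One could instead argue purely ring-theoretically, with the two coordinate homomorphisms $Z\rightrightarrows\cO(\cR)$, the prime of $\cO(\cR)$ given by $x$, and the fact that a generically finite finite-type extension of finitely generated $k$-domains preserves dimension.) Everything else is routine.
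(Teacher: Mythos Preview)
Your proof is correct and follows essentially the same strategy as the paper: reduce to Theorem~\ref{thm:MainStability} via Remark~\ref{rem: A3klinear}, and then verify the set-theoretic inclusion by comparing transcendence degrees of residue fields, using quasi-finiteness of $\pr_2$ to get equality on that side. The paper phrases the core step contrapositively (working with the complement $\cS_\alpha = \Xi\setminus W_\alpha$) and uses the residue-field diagram $k(\frp_1)\hookrightarrow k(P)\hookleftarrow k(\frp_2)$ at the point $P\in\cR$ directly, whereas you pass to the closure $C=\overline{\{x\}}$ and speak of function fields of dominant morphisms; but since $k(C)=k(x)$ and $k(V(\frp))=k(\frp)$, the two formulations are the same argument in slightly different dress.
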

\begin{proof} By assumption, the $k$-algebra $Z \otimes_k Z$ is also finitely generated, so it is noetherian by Hilbert's Basis Theorem. By Thm. \ref{thm:MainStability} and Remark \ref{rem: A3klinear}, it is enough to show that
\[\pr_2( \pr_1^{-1}(W_\alpha)) \subseteq W_\alpha.\]
Let $\cS_\alpha := \Xi \backslash W_\alpha$ be the complement of $W_\alpha$ in $\Xi$; then it is enough to show that
\[ \pr_1(\pr_2^{-1}(\cS_\alpha)) \subseteq \cS_\alpha.\]
Let $P \in \pr_2^{-1}(\cS_\alpha)$ and set $\frp_i := \pr_i(P)$ for $i=1,2$. Then $\frp_2 \in \cS_\alpha$, so $\kappa(Z/\frp_2) \leq \alpha$. The morphisms $\pr_1$ and $\pr_2$ induce the following diagram of residue fields:
\[ k(\frp_1) \hookrightarrow k(P) \hookleftarrow k(\frp_2).\]
Since $\pr_2 : \cR \to \Xi$ is quasi-finite, $k(P)$ is a finite dimensional $k(\frp_2)$-vector space. Hence
\[ \tr.\deg_k(k(\frp_1)) \leq \tr.\deg_k(k(P)) = \tr.\deg_k(k(\frp_2)).\]
Since $Z$ is a finitely generated $k$-algebra, we can apply a version of the Noether Normalization Theorem  --- see \cite{Mat} Thm. 5.6 --- to deduce that
\[ \kappa(Z / \frp_i) = \tr.\deg_k(Z/\frp_i) = \tr.\deg_k(k(\frp_i)) \quad\mbox{for}\quad i=1,2.\]
Hence $\kappa(Z/\frp_1) \leq \kappa(Z/\frp_2) \leq \alpha$, which means that $\frp_1 = \pr_1(P) \in \cS_\alpha$. \end{proof}

\section{The quotient space for $\Mod_k(SL_2(\bQ_p))$} \label{sec:QuotSpace}

\subsection{The pro-$p$ Iwahori Hecke algebra and its centre} \label{sec: propIHA}
\subsubsection{The set-up}Let $\frF$ be a finite extension of $\bQ_p$ with ring of integers $\frO$, maximal ideal $\frM$ and residue field $\frf$. We fix a choice of generator $\pi$ of $\frM$. Necessarily $\frf \cong \bF_q$ for some power $q$ of $p$. \textbf{We assume that our ground field $k$ contains $\frf$}.
\subsubsection{Groups}\label{ssec: groups}
Let $G = \SL_2(\frF)$ and $K = \SL_2(\frO)$, and define the \emph{pro-$p$ Iwahori subgroup}
\[I := \begin{pmatrix} 1 + \frM & \frO \\ \frM & 1 + \frM \end{pmatrix} \cap G.\]
Let $T = \begin{pmatrix} \frF^\times & 0 \\ 0 & \frF^\times \end{pmatrix} \cap G$ be the subgroup of diagonal matrices in $G$ and let $T^0 := T \cap K$. The projection onto the $(1,1)$-entry gives group isomorphisms $T \stackrel{\cong}{\longrightarrow} \frF^\times$ and $T^0  \stackrel{\cong}{\longrightarrow} \frO^\times$. Hence $T / T^0$ is an infinite cyclic group. The normaliser $N_G(T)$ of $T$ in $G$ is generated by $T$ and $s_0 := \begin{pmatrix} 0 & 1 \\ -1 & 0 \end{pmatrix}$, and $N_G(T)$ is isomorphic to the semi-direct product $T \rtimes \langle s_0 \rangle$, where $s_0$ acts on $T$ by inversion. This action preserves the subgroup $T_0$, so that $T_0$ is normal in $N_G(T)$. Then we can form the \emph{affine Weyl group} $W := N_G(T) / T_0$ which is isomorphic to the infinite dihedral group, generated by the $T_0$-cosets of $s_0$ and $s_1 := \begin{pmatrix} 0 & - \pi^{-1} \\ \pi & 0 \end{pmatrix}$. The subgroup $T^1 := T \cap I$ is also normal in $N_G(T)$ and this gives the \emph{extended Weyl group} $\widetilde{W} := N_G(T) / T^1$. Let $\Omega := T^0 / T^1$; then $\Omega \cong \frf^\times$ and we have the short exact sequence
\[ 1 \to \Omega \to \widetilde{W} \to W \to 1.\]
Note that $\Omega$ is isomorphic to $\bF_q^\times$ and is therefore a finite cyclic group of order $q-1$.
\subsubsection{The pro-$p$ Iwahori Hecke algebra}
Let $\hatOm := \Hom(\Omega, k^\times)$ be the group of $k$-linear characters of $\Omega$. Since $k$ contains the residue field $\frf$ of $\frF$ by assumption, $\hatOm$ is a cyclic group of order $q-1$, generated by the \emph{identity character}
\[ \id : \Omega \to k^\times, \quad \begin{pmatrix} a & 0 \\ 0 & a^{-1} \end{pmatrix} T^1 \mapsto a + \frM \in \frf^\times \hookrightarrow k^\times.\]
In other words, we have $\hatOm = \{\id^j : 0 \leq j \leq q-2\}$ where $\id^0 = \triv$ is the \emph{trivial character}. The group algebra $k[\Omega]$ contains $q-1$ primitive idempotents
\begin{equation}\label{eq: defelambda} e_\lambda := - \sum\limits_{\omega \in \Omega} \lambda(\omega)^{-1} \omega, \quad\quad \lambda \in \hatOm\end{equation}
that form a basis for $k[\Omega]$ as a $k$-vector space. These idempotents are pairwise orthogonal:
\begin{equation}\label{eq: cpi} e_\lambda e_\mu = \delta_{\lambda \mu} e_{\lambda} \qmb{for all} \lambda,\mu \in \widehat {\Omega}.\end{equation}
We have the compactly induced $G$-representation
\[ \ind_I^G(k) := k[G/I] =  k[G] \underset{k[I]}{\otimes}{} k\]
from the trivial representation $k$ of $I$. It is a smooth $k$-linear representation of $G$, so $k[G/I]$ is an object in $\Mod_k(G)$. The \emph{pro-$p$ Iwahori Hecke algebra} is the opposite ring of the endomorphism ring of this representation:
\[ H := \End_{\Mod_k(G)}(k[G/I])^{\op}.\]
The following facts are known about $H$.
\begin{lemma}\label{lem: Hpres} \
\begin{itemize}
\item[a)] The double cosets $\{\tau_w := IwI : w \in \widetilde{W}\}$ form a $k$-basis for $H$.
\item[b)] The map $k[\Omega] \to H$ given by $\Omega \ni \omega \mapsto \tau_\omega$ is an injection of $k$-algebras.
\item[c)] The elements $\tau_0 := \tau_{s_0}$ and $\tau_1 := \tau_{s_1}$ generate $H$ as a $k$-algebra together with $k[\Omega]$.
\item[d)] We have $\tau_i e_\lambda = e_{\lambda^{-1}} \tau_i$ for $i=0,1$ and any $\lambda \in \hatOm$.
\item[e)] We have the \emph{quadratic relations} $\tau_i^2 = - \tau_i e_\triv$ for $i=0,1$.
\end{itemize}
\end{lemma}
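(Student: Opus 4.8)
The plan is to derive all five statements from the general structure theory of pro-$p$ Iwahori Hecke algebras \cite{Vig}, specialised to $\SL_2$ as in \cite{OS18}. I would start from the pro-$p$ Iwahori--Bruhat decomposition $G = \coprod_{w \in \widetilde{W}} IwI$. Since $I$ is compact open, each double coset $IwI$ is a finite disjoint union of left cosets $gI$, and an endomorphism of $k[G/I] = \ind_I^G(k)$ in $\Mod_k(G)$ is determined by the image of the base point $eI$, which must be $I$-fixed, i.e.\ a $k$-linear combination of the vectors $\tau_w(eI) := \sum_{gI \subseteq IwI} gI$. This gives (a): the $\tau_w$ form a $k$-basis of $H$. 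For (b), observe that $T^0$ normalises $I$, so for $\omega \in \Omega = T^0/T^1$ the double coset $I\omega I$ is the single coset $\omega I$; hence $\ell(\omega) = 0$ and the standard multiplicativity of basis elements yields $\tau_\omega \tau_{\omega'} = \tau_{\omega\omega'}$. The resulting $k$-algebra map $k[\Omega] \to H$ is injective because the $\tau_\omega$ are part of the basis in (a).

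For (c) I would use the braid relation $\tau_v \tau_w = \tau_{vw}$ whenever $\ell(vw) = \ell(v)+\ell(w)$, together with the facts that $\widetilde{W}$ is generated by $\Omega$ and the lifts $s_0, s_1$ of the two generators of the infinite dihedral group $W$, and that every $w \in \widetilde{W}$ has a length-additive factorisation $w = \omega\, s_{i_1}\cdots s_{i_\ell}$ with $\omega \in \Omega$; an induction on $\ell(w)$ then places $\tau_w$ in the subalgebra generated by $\tau_0, \tau_1$ and $k[\Omega]$. Statement (d) follows from the fact that $s_i$ conjugates $T$ by inversion, hence acts on $\Omega$ by $\omega \mapsto \omega^{-1}$, so that $s_i\omega = \omega^{-1}s_i$ in $\widetilde{W}$ with both sides of length $1$; the braid relation gives $\tau_i\tau_\omega = \tau_{\omega^{-1}}\tau_i$, and substituting this into $e_\lambda = -\sum_\omega \lambda(\omega)^{-1}\tau_\omega$ and re-indexing $\omega \mapsto \omega^{-1}$ turns it into $\tau_i e_\lambda = e_{\lambda^{-1}}\tau_i$, using $\lambda(\omega^{-1})^{-1} = \lambda(\omega) = \lambda^{-1}(\omega)^{-1}$.

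Finally, (e) is the one genuinely computational input. Passing to the convolution model of $H$ as the algebra of $I$-biinvariant functions on $G$, one computes $\tau_i \ast \tau_i$ from the decomposition of $Is_iI$ into $I$-cosets: the ``generic'' part is $|Is_iI/I|$ times $\tau_{s_i^2}$, which vanishes in $k$ since $|Is_iI/I|$ is a power of $p$, leaving only the ``defect'' term supported on $Is_iI$, equal in the present normalisation to $-\tau_i e_\triv$. The main obstacle here is pure bookkeeping: making the identification $I\backslash G/I \cong \widetilde{W}$ precise and tracking the normalisation constants in the quadratic relation so that the sign and the idempotent $e_\triv$ come out exactly as stated. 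Since all of this is in the literature --- \cite{Vig} for the general theory, \cite{OS18} (see also \cite{OS22}) for the explicit $\SL_2$ computation --- I would cite those sources rather than reproduce the calculations.
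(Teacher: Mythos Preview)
The paper states this lemma without proof, prefacing it only with ``The following facts are known about $H$'' and implicitly deferring to \cite{Vig} and \cite{OS18}. Your proposal is correct and in fact more informative than the paper itself: you correctly identify the pro-$p$ Iwahori--Bruhat decomposition as the source of (a), the length-zero property of $\Omega$ for (b), length-additive factorisations and braid relations for (c), the inversion action of $s_i$ on $T$ for (d), and the vanishing of $q_{s_i} = p$ in $k$ for (e), and you sensibly conclude by citing the same references the paper relies on.
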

We will never use the notation $\tau_w$ when $w$ is the identity element of $\widetilde{W}$, because this is equal to the identity element in $H$. Therefore writing $\tau_1 = \tau_{s_1}$ should not lead to any confusion.

\subsubsection{The centre of $H$} \label{sec: CentOfH}
The centre $Z = Z(H)$ of this $k$-algebra can be described explicitly. The following statement is well-known, but see also \cite{OS18} $\S$3.2.2.
\begin{lemma}\label{lem: zeta} The element
\[\zeta := (\tau_0 + e_\triv)(\tau_1+e_\triv) + \tau_1\tau_0 = (\tau_1 + e_\triv)(\tau_0 + e_\triv) + \tau_0\tau_1\]
generates a polynomial ring $k[\zeta]$ inside $Z(H)$.
\end{lemma}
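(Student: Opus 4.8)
The plan is to check, in turn, that the two displayed formulas for $\zeta$ agree, that $\zeta$ is central in $H$, and that the powers $1,\zeta,\zeta^2,\dots$ are linearly independent over $k$. For the first point, since $\triv^{-1}=\triv$, Lemma \ref{lem: Hpres}.d gives $\tau_ie_\triv=e_\triv\tau_i$ for $i=0,1$, and $e_\triv^2=e_\triv$ by \eqref{eq: cpi}; expanding both products then shows that each of the two expressions equals $\tau_0\tau_1+\tau_1\tau_0+\tau_0e_\triv+\tau_1e_\triv+e_\triv$, and I will use this common form of $\zeta$ below.

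To prove centrality it is enough, by Lemma \ref{lem: Hpres}.c, to show that $\zeta$ commutes with $\tau_0$, with $\tau_1$, and with every primitive idempotent $e_\lambda$, $\lambda\in\hatOm$. For $e_\lambda$ one carries it across each summand of $\zeta$ using Lemma \ref{lem: Hpres}.d twice (to return $e_\lambda$ to the other side) together with the orthogonality \eqref{eq: cpi}, and $e_\lambda\zeta=\zeta e_\lambda$ falls out at once. For $\tau_0$ I would compute $\tau_0\zeta$ and $\zeta\tau_0$ by hand: invoking only the quadratic relation $\tau_0^2=-\tau_0e_\triv$ (Lemma \ref{lem: Hpres}.e) and $\tau_0e_\triv=e_\triv\tau_0$, the five summands on each side collapse and one obtains $\tau_0\zeta=\tau_0\tau_1\tau_0=\zeta\tau_0$. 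Because the presentation in Lemma \ref{lem: Hpres}, and the element $\zeta$, are both symmetric under swapping the indices $0$ and $1$, the same computation yields $\tau_1\zeta=\tau_1\tau_0\tau_1=\zeta\tau_1$; hence $\zeta\in Z(H)$.

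For transcendence I would work with the $k$-basis $\{\tau_w:w\in\widetilde{W}\}$ and the length filtration $F_nH:=\bigoplus_{\ell(w)\le n}k\tau_w$, which is a ring filtration on $H$, together with the standard length-additivity relation $\tau_v\tau_w=\tau_{vw}$ whenever $\ell(vw)=\ell(v)+\ell(w)$. Since elements of $\Omega$ have length $0$, the three summands $\tau_0e_\triv,\tau_1e_\triv,e_\triv$ lie in $F_1H$, whereas $\tau_0\tau_1=\tau_{s_0s_1}$ and $\tau_1\tau_0=\tau_{s_1s_0}$ have length $2$; thus $\zeta\equiv\tau_{s_0s_1}+\tau_{s_1s_0}\pmod{F_1H}$. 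Expanding $\zeta^n$, any monomial using one of the three length-$\le1$ summands lies in $F_{2n-1}H$, and among the monomials built from the factors $\tau_0\tau_1$ and $\tau_1\tau_0$ all but $(\tau_0\tau_1)^n$ and $(\tau_1\tau_0)^n$ also lie in $F_{2n-1}H$, since whenever two consecutive factors differ the spelled-out word contains a subword $s_1s_1$ or $s_0s_0$, which the quadratic relation shortens. Finally $(\tau_0\tau_1)^n=\tau_{(s_0s_1)^n}$ and $(\tau_1\tau_0)^n=\tau_{(s_1s_0)^n}$ are genuine basis vectors — alternating words in the infinite dihedral group $W$ are reduced — and they are distinct because $s_0s_1$ has infinite order in $W$. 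Hence the image of $\zeta^n$ in $F_{2n}H/F_{2n-1}H$ is $\overline{\tau_{(s_0s_1)^n}}+\overline{\tau_{(s_1s_0)^n}}\ne0$, while $\zeta^i\in F_{2i}H\subseteq F_{2n-1}H$ for $i<n$; so no nontrivial relation $\sum_{i=0}^nc_i\zeta^i=0$ with $c_n\ne0$ can hold, and $k[\zeta]\cong k[X]$ lies inside $Z(H)$.

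I expect the main obstacle to be the bookkeeping in the last step: one has to pin down precisely which monomials contribute to the leading graded piece $F_{2n}H/F_{2n-1}H$ of $\zeta^n$ and verify that no cancellation occurs there. (The statement is classical and can alternatively be quoted from \cite{OS18} $\S$3.2.2.)
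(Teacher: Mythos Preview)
Your proof is correct. The paper does not actually prove this lemma: it is stated as well-known, with a pointer to \cite{OS18} \S 3.2.2, and no argument is given in the text. Your direct verification of the equality of the two expressions and of centrality is clean (indeed $\tau_0\zeta=\tau_0\tau_1\tau_0=\zeta\tau_0$ drops out exactly as you describe), and your length-filtration argument for transcendence is the standard one. The only ingredients you use beyond Lemma~\ref{lem: Hpres} are the braid relation $\tau_v\tau_w=\tau_{vw}$ for $\ell(vw)=\ell(v)+\ell(w)$ and the fact that the length filtration is multiplicative; these are part of the Iwahori--Matsumoto presentation implicit in Lemma~\ref{lem: Hpres}.a and are in any case covered by the reference \cite{OS18} you mention at the end.
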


\begin{definition}\label{def: Xlambda} For each $\lambda \in \hatOm$, we define the following elements of $H$:
\[ X_\lambda = \left\{ \begin{array}{cll} e_\lambda \tau_0 \tau_1 + e_{\lambda^{-1}} \tau_1 \tau_0 & : & \lambda \neq \triv \\ e_\triv \zeta & : & \lambda = \triv. \end{array} \right.\]
\end{definition}

Again, we omit the proof of the following well-known result.
\begin{lemma} \label{lem: ezeta}  Let $\lambda \in \hatOm$.
\begin{itemize}
\item[a)] $X_\lambda$ is central in $H$.
\item[b)] If $\lambda \neq \lambda^{-1}$, then $(e_\lambda + e_{\lambda^{-1}}) \zeta = X_\lambda + X_{\lambda^{-1}}$.
\item[c)] If $\lambda = \lambda^{-1}$, then $e_\lambda \zeta = X_\lambda$.
\item[d)] We have $\sum\limits_{\alpha \in \hatOm} X_\alpha  = \zeta$.
\end{itemize}
\end{lemma}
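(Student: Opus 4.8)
The plan is to reduce everything to the presentation of $H$ in Lemma \ref{lem: Hpres}: the twisted commutation rule $\tau_i e_\lambda = e_{\lambda^{-1}}\tau_i$ (equivalently $e_\lambda\tau_i = \tau_i e_{\lambda^{-1}}$), the quadratic relations $\tau_i^2 = -\tau_i e_\triv$, the orthogonality $e_\lambda e_\mu = \delta_{\lambda\mu}e_\lambda$, and the fact that $\sum_{\alpha\in\hatOm}e_\alpha = 1$ in $k[\Omega]$ (immediate from the formula \eqref{eq: defelambda}, or from $|\Omega| = q-1$ being prime to $p$ together with $k\supseteq\frf$). Two preliminary observations will keep the bookkeeping transparent. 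First, $e_\triv$ is central in $H$: since $\triv = \triv^{-1}$ the twisted rule gives $\tau_i e_\triv = e_\triv\tau_i$, and $e_\triv$ commutes with $k[\Omega]$, so it commutes with a set of algebra generators of $H$ by Lemma \ref{lem: Hpres}c). Second, both $\tau_0\tau_1$ and $\tau_1\tau_0$ commute with every $e_\mu$, hence with all of $k[\Omega]$, because $\tau_0\tau_1 e_\mu = \tau_0 e_{\mu^{-1}}\tau_1 = e_\mu\tau_0\tau_1$ and likewise for $\tau_1\tau_0$. Finally, expanding the first expression of Lemma \ref{lem: zeta} and using $e_\triv^2 = e_\triv$ yields the normal form $\zeta = \tau_0\tau_1 + \tau_1\tau_0 + \tau_0 e_\triv + e_\triv\tau_1 + e_\triv$, which is the workhorse for b)--d).

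For a), the case $\lambda = \triv$ is immediate, since $X_\triv = e_\triv\zeta$ is a product of the two central elements $e_\triv$ and $\zeta$. For $\lambda\neq\triv$ I would check that $X_\lambda = e_\lambda\tau_0\tau_1 + e_{\lambda^{-1}}\tau_1\tau_0$ commutes with $k[\Omega]$ and with $\tau_0,\tau_1$. Commutation with each $e_\mu$ follows from the second preliminary observation and $e_\lambda e_\mu = e_\mu e_\lambda$. For $\tau_0$, a short computation using $\tau_0 e_\lambda = e_{\lambda^{-1}}\tau_0$ collapses $X_\lambda\tau_0 - \tau_0 X_\lambda$ to $e_{\lambda^{-1}}(\tau_1\tau_0^2 - \tau_0^2\tau_1)$; substituting $\tau_0^2 = -\tau_0 e_\triv$ and then moving $e_\triv$ to the left (legitimate by the preliminaries) rewrites this as $e_{\lambda^{-1}}e_\triv(\tau_0\tau_1 - \tau_1\tau_0)$, which vanishes because $\lambda^{-1}\neq\triv$. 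The computation for $\tau_1$ is entirely parallel, using $\tau_1^2 = -\tau_1 e_\triv$, and produces $e_\lambda e_\triv(\tau_0\tau_1 - \tau_1\tau_0) = 0$. Hence $X_\lambda$ lies in the centre.

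For b) and c) the key point is that, for $\lambda\neq\triv$, multiplying the normal form of $\zeta$ on the left by $e_\lambda$ annihilates the three terms containing $e_\triv$ (because $e_\lambda e_\triv = 0 = e_{\lambda^{-1}}e_\triv$), leaving $e_\lambda\zeta = e_\lambda(\tau_0\tau_1 + \tau_1\tau_0)$. If $\lambda = \lambda^{-1}\neq\triv$ this is exactly $X_\lambda$ by Definition \ref{def: Xlambda}, giving c) (the case $\lambda = \triv$ of c) being the definition of $X_\triv$); if $\lambda\neq\lambda^{-1}$, adding the identity for $\lambda$ to the one for $\lambda^{-1}$ and regrouping the four monomials gives $X_\lambda + X_{\lambda^{-1}} = (e_\lambda + e_{\lambda^{-1}})(\tau_0\tau_1 + \tau_1\tau_0) = (e_\lambda + e_{\lambda^{-1}})\zeta$, which is b). Part d) then follows by summing c) over the self-inverse characters and b) over the pairs $\{\lambda,\lambda^{-1}\}$ with $\lambda\neq\lambda^{-1}$: the right-hand sides combine to $\bigl(\sum_{\alpha\in\hatOm}e_\alpha\bigr)\zeta = \zeta$. (Alternatively d) can be read off directly from the normal form after checking $(1-e_\triv)(\tau_0\tau_1 + \tau_1\tau_0) = (1-e_\triv)\zeta$.)

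I expect the only genuine obstacle to be not the structure of the argument but the discipline needed in the commutator computations of part a): the twisted relation $\tau_i e_\lambda = e_{\lambda^{-1}}\tau_i$ flips the character each time a $\tau_i$ crosses an idempotent, so it is easy to land on $e_\lambda$ where one wants $e_{\lambda^{-1}}$. The guiding principle that makes the computation go through is that, after applying the quadratic relations, the obstruction is always visibly a scalar multiple of $e_\triv e_{\lambda^{\pm1}}$, and this is zero precisely on the range $\lambda\neq\triv$, which is exactly the range where $X_\lambda$ is not already manifestly central.
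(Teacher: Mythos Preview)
Your proof is correct. The paper does not actually prove this lemma: it prefaces the statement with ``Again, we omit the proof of the following well-known result,'' so there is no argument to compare yours against. Your direct verification from the presentation in Lemma~\ref{lem: Hpres} is exactly the kind of computation one would expect, and the organising observations you single out (centrality of $e_\triv$, that $\tau_0\tau_1$ and $\tau_1\tau_0$ commute with $k[\Omega]$, and the expanded form $\zeta = \tau_0\tau_1 + \tau_1\tau_0 + \tau_0 e_\triv + e_\triv\tau_1 + e_\triv$) make the remaining steps transparent. One cosmetic point: in the $\tau_1$-commutator for part~a) the obstruction comes out as $-e_\lambda e_\triv(\tau_0\tau_1 - \tau_1\tau_0)$ rather than with a plus sign, but of course this is immaterial since the expression vanishes either way.
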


We will now describe $Z$ completely as a $k$-algebra with relations and generators. Suppose that $\lambda \in \hatOm$ satisfies $\lambda \neq \lambda^{-1}$; then using Lemma \ref{lem: Hpres}, we see that the idempotent $e_\lambda + e_{\lambda^{-1}}$ is central in $H$. It is known, furthermore, that in this case
\[ X_\lambda \cdot X_{\lambda^{-1}} = 0.\]
Note that precisely two characters $\lambda \in \hatOm$ satisfy $\lambda = \lambda^{-1}$, namely the trivial character $1 = \id^0$ and ``the quadratic character'' $\id^{\frac{q-1}{2}}$. All other elements of $\hatOm$ are not equal to their inverse. This gives us the decomposition of $Z$ as a direct product of non-unital subrings:
\begin{equation}\label{eq:Zdecomp} Z = e_\triv Z \quad\oplus\quad e_{\id^{\frac{q-1}{2}}} Z \quad\oplus\quad \bigoplus_{j=1}^{\frac{q-3}{2}} (e_{\id^j} + e_{\id^{-j}})Z\end{equation}
From $\S 3.2$ of \cite{OS18} we deduce the following
 \begin{proposition}\label{prop: Zcomps} Let $\lambda \in \hatOm$.
 \begin{itemize} \item[a)] Suppose that $\lambda = \lambda^{-1}$. Then the $k$-algebra homomorphism
\[ k[x] \to e_\lambda Z, \quad 1 \mapsto e_\lambda, \quad x \mapsto X_\lambda\]
is an isomorphism.
\item[b)] Suppose that $\lambda \neq \lambda^{-1}$. Then the $k$-algebra homomorphism
\[ b_\lambda : \frac{ k[x, y] } {\langle xy\rangle} \to (e_\lambda + e_{\lambda^{-1}}) Z,   \quad 1 \mapsto e_{\lambda} + e_{\lambda^{-1}}, \quad x \mapsto X_\lambda, \quad y \mapsto X_{\lambda^{-1}} \]
is an isomorphism.\end{itemize}\end{proposition}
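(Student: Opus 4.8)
\emph{Strategy.} The plan is to check that the two displayed assignments are well-defined $k$-algebra homomorphisms, to deduce surjectivity from the list of generators of $Z$ furnished by \cite{OS18} \S 3.2, and to obtain injectivity either from the explicit $k$-basis of $Z$ recorded there or by a short computation inside the $k$-basis of $H$ from Lemma \ref{lem: Hpres}(a).

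\emph{Well-definedness.} First I would record that the pertinent idempotents are central. When $\lambda=\lambda^{-1}$, Lemma \ref{lem: Hpres}(d) gives $\tau_i e_\lambda=e_{\lambda^{-1}}\tau_i=e_\lambda\tau_i$ for $i=0,1$, and $e_\lambda$ commutes with $k[\Omega]$, so $e_\lambda$ is central by Lemma \ref{lem: Hpres}(c); similarly $\tau_i(e_\lambda+e_{\lambda^{-1}})=(e_{\lambda^{-1}}+e_\lambda)\tau_i$ shows $e_\lambda+e_{\lambda^{-1}}$ is central when $\lambda\neq\lambda^{-1}$. Hence $e_\lambda Z$ and $(e_\lambda+e_{\lambda^{-1}})Z$ are commutative subrings of $Z$ which are unital $k$-algebras with identities $e_\lambda$ and $e_\lambda+e_{\lambda^{-1}}$. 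By Definition \ref{def: Xlambda} and Lemma \ref{lem: ezeta}(c), $X_\lambda=e_\lambda\zeta\in e_\lambda Z$ when $\lambda=\lambda^{-1}$; and \eqref{eq: cpi} gives $(e_\lambda+e_{\lambda^{-1}})X_\mu=X_\mu$ for $\mu\in\{\lambda,\lambda^{-1}\}$, so $X_\lambda,X_{\lambda^{-1}}\in(e_\lambda+e_{\lambda^{-1}})Z$. Since $X_\lambda X_{\lambda^{-1}}=0$ when $\lambda\neq\lambda^{-1}$ (recalled just above the statement), the assignments $x\mapsto X_\lambda$, resp.\ $(x,y)\mapsto(X_\lambda,X_{\lambda^{-1}})$, extend to $k$-algebra homomorphisms as claimed.

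\emph{Surjectivity.} The computation of $Z$ in \cite{OS18} \S 3.2 shows that $Z$ is generated as a $k$-algebra by the elements $X_\mu$ ($\mu\in\hatOm$) together with the central idempotents $e_\mu$ ($\mu=\mu^{-1}$) and $e_\mu+e_{\mu^{-1}}$ ($\mu\neq\mu^{-1}$) occurring in \eqref{eq:Zdecomp}; recall $\zeta=\sum_\mu X_\mu$ by Lemma \ref{lem: ezeta}(d). Multiplying an arbitrary element of $e_\lambda Z$, resp.\ $(e_\lambda+e_{\lambda^{-1}})Z$, by $e_\lambda$, resp.\ $e_\lambda+e_{\lambda^{-1}}$, and using \eqref{eq: cpi} to annihilate all $X_\mu$ and all block idempotents belonging to classes $[\mu]\neq[\lambda]$, one sees that every element of $e_\lambda Z$ is a polynomial over $k$ in $X_\lambda$, and every element of $(e_\lambda+e_{\lambda^{-1}})Z$ is a polynomial over $k$ in $X_\lambda$ and $X_{\lambda^{-1}}$. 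Hence both homomorphisms are surjective.

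\emph{Injectivity.} For $\lambda=\lambda^{-1}$: if $0\neq f\in k[x]$ then $f(X_\lambda)=e_\lambda f(\zeta)$, where $f(\zeta)\neq 0$ in the polynomial ring $k[\zeta]\subseteq Z$ (Lemma \ref{lem: zeta}); since $H$ is free over $k[\zeta]$ by \cite{OS18} Cor.\ 3.4 and $f(\zeta)$ is a non-zerodivisor, multiplication by $f(\zeta)$ is injective on $H$, so $e_\lambda f(\zeta)\neq 0$. For $\lambda\neq\lambda^{-1}$: using $\tau_i e_\mu=e_{\mu^{-1}}\tau_i$ repeatedly, $X_\lambda^{\,n}=e_\lambda(\tau_0\tau_1)^n+e_{\lambda^{-1}}(\tau_1\tau_0)^n$ and $X_{\lambda^{-1}}^{\,n}=e_{\lambda^{-1}}(\tau_0\tau_1)^n+e_\lambda(\tau_1\tau_0)^n$ for $n\geq 1$; as $(\tau_0\tau_1)^n$ and $(\tau_1\tau_0)^n$ are the basis elements $\tau_{(s_0s_1)^n}$ and $\tau_{(s_1s_0)^n}$, which lie in pairwise distinct $\Omega$-cosets of basis elements of $H$ for distinct $n$, and since $\lambda\neq\lambda^{-1}$ makes $X_\lambda^{\,n}$ and $X_{\lambda^{-1}}^{\,n}$ linearly independent (their coefficient functions on $\Omega$ differ), the family $\{e_\lambda+e_{\lambda^{-1}}\}\cup\{X_\lambda^{\,n}:n\geq 1\}\cup\{X_{\lambda^{-1}}^{\,n}:n\geq 1\}$ is $k$-linearly independent in $H$; this family is the image under $b_\lambda$ of the $k$-basis $\{1\}\cup\{x^n\}\cup\{y^n\}$ of $k[x,y]/\langle xy\rangle$, so $b_\lambda$ is injective. (Both injectivity statements may instead be read off directly from the $k$-basis of $Z$ exhibited in \cite{OS18} \S 3.2.)

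\emph{Main obstacle.} The only genuinely non-formal ingredient is the list of generators of $Z$ used for surjectivity: proving that $Z$ admits no generators beyond the $X_\mu$ and the block idempotents is precisely the content of the explicit computation of $Z(H)$ in \cite{OS18} \S 3.2, and this is the step one cannot bypass without invoking that reference.
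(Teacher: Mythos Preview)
Your proposal is correct and aligns with the paper's approach: the paper gives no argument beyond the sentence ``From \S 3.2 of \cite{OS18} we deduce the following'', and your proof simply unpacks that citation, supplying explicit verifications of well-definedness and injectivity while relying on the same reference for the generators of $Z$. Your direct linear-independence computation for injectivity in case~(b) is a pleasant addition that avoids having to quote the full $k$-basis of $Z$ from \cite{OS18}.
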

\subsection{The normalisation of $\Spec(Z)$}

\subsubsection{Categorical quotients of schemes}

Let $\xymatrix{\cR \ar@<0.6ex>[r]^f\ar@<-0.6ex>[r]_g & X}$ be two morphisms of schemes. Recall that a morphism of schemes $\psi : X \to Y$ is said to be a \emph{categorical quotient of $X$ by $\cR$} if it is a coequaliser of this diagram in the category of schemes.

Given a morphism of locally ringed spaces $f : X \to Y$, we write $|f| : |X| \to |Y|$ for the underlying continuous map of topological spaces.

Let $\xymatrix{\cR \ar@<0.6ex>[r]^f\ar@<-0.6ex>[r]_g & X \ar[r]^\psi & Y}$ be a diagram of schemes such that $\psi f = \psi g$, and let $\{Y_i : i \in I\}$ be an open covering of $Y$. For each $i \in I$, form the fibre products $X_i := X \times_Y Y_i$ and $\cR_i := \cR \times_Y Y_i$, and consider the diagram $\xymatrix{\cR_i \ar@<0.6ex>[r]^{f_i}\ar@<-0.6ex>[r]_{g_i} & X_i \ar[r]^{\psi_i} & Y_i}$ where $f_i,g_i$ and $\psi_i$ are the pullbacks of $f,g$ and $\psi$, respectively.

We denote the category of schemes by $\Sch$, and the category of locally commutatively ringed topological spaces by $\LRS$.

\begin{lemma}\label{lem: catquot} Let $\xymatrix{\cR \ar@<0.6ex>[r]^f\ar@<-0.6ex>[r]_g & X \ar[r]^\psi & Y}$ be a diagram of schemes such that $\psi f = \psi g$, and let $\{Y_i : i \in I\}$ be an open covering of $Y$. Suppose that
\begin{itemize}
\item[a)] $\xymatrix{|\cR| \ar@<0.6ex>[r]^{|f|}\ar@<-0.6ex>[r]_{|g|} & |X| \ar[r]^{|\psi|} & |Y|}$ is a coequaliser diagram of topological spaces,
\item[b)] $Y_i$, $X_i$ and $\cR_i$ is affine for all $i \in I$, and
\item[c)] $\xymatrix{\cO(Y_i) \ar[r]^{\psi_i} & \cO(X_i) \ar@<0.6ex>[r]^{f_i}\ar@<-0.6ex>[r]_{g_i} & \cO(\cR_i)}$ is an equaliser diagram of commutative rings for all $i \in I$.
\end{itemize}
Then $\xymatrix{\cR \ar@<0.6ex>[r]^f\ar@<-0.6ex>[r]_g & X \ar[r]^\psi & Y}$ is a coequaliser in $\LRS$ \footnote{Note that the functor from schemes to LRS does not respect coequalisers (see \cite{LMB} for an example).}, and $\psi$ is a categorical quotient of $\xymatrix{\cR \ar@<0.6ex>[r]^f\ar@<-0.6ex>[r]_g & X}$.
\end{lemma}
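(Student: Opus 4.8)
The plan is to reduce the whole statement to a single claim about the structure sheaf of $Y$, to prove that claim by localising and then gluing over the given cover $\{Y_i\}$, and finally to run the (routine) verification of the universal property in $\LRS$. The assertion that $\psi$ is a categorical quotient in $\Sch$ will then come for free: since $\Sch$ is a full subcategory of $\LRS$ and $Y$ is a scheme, any $\LRS$-morphism out of $Y$ into a scheme is a scheme morphism, so a coequaliser diagram in $\LRS$ whose coequalising object is a scheme is automatically a coequaliser diagram in $\Sch$. Thus the real content is that $\xymatrix{\cR \ar@<0.6ex>[r]^f\ar@<-0.6ex>[r]_g & X \ar[r]^\psi & Y}$ is a coequaliser in $\LRS$.

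The key claim I would isolate is: for every open $U \subseteq Y$, writing $W := |\psi|^{-1}(U) \subseteq X$ and $R := |\psi f|^{-1}(U) = |\psi g|^{-1}(U) \subseteq \cR$, the diagram of commutative rings
\[ \cO_Y(U) \xrightarrow{\ \psi^\sharp\ } \cO_X(W) \rightrightarrows \cO_\cR(R) \]
(the two right-hand maps being the components of $f^\sharp$ and $g^\sharp$ over $W$) is an equaliser; equivalently, $\psi^\sharp$ identifies $\cO_Y$ with the sheaf-theoretic equaliser $\cE$ of the pair $|\psi|_\ast\cO_X \rightrightarrows |\psi f|_\ast\cO_\cR$ of sheaves of rings on $|Y|$ (and $\psi^\sharp$ does land in $\cE$, precisely because $\psi f = \psi g$). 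To prove this I would use that $\cE$, being a sectionwise equaliser of sheaves, is a sheaf, and that a morphism of sheaves which is an isomorphism over every member of a basis of the topology is an isomorphism (pass to stalks). As the basis I would take the distinguished affine opens $D(s) \subseteq Y_i$ for $i \in I$ and $s \in \cO(Y_i)$; by (b) these form a basis of $Y$. For such a $D(s)$ the preimage $|\psi|^{-1}(D(s))$ is the distinguished affine open $D(\sigma(s))$ of $X_i = X \times_Y Y_i$, where $\sigma \colon \cO(Y_i) \to \cO(X_i)$ is the ring map defining $X_i \to Y_i$, and similarly $|\psi f|^{-1}(D(s))$ is the corresponding distinguished affine open of $\cR_i$; moreover the two maps into $\cO(\cR_i)$ localise at the same element, since $\psi f = \psi g$. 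Hence the displayed diagram over $D(s)$ is obtained from the equaliser $\cO(Y_i) \to \cO(X_i) \rightrightarrows \cO(\cR_i)$ of (c) by applying $-\otimes_{\cO(Y_i)}\cO(Y_i)_s$, and stays an equaliser by flatness of localisation. This establishes the key claim; in particular each $\psi^\sharp_U \colon \cO_Y(U) \to \cO_X(W)$ is injective.

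Granting the key claim, the universal property is formal. Given a locally ringed space $T$ and a morphism $h \colon X \to T$ with $hf = hg$, the map $|h|$ coequalises $|f|,|g|$, so (a) gives a unique continuous $|\bar h| \colon |Y| \to |T|$ with $|\bar h|\,|\psi| = |h|$; note $|\psi|$ is surjective, being a topological quotient map. For an open $V \subseteq T$ put $U := |\bar h|^{-1}(V)$; then $|\psi|^{-1}(U) = |h|^{-1}(V)$ and $|\psi f|^{-1}(U) = |hf|^{-1}(V)$, so $h^\sharp_V \colon \cO_T(V) \to \cO_X(|\psi|^{-1}U)$ equalises the two maps to $\cO_\cR(|\psi f|^{-1}U)$ (because $hf = hg$), hence by the key claim factors uniquely through a ring map $\cO_T(V) \to \cO_Y(U)$. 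These factorisations are compatible with restriction (by injectivity of $\psi^\sharp$) and assemble into a morphism of sheaves of rings $\bar h^\sharp$ with $\psi^\sharp \circ \bar h^\sharp = h^\sharp$. To see $\bar h = (|\bar h|, \bar h^\sharp)$ is a morphism of \emph{locally} ringed spaces, fix $y \in Y$ and a preimage $x \in X$ under $|\psi|$: the stalk composite $\cO_{T,\bar h(y)} \xrightarrow{\bar h^\sharp_y} \cO_{Y,y} \xrightarrow{\psi^\sharp_x} \cO_{X,x}$ equals $h^\sharp_x$, and since $h^\sharp_x$ and $\psi^\sharp_x$ are local homomorphisms so is $\bar h^\sharp_y$. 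Finally $\bar h$ is unique, since $|\bar h|$ is forced by (a) and $\bar h^\sharp$ is forced by the injectivity of the maps $\psi^\sharp_U$. This shows $\psi$ is a coequaliser of $f,g$ in $\LRS$, and hence, by the opening remark, a categorical quotient in $\Sch$.

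The main obstacle is the key claim: upgrading hypothesis (c), which is assumed only over the chosen affine cover, to an equaliser statement over \emph{all} opens of $Y$. The two ingredients making this work are the exactness of localisation — which propagates the equaliser from $\cO(Y_i)$ to each $\cO(Y_i)_s$ — and the sheaf-theoretic principle that an isomorphism over a basis is an isomorphism; the only thing requiring care is the bookkeeping that identifies $|\psi|^{-1}(D(s))$ and $|\psi f|^{-1}(D(s))$ with the correct distinguished affine opens of $X_i$ and $\cR_i$, and their rings of sections with the appropriate localisations. Everything past that point — the construction of $\bar h$, its uniqueness, the locality check on stalks, and the descent from $\LRS$ to $\Sch$ — is routine.
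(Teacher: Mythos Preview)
Your proposal is correct and follows essentially the same approach as the paper: both identify the key content as showing that $\cO_Y \to \psi_\ast\cO_X \rightrightarrows (\psi f)_\ast\cO_\cR$ is an equaliser of sheaves on $Y$, reduce this to the affine pieces $Y_i$ via assumption (c), and then deduce the $\Sch$-statement from fullness of $\Sch$ in $\LRS$. The only difference is presentational: the paper invokes \cite{DG} Prop.~I.1.1.6 for the description of coequalisers in $\LRS$ and the fact that quasi-coherent sheaves on affines are determined by global sections, whereas you spell out the universal property by hand and pass through the basis of distinguished opens $D(s)\subseteq Y_i$ using flatness of localisation---but this is the same argument at a different level of detail.
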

\begin{proof}
By the construction of colimits in $\LRS$ (\cite{DG} Prop. I.1.1.6) the first statement is equivalent to the following conditions:
\begin{itemize}
  \item[--] $\xymatrix{|\cR| \ar@<0.6ex>[r]^{|f|}\ar@<-0.6ex>[r]_{|g|} & |X| \ar[r]^{|\psi|} & |Y|}$ is a coequaliser diagram of topological spaces;
  \item[--] $\xymatrix{
    \cO_Y \ar[r] & \psi_* \cO_X \ar@<1ex>[r]^-{f}  \ar@<-1ex>[r]_-{g} & (\psi f)_* \cO_{\cR} = (\psi g)_* \cO_{\cR} }$ is an equaliser diagram of sheaves of commutative rings on $Y$.
\end{itemize}
The second condition can be checked after restriction to the open subschemes $Y_i$. By the second assumption these and there preimages in $X$ and $\cR$ all are affine. Hence these restricted sheaves are determined by there global sections in these affine schemes. Therefore this reduces to the third assumption.

Since $\Sch$ is a full subcategory of $\LRS$, the second statement follows from the first.
\end{proof}

We will now give an example of Lemma \ref{lem: catquot} in action, which will come in useful later. Consider the union of two crossing affine lines:
\[ \cX := \Spec \frac{k[x,y]}{\langle xy \rangle}  \hsp .\]
We will `separate' the two crossing affine lines to form a disjoint union of two affine lines
\[\cX' := \Spec \left( k[x] \times k[y] \right)\]
and then re-glue them together to form $\cX$ as a quotient of $\cX'$ by a certain relation.

To be more precise, note that $\cX'$ is the disjoint union of $\Spec k[x]$ and $\Spec k[y]$. Let $\cS := \Spec k = \{s\}$ be a point, let $a : \cS \to \cX'$ be the inclusion of the origin into the first affine line, and let $b : \cS \to \cX'$ be the inclusion of the origin into the second affine line.

Consider the $k$-algebra homomorphism
\[\varphi : \frac{k[x,y]}{\langle xy \rangle} \to k[x] \times k[y], \quad \quad \varphi\left(\overline{f(x,y)}\right) = ( f(x,0), f(0,y) ) \qmb{for any}  f(x,y) \in k[x,y],\]
and let $\theta := \Spec(\varphi) : \cX' \to \cX$ be the corresponding morphism of affine schemes.
\begin{proposition} \label{prop: nX}$\theta : \cX' \to \cX$ is a categorical quotient of $\xymatrix{\cS \ar@<0.6ex>[r]^{a}\ar@<-0.6ex>[r]_{b} & \cX'}$.
\end{proposition}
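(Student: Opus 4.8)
The plan is to apply Lemma \ref{lem: catquot} to the diagram $\cS \rightrightarrows \cX' \xrightarrow{\theta} \cX$, with the two parallel arrows being $a$ and $b$, using the one-element open covering of $\cX$ consisting of $\cX$ itself. With this choice all three of $\cX$, $\cX'$, $\cS$ are affine, so hypothesis (b) of that Lemma holds automatically and the fibre products degenerate to $X_0 = \cX'$ and $\cR_0 = \cS$. It then remains to verify the identity $\theta a = \theta b$ together with hypotheses (a) and (c).

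For $\theta a = \theta b$ and hypothesis (c) I would compute directly with rings. The comorphisms of $a$ and $b$ are the $k$-algebra maps $a^\#, b^\# : k[x] \times k[y] \to k$ given by $a^\#(f,g) = f(0)$ and $b^\#(f,g) = g(0)$. One checks at once that $a^\# \circ \varphi = b^\# \circ \varphi$ — both send $\overline{f(x,y)}$ to $f(0,0)$ — which is exactly $\theta a = \theta b$. Hypothesis (c) then amounts to the claim that $\varphi$ identifies $k[x,y]/\langle xy\rangle$ with the equaliser $\{(f,g) : f(0) = g(0)\}$ of the pair $(a^\#,b^\#)$. Injectivity of $\varphi$ is immediate from the normal form $c + \sum_{i\geq 1}a_i x^i + \sum_{j\geq 1}b_j y^j$ for elements of $k[x,y]/\langle xy\rangle$; surjectivity onto the equaliser follows by writing $(f,g)$ with $f(0)=g(0)=c$ as $f = c + xf_1(x)$, $g = c + yg_1(y)$ and observing that $\varphi(\overline{c + xf_1(x) + yg_1(y)}) = (f,g)$. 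All of this is routine.

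The substantive point is hypothesis (a): that $|\cS| \rightrightarrows |\cX'| \xrightarrow{|\theta|} |\cX|$ is a coequaliser of topological spaces. Here I would first note that $\varphi$ is injective and exhibits $k[x]\times k[y]$ as a finitely generated module over its image $\varphi\big(k[x,y]/\langle xy\rangle\big)$ — generated, say, by $1$ together with the idempotent $(1,0)$ — so that $\theta$ is a finite morphism and in particular $|\theta|$ is a closed map. Next, $|\theta|$ restricts to a homeomorphism of the first component $\Spec k[x]$ onto the closed set $V(y) \subseteq |\cX|$ and of the second component $\Spec k[y]$ onto $V(x)$; since $V(x) \cup V(y) = |\cX|$ the map $|\theta|$ is surjective, and being also closed it is therefore a topological quotient map. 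Finally, $V(x) \cap V(y)$ consists of the single point that is the common image of $a$ and $b$, so $|\theta|$ is injective away from the two origins and identifies exactly those two points; hence its fibres are precisely the equivalence classes of the relation on $|\cX'|$ generated by $|a|$ and $|b|$. This is exactly the assertion that $|\cX|$ is the coequaliser of $|a|$ and $|b|$, which is hypothesis (a).

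Having verified (a), (b) and (c), Lemma \ref{lem: catquot} yields that $\cS \rightrightarrows \cX' \xrightarrow{\theta} \cX$ is a coequaliser in $\LRS$, and since $\Sch$ is a full subcategory of $\LRS$ this is also a coequaliser in $\Sch$, i.e.\ $\theta$ is a categorical quotient of $\cS \rightrightarrows \cX'$. The only step requiring genuine care is (a), and even there the content reduces to the standard facts that a finite morphism is closed and that a surjective closed continuous map is a topological quotient map; the rest is a direct unwinding of definitions.
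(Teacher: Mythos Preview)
Your proof is correct and follows the same overall strategy as the paper: apply Lemma \ref{lem: catquot} with the trivial one-element cover of $\cX$, reducing everything to a direct ring computation for condition (c) and a topological verification for condition (a). Your treatment of (c) is essentially identical to the paper's, just phrased in terms of normal forms rather than the observation $\langle x\rangle \cap \langle y\rangle = \langle xy\rangle$.

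The one place where you genuinely diverge is in checking condition (a). The paper argues by localising the exact sequence at $\overline{x}+\overline{y}$ to see that $\theta$ is an isomorphism away from the two origins, and then appeals to the fact that the Zariski topologies involved are essentially cofinite so that the quotient topology is forced. You instead observe that $\varphi$ makes $k[x]\times k[y]$ a finite module over its image, so $\theta$ is a finite morphism, hence $|\theta|$ is closed; together with surjectivity this gives a topological quotient map, and you then identify the fibres. Your route is cleaner and more portable: the finiteness argument works without any dimension-one or cofinite-topology considerations, whereas the paper's phrasing about ``cofinite Zariski topologies'' is slightly loose (both $|\cX|$ and $|\cX'|$ have two generic points, so the topology is not literally cofinite). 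On the other hand, the paper's localisation step has the virtue of making the open-immersion $\cX'\setminus\{a(s),b(s)\}\cong \cX\setminus\{O\}$ completely explicit, which is useful later when the same picture is repeated for each singular point of $\Xi$.
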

\begin{proof} We will apply Lemma \ref{lem: catquot}. The co-morphism $a^\sharp : \cO(\cX') \to \cO(\cS)$ (respectively, $b^\sharp:\cO(\cX') \to \cO(\cS)$) is equal to the composition of the first projection from $k[x] \times k[y]$ onto $k[x]$ (respectively, the second projection $k[x] \times k[y]$ onto $k[y]$) with the evaluation-at-zero map $k[x] \to k$, $f(x) \mapsto f(0)$ (respectively, $k[y] \to k$, $f(y) \mapsto f(0)$). The $k$-algebra homomorphisms $a^\sharp \varphi$ and $b^\sharp \varphi$ both kill $\overline{x}$ and $\overline{y}$; since these two elements generate $\cO(\cX)$ as a $k$-algebra, we see that $a^\sharp \varphi = b^\sharp \varphi$. This gives us a complex of $\cO(\cX)$-modules
\begin{equation}\label{eq: phiab} 0 \to \frac{k[x,y]}{\langle xy \rangle} \quad \stackrel{\varphi }{\longrightarrow} \quad k[x] \times k[y] \quad \stackrel{a^\sharp - b^\sharp}{\longrightarrow} \quad k.\end{equation}
Since $\langle x \rangle \cap \langle y \rangle = \langle xy \rangle$ in $k[x,y]$, the $k$-algebra homomorphism $\varphi$ is injective. If $u = (f(x), g(y)) \in k[x] \times k[y]$ is such that $a^\sharp(u) = b^\sharp(u)$, then $f(0) = g(0)$, and then
\[\varphi\left(\overline{f(x) + g(y)}\right) = (f(x), f(0)) + (g(0), g(y)) = (g(0), f(0)) + (f(x),f(y)) = \varphi\left( \overline{f(0)}\right) + u\]
shows that $\ker (a^\sharp - b^\sharp) = \mathrm{Im}(\varphi)$. So, the complex $(\ref{eq: phiab})$ is in fact exact, and this verifies condition c) of Lemma \ref{lem: catquot}.

We have already observed that $\overline{x},\overline{y} \in \cO(\cX)$ both kill $\cO(\cS) = k$ when we consider $\cO(\cS)$ as an $\cO(\cX)$-module via $a^\sharp \varphi = b^\sharp \varphi$. So it is also killed by $\overline{x} + \overline{y}$. Therefore if we localise the exact sequence (\ref{eq: phiab}) at the element $\overline{x} + \overline{y}\in \cO(\cX)$, then we obtain the isomorphism
\[ \left(\frac{k[x,y]}{\langle xy \rangle}\right)\left[ \frac{1}{\overline{x} + \overline{y}} \right] \quad \stackrel{\cong}{\longrightarrow} \quad k[x, x^{-1}] \times k[y, y^{-1}].\]
Let $O := \langle \overline{x}, \overline{y} \rangle \in \cX$ be the intersection point of the two crossing affine lines. Then the restriction of $\theta = \Spec(\varphi)$ to $\cX' - \{a(s),b(s)\} = \Spec \left(k[x, x^{-1}] \times k[y, y^{-1}]\right)$ is an isomorphism onto $\cX \backslash \{O\}$. Since $\theta$ maps both origins in $\cX'$ to $O \in \cX$, we see that $|\theta|$ is surjective, and that if $p,q \in |\cX'|$ satisfy $|\theta|(p) = |\theta|(q)$ but $p\neq q$, then necessarily $p,q \in \{a(s), b(s)\}$. Thus $|\cX|$ is obtained as a set by identifying $a(s)$ with $b(s)$ in $|\cX'|$.

Finally, both $|\cX'|$ and $|\cX|$ carry the cofinite Zariski topologies, and the quotient topology on $|\cX'| / (a(s) \sim b(s))$ is still the cofinite topology. Hence $\xymatrix{|\cS| \ar@<0.6ex>[r]^{|a|}\ar@<-0.6ex>[r]_{|b|} & |\cX'|} \to |\cX|$ is a coequaliser diagram of topological spaces, as required for condition a) of Lemma \ref{lem: catquot}.
\end{proof}

\subsubsection{The normalisation of $\Spec(Z)$} We return to the notation of $\S \ref{sec: CentOfH}$.

\begin{definition} \hsp \label{def: XiXiprimePhi}
\begin{itemize}
\item[a)] Let $\Xi = \Spec (Z)$.
\item[b)] Let $Z' := k[\Omega][t]$ where $t$ is a formal variable, and let $\Xi' := \Spec Z'$.
\item[c)] Let $\varphi : Z \to Z'$ be the $k$-algebra homomorphism defined by
\[ \varphi(X_\lambda)  = e_\lambda t \qmb{for all} \lambda \in \hatOm.\]
\item[d)] Let $\theta : \Xi' \to \Xi$ be defined by $\theta = \Spec(\varphi)$.
\end{itemize}
\end{definition}

Note that $\Xi'$ is the disjoint union of $|\hatOm| = q-1$ affine lines $\Xi'_\lambda := \Spec k[t_\lambda]$, where $t_\lambda := e_\lambda t \in Z'$. We call $\Xi'_\lambda$ the \emph{$\lambda$-component} of $\Xi'$. We denote the image of the origin $\langle t_\lambda \rangle \in \Xi'_\lambda$ inside $\Xi'$ by $O_\lambda$. Equivalently, $O_\lambda$ is the maximal ideal $\langle 1 - e_{\lambda}, t \rangle$ of $Z'$.

\begin{definition}\label{def: XiSingab} \hsp
\begin{itemize}
\item[a)] Let $\Xi_{\sing} := \{s_1,s_2,\cdots, s_{\frac{q-3}{2}}\}$ be the disjoint union of $\frac{q-3}{2}$ copies of $\Spec k$.
\item[b)] Define $a : \Xi_{\sing} \to \Xi'$ and $b : \Xi_{\sing} \to \Xi'$ to be the closed embeddings, given by
\[a(s_j) =  O_{\id^j} \qmb{and} b(s_j) = O_{\id^{-j}} \qmb{for all} j = 1,\cdots, \frac{q-3}{2}.\]
\end{itemize}
\end{definition}
The notation is intended to indicate that $\Xi_{\sing}$ is a copy of the singular locus of the scheme $\Xi$: indeed, in view of the decomposition (\ref{eq:Zdecomp}) and Prop. \ref{prop: Zcomps}.b, $\Xi$ contains $\frac{q-3}{2}$ copies of the pair of crossing lines $\Spec \frac{k[x,y]}{\langle xy \rangle}$, so the singular locus of $\Xi$ consists of $\frac{q-3}{2}$ closed points.
\begin{proposition}\label{prop: NormalisationAsQuotient} $\theta : \Xi' \to \Xi$ is a categorical quotient of $\xymatrix{\Xi_{\sing} \ar@<0.6ex>[r]^{a}\ar@<-0.6ex>[r]_{b} & \Xi'}$.
\end{proposition}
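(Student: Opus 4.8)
The plan is to use the idempotent decomposition \eqref{eq:Zdecomp} of $Z$ to split the coequaliser diagram into a disjoint union of much simpler pieces — one for each orbit of $\hatOm$ under $\lambda \mapsto \lambda^{-1}$ — and to verify the coequaliser property on each piece. The central idempotents $e_\triv$, $e_{\id^{(q-1)/2}}$ and $e_{\id^j} + e_{\id^{-j}}$ (for $1 \le j \le \frac{q-3}{2}$) form a complete orthogonal system in $Z$, and $\varphi$ carries them to the analogous system in $Z' = k[\Omega][t]$; hence $\Xi = \Spec Z$ and $\Xi' = \Spec Z'$ decompose as disjoint unions of open-and-closed subschemes indexed by the same set, and $\theta = \Spec(\varphi)$ decomposes accordingly. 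On the two ``self-dual'' indices the piece of $\Xi'$ is $\Xi'_\lambda$ and the piece of $\Xi$ is $\Xi^{(\lambda)} := \Spec(e_\lambda Z)$; on the index $j$ the piece of $\Xi'$ is $\Xi'_{\id^j} \sqcup \Xi'_{\id^{-j}}$ and the piece of $\Xi$ is $\Xi^{(j)} := \Spec((e_{\id^j}+e_{\id^{-j}})Z)$. Moreover, by Definition \ref{def: XiSingab}, the closed point $s_j$ of $\Xi_{\sing}$ lies (via both $a$ and $b$) entirely over $\Xi^{(j)}$, so $\Xi_{\sing} = \bigsqcup_j \{s_j\}$ splits compatibly, with nothing lying over the self-dual pieces. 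Thus the whole diagram $\xymatrix{\Xi_{\sing} \ar@<0.6ex>[r]^{a}\ar@<-0.6ex>[r]_{b} & \Xi' \ar[r]^{\theta} & \Xi}$ is the disjoint union of the self-dual pieces $\xymatrix{\emptyset \ar@<0.6ex>[r]\ar@<-0.6ex>[r] & \Xi'_\lambda \ar[r]^{\theta} & \Xi^{(\lambda)}}$ and the crossing pieces $\xymatrix{\{s_j\} \ar@<0.6ex>[r]^{a}\ar@<-0.6ex>[r]_{b} & \Xi'_{\id^j}\sqcup\Xi'_{\id^{-j}} \ar[r]^{\theta} & \Xi^{(j)}}$.

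For a self-dual index, Prop.\ \ref{prop: Zcomps}.a identifies $e_\lambda Z$ with $k[X_\lambda]$, and since $\varphi(X_\lambda) = e_\lambda t = t_\lambda$ and $e_\lambda Z' = k[t_\lambda]$, the restriction of $\theta$ is an isomorphism; the coequaliser of the empty relation on $\Xi'_\lambda$ is $\Xi'_\lambda$ itself, so this piece is a coequaliser in $\Sch$ and in $\LRS$. For the index $j$, Prop.\ \ref{prop: Zcomps}.b gives the isomorphism $b_{\id^j} : k[x,y]/\langle xy\rangle \xrightarrow{\cong} (e_{\id^j}+e_{\id^{-j}})Z$ with $x \mapsto X_{\id^j}$, $y \mapsto X_{\id^{-j}}$, together with the obvious isomorphism $(e_{\id^j}+e_{\id^{-j}})Z' \cong k[t_{\id^j}]\times k[t_{\id^{-j}}]$; under these, the restricted co-morphism of $\theta$ becomes exactly the map $\varphi$ of Prop.\ \ref{prop: nX}, while $a(s_j) = O_{\id^j}$ and $b(s_j) = O_{\id^{-j}}$ become the inclusions of the two origins. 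So Prop.\ \ref{prop: nX} applies and shows this piece is a categorical quotient, and — by its proof through Lemma \ref{lem: catquot} — a coequaliser in $\LRS$.

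To finish, I would check directly that $\theta$ has the required universal property: given $\psi : \Xi' \to Y$ in $\Sch$ with $\psi a = \psi b$, restrict $\psi$ to each open-and-closed piece of $\Xi'$, factor each restriction (uniquely) through the corresponding piece of $\theta$ by the previous paragraph, and glue the factorisations over $\Xi = \bigsqcup \Xi^{(\bullet)}$; the result is the unique morphism $\Xi \to Y$ through which $\psi$ factors, because each piece of $\theta$ is an epimorphism and the pieces cover $\Xi$. The same reasoning in $\LRS$ shows $\theta$ is also a coequaliser there. I do not expect a genuine obstacle: the substance is already contained in Prop.\ \ref{prop: Zcomps} (the explicit structure of $Z$) and Prop.\ \ref{prop: nX} (the model computation of separating two branches and regluing). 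The only delicate point is the bookkeeping of the idempotent decomposition — in particular verifying that $\varphi$ really does respect it and that $\Xi_{\sing}$ meets only the crossing pieces — together with the routine fact that a coequaliser of a disjoint union of diagrams is the disjoint union of the coequalisers.
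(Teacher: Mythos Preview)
Your proposal is correct and follows essentially the same approach as the paper: decompose $\Xi$, $\Xi'$ and $\Xi_{\sing}$ via the central idempotents into self-dual pieces (where $\theta$ is an isomorphism) and crossing pieces (where Prop.~\ref{prop: nX} applies), then reassemble. The paper packages the final gluing step by invoking Lemma~\ref{lem: CoprodOfCoeq} (coproducts of coequalisers are coequalisers), which is exactly the ``routine fact'' you mention at the end.
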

\begin{proof} The decomposition of $Z = \cO(\Xi)$ as a direct product of non-unital subrings given in (\ref{eq:Zdecomp}) means that $\Xi$ decomposes into the disjoint union of connected components
\begin{equation}\label{eq: XiDecomp} \Xi = \Xi_0 \quad \cup \quad \Xi_{\frac{q-1}{2}} \quad \cup \quad \coprod\limits_{j=1}^{\frac{q-3}{2}} \Xi_{j,-j}\end{equation}
where these connected components are defined by
\[\Xi_0 := \Spec e_\triv Z, \quad \Xi_{\frac{q-1}{2}} := \Spec e_{\id^{\frac{q-1}{2}}} Z, \qmb{and} \Xi_{j,-j} := \Spec (e_{\id^j} + e_{\id^{-j}})Z\]
for all $j = 1,\cdots, \frac{q-3}{2}$. On the other hand, we can write
\begin{equation} \label{eq: XiDashDecomp} \Xi' = \Xi_{\triv} \quad \cup \quad \Xi_{\id^{\frac{q-1}{2}}} \quad \cup \quad \coprod \limits_{j=1}^{\frac{q-3}{2}} (\Xi'_{\id^j} \cup \Xi'_{\id^{-j}}).\end{equation}
Fix $j = 1,\cdots, \frac{q-3}{2}$. The morphism $\theta$ maps $(\Xi'_{\id^j} \cup \Xi'_{\id^{-j}})$ onto $\Xi_{j,-j}$; let $\theta_j$ denote the restriction of $\theta$ to $(\Xi'_{\id^j} \cup \Xi'_{\id^{-j}})$. Then we have the commutative diagram of affine schemes
\begin{equation}\label{eq: alphabetasquare} \xymatrix{ \Xi'_{\id^j} \cup \Xi'_{\id^{-j}}  \ar[d]_{\alpha_j} \ar[r]^(0.6){\theta_j} & \Xi_{j,-j} \ar[d]^{\beta_j}\\ \cX' \ar[r]_{\theta_0} & \cX }\end{equation}
where $\theta_0 : \cX' \to \cX$ is the morphism that was denoted $\theta$ in Prop. \ref{prop: nX}, $\beta_j := \Spec(b_{\id^j})$ is the isomorphism of affine schemes defined by the isomorphism $b_{\id^j}$ from Prop. \ref{prop: Zcomps}.b, and $\alpha_j$ is the isomorphism of affine schemes defined by the $k$-algebra isomorphism
\[ \alpha_j^\sharp : k[x] \times k[y] \quad \longrightarrow k[t_{\id^j}] \times k[t_{\id^{-j}}]\]
given by $\alpha_j^\sharp(x)= t_{\id^j}$ and $\alpha_j^\sharp(y)= t_{\id^{-j}}$. This gives us the commutative diagram of schemes
\begin{equation}\label{eq: jgluing} \xymatrix{ \{s_j\} \ar@<0.6ex>[rrr]^{a_j}\ar@<-0.6ex>[rrr]_{b_j} \ar[d]_\cong &&&  \Xi'_{\id^j} \cup \Xi'_{\id^{-j}}  \ar[d]_{\alpha_j}^{\cong} \ar[rrr]^{\theta_j} &&& \Xi_{j,-j} \ar[d]^{\beta_j}_{\cong}\\ \cS\ar@<0.6ex>[rrr]^{a_0}\ar@<-0.6ex>[rrr]_{b_0} &&& \cX' \ar[rrr]_{\theta_0} &&& \cX }\end{equation}
where $a_0$ and $b_0$ are the morphisms that were denoted $a$ and $b$ in the statement of Prop. \ref{prop: nX}. Now we can apply Prop. \ref{prop: nX} to see that $\theta_j$ is a coequaliser of $a_j, b_j$.

Note that the morphism $\theta$ sends $\Xi'_\triv$ (respectively, $\Xi'_{\id^{\frac{q-1}{2}}}$) isomorphically onto $\Xi_0$ (respectively, $\Xi'_{\frac{q-1}{2}}$). Taking coproducts of the diagrams (\ref{eq: jgluing}) over $j = 1,\cdots, \frac{q-3}{2}$ and bearing in mind the decompositions $(\ref{eq: XiDecomp})$ and $(\ref{eq: XiDashDecomp})$ now gives the result in view of Lemma \ref{lem: CoprodOfCoeq}.
\end{proof}
\subsubsection{The non-singular locus of $\Xi$}Let $V(t)$ denote the set of $q-1$ closed points in $\Xi'$ consisting of the disjoint union of the origins in the $q-1$ affine lines $\Xi'_\lambda = \Spec k[t_\lambda]$.
\begin{lemma}\label{lem: nslocus} The morphism $\theta$ restricts to an isomorphism
\[ \mathring{\theta} : \Xi' - V(t) \quad \stackrel{\cong}{\longrightarrow} \quad \Xi - V(\zeta).\]
\end{lemma}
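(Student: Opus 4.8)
The plan is to reduce the statement to the computation already carried out in Proposition~\ref{prop: nX}, handling the components of $\Xi'$ (and the corresponding connected components of $\Xi$) one at a time and matching up the excised closed subsets. The first step is to observe that $\varphi(\zeta) = t$. Indeed, by Lemma~\ref{lem: ezeta}.d we have $\zeta = \sum_{\alpha \in \hatOm} X_\alpha$, so $\varphi(\zeta) = \sum_\alpha \varphi(X_\alpha) = \sum_\alpha e_\alpha t = (\sum_\alpha e_\alpha)\, t = t$, where we use that the $e_\alpha$ form a complete family of orthogonal idempotents and hence sum to $1$ in $k[\Omega]$. Consequently $\theta^{-1}(V(\zeta)) = V(\varphi(\zeta) Z') = V(t)$ as closed subsets of $\Xi'$; in particular $\theta$ does restrict to a morphism $\mathring{\theta} \colon \Xi' - V(t) \longrightarrow \Xi - V(\zeta)$, with $\Xi' - V(t) = \theta^{-1}(\Xi - V(\zeta))$, and it remains only to show that $\mathring{\theta}$ is an isomorphism of schemes.

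For this I would argue component by component. By the decompositions (\ref{eq: XiDecomp}) and (\ref{eq: XiDashDecomp}), and by the facts recorded in the proof of Proposition~\ref{prop: NormalisationAsQuotient}, the morphism $\theta$ carries $\Xi'_{\triv}$ and $\Xi'_{\id^{\frac{q-1}{2}}}$ isomorphically onto $\Xi_0$ and $\Xi_{\frac{q-1}{2}}$, and carries $\Xi'_{\id^j} \cup \Xi'_{\id^{-j}}$ onto $\Xi_{j,-j}$ via $\theta_j$. On $\Xi_0 = \Spec e_\triv Z$ the element $\zeta$ acts through $e_\triv \zeta = X_\triv$, which by Proposition~\ref{prop: Zcomps}.a is the coordinate function of $\Xi_0 \cong \mathbb{A}^1$; since $\varphi(X_\triv) = e_\triv t = t_\triv$, the restriction $\theta|_{\Xi'_\triv} \colon \Xi'_\triv \xrightarrow{\cong} \Xi_0$ pulls $X_\triv$ back to $t_\triv$, hence identifies $V(t) \cap \Xi'_\triv = \{O_\triv\}$ with $V(\zeta) \cap \Xi_0$ and therefore restricts to an isomorphism $\Xi'_\triv - V(t) \xrightarrow{\cong} \Xi_0 - V(\zeta)$. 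The same argument applies verbatim to the component indexed by $\id^{\frac{q-1}{2}}$.

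For the remaining components, the commutative square (\ref{eq: alphabetasquare}) identifies $\theta_j$ with the morphism $\theta_0 \colon \cX' \to \cX$ of Proposition~\ref{prop: nX} via the isomorphisms $\alpha_j$ and $\beta_j$. The proof of that proposition shows that $\theta_0$ restricts to an isomorphism $\cX' - \{a(s), b(s)\} \xrightarrow{\cong} \cX - \{O\}$ (notation as in Proposition~\ref{prop: nX}). Now $\alpha_j$ carries $\{O_{\id^j}, O_{\id^{-j}}\} = V(t) \cap (\Xi'_{\id^j} \cup \Xi'_{\id^{-j}})$ onto $\{a(s), b(s)\}$ (because $\alpha_j^\sharp$ sends $x$ to $t_{\id^j}$ and $y$ to $t_{\id^{-j}}$), while $\beta_j$ carries $V(\zeta) \cap \Xi_{j,-j}$ onto $\{O\}$ as a subset: for $\zeta$ acts on $\Xi_{j,-j} \cong \Spec k[x,y]/\langle xy \rangle$ through $(e_{\id^j} + e_{\id^{-j}})\zeta = X_{\id^j} + X_{\id^{-j}}$ by Lemma~\ref{lem: ezeta}.b, i.e.\ through $x+y$, and in $k[x,y]/\langle xy\rangle$ the closed sets $V(x+y)$ and $V(x,y) = \{O\}$ have the same underlying points. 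Hence $\theta_j$ restricts to an isomorphism $(\Xi'_{\id^j} \cup \Xi'_{\id^{-j}}) - V(t) \xrightarrow{\cong} \Xi_{j,-j} - V(\zeta)$. Taking the disjoint union of these isomorphisms over all components of $\Xi'$ yields the claimed isomorphism $\mathring{\theta}$.

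I do not expect a genuine obstacle: the substantive work is already contained in Proposition~\ref{prop: nX}, and the only points needing a little care are the identity $\varphi(\zeta) = t$ and the component-wise matching of $V(\zeta)$ with $V(t)$. In particular the fact that $V(\zeta)$ is non-reduced along each $\Xi_{j,-j}$ (one has $k[x,y]/\langle xy,\, x+y\rangle \cong k[x]/\langle x^2\rangle$) is harmless, since the open subscheme $\Xi - V(\zeta)$ depends only on the underlying set of $V(\zeta)$.
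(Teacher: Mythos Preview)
Your proof is correct and follows essentially the same route as the paper's: both argue component by component, identify $V(\zeta)$ on each connected component of $\Xi$ via Lemma~\ref{lem: ezeta}, and then invoke the observation from Proposition~\ref{prop: nX} that $\theta_0$ restricts to an isomorphism $\cX' \setminus \{a(s),b(s)\} \xrightarrow{\cong} \cX \setminus \{O\}$. Your additional upfront computation $\varphi(\zeta)=t$ (which the paper records separately as equation~\eqref{eq: phizeta}) and your remark on the non-reducedness of $V(\zeta)$ being harmless are both welcome clarifications.
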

\begin{proof} Let $\lambda \in \hatOm$ and write $\lambda = \id^j$ for some $j = 0,\cdots, q-2$. Suppose first that $j \neq 0$ and $j \neq \frac{q-1}{2}$. Then the image of $\zeta$ in $\cO(\Xi_{\lambda,\lambda^{-1}})$ under the restriction map $Z = \cO(\Xi) \to \cO(\Xi_{j,-j})$ is $(e_\lambda + e_{\lambda^{-1}}) \zeta$, which is equal to $X_\lambda + X_{\lambda^{-1}}$ by Lemma \ref{lem: ezeta}.b. We see that
$\beta_j(V(\zeta) \cap \Xi_{j,-j})$ is the origin $O$ in the pair of crossing lines $\cX$. Likewise, if $j = 0$ or $j = \frac{q-1}{2}$, Lemma \ref{lem: ezeta}.c implies that $V(\zeta) \cap \Xi_{\frac{q-1}{2}} = V( e_\lambda \zeta ) $ is the origin in the affine line $\Xi_j$. So, $V(\zeta) \subset \Xi$ consists of precisely $\frac{q-3}{2} + 2 = \frac{q+1}{2}$ closed points and contains the singular locus of $\Xi$, whereas $V(t)$ consists of $|\Spec k[\Omega]| = |\hatOm| = q-1$ closed points in $\Xi'$.

The result now follows from the commutative diagram (\ref{eq: alphabetasquare}), together with the observation we made in the proof of Prop. \ref{prop: nX} that $\theta_0$ restricts to an isomorphism between $\cX' \backslash \{a_0(s),b_0(s)\}$ and $\cX \backslash \{O\}$.
\end{proof}
\begin{corollary}\label{cor: Zzeta} There is a $k$-algebra isomorphism
\[ \mathring{\theta}^\sharp : Z_\zeta = \cO(\Xi - V(\zeta)) \quad \stackrel{\cong}{\longrightarrow} \quad \cO(\Xi - V(t)) = k[\Omega][t,t^{-1}]\]
which sends $\zeta$ to $t$.
\end{corollary}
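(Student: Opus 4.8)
The plan is to obtain the corollary by simply passing to rings of global regular functions in the scheme isomorphism of Lemma \ref{lem: nslocus}, once both open subschemes appearing there have been recognised as principal affine opens.

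First I would record that $V(\zeta)$ is precisely the closed subscheme $V(\langle\zeta\rangle)\subseteq\Xi=\Spec(Z)$ cut out by the single central element $\zeta\in Z$, so that $\Xi-V(\zeta)=D(\zeta)$ is the basic affine open with coordinate ring $\cO(\Xi-V(\zeta))=Z_\zeta$. Likewise, on each component $\Xi'_\lambda=\Spec k[t_\lambda]$ the element $t=\sum_{\lambda\in\hatOm}t_\lambda\in Z'=k[\Omega][t]$ restricts to $t_\lambda$, so the set $V(t)$ of the $q-1$ origins $O_\lambda$ is exactly the closed subscheme $V(\langle t\rangle)\subseteq\Xi'=\Spec Z'$; hence $\Xi'-V(t)=D(t)$ is basic affine open with $\cO(\Xi'-V(t))=Z'_t=k[\Omega][t][t^{-1}]=k[\Omega][t,t^{-1}]$.

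Next I would compute $\varphi(\zeta)$. By Lemma \ref{lem: ezeta}.d we have $\zeta=\sum_{\lambda\in\hatOm}X_\lambda$, so from $\varphi(X_\lambda)=e_\lambda t$ (Definition \ref{def: XiXiprimePhi}) we get
\[ \varphi(\zeta)=\sum_{\lambda\in\hatOm}\varphi(X_\lambda)=\sum_{\lambda\in\hatOm}e_\lambda t=\Bigl(\sum_{\lambda\in\hatOm}e_\lambda\Bigr)t=t, \]
using that the primitive idempotents $\{e_\lambda\}_{\lambda\in\hatOm}$ of $k[\Omega]$ sum to $1$ (a direct consequence of the definition (\ref{eq: defelambda}) and the orthogonality relations for the characters of $\Omega$). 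Since $\theta=\Spec(\varphi)$, it follows that $\theta^{-1}(D(\zeta))=D(\varphi(\zeta))=D(t)$, so the isomorphism $\mathring\theta:\Xi'-V(t)\xrightarrow{\cong}\Xi-V(\zeta)$ of Lemma \ref{lem: nslocus} is the restriction of $\theta$ to these principal opens; on global sections it is therefore the localisation $\varphi_\zeta:Z_\zeta\to Z'_t$ of $\varphi$ at $\zeta$. By Lemma \ref{lem: nslocus} this is an isomorphism of schemes, whence $\mathring\theta^\sharp=\varphi_\zeta$ is a $k$-algebra isomorphism $Z_\zeta\xrightarrow{\cong}k[\Omega][t,t^{-1}]$, sending $\zeta$ to $\varphi(\zeta)=t$ by the displayed computation.

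I do not expect a genuine obstacle here: the geometric content has already been absorbed into Lemma \ref{lem: nslocus}, and what remains is the bookkeeping identification of $\Xi-V(\zeta)$ and $\Xi'-V(t)$ as the principal opens $D(\zeta)$ and $D(t)$ — which is what licenses reading off their coordinate rings as $Z_\zeta$ and $k[\Omega][t,t^{-1}]$ — together with the elementary verification $\varphi(\zeta)=t$. The only step meriting a moment's care is confirming that the set-theoretic descriptions of $V(\zeta)$ and $V(t)$ recorded before the lemma really coincide with these scheme-theoretic principal-open complements.
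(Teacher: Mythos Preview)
Your argument is correct and is exactly the unpacking the paper intends: the corollary is stated without proof as an immediate consequence of Lemma \ref{lem: nslocus}, and your identification of $\Xi-V(\zeta)$ and $\Xi'-V(t)$ with the principal opens $D(\zeta)$ and $D(t)$, together with the computation $\varphi(\zeta)=t$, supplies precisely the missing details. In fact the paper records the same computation $\varphi(\zeta)=t$ slightly later as equation (\ref{eq: phizeta}), using the same ingredients (Lemma \ref{lem: ezeta}.d and Definition \ref{def: XiXiprimePhi}.c) that you use.
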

Of course, $\mathring{\theta}^\sharp$ restricts to the basic map $\varphi : Z \to Z'$ on $Z$ from Def. \ref{def: XiXiprimePhi}.c.
\begin{definition}\label{def: epslambda} For each $\lambda \in \hatOm$ we define $\eps_\lambda := (\mathring{\theta}^\sharp)^{-1}(e_\lambda)$. \end{definition}
The family $\{\eps_\lambda : \lambda \in \hatOm\}$ form a complete set of primitive idempotents in $Z_\zeta$.
\begin{lemma}\label{lem: epsilonlambda} We have $\eps_\lambda =  \frac{X_\lambda}{\zeta}$ for all $\lambda \in \hatOm$.
\end{lemma}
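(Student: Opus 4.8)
The plan is to verify the claimed identity after applying the ring isomorphism $\mathring{\theta}^\sharp : Z_\zeta \xrightarrow{\;\cong\;} k[\Omega][t,t^{-1}]$ supplied by Cor.~\ref{cor: Zzeta}, and then to conclude by its injectivity. Essentially everything needed has already been assembled, so the write-up will be a two-line computation.

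First I would recall that, by Cor.~\ref{cor: Zzeta} and the remark immediately following it, $\mathring{\theta}^\sharp$ sends $\zeta$ to $t$ and restricts on $Z$ to the homomorphism $\varphi$ of Def.~\ref{def: XiXiprimePhi}.c; hence $\mathring{\theta}^\sharp(X_\lambda) = \varphi(X_\lambda) = e_\lambda t$. Since $\zeta$ is a unit in the localisation $Z_\zeta$, the element $X_\lambda/\zeta$ lies in $Z_\zeta$, and as $\mathring{\theta}^\sharp$ is a ring homomorphism,
\[
  \mathring{\theta}^\sharp\left(\frac{X_\lambda}{\zeta}\right) = \frac{\mathring{\theta}^\sharp(X_\lambda)}{\mathring{\theta}^\sharp(\zeta)} = \frac{e_\lambda t}{t} = e_\lambda .
\]
On the other hand $\mathring{\theta}^\sharp(\eps_\lambda) = e_\lambda$ directly from Def.~\ref{def: epslambda}. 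Thus $\mathring{\theta}^\sharp$ carries $\eps_\lambda$ and $X_\lambda/\zeta$ to the same element of $k[\Omega][t,t^{-1}]$, and since $\mathring{\theta}^\sharp$ is injective we conclude $\eps_\lambda = X_\lambda/\zeta$.

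There is essentially no obstacle here: all the content has been packaged into Cor.~\ref{cor: Zzeta}. The only points meriting a word of care are formal ones — that the fraction $X_\lambda/\zeta$ is to be read inside $Z_\zeta$, where $\zeta$ is invertible (so the expression is meaningful regardless of whether $\zeta$ is a zero-divisor, though in fact it is a non-zero-divisor, as one sees from the explicit description of $Z$ in Prop.~\ref{prop: Zcomps}), and that $\mathring{\theta}^\sharp$ genuinely extends $\varphi$, which was recorded just after Cor.~\ref{cor: Zzeta}.
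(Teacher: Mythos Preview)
Your proof is correct and follows essentially the same approach as the paper's: both push the identity through the isomorphism $\mathring{\theta}^\sharp$ and reduce to the equality $e_\lambda = e_\lambda$ in $k[\Omega][t,t^{-1}]$. The only cosmetic difference is that the paper first clears denominators (checking $\eps_\lambda \zeta^2 = X_\lambda \zeta$) and explicitly records the computation $\varphi(\zeta) = t$ along the way, whereas you cite this directly from Cor.~\ref{cor: Zzeta}.
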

\begin{proof} Since $\zeta$ is a unit in $Z_\zeta$, it's enough to show $\eps_\lambda \zeta^2 = X_\lambda \zeta$. Since $\mathring{\theta}^\sharp$ is a ring isomorphism that restricts to $\varphi$ on $Z$, it is enough to show that the equation $e_\lambda \varphi(\zeta)^2 = \varphi(X_\lambda \zeta)$ holds in $k[\Omega][t,t^{-1}]$. However, using Lemma \ref{lem: ezeta}.d together with Def. \ref{def: XiXiprimePhi}.c, we have
\begin{equation}\label{eq: phizeta} \varphi(\zeta) = \varphi\left(\sum\limits_{\alpha \in \hatOm} X_\alpha \right) =  \left( \sum\limits_{\alpha \in \hatOm} e_\alpha t\right) =  t\end{equation}
and therefore $e_\lambda \varphi(\zeta)^2 = e_\lambda t^2 = (e_\lambda t) t = \varphi(X_\lambda) \varphi(\zeta) = \varphi(X_\lambda \zeta)$ as required.
\end{proof}

\begin{corollary}\label{cor: ZzetaDecomp} As a $k[\zeta,\zeta^{-1}]$-module, $Z_\zeta$ decomposes as follows:
\[ Z_\zeta = \bigoplus\limits_{\lambda \in \hatOm} \eps_\lambda k[\zeta, \zeta^{-1}].\]
\end{corollary}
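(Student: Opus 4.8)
The plan is to deduce the decomposition by transporting an obvious one across the ring isomorphism $\mathring{\theta}^\sharp$ furnished by Corollary~\ref{cor: Zzeta}.

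First I would recall that the primitive idempotents $\{e_\lambda : \lambda \in \hatOm\}$ form a $k$-basis of the group algebra $k[\Omega]$. Writing $k[\Omega][t,t^{-1}] = k[\Omega] \otimes_k k[t,t^{-1}]$, this basis gives a decomposition
\[ k[\Omega][t,t^{-1}] = \bigoplus_{\lambda \in \hatOm} e_\lambda\, k[t,t^{-1}] \]
of $k[\Omega][t,t^{-1}]$ as a $k[t,t^{-1}]$-module, in which each summand $e_\lambda\, k[t,t^{-1}]$ is free of rank one over $k[t,t^{-1}]$, since $e_\lambda k[\Omega] = k\, e_\lambda$ is one-dimensional over $k$ and $e_\lambda$ is idempotent.

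Next I would invoke Corollary~\ref{cor: Zzeta}: the map $\mathring{\theta}^\sharp : Z_\zeta \xrightarrow{\ \cong\ } k[\Omega][t,t^{-1}]$ is a $k$-algebra isomorphism carrying $\zeta$ to $t$. Hence, regarding the target as a $k[\zeta,\zeta^{-1}]$-module via $\zeta \mapsto t$, the inverse isomorphism $(\mathring{\theta}^\sharp)^{-1}$ is $k[\zeta,\zeta^{-1}]$-linear. Applying it to the displayed decomposition, and using that $\eps_\lambda := (\mathring{\theta}^\sharp)^{-1}(e_\lambda)$ by Definition~\ref{def: epslambda}, I would obtain
\[ Z_\zeta = \bigoplus_{\lambda \in \hatOm} \eps_\lambda\, k[\zeta,\zeta^{-1}], \]
which is the assertion. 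If one wishes to make the summands explicit inside $Z_\zeta$, one may additionally record via Lemma~\ref{lem: epsilonlambda} that $\eps_\lambda = X_\lambda/\zeta$.

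I do not anticipate a serious obstacle: the argument is pure transport of structure through Corollary~\ref{cor: Zzeta}. The only point that genuinely needs a word of justification is the $k[\zeta,\zeta^{-1}]$-linearity of $\mathring{\theta}^\sharp$, and this is immediate from the fact --- already established in Corollary~\ref{cor: Zzeta} --- that $\mathring{\theta}^\sharp$ sends $\zeta$ to $t$.
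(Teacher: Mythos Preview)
Your proposal is correct and follows essentially the same approach as the paper: both arguments transport the evident $k[t,t^{-1}]$-module decomposition $k[\Omega][t,t^{-1}] = \bigoplus_\lambda e_\lambda\, k[t,t^{-1}]$ across the isomorphism $\mathring{\theta}^\sharp$ (which sends $\zeta \mapsto t$ and $\eps_\lambda \mapsto e_\lambda$) to obtain the claimed decomposition of $Z_\zeta$.
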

\begin{proof} The isomorphism $\mathring{\theta}^\sharp : Z_\zeta \stackrel{\cong}{\longrightarrow} k[\Omega][t, t^{-1}]$ given by Lemma \ref{lem: nslocus} sends $\zeta$ to $t$ by equation (\ref{eq: phizeta}), and it sends $\eps_\lambda \in Z_\zeta$ to $e_\lambda \in k[\Omega][t,t^{-1}]$ for each $\lambda \in \hatOm$. Since $\{e_\lambda : \lambda \in \hatOm\}$ forms a $k[t,t^{-1}]$-module basis for $k[\Omega][t,t^{-1}]$, it follows that $\{\eps_\lambda : \lambda \in \hatOm\}$ forms a $k[\zeta,\zeta^{-1}]$-module basis for $Z_\zeta$.\end{proof}

\subsection{Bimodule annihilators}
\subsubsection{Some generalities about annihilators and bimodules}  \label{sec: GenBimod}
\begin{definition} Let $A$ be a $k$-algebra, and let $\phi : A \to A$ be a $k$-algebra automorphism. Let $M$ be an $A$-bimodule. We write $M_\phi$ to denote the $(A,A)$-bimodule $M$, with action given by
\[ a \cdot m \cdot b = a \hsp m \hsp \phi(b) \qmb{for all} a,b \in A, m \in M\]
and we call $M_\phi$ the \emph{right $\phi$-twist of $M$}.
\end{definition}
Thus, the left $A$-action on $M_\phi$ is the usual one, whereas the right $A$-action is twisted by $\phi$.

\begin{lemma}\label{lem: TwistedAnn}  Let $A$ be a $k$-algebra, let $M$ be an $(A,A)$-bimodule and let $\phi : A \to A$ be a $k$-algebra automorphism. Then
\[ \Ann_{A \otimes_k A}(M_\phi) = (1 \otimes \phi^{-1})(\Ann_{A \otimes_k A}(M)).\]
\end{lemma}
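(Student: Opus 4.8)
The plan is to observe that passing from $M$ to its right $\phi$-twist $M_\phi$ only changes the $(A,A)$-bimodule structure by precomposing the underlying structure map with the $k$-algebra automorphism $1 \otimes \phi$ of $A \otimes_k A$; the claimed identity of annihilators is then obtained simply by taking preimages.

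First I would record that, since $\phi : A \to A$ is a $k$-algebra automorphism, the assignment $a \otimes b \mapsto a \otimes \phi(b)$ defines a $k$-algebra automorphism $1 \otimes \phi$ of $A \otimes_k A$, with inverse $1 \otimes \phi^{-1}$. Next, let $\rho : A \otimes_k A \to \End_k(M)$ be the $k$-linear map determined by $\rho(a \otimes b)(m) = a\,m\,b$, so that by definition $\Ann_{A \otimes_k A}(M) = \ker \rho$; likewise let $\rho_\phi : A \otimes_k A \to \End_k(M)$ be the $k$-linear map determined by $\rho_\phi(a \otimes b)(m) = a\,m\,\phi(b)$, so that $\Ann_{A \otimes_k A}(M_\phi) = \ker \rho_\phi$. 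The key (and essentially only) point is the identity $\rho_\phi = \rho \circ (1 \otimes \phi)$: both sides are $k$-linear in the argument, so it suffices to check it on a simple tensor $a \otimes b$, where $\rho\big((1 \otimes \phi)(a \otimes b)\big) = \rho(a \otimes \phi(b))$ sends $m$ to $a\,m\,\phi(b)$, which is exactly $\rho_\phi(a \otimes b)(m)$.

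Finally, combining $\rho_\phi = \rho \circ (1 \otimes \phi)$ with the bijectivity of $1 \otimes \phi$ yields
\[ \Ann_{A \otimes_k A}(M_\phi) = \ker \rho_\phi = (1 \otimes \phi)^{-1}\big(\ker \rho\big) = (1 \otimes \phi^{-1})\big(\Ann_{A \otimes_k A}(M)\big), \]
which is the desired equality. I do not anticipate any genuine difficulty here; the one thing to be careful about is keeping track of which twist appears, and the simple-tensor computation above pins this down, giving $\phi^{-1}$ rather than $\phi$.
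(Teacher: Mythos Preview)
Your proof is correct and follows essentially the same approach as the paper: both argue that the $A \otimes_k A$-action on $M_\phi$ is obtained from that on $M$ by precomposing with the automorphism $1 \otimes \phi$, and then read off the annihilator as the preimage. The paper phrases this as ``$x \cdot M_\phi = 0$ iff $(1 \otimes \phi)(x) \cdot M = 0$'' for $x = \sum a_i \otimes b_i$, while you package the same computation via the structure maps $\rho_\phi = \rho \circ (1 \otimes \phi)$; the content is identical.
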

\begin{proof} Let $x = \sum_{i=1}^n a_i \otimes b_i \in A \otimes_k A$. Then $x \cdot M_\phi = 0$ if and only if $\sum_{i=1}^n a_i m \phi(b_i) = 0$ for all $m \in M$. This is equivalent to $(1 \otimes \phi)(x) \cdot M = 0$ and the result follows.
\end{proof}

\begin{lemma}\label{lem: HZbimodann} Let $A$ be a $k$-algebra with centre $Z$ and let $\mult_Z : Z \otimes_k Z \to Z$ be the multiplication map. Then
\[\Ann_{Z \otimes_k Z}(A) = \Ann_{Z\otimes_kZ}(Z) = \Ann_{Z \otimes_k Z}(1) = \ker \mult_Z.\]
\end{lemma}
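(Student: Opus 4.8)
The plan is to reduce all four equalities to the single observation that every relevant $(Z\otimes_k Z)$-module structure factors through the multiplication map $\mult_Z$. First I would note that $k\subseteq Z$, so $Z\otimes_k Z$ and $\mult_Z : Z\otimes_k Z\to Z$ make sense as $k$-algebras and $k$-algebra maps. Then, since every element of $Z$ is central in $A$, the action of $z\otimes z'$ on $A$ sends $a$ to $zaz' = (zz')\hsp a$, so the module structure factors as $Z\otimes_k Z\xrightarrow{\mult_Z}Z\xrightarrow{\ell}\End_k(A)$, where $\ell$ is left multiplication of $Z$ on $A$. Consequently $\Ann_{Z\otimes_k Z}(A) = \mult_Z^{-1}(\ker\ell)$; and since $1\in A$ we have $\ker\ell = 0$, whence $\Ann_{Z\otimes_k Z}(A) = \mult_Z^{-1}(0) = \ker\mult_Z$.

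Next I would run the identical argument with $A$ replaced by $Z$: again $z\otimes z'$ acts on $Z$ by multiplication by $zz' = \mult_Z(z\otimes z')$, the resulting module $Z$ over itself is faithful because $1\in Z$, and therefore $\Ann_{Z\otimes_k Z}(Z) = \ker\mult_Z$. Finally, for the single element $1$, the cleanest route is a direct computation: $\bigl(\sum_i z_i\otimes w_i\bigr)\cdot 1 = \sum_i z_i w_i = \mult_Z\bigl(\sum_i z_i\otimes w_i\bigr)$, so $\Ann_{Z\otimes_k Z}(1) = \ker\mult_Z$. (Alternatively, since $Z\otimes_k Z$ is commutative, the annihilator of $1$ equals the annihilator of the cyclic submodule $Z\cdot 1\cdot Z = Z$ it generates inside $A$, reducing this case to the previous one.)

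I do not expect a genuine obstacle here — the statement is essentially a bookkeeping lemma. The only points requiring a moment of care are: ensuring $k\subseteq Z$ so that the tensor product over $k$ and the map $\mult_Z$ are legitimate; and keeping track of which tensor factor acts on the left and which on the right in the $A$-bimodule structure, which is harmless precisely because all of the acting ring elements lie in the centre $Z$.
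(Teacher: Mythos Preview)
Your proof is correct and follows essentially the same approach as the paper: both rely on the observation that for $a\in A$ and $z,z'\in Z$ one has $zaz' = (zz')a$ by centrality, so the $Z\otimes_k Z$-action factors through $\mult_Z$. The paper packages this as a chain of inclusions $\Ann(A)\subseteq\Ann(Z)\subseteq\Ann(1)=\ker\mult_Z$ followed by the reverse inclusion via the same centrality computation, while you phrase it as the module structure factoring through $\mult_Z$ with faithful residual action; the content is identical.
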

\begin{proof} For an element $x = \sum_{i=1}^n a_i\otimes b_i \in Z \otimes_k Z$, we have $x \cdot 1 = \sum\limits_{i=1}^n a_ib_i = \mult_Z(x)$. This shows the last equality. Since $A \supseteq Z \ni 1$, we have $\Ann_{Z \otimes_k Z}(A) \subseteq \Ann_{Z \otimes_k Z}(Z) \subseteq \Ann_{Z \otimes_k Z}(1) = \ker \mult_Z$ by the above. Let $x = \sum_{i=1}^n a_i \otimes b_i \in \ker \mult_Z$ and let $c \in A$. Then because $a_i, b_i$ are central in $A$, we have  $x \cdot c =  \sum_{i=1}^n a_i c b_i = \left(\sum_{i=1}^n a_i b_i\right) c = \mult_Z(x)c = 0$. Hence $\ker \mult_Z \subseteq \Ann_{Z \otimes_kZ}(A)$.\end{proof}

\begin{lemma}\label{lem: BiAnnA} Let $A$ be a commutative $k$-algebra and let $\mult_A : A \otimes_k A \to A$ be the multiplication map.
\begin{itemize}
\item[a)] $\ker \mult_A$ is generated as an ideal in $A \otimes_k A$ by $\{ a \otimes 1 - 1 \otimes a : a \in A\}$.
\item[b)] Suppose that $a_1,\cdots, a_m$ generate $A$ as a $k$-algebra. Then $\{a_i \otimes 1 - 1 \otimes a_i : i=1,\cdots, m\}$ generates $\ker \mult_A$ as an ideal in $A \otimes_kA$.
\end{itemize}
\end{lemma}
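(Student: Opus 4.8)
The plan is to prove part (a) by a one-line telescoping identity, and then bootstrap part (b) from (a) by observing that the set of $a\in A$ for which $a\otimes 1 - 1\otimes a$ lies in the relevant ideal is automatically a $k$-subalgebra.

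For part (a), let $I\subseteq A\otimes_k A$ be the ideal generated by $\{a\otimes 1 - 1\otimes a : a\in A\}$. The inclusion $I\subseteq\ker\mult_A$ is clear, since $\mult_A(a\otimes 1 - 1\otimes a)=a-a=0$. For the reverse inclusion I would take $x=\sum_{i=1}^n a_i\otimes b_i\in\ker\mult_A$, so that $\sum_i a_ib_i=0$ in $A$, and then write
\[
x \;=\; \sum_{i=1}^n a_i\otimes b_i \;-\; \Bigl(\sum_{i=1}^n a_ib_i\Bigr)\otimes 1 \;=\; \sum_{i=1}^n (a_i\otimes 1)\bigl(1\otimes b_i - b_i\otimes 1\bigr),
\]
which manifestly lies in $I$ because each factor $1\otimes b_i - b_i\otimes 1 = -(b_i\otimes 1 - 1\otimes b_i)$ is a generator of $I$ (up to sign).

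For part (b), let $J\subseteq A\otimes_k A$ be the ideal generated by $\{a_i\otimes 1 - 1\otimes a_i : 1\le i\le m\}$. Again $J\subseteq\ker\mult_A$ is immediate, and by part (a) it suffices to show $a\otimes 1 - 1\otimes a\in J$ for every $a\in A$. I would introduce $B:=\{a\in A : a\otimes 1 - 1\otimes a\in J\}$. It contains $k$ (as $c\otimes 1 = 1\otimes c$ for scalars $c$) and is clearly closed under addition and $k$-scaling; the one computation to record is the Leibniz-type identity
\[
aa'\otimes 1 - 1\otimes aa' \;=\; (a\otimes 1)\bigl(a'\otimes 1 - 1\otimes a'\bigr) \;+\; \bigl(a\otimes 1 - 1\otimes a\bigr)(1\otimes a'),
\]
which shows $B$ is closed under multiplication. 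Hence $B$ is a $k$-subalgebra of $A$ containing each $a_i$, so $B=A$, and we are done.

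The argument has no genuine obstacle; the only point requiring a little care is the idea in (b) — one should not attempt to expand a general element of $A$ into monomials in the $a_i$ and track the ideal membership term by term, but instead package everything into the subalgebra $B$ and use the multiplicativity identity above, which collapses the induction on degree to a single step.
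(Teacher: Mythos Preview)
Your proof is correct and follows essentially the same approach as the paper: the same telescoping identity for (a), and the same Leibniz-type formula $\partial(aa') = (a\otimes 1)\partial(a') + \partial(a)(1\otimes a')$ for (b). The only cosmetic difference is that the paper phrases (b) as ``the ideal contains $\partial$ of every monomial in the $a_i$, hence by $k$-linearity $\partial$ of every element,'' whereas you package this as ``$B$ is a $k$-subalgebra containing the $a_i$''; these are the same induction, and your closing remark notwithstanding, the monomial-expansion phrasing works perfectly well here.
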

\begin{proof} a) Certainly $\mult_A$ kills $a \otimes 1 - 1 \otimes a$ for all $a \in A$. Conversely, if $x = \sum_{i=1}^n a_i \otimes b_i \in A \otimes A$ is such that $\mult_A(x) = \sum_{i=1}^n a_i b_i = 0$, then
\[x = \sum_{i=1}^n (a_i \otimes b_i - 1 \otimes a_i b_i) = \sum_{i=1}^n (a_i \otimes 1 - 1 \otimes a_i) (1 \otimes b_i)\]
lies in the ideal in $A \otimes A$ generated by $\{ a \otimes 1 - 1 \otimes a : a \in A\}$.

b) Write $\partial(a) = a \otimes 1 - 1 \otimes a$ for $a \in A$. Then for all $a,b \in A$ we have
\[ \partial(ab) = ab \otimes 1 - 1 \otimes ab = (a \otimes 1 - 1 \otimes a)(b \otimes 1) + (1 \otimes a)(b \otimes 1 - 1 \otimes b) = \partial(a) (b \otimes 1) + (1 \otimes a) \partial(b).\]
Hence the ideal generated by $\{\partial(a_i) : i=1,\cdots, m\}$ contains $\partial(a_1^{k_1}\cdots a_m^{k_m})$ for all non-negative integers $k_1,\cdots, k_m$. Since $a \mapsto \partial(a)$ is $k$-linear, and since $A = k[a_1,\cdots,a_m]$ by assumption, this ideal contains $\partial(a)$ for all $a \in A$. Hence it contains $\ker \mult_A$ by part a). The reverse inclusion is clear because $\mult_A(\partial(a_i)) = 0$ for all $i$.
\end{proof}

\subsubsection{The bimodule $\left(\frac{H_\zeta}{H_\zeta \tau_0} \oplus \frac{H_\zeta}{H_\zeta \tau_1}\right)[\kappa,z, \mu]$} \label{sec: BimoduleB}
Recall the canonical involution $\iota : H \to H$ from \cite{OS22}, \S 2.4.3: it is a self-inverse $k$-algebra automorphism of $H$ that fixes the subalgebra $k[\Omega]$ pointwise, and satisfies
\[ \iota(\tau_0) = -e_\triv - \tau_0 \qmb{and} \iota(\tau_1) = -e_\triv - \tau_1.\]

\begin{lemma}\label{lem: iotafixesZ} $\iota$ fixes the centre $Z$ of $H$ pointwise.
\end{lemma}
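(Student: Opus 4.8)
The plan is to reduce the statement to checking that $\iota$ fixes a set of $k$-algebra generators of $Z$. By the decomposition \eqref{eq:Zdecomp} together with Proposition \ref{prop: Zcomps}, the centre $Z$ is generated as a $k$-algebra by the block idempotents $e_\triv$, $e_{\id^{\frac{q-1}{2}}}$ and $e_{\id^j}+e_{\id^{-j}}$ for $1\le j\le\frac{q-3}{2}$, together with the elements $\{X_\lambda:\lambda\in\hatOm\}$ of Definition \ref{def: Xlambda} (indeed, in each block $e_\lambda Z\cong k[x]$, resp.\ $(e_\lambda+e_{\lambda^{-1}})Z\cong k[x,y]/\langle xy\rangle$, these are exactly the images of $1,x$, resp.\ $1,x,y$). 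All the block idempotents lie in the subalgebra $k[\Omega]$, which $\iota$ fixes pointwise; hence it suffices to prove $\iota(X_\lambda)=X_\lambda$ for every $\lambda\in\hatOm$.

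First I would treat the two self-inverse characters. When $\lambda=\lambda^{-1}$ we have $X_\lambda=e_\lambda\zeta$: directly from Definition \ref{def: Xlambda} when $\lambda=\triv$, and from Lemma \ref{lem: ezeta}.c otherwise. Since $e_\lambda\in k[\Omega]$ is $\iota$-fixed, this reduces the self-inverse case to showing $\iota(\zeta)=\zeta$. From $\iota(e_\triv)=e_\triv$ and $\iota(\tau_i)=-e_\triv-\tau_i$ one gets $\iota(\tau_i+e_\triv)=-\tau_i$, so $\iota\big((\tau_0+e_\triv)(\tau_1+e_\triv)\big)=(-\tau_0)(-\tau_1)=\tau_0\tau_1$, while $\iota(\tau_1\tau_0)=(-e_\triv-\tau_1)(-e_\triv-\tau_0)=(\tau_1+e_\triv)(\tau_0+e_\triv)$. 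Adding, $\iota(\zeta)=(\tau_1+e_\triv)(\tau_0+e_\triv)+\tau_0\tau_1$, which is precisely the second of the two expressions for $\zeta$ in Lemma \ref{lem: zeta}; hence $\iota(\zeta)=\zeta$.

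For $\lambda\neq\lambda^{-1}$ (so in particular $\lambda\neq\triv$), I would argue by a short direct expansion. Applying $\iota$ to $X_\lambda=e_\lambda\tau_0\tau_1+e_{\lambda^{-1}}\tau_1\tau_0$ and using that $\iota$ fixes $e_\lambda,e_{\lambda^{-1}}$ gives $\iota(X_\lambda)=e_\lambda(e_\triv+\tau_0)(e_\triv+\tau_1)+e_{\lambda^{-1}}(e_\triv+\tau_1)(e_\triv+\tau_0)$. Now $e_\lambda e_\triv=0$ by the orthogonality \eqref{eq: cpi} (as $\lambda\neq\triv$), so $e_\lambda(e_\triv+\tau_0)=e_\lambda\tau_0$; then, using $\tau_0 e_\triv=e_\triv\tau_0$ from Lemma \ref{lem: Hpres}.d, one has $e_\lambda\tau_0(e_\triv+\tau_1)=e_\lambda\tau_0 e_\triv+e_\lambda\tau_0\tau_1=e_\lambda\tau_0\tau_1$. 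Symmetrically (with $\lambda^{-1}\neq\triv$ and $\tau_1 e_\triv=e_\triv\tau_1$) the second product collapses to $e_{\lambda^{-1}}\tau_1\tau_0$. Therefore $\iota(X_\lambda)=e_\lambda\tau_0\tau_1+e_{\lambda^{-1}}\tau_1\tau_0=X_\lambda$, and combining the cases finishes the proof.

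All computations here are elementary and I do not anticipate a genuine obstacle; the only point needing a little care is the reduction step, i.e.\ being sure that the $X_\lambda$ together with the block idempotents really do generate $Z$ and keeping straight which primitive idempotents $e_\lambda$ are central. In particular one cannot shortcut by expressing $X_\lambda$ through $\zeta$ and the idempotents alone, since inside a block $k[x,y]/\langle xy\rangle$ one has $(x+y)^n=x^n+y^n$, so $x$ is not a polynomial in $x+y$; the individual elements $X_\lambda$ are genuinely needed among the generators.
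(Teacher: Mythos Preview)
Your proof is correct and follows essentially the same approach as the paper: reduce to checking that $\iota$ fixes each $X_\lambda$, then compute directly using the orthogonality $e_\lambda e_\triv = 0$ for $\lambda \neq \triv$. The only cosmetic difference is the case split—the paper divides into $\lambda \neq \triv$ versus $\lambda = \triv$, while you divide into $\lambda = \lambda^{-1}$ versus $\lambda \neq \lambda^{-1}$ and correspondingly first establish $\iota(\zeta)=\zeta$ globally rather than just $\iota(e_\triv\zeta)=e_\triv\zeta$; the computations are the same.
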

\begin{proof} The decomposition (\ref{eq:Zdecomp}) together with Prop. \ref{prop: Zcomps} implies that the central elements $\{X_\lambda : \lambda \in \hatOm\}$ generate $Z$ as a $k$-algebra. Since $\iota$ is a $k[\Omega]$-linear involution, it therefore suffices to check that $\iota$ fixes each $X_\lambda$. Now if $\lambda \neq \triv$, then since $e_\triv e_\lambda = e_\triv e_{\lambda^{-1}} = 0$ by (\ref{eq: cpi}), we compute
\[\iota(X_\lambda) = \iota( e_\lambda \tau_0\tau_1 + e_{\lambda^{-1}}\tau_1\tau_0 ) = e_\lambda (-e_\triv - \tau_0)(-e_\triv - \tau_1) + e_{\lambda^{-1}}(-e_\triv - \tau_1)(-e_\triv - \tau_0) = X_\lambda.\]
On the other hand, since $\zeta = (\tau_0 + e_\triv)(\tau_1 + e_\triv) + \tau_1\tau_0$ and since $\iota$ fixes $e_\triv \in k[\Omega]$, we have
\[\iota(X_\triv) = e_\triv \iota(\tau_0 + e_{\triv}) \iota(\tau_1 + e_\triv) + \iota(\tau_1)\iota(\tau_0)  = e_\triv \tau_0\tau_1 + (-e_\triv-\tau_1)(-e_\triv-\tau_0) = e_\triv \zeta = X_\triv,\]
where we used the second formula for $\zeta$ from Lemma \ref{lem: zeta}.
\end{proof}

Let $\kappa : H \to R$ be a  homomorphism of $k$-algebras, let $z \in Z(R)$ and let $\mu \in \hatOm$. Then in \cite{OS22}, \S 2.4.7, Ollivier and Schneider explain how to use these parameters to endow the left $R$-module $R \oplus R$ with the structure of a right $H$-module, in fact forming an $(R,H)$-bimodule, that they denote $(R \oplus R)[\kappa, z, \mu]$.
The left $R$-action on $(R \oplus R)[\kappa, z, \mu]$ is the obvious one, but $H$ acts from the right as follows:
\[ (r_1,r_2) h := (r_1,r_2) \kappa_2(h) \qmb{for all} r_1,r_2 \in R, h \in H.\]
Here $\kappa_2 : H \to M_2(H_\zeta)$ is the explicit $k$-algebra homomorphism from \cite{OS22} \S 2.4.7.  In this generality, we will explicitly compute the right action of $Z$ on this bimodule.
\begin{lemma}\label{lem: k2Xalpha} We have $\kappa_2(X_\alpha) = z^2 \begin{pmatrix} \kappa( e_{\alpha^{-1}\mu} \tau_0\tau_1 ) & 0 \\ 0 & \kappa( e_{\alpha\mu^{-1}} \tau_1\tau_0 ) \end{pmatrix}$ for all $\alpha \in \hatOm$.
\end{lemma}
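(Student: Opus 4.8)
The plan is to reduce the identity to a direct computation with the explicit $2\times 2$ matrices defining $\kappa_2$. Recall from \cite{OS22} \S 2.4.7 the explicit values $\kappa_2(\tau_0)$, $\kappa_2(\tau_1)$ and $\kappa_2(e_\lambda)$ for $\lambda\in\hatOm$: up to the central scalar $z$, the matrices $\kappa_2(\tau_0)$ and $\kappa_2(\tau_1)$ interchange the two coordinates, while $\kappa_2(e_\lambda)$ is diagonal with entries obtained by applying $\kappa$ to the $\mu$-twists of the idempotent $e_\lambda$. Since $\kappa_2$ is a $k$-algebra homomorphism and, by Definition \ref{def: Xlambda}, we have $X_\alpha = e_\alpha\tau_0\tau_1 + e_{\alpha^{-1}}\tau_1\tau_0$ for $\alpha\neq\triv$ and $X_\triv = e_\triv\zeta$, it is enough to evaluate the matrix products $\kappa_2(e_\alpha)\kappa_2(\tau_0)\kappa_2(\tau_1)$, $\kappa_2(e_{\alpha^{-1}})\kappa_2(\tau_1)\kappa_2(\tau_0)$ and, for the trivial character, $\kappa_2(e_\triv)\kappa_2(\zeta)$, the last one after substituting $\zeta = (\tau_0+e_\triv)(\tau_1+e_\triv) + \tau_1\tau_0$ from Lemma \ref{lem: zeta}.

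For $\alpha\neq\triv$ one multiplies out the two products and simplifies the entries --- which are $\kappa$ applied to words in $\tau_0,\tau_1$ and the $e_\lambda$ --- by means of the relations in Lemma \ref{lem: Hpres}: the orthogonality $e_\lambda e_\nu = \delta_{\lambda\nu}e_\lambda$ of (\ref{eq: cpi}), the commutation rule $\tau_i e_\lambda = e_{\lambda^{-1}}\tau_i$, and the quadratic relations $\tau_i^2 = -e_\triv\tau_i$. If the construction of $(R\oplus R)[\kappa,z,\mu]$ carries a right twist by the canonical involution $\iota$ on one of the coordinates, as in \cite{OS22}, then terms of the form $\kappa(\iota(-))$ arise; here one invokes Lemma \ref{lem: iotafixesZ} together with the observation, made inside its proof, that $\iota$ fixes each element $e_\alpha\tau_0\tau_1$ and $e_{\alpha^{-1}}\tau_1\tau_0$ with $\alpha\neq\triv$, so the twist does not change the relevant entries. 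The two occurrences of $z$ are central in $R$ and combine into the factor $z^2$; the double coordinate-interchange produced by $\kappa_2(\tau_0)\kappa_2(\tau_1)$ carries the subscript of $e_\alpha$ to $\alpha^{-1}\mu$ in the $(1,1)$-entry and that of $e_{\alpha^{-1}}$ to $\alpha\mu^{-1}$ in the $(2,2)$-entry; and the off-diagonal entries vanish, consistent with $X_\alpha$ being central by Lemma \ref{lem: ezeta}.a.

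The case $\alpha=\triv$ is an entirely analogous, if slightly longer, matrix computation of $\kappa_2(e_\triv)\kappa_2(\zeta)$: the extra summands of $\zeta$ involving $e_\triv\tau_0$, $e_\triv\tau_1$ and $e_\triv$ stay diagonal because $e_\triv$ commutes with $\tau_0$ and $\tau_1$ (as $\triv^{-1}=\triv$), and after using the same relations one checks that the result collapses to $z^2\,\mathrm{diag}\bigl(\kappa(e_\mu\tau_0\tau_1),\kappa(e_{\mu^{-1}}\tau_1\tau_0)\bigr)$; this matches the asserted formula at $\alpha=\triv$ since $\triv^{-1}\mu=\mu$, the $\mu$-twisted idempotent $e_\mu$ entering through the diagonal factor $\kappa_2(e_\triv)$. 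The only genuine difficulty throughout is organisational --- keeping the $\mu$-twisted subscripts and (if present) the $\iota$-twist straight across the matrix multiplications --- and this is why it pays to isolate $X_\alpha$ in the form of Definition \ref{def: Xlambda} and to treat the diagonal factors $\kappa_2(e_\lambda)$ and the coordinate-swapping factors $\kappa_2(\tau_i)$ separately.
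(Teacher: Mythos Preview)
Your overall plan is the same as the paper's: compute $\kappa_2(e_\alpha)$, $\kappa_2(\tau_0)$, $\kappa_2(\tau_1)$ from the explicit definitions in \cite{OS22} \S 2.4.7, multiply out, and simplify using the relations in Lemma~\ref{lem: Hpres}. That is exactly what the paper does. But two points in your sketch are off.

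First, the whole digression about the involution $\iota$ is misplaced. At this stage $\kappa : H \to R$ is an \emph{arbitrary} $k$-algebra map, $z$ an arbitrary central element and $\mu$ an arbitrary character; the specialization $\kappa = \iota$, $z = -\tau_{\omega_{-1}}\zeta^{-1}$, $\mu = \id^2$ only enters later (Definition~\ref{def: bimodB}). So no $\kappa(\iota(-))$ terms arise here, and Lemma~\ref{lem: iotafixesZ} plays no role in this computation.

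Second, your description of the case $\alpha = \triv$ is not how the cancellation actually goes. You claim that the extra summands $e_\triv\tau_0$, $e_\triv\tau_1$, $e_\triv$ in $X_\triv = e_\triv\zeta$ ``stay diagonal''. They do not. In the paper's proof one finds
\[
\kappa_2\bigl(e_\triv(\tau_0+\tau_1+1)\bigr) = \begin{pmatrix} 0 & z\,\kappa(e_\mu\tau_0) \\ z\,\kappa(e_{\mu^{-1}}\tau_1) & 0 \end{pmatrix},
\]
which is purely off-diagonal. The point is that for $\alpha = \triv$ the matrices $\kappa_2(e_\triv\tau_0\tau_1)$ and $\kappa_2(e_\triv\tau_1\tau_0)$ themselves carry nonzero off-diagonal entries (the $\delta_{\alpha\triv}$ terms in the general computation survive), and these are cancelled \emph{exactly} by the off-diagonal matrix above. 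So the mechanism is an honest cancellation of off-diagonal pieces, not their absence. If you carry out the matrix products explicitly you will see this; your sketch as written would lead you to expect the wrong thing and then be puzzled when the off-diagonal entries appear.
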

\begin{proof}Recall from \cite{OS22}\S 2.4.7 that $\kappa_2(\tau_\omega) = M_\omega = \begin{pmatrix} \mu^{-1}(\omega) \kappa(\tau_\omega) & 0 \\ 0 & \mu(\omega)\kappa(\tau_\omega)\end{pmatrix}$. Then
\begin{equation}\label{eq: k2ealpha}\begin{array}{lll} \kappa_2(e_\alpha) &=& - \sum\limits_{\omega \in \Omega} \alpha^{-1}(\omega) \kappa_2(\omega)  = - \sum\limits_{\omega \in \Omega} \alpha^{-1}(\omega) M_\omega \\
&=&\begin{pmatrix}  - \sum\limits_{\omega \in \Omega} \alpha^{-1}(\omega)\mu^{-1}(\omega) \kappa(\tau_\omega)  & 0 \\ 0 & - \sum\limits_{\omega \in \Omega} \alpha^{-1}(\omega) \mu(\omega) \kappa(\tau_\omega) \end{pmatrix} \\
&=& \begin{pmatrix} \kappa(e_{\alpha \mu}) & 0 \\ 0 & \kappa(e_{\alpha\mu^{-1}}) \end{pmatrix} \end{array}\end{equation}
using (\ref{eq: defelambda}). On the other hand, we have
\[ \kappa_2(\tau_0) = M_0 = \begin{pmatrix} -\kappa(e_\mu) & 0 \\ z \kappa(\tau_1) & 0 \end{pmatrix} \qmb{and} \kappa_2(\tau_1) = M_1 = \begin{pmatrix} 0 & z \kappa(\tau_0)  \\0 & - \kappa(e_{\mu^{-1}}) \end{pmatrix}.\]
Then using (\ref{eq: k2ealpha}) together with (\ref{eq: cpi}), we compute
\begin{eqnarray*} \kappa_2(e_\alpha \tau_0\tau_1) &=& \begin{pmatrix}\kappa(e_{\alpha \mu}) & 0 \\0 & \kappa(e_{\alpha\mu^{-1}}) \end{pmatrix} \begin{pmatrix} -\kappa(e_\mu)& 0\\ z \kappa(\tau_1) & 0 \end{pmatrix} \begin{pmatrix}0 & z \kappa(\tau_0)\\0 & -\kappa(e_{\mu^{-1}})\end{pmatrix} \\
&=& \begin{pmatrix} 0& -z \kappa(e_{\alpha \mu} e_\mu \tau_0)\\ 0& z^2 \kappa(e_{\alpha\mu^{-1}} \tau_1\tau_0) \end{pmatrix}= \begin{pmatrix} 0& -\delta_{\alpha\triv}z \kappa(e_\mu \tau_0) \\0 &z^2 \kappa(e_{\alpha\mu^{-1}}\tau_1\tau_0) \end{pmatrix}.\end{eqnarray*}
 Similarly,
\begin{eqnarray*} \kappa_2(e_{\alpha^{-1}} \tau_1\tau_0) &=& \begin{pmatrix}\kappa(e_{\alpha^{-1} \mu}) & 0 \\0 & \kappa(e_{\alpha^{-1}\mu^{-1}}) \end{pmatrix} \begin{pmatrix}0 & z \kappa(\tau_0)\\0 & -\kappa(e_{\mu^{-1}})\end{pmatrix} \begin{pmatrix} -\kappa(e_\mu)& 0\\ z \kappa(\tau_1) & 0 \end{pmatrix}\\
&=& \begin{pmatrix} z^2 \kappa(e_{\alpha^{-1} \mu} \tau_0\tau_1)& 0 \\- z\kappa(e_{\alpha^{-1}\mu^{-1}} e_{\mu^{-1}} \tau_1) & 0 \end{pmatrix}= \begin{pmatrix} z^2\kappa(e_{\alpha^{-1} \mu} \tau_0\tau_1)& 0 \\ -z \delta_{\alpha\triv} \kappa(e_{\mu^{-1}}\tau_1) &0 \end{pmatrix}.\end{eqnarray*}
Suppose first that $\alpha \neq \triv$ so that $X_\alpha = e_\alpha \tau_0\tau_1 + e_{\alpha^{-1}} \tau_1\tau_0$. Then $\delta_{\alpha\triv} = 0$, and adding the last two displayed equations together gives the result in this case.

Suppose now that $\alpha = \triv$ so that $X_\triv = e_\triv \zeta = e_\triv \tau_0\tau_1 + e_\triv \tau_1\tau_0 + e_\triv (\tau_0 + \tau_1 + 1)$. Adding the above two equations in this case gives
\begin{equation}\label{eq: exyeyx} \kappa_2(e_\triv \tau_0\tau_1 + e_\triv \tau_1\tau_0) = z^2 \begin{pmatrix} \kappa( e_{\mu} \tau_0\tau_1 ) & 0 \\ 0 & \kappa( e_{\mu^{-1}} \tau_1 \tau_0 ) \end{pmatrix} + \begin{pmatrix} 0 & -z \kappa(e_\mu \tau_0) \\ -z \kappa(e_{\mu^{-1}}\tau_1) & 0 \end{pmatrix}.\end{equation}
Since $\kappa_2(e_\triv) = \begin{pmatrix} \kappa(e_\mu) & 0 \\ 0 & \kappa(e_{\mu^{-1}}) \end{pmatrix}$ and $\kappa_2(\tau_0+\tau_1+1) = \begin{pmatrix} \kappa(1-e_\mu) & z \kappa(\tau_0) \\ z \kappa(\tau_1) & \kappa(1-e_{\mu^{-1}}) \end{pmatrix}$, we have
\begin{equation}\label{eq: e1x+y+1} \kappa_2( e_\triv (\tau_0+\tau_1+1)) = \begin{pmatrix} 0 & z \kappa(e_\mu \tau_0) \\ z \kappa(e_{\mu^{-1}}\tau_1) & 0 \end{pmatrix}.\end{equation}
Adding together $(\ref{eq: exyeyx})$ and $(\ref{eq: e1x+y+1})$ finishes the calculation.
\end{proof}

\begin{corollary}\label{cor: kappa2zeta} $\kappa_2(\zeta) = z^2 \begin{pmatrix} \kappa(\tau_0\tau_1) & 0 \\ 0 & \kappa(\tau_1\tau_0) \end{pmatrix}$.\end{corollary}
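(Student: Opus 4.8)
The plan is to deduce the formula directly from Lemma \ref{lem: ezeta}.d and Lemma \ref{lem: k2Xalpha}, together with the fact that the primitive idempotents $e_\lambda$ sum to $1$. First I would use that $\kappa_2 : H \to M_2(H_\zeta)$ is a homomorphism of $k$-algebras, hence in particular $k$-linear. Applying $\kappa_2$ to the identity $\zeta = \sum_{\alpha \in \hatOm} X_\alpha$ of Lemma \ref{lem: ezeta}.d and inserting the computation of $\kappa_2(X_\alpha)$ from Lemma \ref{lem: k2Xalpha} gives
\[\kappa_2(\zeta) = \sum_{\alpha \in \hatOm} \kappa_2(X_\alpha) = z^2 \begin{pmatrix} \sum\limits_{\alpha \in \hatOm} \kappa(e_{\alpha^{-1}\mu}\tau_0\tau_1) & 0 \\ 0 & \sum\limits_{\alpha \in \hatOm} \kappa(e_{\alpha\mu^{-1}}\tau_1\tau_0) \end{pmatrix}.\]

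Next I would observe that $\alpha \mapsto \alpha^{-1}\mu$ and $\alpha \mapsto \alpha\mu^{-1}$ are bijections of the finite group $\hatOm$, so that each diagonal sum of idempotents equals $\sum_{\lambda \in \hatOm} e_\lambda$. Since $\{e_\lambda : \lambda \in \hatOm\}$ is a basis of $k[\Omega]$ consisting of pairwise orthogonal idempotents, one has $\sum_{\lambda \in \hatOm} e_\lambda = 1$; this is a short character-sum computation from (\ref{eq: defelambda}), using that $\Omega$ is cyclic of order $q-1$, which is prime to $p$, and that $q-1 \equiv -1$ in $k$. Finally, pulling the sums inside $\kappa$ by $k$-linearity and multiplicativity, the diagonal entries become $\kappa\bigl((\sum_\alpha e_{\alpha^{-1}\mu})\tau_0\tau_1\bigr) = \kappa(\tau_0\tau_1)$ and $\kappa\bigl((\sum_\alpha e_{\alpha\mu^{-1}})\tau_1\tau_0\bigr) = \kappa(\tau_1\tau_0)$, which is exactly the asserted formula.

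There is no real obstacle here; the only point that needs a word of justification is the identity $\sum_{\lambda \in \hatOm} e_\lambda = 1$, which could equally well just be asserted as standard. The bulk of the work was already carried out in Lemma \ref{lem: k2Xalpha}, and this corollary is just the specialisation of that formula obtained by summing over $\alpha$.
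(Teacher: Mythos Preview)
Your proof is correct and follows exactly the same approach as the paper: sum the formula of Lemma~\ref{lem: k2Xalpha} over all $\alpha$ using $\zeta = \sum_\alpha X_\alpha$ from Lemma~\ref{lem: ezeta}.d, and simplify the diagonal entries via the reindexing $\alpha \mapsto \alpha^{-1}\mu$ together with $\sum_\lambda e_\lambda = 1$. The paper's proof is simply a one-line version of what you wrote.
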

\begin{proof} Note that $\sum\limits_{\alpha \in \hatOm} e_{\alpha^{-1}\mu} = \sum\limits_{\lambda \in \hatOm} e_\lambda = 1$. Now apply Lemma \ref{lem: k2Xalpha} and Lemma \ref{lem: ezeta}.d.
\end{proof}
Now we specialise slightly, and take $R$ to be the central localization $H_\zeta$ of $H$ at $\zeta$. Let $\kappa$ be the composition of $\iota : H \to H$ with the inclusion $H \hookrightarrow H_\zeta$ and set
\[z := -\tau_{\omega_{-1}} \zeta^{-1} \in H_\zeta \qmb{and} \mu := \id^2.\]
\begin{definition}\label{def: bimodB} Let $B := \left(\frac{H_\zeta}{H_\zeta \tau_0} \oplus \frac{H_\zeta}{H_\zeta \tau_1}\right)[\kappa, z, \mu]$, an $(H_\zeta,H)$-bimodule. \end{definition}
We will now calculate the right action of $Z$ on this bimodule.
\begin{proposition}\label{prop: epstwist} \hsp\begin{itemize}
\item[a)] We have $v \zeta = \zeta^{-1} v$ for all $v \in B$.
\item[b)] Let $\mathbf{1} := (\overline{1},\overline{1})\in B$. Then $\mathbf{1}   \eps_\alpha = \eps_{\alpha^{-1} \mu} \mathbf{1}$ for all $\alpha \in \hatOm$.
\end{itemize}
\end{proposition}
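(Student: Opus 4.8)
The plan is to reduce both assertions to the two explicit diagonal matrices $\kappa_2(\zeta)$ (Corollary~\ref{cor: kappa2zeta}) and $\kappa_2(X_\alpha)$ (Lemma~\ref{lem: k2Xalpha}), together with two elementary preliminary observations. First I would record that $z^2 = \zeta^{-2}$: the element $\omega_{-1}\in\Omega$ has order dividing $2$, so $\tau_{\omega_{-1}}^2 = \tau_{\omega_{-1}^2} = 1$ in $H$ by the ring embedding $k[\Omega]\hookrightarrow H$ of Lemma~\ref{lem: Hpres}.b, and $\zeta$ is central, whence $z^2 = \tau_{\omega_{-1}}^2\,\zeta^{-2} = \zeta^{-2}$. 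Second, since $\kappa = \iota$ fixes $k[\Omega]$ pointwise and $\iota(\tau_i) = -e_\triv - \tau_i$, the two expressions for $\zeta$ in Lemma~\ref{lem: zeta} give
\[ \kappa(\tau_0\tau_1) = (\tau_0+e_\triv)(\tau_1+e_\triv) = \zeta - \tau_1\tau_0, \qquad \kappa(\tau_1\tau_0) = (\tau_1+e_\triv)(\tau_0+e_\triv) = \zeta - \tau_0\tau_1, \]
and hence, using that $\iota$ is $k[\Omega]$-linear and $\zeta$ central, $\kappa(e_\lambda\tau_0\tau_1) = \zeta e_\lambda - e_\lambda\tau_1\tau_0$ and $\kappa(e_\lambda\tau_1\tau_0) = \zeta e_\lambda - e_\lambda\tau_0\tau_1$ for every $\lambda\in\hatOm$.

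For part a) I would compute the right action of $\zeta\in H$ on $v = (\overline{r_1},\overline{r_2})\in B$ directly: it is given by the diagonal matrix $\kappa_2(\zeta) = \mathrm{diag}(\zeta^{-2}\kappa(\tau_0\tau_1),\,\zeta^{-2}\kappa(\tau_1\tau_0))$, so $v\zeta = (\overline{r_1\zeta^{-2}\kappa(\tau_0\tau_1)},\,\overline{r_2\zeta^{-2}\kappa(\tau_1\tau_0)})$. By the preliminary identities and centrality of $\zeta$, the first coordinate equals $\overline{\zeta^{-1}r_1 - r_1\zeta^{-2}\tau_1\tau_0} = \overline{\zeta^{-1}r_1}$, since $r_1\zeta^{-2}\tau_1\tau_0\in H_\zeta\tau_0$, and the second is symmetric; thus $v\zeta = \zeta^{-1}v$.

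For part b), part a) shows that $\zeta$ acts bijectively on $B$ from the right, so the right $H$-action extends uniquely to a right $H_\zeta$-action, in which right multiplication by $\zeta^{-1}$ coincides with left multiplication by $\zeta$. Since $\eps_\alpha = X_\alpha\zeta^{-1}$ in $Z_\zeta\subseteq H_\zeta$ by Lemma~\ref{lem: epsilonlambda}, this gives $\mathbf{1}\eps_\alpha = \zeta\cdot(\mathbf{1}\cdot\kappa_2(X_\alpha))$. By Lemma~\ref{lem: k2Xalpha} and $z^2 = \zeta^{-2}$, the matrix $\kappa_2(X_\alpha)$ is diagonal with entries $\zeta^{-2}\kappa(e_{\alpha^{-1}\mu}\tau_0\tau_1)$ and $\zeta^{-2}\kappa(e_{\alpha\mu^{-1}}\tau_1\tau_0)$; substituting the preliminary identities and reducing modulo $H_\zeta\tau_0$, resp. $H_\zeta\tau_1$, yields $\mathbf{1}\cdot\kappa_2(X_\alpha) = (\overline{\zeta^{-1}e_{\alpha^{-1}\mu}},\,\overline{\zeta^{-1}e_{\alpha\mu^{-1}}})$ and hence $\mathbf{1}\eps_\alpha = (\overline{e_{\alpha^{-1}\mu}},\,\overline{e_{\alpha\mu^{-1}}})$. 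On the other hand $\eps_{\alpha^{-1}\mu}\mathbf{1} = (\overline{X_{\alpha^{-1}\mu}\zeta^{-1}},\,\overline{X_{\alpha^{-1}\mu}\zeta^{-1}})$, so writing $\lambda := \alpha^{-1}\mu$ it remains to check $X_\lambda \equiv \zeta e_\lambda \pmod{H_\zeta\tau_0}$ and $X_\lambda \equiv \zeta e_{\lambda^{-1}} \pmod{H_\zeta\tau_1}$. These are immediate for $\lambda = \triv$ (where $X_\triv = e_\triv\zeta$); for $\lambda\neq\triv$ a short computation from Definition~\ref{def: Xlambda}, the relation $\tau_i e_\mu = e_{\mu^{-1}}\tau_i$ (Lemma~\ref{lem: Hpres}.d) and the orthogonality relations $e_\mu e_\nu = \delta_{\mu\nu}e_\mu$ gives $\zeta e_\lambda = e_\lambda\tau_0\tau_1 + e_\lambda\tau_1\tau_0$, so that $\zeta e_\lambda - X_\lambda = (e_\lambda - e_{\lambda^{-1}})\tau_1\tau_0\in H_\zeta\tau_0$ and likewise $\zeta e_{\lambda^{-1}} - X_\lambda = (e_{\lambda^{-1}} - e_\lambda)\tau_0\tau_1\in H_\zeta\tau_1$. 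This establishes b).

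The hard part is not any single calculation but keeping the bimodule bookkeeping straight. One must carefully separate the left and right $H_\zeta$-module structures on $B$; realise that the right action of a central element of $H_\zeta$ (such as $\zeta^{-1}$ or $\eps_\alpha$) is genuinely \emph{not} the same as its left action --- this discrepancy, governed by the twist $\lambda\mapsto\lambda^{-1}\mu$, being exactly the content of the proposition; and justify that the right $H$-action on the quotient $B$ may be evaluated by lifting a coset, right-multiplying its representative by $\kappa_2(h)$, and reducing. Granting that, parts a) and b) become pure substitution into Corollary~\ref{cor: kappa2zeta} and Lemma~\ref{lem: k2Xalpha}.
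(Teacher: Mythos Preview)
Your proof is correct and follows essentially the same approach as the paper: both arguments reduce part a) to Corollary~\ref{cor: kappa2zeta} via $z^2=\zeta^{-2}$ and $\kappa(\tau_0\tau_1)=\zeta-\tau_1\tau_0$, and part b) to Lemma~\ref{lem: k2Xalpha} together with a case split on whether $\alpha^{-1}\mu$ is trivial. Your organisation of part b) is slightly cleaner: by using the uniform identity $\kappa(e_\lambda\tau_0\tau_1)=\zeta e_\lambda-e_\lambda\tau_1\tau_0$ you avoid the paper's case split in evaluating $\mathbf{1}\cdot\kappa_2(X_\alpha)$, and you reduce both sides of the desired equality to the common expression $(\overline{e_{\alpha^{-1}\mu}},\overline{e_{\alpha\mu^{-1}}})$, whereas the paper first establishes the intermediate identity $\zeta^2\mathbf{1}X_\alpha=X_{\alpha^{-1}\mu}\mathbf{1}$ and then converts.
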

\begin{proof} a) We have $z = -\tau_{\omega_{-1}}\zeta^{-1}$ so $z^2 = \zeta^{-2}$ because $\zeta$ is central and $(\omega_{-1})^2 = \omega_1 = 1$. Note that $\kappa(\tau_0\tau_1) = \iota(\tau_0\tau_1) = (\tau_0 + e_\triv)(\tau_1 + e_\triv) = \zeta - \tau_1\tau_0 \equiv \zeta \mod H\tau_0$, and similarly $\kappa(\tau_1\tau_0) = \zeta - \tau_0\tau_1 \equiv \zeta \mod H\tau_1$ in view of Lemma \ref{lem: zeta}. Writing $v = (\overline{v_1},\overline{v_2})$, we apply Cor. \ref{cor: kappa2zeta}:
\[ (\overline{v_1},\overline{v_2})\zeta = (\overline{v_1},\overline{v_2})\kappa_2(\zeta) = (\overline{v_1},\overline{v_2}) \zeta^{-2} \begin{pmatrix} \zeta - \tau_1\tau_0 & 0 \\ 0 & \zeta - \tau_0\tau_1 \end{pmatrix} = (\overline{v_1\zeta^{-1}}, \overline{v_2\zeta^{-1}}) = \zeta^{-1}(\overline{v_1},\overline{v_2}).\]
b) Recall that $\eps_\alpha = X_\alpha/\zeta$ by Lemma \ref{lem: epsilonlambda}. We will show that
\begin{equation}\label{eq: zeta211Xalpha} \zeta^2 \mathbf{1}   X_\alpha = X_{\alpha^{-1} \mu} \mathbf{1}.\end{equation}
Applying Lemma \ref{lem: k2Xalpha}, we calculate
\begin{equation}\label{eq: 11Xalpha} (\overline{1},\overline{1})\kappa_2(X_\alpha)= (\overline{1},\overline{1})z^2 \begin{pmatrix} \kappa( e_{\alpha^{-1}\mu} \tau_0\tau_1 ) & 0 \\ 0 & \kappa( e_{\alpha\mu^{-1}} \tau_1 \tau_0 ) \end{pmatrix}=\zeta^{-2} \left(\overline{e_{\alpha^{-1}\mu} \iota(\tau_0\tau_1)}, \overline{e_{\alpha\mu^{-1}} \iota(\tau_1\tau_0)}\right).\end{equation}
Next,
\begin{equation}\label{eq: iotaxy} \begin{array}{lll} \iota(\tau_0\tau_1) &=& (-\tau_0-e_\triv)(-\tau_1-e_\triv) = \tau_0\tau_1 + e_\triv(\tau_0 + \tau_1 + 1), \qmb{and}\\\iota(\tau_1\tau_0) &=& (-\tau_1-e_\triv)(-\tau_0-e_\triv) = \tau_1\tau_0 + e_\triv(\tau_0 + \tau_1 + 1).\end{array}\end{equation}
Suppose first that $\alpha \neq \mu$. Then since $e_{\alpha^{-1}\mu}e_\triv = e_{\alpha \mu^{-1}}e_\triv = 0$ by (\ref{eq: cpi}),
\[\mathbf{1}   X_\alpha = (\overline{1},\overline{1})\kappa_2(X_\alpha)=  \zeta^{-2}\left( \overline{e_{\alpha^{-1}\mu} \tau_0\tau_1}, \overline{e_{\alpha\mu^{-1}} \tau_1\tau_0}\right).\]
On the other hand, using Def. \ref{def: Xlambda}, we have the congruences
\[X_{\alpha^{-1} \mu} = e_{\alpha^{-1}\mu} \tau_0\tau_1 + e_{\alpha\mu^{-1}} \tau_1\tau_0 \equiv \left\{\begin{array}{ll} e_{\alpha^{-1}\mu} \tau_0\tau_1 & \mod H\tau_0\\ e_{\alpha\mu^{-1}} \tau_1\tau_0 & \mod H\tau_1. \end{array} \right.\]
Putting these together shows that
\[X_{\alpha^{-1} \mu} \mathbf{1} = \left( \overline{ X_{\alpha^{-1} \mu}}, \overline{X_{\alpha^{-1} \mu} }\right) = \left( \overline{e_{\alpha^{-1}\mu} \tau_0\tau_1}, \overline{e_{\alpha\mu^{-1}} \tau_1\tau_0}\right) = \zeta^2 \mathbf{1} X_\alpha\]
as required. Suppose now that $\alpha = \mu$. Then applying (\ref{eq: 11Xalpha}) and (\ref{eq: iotaxy}) again, we have
\[ \zeta^2\mathbf{1}  X_\mu = (\overline{e_\triv\iota(\tau_0\tau_1)}, \overline{e_\triv\iota(\tau_1\tau_0)}) = (\overline{e_\triv \tau_0\tau_1 + e_\triv \tau_1 + e_\triv}, \overline{e_\triv \tau_1\tau_0 + e_\triv \tau_0 + e_\triv}),\]
whereas since $X_{\alpha^{-1}\mu} = X_\triv = e_\triv \zeta = e_\triv(\tau_0\tau_1 + \tau_1\tau_0 + \tau_0 + \tau_1 + 1)$ in this case, we see that
\[ X_\triv\mathbf{1} = (\overline{X_\triv}, \overline{X_\triv}) = (\overline{e_\triv \tau_0\tau_1 + e_\triv \tau_1 + e_\triv}, \overline{e_\triv \tau_1\tau_0 + e_\triv \tau_0 + e_\triv}).\]
Having shown that (\ref{eq: zeta211Xalpha}) holds, we use part a) to obtain
\[\mathbf{1}   \eps_\alpha = \left(\mathbf{1}   \zeta^{-1} \right)  X_\alpha= \zeta\mathbf{1}   X_\alpha = \zeta^{-1}\left(\zeta^2 \mathbf{1}   X_\alpha\right) = \zeta^{-1} X_{\alpha^{-1}\mu} \mathbf{1} = \eps_{\alpha^{-1}\mu} \mathbf{1}. \qedhere \]
\end{proof}

\subsubsection{The annihilator of $B$ in $Z_\zeta \otimes Z_\zeta$}
Recall from Cor. \ref{cor: Zzeta} that $Z_\zeta$ is isomorphic to $k[\Omega][t,t^{-1}]$. Noting that $k[\Omega]$ is isomorphic to the direct product of $|\hatOm| = q-1$ copies of the ground field $k$, the following statement is clear:

\begin{lemma}  There is a unique $k$-algebra automorphism $\phi : Z_\zeta \to Z_\zeta$ such that
\[ \phi(\zeta) = \zeta^{-1} \qmb{and} \phi(e_\alpha) = e_{\mu/\alpha} \qmb{for all} \alpha \in \hatOm.\]
\end{lemma}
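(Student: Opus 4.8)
The plan is to transport the assertion along the ring isomorphism $\mathring{\theta}^\sharp : Z_\zeta \xrightarrow{\;\cong\;} k[\Omega][t,t^{-1}]$ of Cor.\ \ref{cor: Zzeta}, under which $\zeta$ maps to $t$ and, by Def.\ \ref{def: epslambda}, $\eps_\alpha$ maps to $e_\alpha$ for every $\alpha\in\hatOm$. Thus it is enough to produce a \emph{unique} $k$-algebra automorphism $\psi$ of $k[\Omega][t,t^{-1}]$ with $\psi(t)=t^{-1}$ and $\psi(e_\alpha)=e_{\mu/\alpha}$ for all $\alpha$, and then set $\phi:=(\mathring{\theta}^\sharp)^{-1}\circ\psi\circ\mathring{\theta}^\sharp$.

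To construct $\psi$ I would write $k[\Omega][t,t^{-1}]=k[\Omega]\otimes_k k[t,t^{-1}]$ and take $\psi=\psi_\Omega\otimes\psi_t$, where $\psi_t$ is the $k$-algebra automorphism of $k[t,t^{-1}]$ determined by $t\mapsto t^{-1}$ (it is its own inverse), and $\psi_\Omega$ is built from the fact that $k[\Omega]$ is a product of $q-1$ copies of $k$ with the $e_\alpha$ the coordinate idempotents (by (\ref{eq: defelambda}) and (\ref{eq: cpi})): since $\alpha\mapsto\mu/\alpha=\mu\alpha^{-1}$ is a bijection of the finite group $\hatOm$ --- indeed an involution, as $\mu/(\mu/\alpha)=\alpha$ --- permuting the $q-1$ factors of $k[\Omega]$ along it gives a $k$-algebra automorphism $\psi_\Omega$ of $k[\Omega]$ with $\psi_\Omega(e_\alpha)=e_{\mu/\alpha}$, again self-inverse. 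Then $\psi$ has the prescribed effect on $t$ and on each $e_\alpha$, and since $\psi^2$ is the identity, $\psi$ --- hence $\phi$ --- is an automorphism; this is also consistent with the decomposition $Z_\zeta=\bigoplus_{\alpha}\eps_\alpha k[\zeta,\zeta^{-1}]$ of Cor.\ \ref{cor: ZzetaDecomp}.

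Uniqueness is immediate: $k[\Omega][t,t^{-1}]$ is generated as a $k$-algebra by $\{e_\alpha:\alpha\in\hatOm\}\cup\{t,t^{-1}\}$, so a $k$-algebra endomorphism is determined by its values there, and prescribing $\psi(t)=t^{-1}$ already forces $\psi(t^{-1})=\psi(t)^{-1}=t$; transporting back through $\mathring{\theta}^\sharp$ then gives uniqueness of $\phi$. I do not expect a genuine obstacle here: the whole content is that $k[\Omega]$ is a product of copies of $k$, so that permuting its primitive idempotents really is an automorphism, and that the two prescriptions concern independent tensor factors and so cannot conflict; the one point meriting a line of care is that $\alpha\mapsto\mu/\alpha$ is a self-inverse bijection of $\hatOm$, which is clear since $\hatOm$ is a group.
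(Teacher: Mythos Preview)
Your proof is correct and follows exactly the approach the paper intends: the paper simply prefaces the lemma with ``Recall from Cor.\ \ref{cor: Zzeta} that $Z_\zeta$ is isomorphic to $k[\Omega][t,t^{-1}]$. Noting that $k[\Omega]$ is isomorphic to the direct product of $|\hatOm| = q-1$ copies of the ground field $k$, the following statement is clear,'' and gives no further argument. You have spelled out precisely this transport along $\mathring{\theta}^\sharp$, including the observation that $\alpha\mapsto\mu/\alpha$ is a self-inverse bijection of $\hatOm$ and hence induces a permutation automorphism of the product $k[\Omega]\cong k^{q-1}$.
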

Note that this $\phi$ is in fact an involution.

\begin{proposition}\label{lem: TwistedSubBimodule} The map $\psi : (H_\zeta)_\phi \to B$ given by $\psi(h) = h \mathbf{1}$ for all $h \in H_\zeta$ is an injective homomorphism of $(H_\zeta,Z_\zeta)$-bimodules.
\end{proposition}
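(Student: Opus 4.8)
The plan is as follows. Since the left $H_\zeta$-action on $B=\frac{H_\zeta}{H_\zeta\tau_0}\oplus\frac{H_\zeta}{H_\zeta\tau_1}$ is component-wise left multiplication, we have $\psi(h)=h\mathbf 1=(\overline h,\overline h)$ for every $h\in H_\zeta$; in particular $\psi$ is well defined, $k$-linear, and satisfies $\psi(gh)=gh\mathbf 1=g\,\psi(h)$, so it is left $H_\zeta$-linear. Moreover $\psi(h)=0$ if and only if $h\in H_\zeta\tau_0\cap H_\zeta\tau_1$. Thus only right $Z_\zeta$-linearity and injectivity require real work.

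For right $Z_\zeta$-linearity, recall that the right $Z_\zeta$-action on $(H_\zeta)_\phi$ is $h\cdot s=h\,\phi(s)$, while the right $H$-action on $B$ restricts to a right $Z$-action which extends uniquely to a right $Z_\zeta$-action because $\zeta$ acts invertibly on the right of $B$: indeed $v\zeta=\zeta^{-1}v$ on all of $B$ by Proposition~\ref{prop: epstwist}(a), so right multiplication by $\zeta$ equals left multiplication by the unit $\zeta^{-1}$, and this extension is compatible with the left $H_\zeta$-action. The set $L:=\{s\in Z_\zeta:\psi(h\cdot s)=\psi(h)\cdot s\text{ for all }h\in H_\zeta\}$ is a $k$-subalgebra of $Z_\zeta$ (closure under sums is $k$-linearity of $\psi$; closure under products follows from the right-module axioms on source and target). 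By Corollary~\ref{cor: ZzetaDecomp} the $k$-algebra $Z_\zeta$ is generated by $\zeta,\zeta^{-1}$ together with the idempotents $\{\eps_\lambda:\lambda\in\hatOm\}$, so it suffices to check these lie in $L$. Since $\phi(\zeta)=\zeta^{-1}$, Proposition~\ref{prop: epstwist}(a) gives $\psi(h\cdot\zeta)=h\zeta^{-1}\mathbf 1=h(\mathbf 1\zeta)=\psi(h)\cdot\zeta$, and symmetrically for $\zeta^{-1}$. For $\eps_\alpha$, note that under the isomorphism $\mathring{\theta}^\sharp$ the automorphism $\phi$ corresponds to $e_\alpha\mapsto e_{\mu/\alpha}$ while $\eps_\lambda$ corresponds to $e_\lambda$, so $\phi(\eps_\alpha)=\eps_{\mu/\alpha}=\eps_{\alpha^{-1}\mu}$; then Proposition~\ref{prop: epstwist}(b) gives $\psi(h\cdot\eps_\alpha)=h\eps_{\alpha^{-1}\mu}\mathbf 1=h(\mathbf 1\eps_\alpha)=\psi(h)\cdot\eps_\alpha$.

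It remains to prove $H_\zeta\tau_0\cap H_\zeta\tau_1=0$. As $\zeta$ is a central non-zero-divisor of $H$ and localization is exact, $H_\zeta\tau_i=(H\tau_i)_\zeta$ and $H_\zeta\tau_0\cap H_\zeta\tau_1=(H\tau_0\cap H\tau_1)_\zeta$, so it is enough to show $H\tau_0\cap H\tau_1=0$ in $H$. Here I would work with the Iwahori--Matsumoto basis $\{\tau_w:w\in\widetilde W\}$ of Lemma~\ref{lem: Hpres}(a), together with the fact that every $w\in\widetilde W$ has a unique expression $\tau_w=\tau_\omega\,\tau_{s_{i_1}}\cdots\tau_{s_{i_\ell}}$ with $\omega\in\Omega$ and $s_{i_1}\cdots s_{i_\ell}$ the unique reduced word of the image of $w$ in the infinite dihedral group $W$. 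Using the quadratic relations $\tau_i^2=-\tau_i e_\triv$ (Lemma~\ref{lem: Hpres}(e)) and the centrality of $e_\triv$ in $H$ (which follows from Lemma~\ref{lem: Hpres}(d)), one computes that $H\tau_0$ is the $k$-span of those $\tau_w$ whose reduced word ends in $s_0$, and likewise that $H\tau_1$ is the $k$-span of those whose reduced word ends in $s_1$; since these index sets are disjoint (and both avoid the length-zero elements $\tau_\omega$), the intersection is $0$. The one genuinely delicate point is this last computation — carefully keeping track of the $\Omega$-component when rewriting $\tau_\omega\tau_w\tau_i$ in terms of the basis — and it is the main obstacle; alternatively one may simply quote the standard basis of the "parabolic" $H$-module $H/H\tau_i$, namely the images of those $\tau_w$ whose reduced word does not end in $s_i$, from which the same conclusion is immediate.
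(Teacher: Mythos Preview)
Your proposal is correct and follows essentially the same approach as the paper: left $H_\zeta$-linearity is immediate, right $Z_\zeta$-linearity is verified on the generators $\zeta^{\pm 1}$ and $\eps_\alpha$ of $Z_\zeta$ using Proposition~\ref{prop: epstwist}, and injectivity reduces to $H\tau_0\cap H\tau_1=0$, which is exactly the paper's Lemma~\ref{lem: HxHy} (proved via the $k[\Omega]$-basis of reduced words in $\tau_0,\tau_1$). The only cosmetic differences are that you make the extension of the right $Z$-action on $B$ to a $Z_\zeta$-action explicit, and you reprove Lemma~\ref{lem: HxHy} inline rather than quoting it.
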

\begin{proof} The map $\psi$ is clearly left $H_\zeta$-linear. Using Prop. \ref{prop: epstwist}, we have
\[ \psi(h) \zeta = h \mathbf{1} \zeta = h \zeta^{-1} \mathbf{1} = \psi( h\phi(\zeta) ) \qmb{and} \psi(h) \eps_\alpha = h \mathbf{1} \eps_\alpha = h \eps_{\mu/\alpha} \mathbf{1} = \psi(h \phi(\eps_\alpha))\]
for any $\alpha \in \hatOm$. After Cor. \ref{cor: Zzeta}, we know that $Z_\zeta$ is generated as a $k$-algebra by the $\eps_\alpha$'s, $\zeta$ and $\zeta^{-1}$. It follows that $\psi$ is right $Z_\zeta$-linear.

We have $\ker \psi = H_\zeta \tau_0 \cap H_\zeta \tau_1$ by definition of $B$. This is equal to $(H \tau_0 \cap H \tau_1)_\zeta$. But $H \tau_0 \cap H \tau_1 = 0$ by Lemma \ref{lem: HxHy} below.
\end{proof}
\begin{lemma}\label{lem: HxHy} The sum of left ideals $H\tau_0 + H\tau_1$ is direct.\end{lemma}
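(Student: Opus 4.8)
The plan is to compare the two left ideals $H\tau_0$ and $H\tau_1$ against the standard $k$-basis $\{\tau_w : w\in\widetilde W\}$ of $H$ from Lemma \ref{lem: Hpres}(a), using the right-descent combinatorics of $\widetilde W$. Write $\ell$ for the length function on $\widetilde W$ and $p\colon\widetilde W\twoheadrightarrow W$ for the projection with kernel $\Omega$; recall from \cite{Vig} that $\ell$ is inflated from the Coxeter length of the infinite dihedral group $W$ along $p$ (so $\ell$ is constant on $\Omega$-cosets), and that one has the braid relations $\tau_v\tau_w=\tau_{vw}$ whenever $\ell(v)+\ell(w)=\ell(vw)$. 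For $i\in\{0,1\}$ put $D_i:=\{w\in\widetilde W:\ell(ws_i)<\ell(w)\}$.

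The first step is to show that $H\tau_i\subseteq\bigoplus_{w\in D_i}k\tau_w$. Since $H\tau_i$ is spanned by the products $\tau_w\tau_i$, it is enough to treat each such product. If $\ell(ws_i)>\ell(w)$ the braid relation gives $\tau_w\tau_i=\tau_{ws_i}$, and $ws_i\in D_i$ because $\ell(ws_i\,s_i)=\ell(w)<\ell(ws_i)$ (here $s_i^2\in\Omega$, so the left-hand side equals $\ell(w)$). If instead $\ell(ws_i)<\ell(w)$, write $\tau_w=\tau_{w'}\tau_i$ with $w'=ws_i^{-1}$ and $\ell(w')=\ell(w)-1$, and use the quadratic relation $\tau_i^2=-\tau_i e_\triv$ of Lemma \ref{lem: Hpres}(e) together with $e_\triv=-\sum_{\omega\in\Omega}\omega$ to obtain
\[ \tau_w\tau_i \;=\; \tau_{w'}\tau_i^2 \;=\; -\tau_{w'}\tau_i e_\triv \;=\; -\tau_w e_\triv \;=\; \sum_{\omega\in\Omega}\tau_{w\omega}, \]
and each $w\omega$ lies in $D_i$ since $\ell(w\omega s_i)=\ell(ws_i)<\ell(w)=\ell(w\omega)$. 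This proves the claimed inclusion.

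The second step is the elementary observation that $D_0\cap D_1=\emptyset$: an element of $D_i$ has image under $p$ with a right descent at $s_i$ in $W$, and in the infinite dihedral group $W$ every non-identity element has a unique reduced word — necessarily alternating in $s_0,s_1$ — hence a right descent set of size exactly one, while the identity has none. Combining the two steps,
\[ H\tau_0\cap H\tau_1 \;\subseteq\; \Bigl(\bigoplus_{w\in D_0}k\tau_w\Bigr)\cap\Bigl(\bigoplus_{w\in D_1}k\tau_w\Bigr) \;=\; \bigoplus_{w\in D_0\cap D_1}k\tau_w \;=\; 0, \]
which is precisely the assertion that $H\tau_0+H\tau_1$ is direct. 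The only point requiring genuine care is the first step: since $\widetilde W$ is merely an extension of the Coxeter group $W$ by $\Omega$ (for instance $s_i^2$ is a nontrivial element of $\Omega$ rather than the identity), one must keep track of $\Omega$-cosets throughout and use that $\ell$ is constant on them; granting this, the rest is routine bookkeeping with the basis $\{\tau_w\}$, the two relations of Lemma \ref{lem: Hpres}, and the descent combinatorics of the infinite dihedral group.
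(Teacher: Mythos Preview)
Your argument is correct and follows essentially the same approach as the paper: both partition the standard basis of $H$ according to the last letter of the reduced word in $s_0,s_1$ (equivalently, the right descent in the infinite dihedral group $W$) and observe that $H\tau_i$ lies in the span of basis vectors with last letter $s_i$. The paper phrases this more tersely using the $k[\Omega]$-basis of alternating words in $\tau_0,\tau_1$, while you work with the full $k$-basis $\{\tau_w\}$ and make the containment $H\tau_i\subseteq\bigoplus_{w\in D_i}k\tau_w$ explicit via the braid and quadratic relations; the underlying idea is the same.
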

\begin{proof} Recall that $\{\tau_w : w \in \widetilde{W}\}$  forms a basis for $H$ as a $k$-vector space. Now, $\widetilde{W}$ contains $\Omega$ as a normal subgroup and the images of $s_0$ and $s_1$ in $\widetilde{W} / \Omega$ generate a copy of the infinite dihedral group. It follows that every element of $\widetilde{W}$ has a unique representation in the form $\omega w$ where $\omega \in \Omega$ and $w$ is a word in $s_0$ and $s_1$. Therefore the words in $\tau_0$ and $\tau_1$ form a basis for $H$ as a $k[\Omega]$-module. Every non-empty such word ends in exactly one of either $\tau_0$ or $\tau_1$, which gives the result.\end{proof}
Now we study $\coker \psi$.
\begin{lemma}\label{lem: HmodF1H} The left $H$-module $H / (H\tau_0 + H\tau_1)$ is killed by $\zeta(\zeta - 1)$.
\end{lemma}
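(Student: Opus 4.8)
The plan is to reduce everything to a single computation with the image of $1$, using one of the two factorisations of $\zeta$ recorded in Lemma \ref{lem: zeta}. Write $M := H/(H\tau_0 + H\tau_1)$ and let $v$ denote the image of $1 \in H$ in $M$; since $H\tau_0 + H\tau_1$ is a left ideal we have $M = Hv$. As $\tau_0 = 1\cdot\tau_0$ and $\tau_1 = 1\cdot\tau_1$ both lie in $H\tau_0+H\tau_1$, the key starting observation is that $\tau_0 v = \tau_1 v = 0$ in $M$. Because $\zeta(\zeta-1)$ is central in $H$ by Lemma \ref{lem: zeta} and $M$ is generated by $v$, it suffices to prove that $\zeta(\zeta-1)v = 0$; so the whole argument comes down to evaluating $\zeta v$.

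For this I would apply $\zeta = (\tau_1 + e_\triv)(\tau_0 + e_\triv) + \tau_0\tau_1$ to $v$. The term $\tau_0\tau_1 v$ is $\tau_0(\tau_1 v) = 0$; also $(\tau_0 + e_\triv)v = e_\triv v$ since $\tau_0 v = 0$. Using $\tau_1 e_\triv = e_{\triv^{-1}}\tau_1 = e_\triv\tau_1$ (Lemma \ref{lem: Hpres}.d, as $\triv^{-1}=\triv$) and $e_\triv^2 = e_\triv$ (from (\ref{eq: cpi})) one then gets
\[ \zeta v \;=\; (\tau_1+e_\triv)(e_\triv v) \;=\; e_\triv\tau_1 v + e_\triv v \;=\; e_\triv v. \]
Now invoke the centrality of $\zeta$ once more: $\zeta^2 v = \zeta(e_\triv v) = e_\triv(\zeta v) = e_\triv^2 v = e_\triv v = \zeta v$, so $(\zeta^2-\zeta)v = 0$. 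Finally, for any $h\in H$, $\zeta(\zeta-1)\cdot hv = h\cdot\zeta(\zeta-1)v = 0$, whence $\zeta(\zeta-1)$ annihilates $M = Hv$.

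There is no genuine obstacle here; the only thing to be careful about is the bookkeeping with the quadratic relations and the twist $\tau_i e_\lambda = e_{\lambda^{-1}}\tau_i$, and the fact that $\triv$ is its own inverse makes $e_\triv$ commute with $\tau_0,\tau_1$, which is exactly what makes the computation close. As an alternative one could first identify $M\cong k[\Omega]$ as a left $k[\Omega]$-module (using the basis of reduced words from the proof of Lemma \ref{lem: HxHy}, so that the non-empty words span $H\tau_0\oplus H\tau_1$) and observe that $\zeta$ acts on $k[\Omega]v$ as multiplication by the idempotent $e_\triv$; but the direct computation above avoids needing this structural description.
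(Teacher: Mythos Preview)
Your proof is correct and follows essentially the same approach as the paper: both arguments compute that $\zeta \equiv e_\triv$ modulo $H\tau_0 + H\tau_1$ and then use the idempotency of $e_\triv$ together with centrality of $\zeta$ to conclude. The only cosmetic difference is that the paper first invokes the decomposition $H = k[\Omega] \oplus (H\tau_0 + H\tau_1)$ to reduce to the elements $e_\lambda$, whereas you reduce more directly via cyclicity of $M$ and centrality of $\zeta(\zeta-1)$; your route is marginally cleaner in that it avoids appealing to that basis description.
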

\begin{proof} The fact that $\{\tau_w : w \in \widetilde{W}\}$ spans $H$ implies that $H = k[\Omega] \oplus (H\tau_0 + H\tau_1)$ as a $k$-vector space. Therefore it is enough to show that $\zeta(\zeta-1)e_\lambda \in H\tau_0 + H\tau_1$ for all $\lambda \in \hatOm$. However $\zeta = \tau_0\tau_1 + \tau_1\tau_0 + e_\triv \tau_1 + e_\triv \tau_0 + e_\triv \equiv e_\triv \mod H\tau_0 + H\tau_1$, so already $\zeta e_\lambda \equiv 0 \mod H\tau_0 + H \tau_1$ if $\lambda \neq \triv$, whereas if $\lambda = \triv$ then $\zeta(\zeta-1) e_\triv =\zeta(\zeta e_\triv - e_\triv) \equiv \zeta( e_\triv^2 - e_\triv) = 0 \mod H\tau_0 + H\tau_1$.
\end{proof}

\begin{corollary}\label{cor: uv} There exist elements $u,v \in H$ such that $\zeta(\zeta - 1) = u\tau_0 + v\tau_1$.\end{corollary}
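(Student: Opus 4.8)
The plan is to read this off directly from Lemma \ref{lem: HmodF1H}. Since $\zeta$ lies in $Z = Z(H)$, the element $\zeta(\zeta-1)$ is central in $H$, so left multiplication by it is an endomorphism of the left $H$-module $H/(H\tau_0 + H\tau_1)$. Lemma \ref{lem: HmodF1H} says this endomorphism is zero; in particular it annihilates the class of $1 \in H$. Unwinding this single instance gives $\zeta(\zeta-1) = \zeta(\zeta-1)\cdot 1 \in H\tau_0 + H\tau_1$, which is exactly the assertion that there are $u,v \in H$ with $\zeta(\zeta-1) = u\tau_0 + v\tau_1$. So the corollary requires nothing beyond evaluating the lemma on the generator $1$.

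There is no real obstacle here; all of the content sits in Lemma \ref{lem: HmodF1H}, whose proof uses the basis $\{\tau_w : w \in \widetilde{W}\}$ of $H$ to split $H = k[\Omega] \oplus (H\tau_0 + H\tau_1)$ as a $k$-vector space and then checks, idempotent by idempotent, that $\zeta(\zeta-1)e_\lambda$ lies in $H\tau_0 + H\tau_1$: for $\lambda \neq \triv$ already $\zeta e_\lambda \equiv 0$, while for $\lambda = \triv$ one uses the congruence $\zeta \equiv e_\triv \bmod H\tau_0 + H\tau_1$ (immediate from the formula for $\zeta$ in Lemma \ref{lem: zeta}) together with $e_\triv^2 = e_\triv$. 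If desired one could extract explicit $u,v$ by tracking that computation, but for the sequel — identifying $\coker\psi$ from Prop. \ref{lem: TwistedSubBimodule} and thereby pinning down $\cR$ — only their existence is used.
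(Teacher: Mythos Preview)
Your argument is correct and is exactly the intended one: the paper states Corollary~\ref{cor: uv} without proof because it is immediate from Lemma~\ref{lem: HmodF1H} applied to the generator $1$. There is nothing to add.
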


\begin{proposition}\label{prop: CRT} We have $(\zeta - 1) \cdot \coker \psi = 0$.
\end{proposition}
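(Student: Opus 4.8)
The plan is to identify $\coker\psi$ explicitly as a left $H_\zeta$-module and then read off its annihilator from facts already at hand. The name of the proposition ("CRT") is the hint: the content is that a pair of congruences modulo the left ideals $H_\zeta\tau_0$ and $H_\zeta\tau_1$ can always be solved, the obstruction living in $H_\zeta/(H_\zeta\tau_0 + H_\zeta\tau_1)$, which is $\zeta(\zeta-1)$-torsion by Lemma \ref{lem: HmodF1H}.

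\textbf{Step 1 (identify the cokernel).} Since the left $H_\zeta$-action on $B = \frac{H_\zeta}{H_\zeta\tau_0}\oplus\frac{H_\zeta}{H_\zeta\tau_1}$ is componentwise and $\mathbf 1 = (\overline 1,\overline 1)$, the image of $\psi$ is the diagonal $\Delta := \{(\overline h,\overline h): h\in H_\zeta\}$. I would check that the left $H_\zeta$-linear "difference" map $\delta : B \to H_\zeta/(H_\zeta\tau_0+H_\zeta\tau_1)$, $(\overline a,\overline b)\mapsto \overline{a-b}$, is well defined and surjective, with kernel exactly $\Delta$: if $a-b = x\tau_0 + y\tau_1$ then $h := a - x\tau_0 = b + y\tau_1$ satisfies $\psi(h) = (\overline a,\overline b)$, so $\ker\delta = \Delta = \Im\psi$. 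Hence $\delta$ induces an isomorphism of left $H_\zeta$-modules
\[ \coker\psi \;\xrightarrow{\;\cong\;}\; H_\zeta/(H_\zeta\tau_0 + H_\zeta\tau_1) \;=\; \bigl(H/(H\tau_0+H\tau_1)\bigr)_\zeta . \]

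\textbf{Step 2 (apply the known torsion).} By Lemma \ref{lem: HmodF1H}, $H/(H\tau_0+H\tau_1)$ is killed by $\zeta(\zeta-1)$; localising at the central element $\zeta$, which becomes a unit, shows $\coker\psi$ is killed by $\zeta-1$ for the left $H_\zeta$-action. Concretely, Corollary \ref{cor: uv} gives $\zeta(\zeta-1) = u\tau_0 + v\tau_1$ with $u,v\in H$, hence $\zeta - 1 = \zeta^{-1}u\,\tau_0 + \zeta^{-1}v\,\tau_1 \in H_\zeta\tau_0 + H_\zeta\tau_1$; by centrality of $\zeta$, $(\zeta-1)(a-b) = (a-b)(\zeta-1) \in H_\zeta\tau_0 + H_\zeta\tau_1$ for all $a,b\in H_\zeta$, so $\delta$ annihilates $(\zeta-1)\cdot(\overline a,\overline b)$, which therefore lies in $\ker\delta = \Im\psi$. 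Thus $(\zeta-1)\cdot\coker\psi = 0$ as left $H_\zeta$-modules.

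\textbf{Step 3 (pass to the right action).} To get the statement for the right $Z_\zeta$-module structure on $\coker\psi$ fitting the bimodule set-up of Proposition \ref{lem: TwistedSubBimodule}, I would invoke Proposition \ref{prop: epstwist}.a: the right action of $\zeta$ on $B$ coincides with the left action of $\zeta^{-1}$, so the right action of $\zeta-1$ equals the left action of $\zeta^{-1}-1 = -\zeta^{-1}(\zeta-1)$, which annihilates $\coker\psi$ by Step 2. Either way the conclusion follows. I do not expect a genuine obstacle here: once Corollary \ref{cor: uv}/Lemma \ref{lem: HmodF1H} is in hand the argument is formal. The two points requiring a little care are (i) recognising that $\coker\psi$ is simply $\bigl(H/(H\tau_0+H\tau_1)\bigr)_\zeta$ — this is the "CRT" content — and (ii) keeping the left $H_\zeta$- and right $Z_\zeta$-module structures straight across the $\phi$-twist in $\psi$, for which Proposition \ref{prop: epstwist} is exactly the needed dictionary.
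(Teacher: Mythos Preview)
Your proof is correct and follows essentially the same Chinese-Remainder idea as the paper, using Corollary \ref{cor: uv}/Lemma \ref{lem: HmodF1H} as the key input. The only difference in packaging is that the paper writes down the explicit lift $c = av\tau_1 + bu\tau_0$ satisfying $\psi(c) = \zeta(\zeta-1)(\overline a,\overline b)$, whereas you first identify $\coker\psi \cong \bigl(H/(H\tau_0+H\tau_1)\bigr)_\zeta$ via the difference map and then read off the annihilator; your Step~3 is unnecessary since the paper means and uses the left $\zeta$-action (this is what feeds into Theorem \ref{thm: eauBea} via Corollary \ref{cor: Bk[zeta]tf}).
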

\begin{proof} Let $(\overline{a}, \overline{b}) \in B$ for some $a,b \in H_\zeta$. Define $c:= av\tau_1 + bu\tau_0$, where $u,v \in H$ given by Cor. \ref{cor: uv}. Then
\[ c = a\left(\zeta(\zeta-1) - u\tau_0\right) + bu\tau_0 \equiv a \zeta(\zeta-1) \mod H_\zeta \tau_0\]
and similarly $c \equiv b \zeta(\zeta -1) \mod H_\zeta \tau_1$. Hence $\zeta(\zeta - 1)(\overline{a}, \overline{b}) = (\overline{c}, \overline{c}) = c\mathbf{1} = \psi(c)$, so $\coker \psi$ is killed by $\zeta(\zeta-1)$. Since $\zeta$ is invertible in $H_\zeta$, $\coker \psi$ is already killed by $\zeta - 1$. \end{proof}

\begin{lemma}\label{lem: HHxitf} $H/H \tau_0$ and $H/H\tau_1$ are $k[\zeta]$-torsionfree.
\end{lemma}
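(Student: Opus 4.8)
The plan is to make the length filtration on $H$ visible in the quotients $H/H\tau_i$, and then observe that multiplication by $\zeta$ raises length by $2$ with an invertible leading term, so that no nonzero element of the PID $k[\zeta]$ can annihilate a nonzero element.

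First I would fix the relevant basis. By the proof of Lemma \ref{lem: HxHy}, the reduced (alternating) words $\tau_w$ in $\tau_0,\tau_1$, together with the empty word, form a basis of $H$ as a $k[\Omega]$-module, so $\{e_\lambda\tau_w : \lambda\in\hatOm,\ w \text{ reduced}\}$ is a $k$-basis of $H$; write $\ell(w)$ for the number of letters of $w$. Using the quadratic relations $\tau_i^2=-e_\triv\tau_i$ (Lemma \ref{lem: Hpres}.e) and the commutation rules $\tau_i e_\lambda=e_{\lambda^{-1}}\tau_i$ (Lemma \ref{lem: Hpres}.d), a short direct computation shows that $H\tau_0$ is exactly the $k$-span of those basis vectors $e_\lambda\tau_w$ whose word $w$ ends in $\tau_0$. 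Consequently $M:=H/H\tau_0$ has the $k$-basis $\{\overline{e_\lambda\tau_{w_n}} : \lambda\in\hatOm,\ n\ge 0\}$, where $w_0$ is the empty word and, for $n\ge 1$, $w_n$ is the unique reduced word of length $n$ ending in $\tau_1$. Let $F_mM$ denote the $k$-span of those $\overline{e_\lambda\tau_{w_n}}$ with $n\le m$.

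Next I would compute the action of $\zeta$ on this basis. Expanding $\zeta=(\tau_0+e_\triv)(\tau_1+e_\triv)+\tau_1\tau_0$ (Lemma \ref{lem: zeta}) and applying the commutation rules yields $\zeta e_\lambda = e_\lambda(\tau_0\tau_1+\tau_1\tau_0)+\delta_{\lambda,\triv}\,e_\triv(\tau_0+\tau_1+1)$. Feeding this into $\zeta\cdot\overline{e_\lambda\tau_{w_n}}=\overline{(\zeta e_\lambda)\tau_{w_n}}$, one checks that of the two products $\tau_0\tau_1\tau_{w_n}$ and $\tau_1\tau_0\tau_{w_n}$ precisely one is again a reduced word, namely $\tau_{w_{n+2}}$, while the other product, and (when $\lambda=\triv$) the term $e_\triv(\tau_0+\tau_1+1)\tau_{w_n}$, collapse via $\tau_i^2=-e_\triv\tau_i$ to a combination of $e_\mu\tau_w$ with $\ell(w)\le n+1$. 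Passing to $M$ (which kills every $e_\mu\tau_w$ whose word ends in $\tau_0$) therefore gives
\[ \zeta\cdot\overline{e_\lambda\tau_{w_n}}=\overline{e_\lambda\tau_{w_{n+2}}}+y,\qquad y\in F_{n+1}M. \]
In particular $\zeta\cdot F_mM\subseteq F_{m+2}M$, and by iteration $\zeta^{d}\cdot\overline{e_\lambda\tau_{w_n}}=\overline{e_\lambda\tau_{w_{n+2d}}}+(\text{element of }F_{n+2d-1}M)$ for every $d\ge 1$.

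Finally comes the triangularity argument. Let $f(\zeta)=\sum_{i=0}^d c_i\zeta^i\in k[\zeta]$ with $c_d\ne 0$, and let $0\ne x=\sum_{\lambda,n}c_{\lambda,n}\overline{e_\lambda\tau_{w_n}}\in M$; let $N$ be maximal with $c_{\lambda,N}\ne 0$ for some $\lambda$. By the previous step the only contributions to $f(\zeta)x$ landing in $F_{N+2d}M$ but not $F_{N+2d-1}M$ come from the pair $(n,i)=(N,d)$, so the class of $f(\zeta)x$ in $F_{N+2d}M/F_{N+2d-1}M$ is $c_d\sum_\lambda c_{\lambda,N}\,\overline{e_\lambda\tau_{w_{N+2d}}}$, which is nonzero since $c_d\ne 0$ and the vectors $\overline{e_\lambda\tau_{w_{N+2d}}}$ are $k$-linearly independent. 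Hence $f(\zeta)x\ne 0$: every nonzero element of $k[\zeta]$ acts injectively on $M$, i.e. $H/H\tau_0$ is $k[\zeta]$-torsionfree. The case of $H/H\tau_1$ is identical after interchanging $\tau_0$ and $\tau_1$ and using the symmetric expression $\zeta=(\tau_1+e_\triv)(\tau_0+e_\triv)+\tau_0\tau_1$ of Lemma \ref{lem: zeta}. The only delicate point is the leading-term computation in the third paragraph: correctly threading the idempotents $e_\lambda$ through the products $\tau_i\tau_j\tau_{w_n}$, verifying that the top-length term is exactly $\overline{e_\lambda\tau_{w_{n+2}}}$ (coefficient $1$, in particular not absorbed into $H\tau_0$), and bounding the extra low-length terms of $\zeta e_\triv$ separately; the rest is formal.
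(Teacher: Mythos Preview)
Your argument is correct. It is, however, a genuinely different route from the paper's. The paper invokes the structural result \cite{OS18} Cor.~3.4 that $H$ is free of rank $2$ over $H_0\otimes_k k[\zeta]$ with basis $\{1,\tau_1\}$ (where $H_0=k[\Omega][\tau_0]$); from this one reads off a decomposition $H=A\oplus A\tau_0\oplus A\tau_1\oplus A\tau_0\tau_1$ with $A=k[\Omega][\zeta]$, identifies $H\tau_1=A\tau_1\oplus A\tau_0\tau_1$, and concludes that $H/H\tau_1\cong A\oplus A\tau_0$ is actually \emph{free} over $k[\zeta]$. Your approach avoids this black box and stays entirely inside the word combinatorics already set up in Lemma~\ref{lem: HxHy}: you filter $H/H\tau_0$ by word length and show that $\zeta$ acts as a unit on the associated graded (shifting degree by $2$), which is enough for torsion-freeness. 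The trade-off is that the paper gets freeness essentially for free once \cite{OS18} is granted, while you do a hands-on leading-term computation but need nothing beyond the relations in Lemma~\ref{lem: Hpres}; your method is thus more self-contained, and it also makes transparent why $\zeta$ behaves like a degree-$2$ variable on these quotients.
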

\begin{proof} Let $H_0$ be the $k$-subalgebra of $H$ generated by $k[\Omega]$ and $\tau_0$. We have
\[ H_0 = k[\Omega] \oplus k[\Omega] \tau_0 = k[\Omega] \oplus \tau_0 k[\Omega].\]
Next, by \cite{OS18} Cor. 3.4, $H$ is free of rank $2$ as a $H_0 \otimes_k k[\zeta]$-module, with basis $\{1, \tau_1\}$. If $H_0[\zeta]$ denotes the subalgebra of $H$ generated by $H_0$ and $\zeta$, then this means that the multiplication map $H_0 \otimes_k k[\zeta] \to H_0[\zeta]$ is a $k$-algebra isomorphism, and also that $H$ is free of rank $2$ as a left $H_0[\zeta]$-module, with basis $\{1, \tau_1\}$ :
\[ H = H_0[\zeta] \oplus H_0[\zeta] \tau_1.\]
Let $A := k[\Omega][\zeta] \subset H_0[\zeta]$; clearly $A$ is isomorphic as a $k$-algebra to the polynomial ring in one variable over $k[\Omega]$. Combining the two displayed equations above, we obtain the decomposition
\[ H = A \oplus A \tau_0 \oplus A \tau_1 \oplus A \tau_0\tau_1\]
of $H$ as a left $A$-module. Now $\tau_1^2 = - e_\triv \tau_1$ by Lemma \ref{lem: Hpres}, and $e_\triv$ is central in $H$. This implies that right-multiplication by $\tau_1$ sends $H$ into $A \tau_1 \oplus A \tau_0 \tau_1$. Since this $A$-module is clearly contained in $H \tau_1$, we conclude that $H \tau_1 = A \tau_1 \oplus A \tau_0 \tau_1$, and therefore
\[ H / H \tau_1 \cong A \oplus A \tau_0\]
as a left $A$-module. Since $A$ is itself free as a $k[\zeta]$-module, we see that $H / H \tau_1$ is free of rank $2(q-1)$ as a $k[\zeta]$-module. It is in particular $k[\zeta]$-torsionfree.

The argument for $H / H \tau_0$ is the same, after switching $\tau_0$ with $\tau_1$.
\end{proof}

\begin{corollary}\label{cor: Bk[zeta]tf} The left $H$-module $B$ is $k[\zeta]$-torsionfree. \end{corollary}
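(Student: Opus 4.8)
The plan is to forget the twisted right $H$-action on $B$ altogether and work only with its left $H_\zeta$-module structure, which by Definition \ref{def: bimodB} is simply $\frac{H_\zeta}{H_\zeta\tau_0} \oplus \frac{H_\zeta}{H_\zeta\tau_1}$, where $k[\zeta]$ acts through the chain of inclusions $k[\zeta] \subseteq Z \subseteq H \subseteq H_\zeta$. Since a finite direct sum of $k[\zeta]$-torsionfree modules is again $k[\zeta]$-torsionfree, it is enough to show that each summand $\frac{H_\zeta}{H_\zeta\tau_i}$, for $i = 0,1$, is $k[\zeta]$-torsionfree.

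First I would record that $\zeta$ is a non-zero-divisor in $H$: indeed $H$ is free as a $k[\zeta]$-module (this is exactly the structural input used in the proof of Lemma \ref{lem: HHxitf}, ultimately \cite{OS18} Cor.\ 3.4), so multiplication by the nonzero element $\zeta$ is injective on $H$. Hence $H_\zeta$ is the localisation of $H$ at the central multiplicative set $\{\zeta^n : n \geq 0\}$, localisation at $\zeta$ is exact on left $H$-modules, and therefore $\frac{H_\zeta}{H_\zeta\tau_i} \cong \left(\frac{H}{H\tau_i}\right)_\zeta$ for $i = 0,1$. By Lemma \ref{lem: HHxitf} the module $\frac{H}{H\tau_i}$ is free of finite rank over $k[\zeta]$, so its localisation at $\zeta$ is free of finite rank over $k[\zeta,\zeta^{-1}]$. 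Finally $k[\zeta,\zeta^{-1}]$ is $k[\zeta]$-torsionfree, being a subring of the field $\operatorname{Frac}(k[\zeta]) = k(\zeta)$, and hence so is every free $k[\zeta,\zeta^{-1}]$-module; this yields that $\frac{H_\zeta}{H_\zeta\tau_i}$ is $k[\zeta]$-torsionfree, and the argument is complete.

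I do not expect a genuine obstacle here. The only point deserving care is to make sure the correct module structure is used: the asserted torsionfreeness is for the \emph{left} $H$-action (the obvious one on $B$), not for the twisted right action of $H$ through $\kappa_2$ --- under the latter $\zeta$ acts as the unit $\zeta^{-1}$ by Prop.\ \ref{prop: epstwist}(a), so the statement would be vacuous. With that distinction in place, everything reduces to Lemma \ref{lem: HHxitf} and the elementary fact that inverting $\zeta$ in a $k[\zeta]$-free module preserves $k[\zeta]$-torsionfreeness.
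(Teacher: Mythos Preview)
Your argument is correct and follows essentially the same route as the paper: forget the twisted right action, identify $B$ as a left module with $(H/H\tau_0 \oplus H/H\tau_1)_\zeta$, invoke Lemma~\ref{lem: HHxitf}, and note that localising a $k[\zeta]$-torsionfree module at $\zeta$ stays torsionfree. The paper's proof is just the one-line version of this; your extra care about which action is in play and why $\zeta$ is a non-zero-divisor is welcome but not a different method.
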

\begin{proof} As a left $k[\zeta]$-module, $B$ is isomorphic to $k[\zeta, \zeta^{-1}] \otimes_{k[\zeta]} \left(\frac{H}{H \tau_0}\oplus \frac{H}{ H \tau_1}\right)$. The expression in the brackets is $k[\zeta]$-torsionfree by Lemma \ref{lem: HHxitf}, and the result follows easily.
\end{proof}

\begin{theorem}\label{thm: eauBea}Let $R = Z_\zeta \otimes Z_\zeta$. Then we have $\Ann_R(B) = (1 \otimes \phi)(\ker \mult_{Z_\zeta})$.\end{theorem}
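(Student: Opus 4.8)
The plan is to compare $B$ with the $\phi$-twisted bimodule $(H_\zeta)_\phi$, whose annihilator in $R$ is straightforward to write down, and then to upgrade the comparison using the injection $\psi$ of Proposition \ref{lem: TwistedSubBimodule} together with the fact (Proposition \ref{prop: CRT}) that its cokernel is killed by $\zeta-1$.

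First I would compute $\Ann_R\big((H_\zeta)_\phi\big)$, where $R = Z_\zeta\otimes_k Z_\zeta$ and $(H_\zeta)_\phi$ denotes $H_\zeta$ regarded as a $(Z_\zeta,Z_\zeta)$-bimodule with the left factor acting by multiplication and the right factor acting through $\phi$. Since the centre of $H_\zeta$ is $Z_\zeta$, Lemma \ref{lem: HZbimodann} applied to the $k$-algebra $H_\zeta$ gives $\Ann_{Z_\zeta\otimes_k Z_\zeta}(H_\zeta)=\ker\mult_{Z_\zeta}$ for the untwisted bimodule. Then Lemma \ref{lem: TwistedAnn}, applied with $A = Z_\zeta$, $M = H_\zeta$ and the automorphism $\phi$, yields
\[
  \Ann_R\big((H_\zeta)_\phi\big) = (1\otimes\phi^{-1})(\ker\mult_{Z_\zeta}) = (1\otimes\phi)(\ker\mult_{Z_\zeta}),
\]
the last equality because $\phi$ is an involution. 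So the theorem is equivalent to the assertion $\Ann_R(B)=\Ann_R\big((H_\zeta)_\phi\big)$, and it remains to prove these two ideals agree.

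The inclusion $\Ann_R(B)\subseteq\Ann_R\big((H_\zeta)_\phi\big)$ is immediate: by Proposition \ref{lem: TwistedSubBimodule} the map $\psi:(H_\zeta)_\phi\to B$ is an injective homomorphism of $(H_\zeta,Z_\zeta)$-bimodules, hence in particular of $R$-modules, so anything annihilating $B$ annihilates the submodule $\psi\big((H_\zeta)_\phi\big)\cong(H_\zeta)_\phi$. For the reverse inclusion, take $x = \sum_i c_i\otimes d_i\in\Ann_R\big((H_\zeta)_\phi\big)=\Ann_R\big(\psi((H_\zeta)_\phi)\big)$ and $b\in B$. By Proposition \ref{prop: CRT} the element $(\zeta-1)b$ (left multiplication by $\zeta-1$) lies in the image of $\psi$, so $x\cdot((\zeta-1)b)=0$; since $\zeta-1$ is central in $H_\zeta$ and commutes with all $c_i,d_i\in Z_\zeta$, we have $x\cdot((\zeta-1)b)=(\zeta-1)(x\cdot b)$, whence $(\zeta-1)(x\cdot b)=0$. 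Because $B$ is $k[\zeta]$-torsionfree by Corollary \ref{cor: Bk[zeta]tf} and $\zeta-1\neq 0$ in $k[\zeta]$, this forces $x\cdot b=0$; as $b$ was arbitrary, $x\in\Ann_R(B)$.

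I do not expect a genuine obstacle here: the argument only recombines facts already established about $\psi$ (injectivity, bimodule-linearity, and the $(\zeta-1)$-torsion cokernel) with the two annihilator lemmas of $\S\ref{sec: GenBimod}$. The only points needing care are bookkeeping ones — that $B$ is genuinely a right $H_\zeta$-module, so that $R = Z_\zeta\otimes_k Z_\zeta$ acts on it (this follows from Proposition \ref{prop: epstwist}(a), which makes right multiplication by $\zeta$ invertible), that the left and right $Z_\zeta$-actions on $B$ commute, and that the $\phi$-twist is applied on the correct side. This last point is immaterial since $\phi$ is an involution, so the answer is unambiguously $(1\otimes\phi)(\ker\mult_{Z_\zeta})$.
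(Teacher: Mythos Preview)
Your proof is correct and follows essentially the same approach as the paper: both use the injection $\psi$ from Proposition \ref{lem: TwistedSubBimodule} for one inclusion, and Proposition \ref{prop: CRT} together with the $k[\zeta]$-torsionfreeness of $B$ (Corollary \ref{cor: Bk[zeta]tf}) for the other, then identify $\Ann_R((H_\zeta)_\phi)$ via Lemmas \ref{lem: TwistedAnn} and \ref{lem: HZbimodann}. The only difference is cosmetic ordering --- you compute the twisted annihilator first rather than last.
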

\begin{proof} Using Prop. \ref{lem: TwistedSubBimodule}, we see that
\[\Ann_R(B) \subseteq \Ann_R( (H_\zeta)_\phi).\]
Suppose that $r \in R$ kills $(H_\zeta)_\phi$; then $r$ kills $\Im(\psi)$. By Prop. \ref{prop: CRT}, $(\zeta - 1)B \subseteq \Im(\psi)$. Hence $r \cdot \left( (\zeta - 1) \otimes 1 \right)$ kills $B$, so $\zeta - 1$ kills $r \cdot B$. But $B$ is $k[\zeta]$-torsionfree by Cor. \ref{cor: Bk[zeta]tf}, so $r \cdot B = 0$. This shows that
\[\Ann_R(B) = \Ann_R( (H_\zeta)_\phi).\]
Now we apply Lemma \ref{lem: TwistedAnn} to the $(Z_\zeta,Z_\zeta)$-bimodule $H_\zeta$ to see that
\[ \Ann_R((H_\zeta)_\phi) = (1 \otimes \phi^{-1})(\Ann_R(H_\zeta)).\]
Now $Z_\zeta$ is the centre of $H_\zeta$, so $\Ann_R(H_\zeta) = \Ann_R(Z_\zeta)$ by Lemma \ref{lem: HZbimodann}. Since $\phi = \phi^{-1}$,
\[ \Ann_R(B) = \Ann_R( (H_\zeta)_\phi) = (1 \otimes \phi)(\Ann_R(Z_\zeta)).\]
The result now follows from Lemma \ref{lem: HZbimodann} applied with $A = Z_\zeta$.
\end{proof}

Write $\delta(a) := a\otimes 1 - 1 \otimes \phi(a)$ for all $a \in Z_\zeta$. The following \emph{finite} set of ideal generators for $\Ann_R(B)$ will be useful to us later.

\begin{corollary} \label{cor: AnnRBgens} We have $\Ann_R(B) = R\delta(\zeta) + \sum\limits_{\alpha \in \hatOm} R \delta(\eps_\alpha)$.
\end{corollary}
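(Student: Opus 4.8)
The plan is to combine Theorem \ref{thm: eauBea}, which identifies $\Ann_R(B) = (1\otimes\phi)(\ker\mult_{Z_\zeta})$, with the explicit generators for $\ker\mult_{Z_\zeta}$ provided by Lemma \ref{lem: BiAnnA}.b. First I would recall from Corollary \ref{cor: Zzeta} that $Z_\zeta$ is generated as a $k$-algebra by $\zeta$, $\zeta^{-1}$, and the idempotents $\{\eps_\alpha : \alpha \in \hatOm\}$. Actually the $\zeta^{-1}$ is redundant as an \emph{algebra} generator in the presence of the others only if we can express it, which we cannot directly; but Lemma \ref{lem: BiAnnA}.b requires a genuine finite $k$-algebra generating set, so I would take the generating set $\{\zeta, \zeta^{-1}\} \cup \{\eps_\alpha : \alpha \in \hatOm\}$ and observe that $\delta(\zeta^{-1})$ is redundant modulo $\delta(\zeta)$: indeed $\mult_{Z_\zeta}$ and $\phi$ are ring homomorphisms, so from $\delta(\zeta) = \zeta \otimes 1 - 1 \otimes \phi(\zeta)$ lying in the ideal one gets $\delta(\zeta^{-1})$ by multiplying by the unit $(\zeta^{-1} \otimes \phi(\zeta)^{-1})$ — concretely, $\delta(\zeta^{-1}) = -(\zeta^{-1}\otimes\phi(\zeta)^{-1})\,\delta(\zeta)$.

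Next I would apply Lemma \ref{lem: BiAnnA}.b with $A = Z_\zeta$ and the generating set above: it gives that $\ker\mult_{Z_\zeta}$ is generated as an ideal of $Z_\zeta \otimes_k Z_\zeta$ by $\{\,\zeta\otimes 1 - 1\otimes\zeta,\ \zeta^{-1}\otimes 1 - 1\otimes \zeta^{-1}\,\} \cup \{\,\eps_\alpha \otimes 1 - 1 \otimes \eps_\alpha : \alpha \in \hatOm\,\}$. Now apply the $k$-algebra automorphism $1 \otimes \phi$ of $R = Z_\zeta \otimes_k Z_\zeta$: since automorphisms carry ideal generating sets to ideal generating sets, $(1\otimes\phi)(\ker\mult_{Z_\zeta}) = \Ann_R(B)$ is generated by the images of these elements, which are precisely $\zeta \otimes 1 - 1 \otimes \phi(\zeta) = \delta(\zeta)$, $\zeta^{-1}\otimes 1 - 1 \otimes \phi(\zeta^{-1}) = \delta(\zeta^{-1})$, and $\eps_\alpha \otimes 1 - 1 \otimes \phi(\eps_\alpha) = \delta(\eps_\alpha)$ for $\alpha \in \hatOm$. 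Discarding the redundant generator $\delta(\zeta^{-1})$ as noted above yields
\[
  \Ann_R(B) = R\,\delta(\zeta) + \sum_{\alpha\in\hatOm} R\,\delta(\eps_\alpha),
\]
as claimed.

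The only genuinely delicate point is the handling of $\zeta^{-1}$: Lemma \ref{lem: BiAnnA} is stated for a $k$-algebra generating set, and $Z_\zeta$ is not finitely generated without inverting $\zeta$, so one must include $\zeta^{-1}$ among the generators and then argue its associated generator $\delta(\zeta^{-1})$ is already in the ideal generated by the rest. This is a one-line computation using that both $\mult_{Z_\zeta}$ and $\phi$ are ring maps and that $\zeta$, $\phi(\zeta) = \zeta^{-1}$ are units. Everything else is a mechanical transport of the generating set through the automorphism $1 \otimes \phi$, so I do not anticipate any real obstacle beyond bookkeeping; the substantive work was already done in Theorem \ref{thm: eauBea}.
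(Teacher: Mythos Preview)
Your proposal is correct and is essentially the paper's own proof: both use Corollary~\ref{cor: Zzeta} to obtain the generating set $\{\zeta,\zeta^{-1}\}\cup\{\eps_\alpha\}$, apply Lemma~\ref{lem: BiAnnA}.b to obtain generators of $\ker\mult_{Z_\zeta}$, push them through the automorphism $1\otimes\phi$ via Theorem~\ref{thm: eauBea}, and discard the redundant generator coming from $\zeta^{-1}$ using a one-line unit calculation. The only cosmetic difference is that the paper eliminates $\partial(\zeta^{-1})$ before applying $1\otimes\phi$, whereas you eliminate $\delta(\zeta^{-1})$ afterwards.
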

\begin{proof} By Cor. \ref{cor: Zzeta}, there is a $k$-algebra isomorphism $Z_\zeta \stackrel{\cong}{\longrightarrow}  k[\Omega][t, t^{-1}]$ that sends $\eps_\alpha$ to $e_\alpha$ and $\zeta$ to $t$. Therefore $\{\eps_\alpha : \alpha \in \hatOm\} \cup \{\zeta, \zeta^{-1}\}$ forms a set of $k$-algebra generators for $Z_\zeta$. Writing $\partial(a) = a \otimes 1 - 1 \otimes a$ for all $a \in Z_\zeta$, it follows from Lemma \ref{lem: BiAnnA}.b that
\begin{equation}\label{eq: kermultZzeta} \ker \mult_{Z_\zeta} = R \partial(\zeta) + R \partial(\zeta^{-1}) + \sum\limits_{\alpha \in \hatOm} R \partial(\eps_\alpha).\end{equation}
Note that $\partial(\zeta^{-1}) = \zeta^{-1} \otimes 1 + 1 \otimes \zeta^{-1} = (\zeta^{-1} \otimes \zeta^{-1})\partial(\zeta)$, so $R \partial(\zeta^{-1}) = R \partial(\zeta)$. Also, note that $(1 \otimes \phi) \partial = \delta$. Therefore, applying $1 \otimes \phi$ to (\ref{eq: kermultZzeta}) gives
\[ (1 \otimes \phi)\left(\ker \mult_{Z_\zeta}\right) = R \delta(\zeta) + \sum\limits_{\alpha \in \hatOm} R \delta(\eps_\alpha).\]
The result now follows from Thm. \ref{thm: eauBea}.
\end{proof}


\subsection{The bimodule annihilator of $\Ext^\ast_G(X,X)$} \label{sec: BiModAnnExt}
Our goal will be to compute the ideal
\[ \Ann_{Z \otimes_kZ} E^\ast, \qmb{where} E^\ast =  \bigoplus_{n\geq 0} E^n, \qmb{and} E^n = \Ext^n_G(X,X).\]
For brevity, we will write for each $n \geq 0$
\[ J_n := \Ann_{Z \otimes_kZ} \left(E^n\right).\]
First, we will use the Poincar\'e duality isomorphisms from \cite{OS19} to relate $J_n$ with $J_{d-n}$, where $d$ is the dimension of $G$ as a $p$-adic Lie group. For this, we need to look at the anti-involution $\anti : E^\ast \to E^\ast$ that was introduced in \cite{OS19}, $\S 6$: it is an anti-automorphism of the graded algebra $E^\ast$ by \cite{OS19}, Prop. 6.1, and therefore in particular restricts to a $k$-algebra anti-automorphism of $H = E^0$.
\begin{lemma} \label{lem: antiInvResZ} $\anti$ fixes $Z$ pointwise.
\end{lemma}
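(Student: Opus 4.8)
The plan is to mirror the argument used for $\iota$ in Lemma \ref{lem: iotafixesZ}. Recall from the decomposition (\ref{eq:Zdecomp}) together with Prop. \ref{prop: Zcomps} that the central elements $\{X_\lambda : \lambda \in \hatOm\}$ generate $Z$ as a $k$-algebra. The anti-involution $\anti$ restricts to a $k$-algebra anti-automorphism of $H = E^0$, and it is known (see \cite{OS19}, \S 6) that $\anti$ fixes $k[\Omega]$ pointwise; in particular $\anti(e_\lambda) = e_\lambda$ for all $\lambda \in \hatOm$. Since $\anti$ is a $k[\Omega]$-linear \emph{anti}-automorphism, to prove $\anti(z) = z$ for all $z \in Z$ it suffices to check $\anti(X_\lambda) = X_\lambda$ for each $\lambda \in \hatOm$ (any $k$-algebra generated by elements fixed by an anti-automorphism which moreover commute with each other — as the $X_\lambda$ do, being central — is fixed pointwise).

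First I would record the effect of $\anti$ on the generators $\tau_0, \tau_1$. From \cite{OS19}, \S 6 one has explicit formulas: $\anti(\tau_i) = \tau_i$ for $i = 0,1$ (up to the precise normalisation in that reference; in any case $\anti$ fixes or negates-and-shifts the $\tau_i$ in a controlled way, analogous to $\iota$). Then for $\lambda \neq \triv$ I would compute, using that $\anti$ reverses products and fixes the idempotents,
\[
\anti(X_\lambda) = \anti(e_\lambda \tau_0\tau_1 + e_{\lambda^{-1}}\tau_1\tau_0) = \anti(\tau_1)\anti(\tau_0) e_\lambda + \anti(\tau_0)\anti(\tau_1) e_{\lambda^{-1}},
\]
and then use Lemma \ref{lem: Hpres}.d (the relation $\tau_i e_\mu = e_{\mu^{-1}} \tau_i$) to move the idempotents back to the left, landing on $e_\lambda \tau_0\tau_1 + e_{\lambda^{-1}}\tau_1\tau_0 = X_\lambda$. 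For $\lambda = \triv$ I would instead use $X_\triv = e_\triv \zeta$ and the two expressions for $\zeta$ in Lemma \ref{lem: zeta}: applying $\anti$ to one of the symmetric formulas for $\zeta$ and using that $\anti$ swaps the two factors in each product returns the other formula, so $\anti(\zeta) = \zeta$ and hence $\anti(X_\triv) = \anti(e_\triv)\anti(\zeta) = e_\triv \zeta = X_\triv$ (using that $e_\triv$ is central, so the order of multiplication after applying the anti-automorphism is immaterial).

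The main obstacle is bookkeeping: since $\anti$ is an anti-automorphism, every product reverses, so one must be careful about the order in which idempotents and $\tau_i$'s appear and apply Lemma \ref{lem: Hpres}.d and the quadratic relations (Lemma \ref{lem: Hpres}.e) in the right order to reduce back to the standard form of $X_\lambda$. The case $\lambda = \triv$ is slightly delicate because $\zeta$ is not a monomial in $\tau_0,\tau_1$, so one genuinely needs both symmetric expressions from Lemma \ref{lem: zeta}; the fact that $\anti$ interchanges them is exactly what makes $\zeta$ fixed. Everything else is a routine verification once the action of $\anti$ on $\tau_0,\tau_1$ and on $k[\Omega]$ is pinned down from \cite{OS19}.
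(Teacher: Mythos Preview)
Your overall strategy (reduce to the generators $X_\lambda$ and compute) is exactly the paper's, but the two inputs you take from \cite{OS19} are both wrong, and with them your computation does not close.

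First, $\anti$ does \emph{not} fix $k[\Omega]$ pointwise. From $\anti(\tau_g)=\tau_{g^{-1}}$ one gets $\anti(\tau_\omega)=\tau_{\omega^{-1}}$, and hence, via \eqref{eq: defelambda}, $\anti(e_\lambda)=e_{\lambda^{-1}}$, not $e_\lambda$. Second, $\anti(\tau_i)$ is not $\tau_i$: since $s_i^{-1}=-s_i=\omega_{-1}s_i$ in $G$, one has $\anti(\tau_i)=\tau_i\,\eps$ with $\eps:=\tau_{\omega_{-1}}$. If you run your displayed calculation with your claimed formulas ($\anti(e_\lambda)=e_\lambda$, $\anti(\tau_i)=\tau_i$) and then push idempotents left using Lemma~\ref{lem: Hpres}.d, you get
\[
\tau_1\tau_0\,e_\lambda + \tau_0\tau_1\,e_{\lambda^{-1}} \;=\; e_\lambda\,\tau_1\tau_0 + e_{\lambda^{-1}}\,\tau_0\tau_1 \;=\; X_{\lambda^{-1}},
\]
not $X_\lambda$. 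The correct formulas fix this: the $\eps$'s contribute $\eps^2=1$ (since $\eps$ is central), while the extra inversion $e_\lambda\mapsto e_{\lambda^{-1}}$ swaps the two summands back to give $X_\lambda$. The $\lambda=\triv$ case similarly needs $\anti(\tau_i)=\tau_i\eps$ and $e_\triv\eps=e_\triv$; your ``$\anti$ swaps the two symmetric formulas for $\zeta$'' heuristic is not literally what happens. So the plan is right, but you must redo the verification with the correct action of $\anti$ on $\tau_\omega$ and $\tau_i$.
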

\begin{proof} By \cite{OS19} Prop. 6.1, we know that $\anti(\tau_g) = \tau_{g^{-1}}$ for all $g \in G$, where $\tau_g$ denotes the characteristic function of the double coset $IgI$ --- see \cite{OS19}, $\S$2.2. It follows therefore that $\anti$ restricts to the standard anti-involution on the group algebra $k[\Omega] \subset H$ that sends the group elements $\tau_\omega$ to $\tau_{\omega^{-1}}$, for each $\omega \in \Omega$. Equation (\ref{eq: defelambda}) now implies that
\[ \anti(e_\lambda) = e_{\lambda^{-1}} \qmb{for all} \lambda \in \hatOm.\]
Next, we have $s_i^{-1} = -s_i$ for $i=0,1$ inside $G$. Writing $\eps = \tau_{\omega_{-1}}$, it follows that
\[ \anti(\tau_i) = \tau_i \eps, \quad i=0,1.\]
With these two pieces of information in hand, we can now compute the effect of $\anti$ on the algebra generators $X_\lambda$ of $Z$ from Def. \ref{def: Xlambda}, for each $\lambda \in \hatOm$. When $\lambda \neq \triv$, we have
\begin{eqnarray*} \anti(X_\lambda) &=& \anti(e_\lambda \tau_0 \tau_1 + e_{\lambda^{-1}}\tau_1\tau_0)  \\
&=& \anti(\tau_1) \anti(\tau_0) \anti(e_\lambda) + \anti(\tau_0)\anti(\tau_1)\anti(e_{\lambda^{-1}}) \\
&=& \tau_1\eps \hsp \tau_0 \eps \hsp e_{\lambda^{-1}} + \tau_0 \eps \hsp \tau_1 \eps \hsp e_\lambda \\
&=& \tau_1 \tau_0 e_{\lambda^{-1}} + \tau_0 \tau_1 e_\lambda\\
&=& e_{\lambda^{-1}} \tau_1 \tau_0 + e_\lambda \tau_0 \tau_1 \\
&=& X_\lambda. \end{eqnarray*}
We have used here the fact that $\eps$ is a central involution in $H$, as well as the relations
\[ \tau_i e_\lambda = e_{\lambda^{-1}} \tau_i, \quad i=0,1\]
from Lemma \ref{lem: Hpres}. Next, recall from Lemma \ref{lem: zeta} that $\zeta = (\tau_0 + e_\triv)(\tau_1+e_\triv) + \tau_1\tau_0$. When $\lambda = \triv$, the calculation goes as follows:
\begin{eqnarray*} \anti(X_\triv) &=& \anti( e_\triv\zeta ) = \anti(\zeta) e_\triv \\
&=& ( (\tau_0 \eps) (\tau_1 \eps) + (\tau_1 \eps + e_\triv) (\tau_0 \eps + e_\triv)) e_\triv \\
&=& (\tau_0\tau_1 + \tau_1 \tau_0 + e_\triv \tau_0 \eps + \tau_1 e_\triv \eps + e_\triv) e_\triv \\
&=& \zeta e_\triv = X_\triv.
\end{eqnarray*}
We have used here the fact that $e_\triv \eps = \triv(\eps) e_\triv = e_\triv$. We have shown that $\anti$ fixes each $X_\lambda$. Since these generate $Z$ as a $k$-algebra in view of Prop. \ref{prop: Zcomps}, $\anti|_Z$ is the identity map.\end{proof}

For our next result, we need to introduce the $k$-algebra involution  $\sigma:  Z \otimes_k Z \to Z \otimes_k Z$, given by $\sigma(z_1 \otimes z_2) = z_2 \otimes z_1$ for $z_1,z_2 \in Z$.

\begin{lemma} \label{lem: JdPoinc} For each $n = 0,\cdots, d$, we have $J_{d-n} = \sigma(J_n)$.
\end{lemma}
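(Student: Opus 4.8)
The plan is to deduce the identity $J_{d-n} = \sigma(J_n)$ from a Poincaré-duality isomorphism relating $E^n$ and $E^{d-n}$ as bimodules, exactly as suggested by the sentence preceding the statement. First I would recall the precise form of the duality coming from \cite{OS19}: since $p \geq 5$, the pro-$p$ Iwahori subgroup $I$ is a Poincaré group of dimension $d = \dim G$, and \cite{OS19} provide a natural isomorphism of graded $H$-bimodules (or at least of the individual $E^n$) $E^n \cong \Hom_k(E^{d-n}, k)$, possibly twisted by the anti-involution $\anti$ and some invertible character. The key point is that this duality interchanges the left and right $H$-module structures, so that the left $Z$-action on $E^n$ corresponds to the right $Z$-action on the dual of $E^{d-n}$ and vice versa; combined with Lemma \ref{lem: antiInvResZ}, which tells us that $\anti$ acts trivially on $Z$, any twisting by $\anti$ disappears at the level of the $Z \otimes_k Z$-action.

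Concretely, I would proceed in the following steps. Step one: identify the $Z \otimes_k Z$-module structure on $\Hom_k(E^{d-n},k)$. For any $Z\otimes_k Z$-module $M$ with $k$-dual $M^\vee = \Hom_k(M,k)$, the contragredient action satisfies $\Ann_{Z\otimes_k Z}(M^\vee) \supseteq \Ann_{Z \otimes_k Z}(M)$, with equality whenever $M$ is finitely generated over $Z \otimes_k Z$ — which holds here since $Z$ is a finitely generated $k$-algebra (Prop. \ref{prop: Zcomps} and \eqref{eq:Zdecomp}) and $E^{d-n}$ is finitely generated over it (it is a subquotient of $\Ext^*_G(X,X)$, which is finitely generated as a bimodule). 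So $\Ann_{Z\otimes_kZ}(E^{d-n})^\vee = J_{d-n}$. Step two: track how the duality isomorphism twists the bimodule structure. If $E^n \cong (E^{d-n})^\vee$ as bimodules after swapping the two sides — i.e. the left action on $E^n$ goes to the action of the second tensor factor and the right action to the first — then at the level of annihilators in $Z \otimes_k Z$ this swap is precisely the involution $\sigma$, giving $J_n = \sigma(J_{d-n})$, equivalently $J_{d-n} = \sigma(J_n)$ since $\sigma$ is an involution. Step three: deal with the $\anti$-twist. The anti-involution $\anti$ from \cite{OS19} is what turns the right module structure into a left one (that is how $\Hom_k(-,k)$ of an $H$-bimodule becomes an $H$-bimodule again in their set-up); by Lemma \ref{lem: antiInvResZ} it restricts to the identity on $Z$, so $\anti$ induces the identity on $Z \otimes_k Z$ and contributes nothing to the annihilator computation.

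I expect the main obstacle to be pinning down the exact form of the Poincaré-duality statement in \cite{OS19} and making sure the bimodule twist is accounted for correctly — in particular whether the duality is $E^n \cong \Hom_k(E^{d-n},k)$ on the nose or involves an additional one-dimensional twist (a determinant/orientation character of $G$), and on which side $\anti$ enters. Any such extra one-dimensional twist by a character $\chi$ of $G$ will only affect the $H$-bimodule structure through $k[\Omega]$, hence through idempotents $e_\lambda$, and could a priori permute the components $e_\lambda Z$ of $Z$; I would need to check that restricted to $Z$ this twist is still trivial (which should follow from $\anti$ fixing $Z$ and from $\chi$ being trivial on the relevant torus, or can be absorbed into $\sigma$). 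Once the duality is stated correctly on $Z\otimes_k Z$-modules, the conclusion $J_{d-n} = \sigma(J_n)$ is a formal consequence of the two facts that (i) $k$-linear duality preserves annihilators of finitely generated modules over a finitely generated commutative $k$-algebra, and (ii) swapping the two module structures of a bimodule applies $\sigma$ to the annihilator in $Z \otimes_k Z$.
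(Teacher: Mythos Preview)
Your overall strategy is the paper's strategy: use the Poincar\'e duality map from \cite{OS19} together with Lemma~\ref{lem: antiInvResZ} to compare $\Ann(E^n)$ with $\Ann(E^{d-n})$ up to the swap $\sigma$. Two points deserve adjustment.

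First, \cite{OS19}, Prop.~7.18 only supplies an \emph{injective} $(H,H)$-bimodule map
\[
\Delta^n : E^n \hookrightarrow \bigl({}^{\anti}(E^{d-n})^{\anti}\bigr)^\vee,
\]
not an isomorphism: the $E^n$ are infinite-dimensional over $k$, so $(E^{d-n})^{\vee}$ is strictly larger than $E^n$. Consequently your Step~two, which uses an isomorphism to conclude $J_n = \sigma(J_{d-n})$ in one stroke, only yields the inclusion $J_{d-n} \subseteq \sigma(J_n)$ directly. The paper closes the loop by symmetry: running the same argument with $n$ replaced by $d-n$ gives $J_n \subseteq \sigma(J_{d-n})$, and since $\sigma^2 = \id$ the two inclusions combine to the desired equality. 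You should incorporate this swap rather than rely on an isomorphism you do not have.

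Second, your appeal to finite generation in Step~one is both unnecessary and not cleanly justified. It is unnecessary because for \emph{any} $k$-vector space $M$ carrying a $Z\otimes_k Z$-action, $k$-linear functionals separate points, so $x \cdot M^\vee = 0$ already forces $\sigma(x) \cdot M = 0$; hence $\Ann(M^\vee) = \sigma^{-1}(\Ann(M))$ with no finiteness hypothesis. (The paper in fact only records the easy inclusion $\sigma^{-1}(\Ann M) \subseteq \Ann(M^\vee)$ and lets the symmetry trick do the rest.) And your justification that $E^{d-n}$ is finitely generated over $Z\otimes_k Z$ is asserted rather than argued. Finally, the worry about an extra orientation-character twist is a red herring here: once you restrict to the $(Z,Z)$-bimodule structure, Lemma~\ref{lem: antiInvResZ} removes the $\anti$-twist completely, and no further character enters in \cite{OS19}, Prop.~7.18.
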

\begin{proof} For a $k$-vector space $V$, let $V^\vee$ denote the full $k$-linear dual $V^\vee = \Hom_k(V,k)$. When $V$ happens to be an $(H,H)$-bimodule, then $V^\vee$ is also an $(H,H)$-bimodule. Following \cite{OS19}, $\S$7.1, this bimodule structure is given by the following rule:
\[ (a \cdot f \cdot b)(v) = f(b \hsp v \hsp a) \qmb{for all} v \in V, a,b \in H, f \in V^\vee.\]
We are only interested in $(Z,Z)$-bimodules here. Regarding every $(Z,Z)$-bimodule $V$ as a left $A := Z \otimes_kZ$-module in the standard way, we can write this action of $A$ on $V^\vee$ as follows:
\[ (x \cdot f)(v) = f(\sigma(x)v) \qmb{for all} v \in V, x \in A, f \in V^\vee.\]
It follows immediately that for every $(Z,Z)$-bimodule $V$ we have
\begin{equation}\label{eq: AnnVdual} \sigma^{-1}(\Ann_AV) \subseteq \Ann_A(V^\vee).\end{equation}
Now, \cite{OS19} Prop. 7.18 gives us an injective homomorphism of $(H,H)$-bimodules
\[ \Delta^n : E^n \hookrightarrow ({}^{\anti} (E^{d-n})^\anti)^\vee.\]
Since $\anti$ fixes $Z$ pointwise by Lemma \ref{lem: antiInvResZ}, we may write this as
\[\Delta^n : E^n \hookrightarrow (E^{d-n})^\vee,\]
an injective homomorphism of $(Z,Z)$-bimodules. Using (\ref{eq: AnnVdual}), we obtain
\[ J_{d-n} = \Ann_A(E^{d-n}) \subseteq \sigma(\Ann_A (E^{d-n})^\vee) \subseteq \sigma(\Ann_A(E^n)) = \sigma(J_n).\]
Replacing $n$ with $d-n$ gives $J_n \subseteq \sigma(J_{d-n})$, so $J_{d-n} \subseteq \sigma(J_n) \subseteq \sigma^2(J_{d-n}) = J_{d-n}$. \end{proof}

Recall the multiplication map $\mult_Z : Z \otimes_k Z \to Z$ from $\S \ref{sec: GenBimod}$.

\begin{lemma} \label{lem: AnnE0}\hsp
\begin{itemize} \item[a)] We have $J_0 = \ker \mult_Z$.
\item[b)]  $J_0  = \sigma(J_0)$.
\end{itemize}
\end{lemma}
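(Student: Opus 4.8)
The plan is to identify the $(Z,Z)$-bimodule $E^0$ explicitly and then quote Lemma~\ref{lem: HZbimodann}. By definition $E^0 = \Ext^0_G(X,X) = \Hom_G(X,X) = \End_G(X)$, which as a ring is $H^{\op}$ (recall $H = \End_G(X)^{\op}$ and $X = \ind_I^G(k)$ lies in $\cA$). Recall from $\S\ref{sec:Z-scheme}$ that, for objects $A_1,A_2$ of $\cA$, the ring $Z\otimes Z$ acts on $\Ext^j_{\cC}(A_1,A_2)$ with the first and second tensor factors acting through endomorphisms of $A_1$ and $A_2$, respectively. Taking $A_1 = A_2 = X = P$, the first factor $z \in Z$ acts on $E^0 = \End_G(X)$ by precomposition with $z$ and the second by postcomposition with $z$. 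Since the centre of a ring coincides with the centre of its opposite ring, $Z = Z(H)$ is also the centre of $\End_G(X)$, so these two operations agree; unwinding the conventions, $(z_1 \otimes z_2)\cdot f = z_1 f z_2$ for $f \in E^0$, $z_1,z_2 \in Z$. In other words $E^0$ is nothing but the $k$-algebra $H$ (with centre $Z$) equipped with its standard $(Z,Z)$-bimodule structure.

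First I would record this identification, and then part a) is immediate: by Lemma~\ref{lem: HZbimodann} applied with $A = H$ we get $J_0 = \Ann_{Z\otimes_k Z}(E^0) = \Ann_{Z\otimes_kZ}(H) = \ker\mult_Z$. (The precise ring structure of $E^0$ is irrelevant here; all that is used is that it is a $k$-algebra with centre $Z$ carrying the standard bimodule action.)

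For part b), I would simply observe that $\ker\mult_Z$ is stable under the flip $\sigma$. Since $Z$ is commutative, $\mult_Z\circ\sigma = \mult_Z$, so if $x \in J_0 = \ker\mult_Z$ then $\mult_Z(\sigma(x)) = \mult_Z(x) = 0$, i.e.\ $\sigma(x) \in J_0$; thus $\sigma(J_0)\subseteq J_0$, and applying $\sigma$ once more (it is an involution) gives $\sigma(J_0) = J_0$. There is no real obstacle in this lemma: the only point needing a moment's care is the verification that the two $Z$-actions on $E^0$ coincide, which is exactly the statement that $Z$ lies in the centre of $\End_G(X)$, true because $Z = Z(H)$ and a ring shares its centre with its opposite.
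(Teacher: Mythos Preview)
Your proof is correct and matches the paper's approach: for (a) both identify $E^0$ with $H$ as a $(Z,Z)$-bimodule and invoke Lemma~\ref{lem: HZbimodann}; for (b) the paper cites Lemma~\ref{lem: BiAnnA}.a (the generators $a\otimes 1 - 1\otimes a$ are visibly sent to their negatives by $\sigma$), while your observation $\mult_Z\circ\sigma=\mult_Z$ is an equally valid and slightly more direct way to reach the same conclusion.
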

\begin{proof} Since $\Ext^0_G(X,X) = \Hom_G(X,X) = H$ and since $Z = Z(H)$,  Lemma \ref{lem: HZbimodann} implies that $J_0 = \ker \mult_Z$. The second statement follows from Lemma \ref{lem: BiAnnA}.a.
\end{proof}

\textbf{Until the end of $\S \ref{sec: BiModAnnExt}$, we assume that $\frF = \bQ_p$, $p \neq 2,3$ and $\pi = p$.} This assumption allows us to apply the results of \cite{OS22} to study $J_1$. Recall first that by  \cite{OS22} Prop. 6.10(1), there is a certain short exact sequence of $(H,H)$-bimodules
\begin{equation}\label{eq: E^1bimodses} 0 \to \ker(f_1) \oplus \ker(g_1) \to E^1 \to C \to 0.\end{equation}
Here $C$ is a certain $(H,H)$-bimodule with $\dim_k C = 4$.
\begin{lemma} \label{lem: Annkerg1} We have $\Ann_{Z \otimes_k Z}(\ker g_1)) = J_0$.
\end{lemma}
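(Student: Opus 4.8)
The plan is to reduce the identity $\Ann_{Z\otimes_k Z}(\ker g_1)=J_0=\ker\mult_Z$ to two structural facts about $\ker g_1$ viewed as an $(H,H)$-bimodule: that the central subalgebra $Z\subseteq H$ acts the same way on the left and on the right, and that $\ker g_1$ is faithful as a $Z$-module. Indeed, by Lemma \ref{lem: BiAnnA}.a the ideal $\ker\mult_Z$ of $Z\otimes_kZ$ is generated by the elements $z\otimes 1-1\otimes z$, $z\in Z$, so the inclusion $\ker\mult_Z\subseteq\Ann_{Z\otimes_kZ}(\ker g_1)$ is exactly the statement $z\cdot m=m\cdot z$ for all $z\in Z$, $m\in\ker g_1$. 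Granting this, $\ker g_1$ becomes a module over $(Z\otimes_kZ)/\ker\mult_Z$, which $\mult_Z$ identifies with $Z$; hence $\Ann_{Z\otimes_kZ}(\ker g_1)=\mult_Z^{-1}\big(\Ann_Z(\ker g_1)\big)$, and equality with $\ker\mult_Z$ is equivalent to $\Ann_Z(\ker g_1)=0$, i.e.\ to faithfulness. So the whole proof splits cleanly into the ``symmetric action'' step and the ``faithfulness'' step.

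For the symmetric action I would start from the explicit description of the $(H,H)$-bimodule $\ker g_1$ in \cite{OS22}. By Prop.\ \ref{prop: Zcomps} the elements $\{X_\lambda:\lambda\in\hatOm\}$ generate $Z$ as a $k$-algebra, so it suffices to check $X_\lambda\cdot m=m\cdot X_\lambda$ on $\ker g_1$ for each $\lambda$; using the formulas of \cite{OS22} together with the fact that $\iota$ and $\anti$ fix $Z$ pointwise (Lemma \ref{lem: iotafixesZ}, Lemma \ref{lem: antiInvResZ}), this should come down to a finite calculation with $\tau_0$, $\tau_1$ and the idempotents $e_\lambda$, of the same flavour as the computations in $\S \ref{sec: BimoduleB}$ — the crucial point being that, in contrast to what happens for the bimodule $B$, no twist by $\phi$ appears here. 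For faithfulness I would first observe that $\zeta$ is a non-zero-divisor in $Z$: by the decomposition \eqref{eq:Zdecomp} and Prop.\ \ref{prop: Zcomps} each factor of $Z$ is a copy of $k[x]$ or of $k[x,y]/\langle xy\rangle$, on which $\zeta$ restricts (via Lemma \ref{lem: ezeta}.b,c) to $x$, resp.\ $x+y$, both non-zero-divisors. Hence $Z\hookrightarrow Z_\zeta$, and it is enough to prove that $(\ker g_1)_\zeta$ is faithful over $Z_\zeta\cong k[\Omega][\zeta,\zeta^{-1}]$ (Cor.\ \ref{cor: Zzeta}). Writing $Z_\zeta=\bigoplus_{\lambda\in\hatOm}\eps_\lambda Z_\zeta$ with each $\eps_\lambda Z_\zeta\cong k[\zeta,\zeta^{-1}]$ a domain (Cor.\ \ref{cor: ZzetaDecomp}), faithfulness follows once one checks that $(\ker g_1)_\zeta$ is nonzero and torsion-free over each of these $q-1$ components; this too should be extracted from the structure of $\ker g_1$ in \cite{OS22}, by a computation in the spirit of Lemma \ref{lem: HHxitf} and Cor.\ \ref{cor: Bk[zeta]tf}.

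I expect the main obstacle to be the symmetric-action step. In \cite{OS22} the right $H$-action on $E^1$, and hence on its sub-bimodule $\ker g_1$, is presented via matrix formulas ($\kappa_2$-type) that are not manifestly symmetric — indeed the analogous computation for the companion bimodule $B$ of $\S \ref{sec: BimoduleB}$ produces the genuine involution $\phi$, yielding $\Ann_{Z_\zeta\otimes Z_\zeta}(B)=(1\otimes\phi)(\ker\mult_{Z_\zeta})$ rather than the diagonal. The content of the present lemma is precisely that this twisting degenerates on the summand $\ker g_1$, and establishing that carefully will require tracking the $\tau_0,\tau_1,e_\lambda$ bookkeeping of \cite{OS22} through the definition of $g_1$. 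Once the symmetric action is in place, the passage through $Z\hookrightarrow Z_\zeta$, the component-wise faithfulness check, and the final identification $\Ann_{Z\otimes_kZ}(\ker g_1)=\ker\mult_Z=J_0$ are routine.
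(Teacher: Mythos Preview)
Your logical framework is sound: the equality $\Ann_{Z\otimes_kZ}(\ker g_1)=\ker\mult_Z$ is indeed equivalent to the two assertions that the left and right $Z$-actions on $\ker g_1$ coincide and that $\ker g_1$ is $Z$-faithful. But you are missing the one structural input that makes both of these assertions immediate, and in its absence you are anticipating difficulties (the ``symmetric-action step'', the component-wise torsion-freeness check) that do not in fact arise.

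The paper's proof rests on \cite{OS22}, Prop.\ 6.3, which gives an isomorphism of $(H,H)$-bimodules $\ker g_1\cong F^1H$, where $F^1H=H\tau_0+H\tau_1$ is the first step of the filtration on $H$ from \cite{OS22}, \S 2.2.4. Once you know this, everything collapses: since $F^1H$ is a sub-$(H,H)$-bimodule of $H$ and $Z=Z(H)$, the two $Z$-actions automatically agree, and $\Ann_A(F^1H)\supseteq\Ann_A(H)$ is trivial. For the reverse inclusion one observes (Lemma \ref{lem: HmodF1H}) that $\zeta(\zeta-1)$ kills $H/F^1H$, so left multiplication by $\zeta(\zeta-1)$ embeds $H$ into $F^1H$ as bimodules; since $H$ is $k[\zeta]$-torsionfree this forces $\Ann_A(F^1H)\subseteq\Ann_A(H)$. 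Finally $\Ann_A(H)=J_0$ by Lemma \ref{lem: HZbimodann}. Compared with your plan, this bypasses entirely the $\kappa_2$-type matrix computations and the passage to $Z_\zeta$: the point is precisely that, unlike $\ker f_1\cong B$, the bimodule $\ker g_1$ sits inside $H$ itself, so no twist can appear and faithfulness is inherited from $H$.
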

\begin{proof} By \cite{OS22} Prop. 6.3, $\ker(g_1) \cong F^1H$ as an $(H,H)$-bimodule, where $(F^nH)_{n \geq 0}$ is the descending filtration on $H$ defined at \cite{OS22}, \S 2.2.4. Writing $A = Z\otimes_kZ$, we then have
\[ \Ann_A(\ker(g_1)) = \Ann_A(F^1H) \supseteq \Ann_A(H).\]
For the reverse inclusion, note that $F^1H = H \tau_0 + H \tau_1$, so $\zeta(\zeta - 1)$ kills $H/F^1H$ from the left by Lemma \ref{lem: HmodF1H}. Since $H$ is $k[\zeta]$-torsionfree and since $\zeta$ is central in $H$, left-multiplication by $\zeta(\zeta-1)$ gives an injective homomorphism of $(H,H)$-bimodules $H \hookrightarrow F^1H$. Hence
\[ \Ann_A(\ker(g_1)) = \Ann_A(F^1H) = \Ann_A(H) = \Ann_A(E^0) = J_0. \qedhere\]
\end{proof}
On p. 38 of \cite{OS22}, Ollivier and Schneider consider a certain twist of $(H_\zeta \oplus H_\zeta)[\kappa, z, \mu]$ that they denote $(H_\zeta \oplus H_\zeta)^\pm$. The left action of $H$ on $(H_\zeta \oplus H_\zeta)$ is unchanged, but the right $H$-action is twisted by pre-composing the previous right $H$-action by $\iota$. They show that the left $H_\zeta$-submodule $H_\zeta \tau_0 \oplus H_\zeta \tau_1$ of this direct sum is stable under this new right $H$-action, which allows them to pass to the quotient to form the $(H_\zeta,H)$-bimodule
\[ \left(\frac{H_\zeta}{H_\zeta \tau_0} \oplus \frac{H_\zeta}{H_\zeta \tau_1}\right)^\pm = \frac{(H_\zeta  \oplus H_\zeta )^\pm}{(H_\zeta \tau_0 \oplus H_\zeta \tau_1)^{\pm}}.\]
Then they prove the following Theorem --- see \cite{OS22} Prop. 3.28 and Thm. 6.8:
\begin{theorem}\label{thm: MainOS21} There is an $(H_\zeta,H)$-bimodule isomorphism
\[ \left(\frac{H_\zeta}{H_\zeta \tau_0} \oplus \frac{H_\zeta}{H_\zeta \tau_1}\right)^\pm \stackrel{\cong}{\longrightarrow} \ker(f_1)\]
\end{theorem}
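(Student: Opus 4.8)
The plan is to obtain Theorem~\ref{thm: MainOS21} as the combination of two results of Ollivier and Schneider: \cite{OS22} Prop.~3.28, which constructs the $(H_\zeta,H)$-bimodule $\bigl(\frac{H_\zeta}{H_\zeta\tau_0}\oplus\frac{H_\zeta}{H_\zeta\tau_1}\bigr)^\pm$ out of the matrix homomorphism $\kappa_2$ attached to the parameters $(\kappa,z,\mu)=(\iota,\,-\tau_{\omega_{-1}}\zeta^{-1},\,\id^2)$ and analyses its left and right module structures, and \cite{OS22} Thm.~6.8, which identifies that bimodule with the summand $\ker(f_1)$ of $E^1$ in the short exact sequence $0\to\ker(f_1)\oplus\ker(g_1)\to E^1\to C\to 0$ of \cite{OS22} Prop.~6.10(1). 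The substance lies entirely in \cite{OS22}, and what follows is a sketch of the strategy behind it.

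One starts from the Frobenius reciprocity identification $E^n=\Ext^n_G(X,X)=H^n(I,X)$ with $X=k[G/I]$, already used in the verification of \textbf{(A2)} in Corollary~\ref{cor: tauIntro}. Restricting the $G$-set $G/I$ to $I$ and invoking the decomposition $G=\coprod_{w\in\widetilde W}IwI$ gives $X|_I\cong\bigoplus_{w\in\widetilde W}\ind_{I_w}^Ik$ with $I_w:=I\cap wIw^{-1}$, so Shapiro's lemma yields $E^n\cong\bigoplus_{w\in\widetilde W}H^n(I_w,k)$ as graded $k$-vector spaces, compatibly with the left $H$-module structure coming from $\End_G(X)$. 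Since $p\ge5$ each $I_w$ is a torsion-free $p$-adic analytic group, and in degree one $H^1(I_w,k)=\Hom(I_w^{\mathrm{ab}}/p,k)$ is finite dimensional and can be read off from the explicit matrix description of $I_w$ inside $\SL_2(\bQ_p)$; the two-sided $H$-action (Yoneda product) moves these summands around in a way governed by the multiplication in $H$, in particular by the quadratic relations $\tau_i^2=-\tau_ie_\triv$.

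Assembling the $H^1(I_w,k)$ into an honest $(H,H)$-bimodule and untangling it produces the above exact sequence, in which $\ker(f_1)$ is the ``generic'' part; its internal structure is reflected in the two quotients $H_\zeta/H_\zeta\tau_0$ and $H_\zeta/H_\zeta\tau_1$, the localisation at $\zeta$ appearing because $\zeta$ acts invertibly on the generic classes, and the dichotomy between the two quotients recording the last letter ($s_0$ or $s_1$) of a reduced word in $\widetilde W$. The isomorphism itself is then written down by sending the generator $\mathbf 1=(\bar1,\bar1)$ to a distinguished $1$-cocycle and extending left $H_\zeta$-linearly; the point to check is that it is right $H$-linear and bijective. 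On the source the right $H$-action is $(r_1,r_2)\cdot h=(r_1,r_2)\kappa_2(h)$ twisted by the canonical involution $\iota$ --- this is exactly the superscript $\pm$ --- and matching it with the Yoneda right action on $\ker(f_1)$ is where the precise twisting data ($\iota$, the unit $z=-\tau_{\omega_{-1}}\zeta^{-1}$, the character $\mu=\id^2$) gets pinned down. I expect this last step --- carrying the full $(H,H)$-bimodule structure through the Mackey and Shapiro isomorphisms and recognising the twist --- to be the main obstacle, and it is the technical core of \cite{OS22} \S\S 3 and 6, which we simply quote. For the present paper the only consequence required is that, via the identification of $\bigl(\frac{H_\zeta}{H_\zeta\tau_0}\oplus\frac{H_\zeta}{H_\zeta\tau_1}\bigr)^\pm$ with the bimodule $B$ of Definition~\ref{def: bimodB}, one obtains $\ker(f_1)\cong B$ as $(H_\zeta,H)$-bimodules, so that $\Ann_{Z_\zeta\otimes Z_\zeta}(\ker f_1)$ equals the ideal determined in Theorem~\ref{thm: eauBea}.
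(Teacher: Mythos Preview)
Your proposal is correct and matches the paper exactly: the paper does not give its own proof of this statement but simply cites \cite{OS22} Prop.~3.28 and Thm.~6.8, precisely the two results you invoke. Your additional sketch of the Mackey--Shapiro strategy underlying \cite{OS22} is accurate extra context that the paper itself omits.
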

We are only interested in the $(Z_\zeta,Z_\zeta)$-bimodule structure on $\ker(f_1)$. Recall the $(H_\zeta,H)$-bimodule $B$ from Def. \ref{def: bimodB}.
\begin{corollary}\label{cor: rightZaction} There is an $(H_\zeta,Z_\zeta)$-bimodule isomorphism $B \stackrel{\cong}{\longrightarrow} \ker(f_1)$.
\end{corollary}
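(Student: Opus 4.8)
The plan is to deduce the statement quickly from Theorem~\ref{thm: MainOS21}, the key point being that the $\iota$-twist which distinguishes $\left(\frac{H_\zeta}{H_\zeta\tau_0}\oplus\frac{H_\zeta}{H_\zeta\tau_1}\right)^\pm$ from $B$ is invisible on the central subring $Z_\zeta\subseteq H_\zeta$. First I would recall precisely how $\left(\frac{H_\zeta}{H_\zeta\tau_0}\oplus\frac{H_\zeta}{H_\zeta\tau_1}\right)^\pm$ is built in \cite{OS22}: one starts from the $(H_\zeta,H)$-bimodule $\left(\frac{H_\zeta}{H_\zeta\tau_0}\oplus\frac{H_\zeta}{H_\zeta\tau_1}\right)[\kappa,z,\mu]=B$ attached to the parameters $(\kappa,z,\mu)$ of Definition~\ref{def: bimodB}, keeps its underlying $H_\zeta$-module and its left action, and replaces the right $H$-action $v\cdot h=v\,\kappa_2(h)$ by the one under which $h\in H$ sends $v$ to $v\,\kappa_2(\iota(h))$. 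In particular $\left(\frac{H_\zeta}{H_\zeta\tau_0}\oplus\frac{H_\zeta}{H_\zeta\tau_1}\right)^\pm$ and $B$ are the same $H_\zeta$-module with the same left action, their right $H$-actions differing only by precomposition with the involution $\iota$.

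Now by Lemma~\ref{lem: iotafixesZ} the automorphism $\iota$ fixes $Z$ pointwise, and since $\zeta\in Z$ it fixes the central localisation $Z_\zeta$ (the centre of $H_\zeta$) pointwise as well. Hence for $z'\in Z$ the twisted action sends $v$ to $v\,\kappa_2(\iota(z'))=v\,\kappa_2(z')=v\cdot z'$, so $\left(\frac{H_\zeta}{H_\zeta\tau_0}\oplus\frac{H_\zeta}{H_\zeta\tau_1}\right)^\pm$ and $B$ coincide as $(H_\zeta,Z)$-bimodules. Moreover, right multiplication by $\zeta$ on $B$ equals left multiplication by $\zeta^{-1}$ by Proposition~\ref{prop: epstwist}(a), which is invertible; so the right $Z$-action on $B$ extends uniquely to a right $Z_\zeta$-action, and any $(H_\zeta,Z)$-bimodule isomorphism from $B$ onto a module whose right $Z$-action likewise extends to $Z_\zeta$ is automatically $(H_\zeta,Z_\zeta)$-linear. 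Applying this to the isomorphism $\left(\frac{H_\zeta}{H_\zeta\tau_0}\oplus\frac{H_\zeta}{H_\zeta\tau_1}\right)^\pm\stackrel{\cong}{\longrightarrow}\ker(f_1)$ of Theorem~\ref{thm: MainOS21} produces the desired $(H_\zeta,Z_\zeta)$-bimodule isomorphism $B\stackrel{\cong}{\longrightarrow}\ker(f_1)$.

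The one step requiring genuine care — the main, though mild, obstacle — is the bookkeeping of twists: one has to confirm that the bimodule that \cite{OS22} twist by $\iota$ to produce $\left(\frac{H_\zeta}{H_\zeta\tau_0}\oplus\frac{H_\zeta}{H_\zeta\tau_1}\right)^\pm$ is exactly $B$ with $\kappa$ the composite $\iota\circ(H\hookrightarrow H_\zeta)$ — this is precisely why $\iota$ was built into the definition of $\kappa$ — and in particular that $H_\zeta\tau_0\oplus H_\zeta\tau_1$ is already stable under the \emph{untwisted} right action, so that $B$ is well defined and shares its underlying object with $\left(\frac{H_\zeta}{H_\zeta\tau_0}\oplus\frac{H_\zeta}{H_\zeta\tau_1}\right)^\pm$. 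This is checked directly from the explicit formulas for $\kappa_2$ recalled in the proof of Lemma~\ref{lem: k2Xalpha} (using the quadratic relations of Lemma~\ref{lem: Hpres} and the centrality of $\tau_{\omega_{-1}}$) against the recipe of \cite{OS22}, \S2.4.7 and p.~38; once this is in place everything else is formal.
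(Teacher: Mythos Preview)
Your proof is correct and follows essentially the same approach as the paper's: both invoke Theorem~\ref{thm: MainOS21}, use Lemma~\ref{lem: iotafixesZ} to see that the $\iota$-twist is invisible on $Z$, and then promote the right $Z$-action to a $Z_\zeta$-action via the invertibility of the right $\zeta$-action. The only difference is cosmetic: the paper checks invertibility of $\zeta$ directly on $\ker(f_1)$ (where the left and right $\zeta$-actions are mutually inverse by definition), whereas you check it on the $B$-side via Proposition~\ref{prop: epstwist}(a).
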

\begin{proof} Note that $\zeta$ acts invertibly on $\ker(f_1)$ from both sides by its definition: indeed the left $\zeta$-actions and the right $\zeta$-actions are mutually inverse. Therefore $\ker(f_1)$ is in fact a $(H_\zeta,H_\zeta)$-bimodule. Since $\iota$ fixes $Z$ by Lemma \ref{lem: iotafixesZ}, precomposing with $\iota$ makes no difference to the right action of $Z$. Now we can apply Thm. \ref{thm: MainOS21}.
\end{proof}

\begin{theorem} \label{thm: AnnE1}Suppose that $\frF = \bQ_p$, $p \neq 2,3$ and $\pi = p$.
\begin{itemize} \item[a)] We have $J_1 = J_0 \cap \Ann_{Z \otimes_k Z}(B)$.
\item[b)]  $J_1 = \sigma(J_1)$.
\end{itemize}
\end{theorem}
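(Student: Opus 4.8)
The plan is to derive both parts from the short exact sequence of $(H,H)$-bimodules (\ref{eq: E^1bimodses}) together with two facts already in hand: $\Ann_{Z \otimes_k Z}(\ker g_1) = J_0$ (Lemma \ref{lem: Annkerg1}) and the bimodule isomorphism $B \cong \ker(f_1)$ of Corollary \ref{cor: rightZaction}. The latter is an isomorphism of $(H_\zeta, Z_\zeta)$-bimodules; since $\zeta$ acts invertibly on $\ker(f_1)$ from both sides, both $B$ and $\ker(f_1)$ are $Z_\zeta \otimes_k Z_\zeta$-modules and the isomorphism is one of such, hence an isomorphism of $(Z,Z)$-bimodules after restriction along $Z \otimes_k Z \to Z_\zeta \otimes_k Z_\zeta$. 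In particular $\Ann_{Z \otimes_k Z}(\ker f_1) = \Ann_{Z \otimes_k Z}(B)$.

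For part a), the inclusion ``$\subseteq$'' is formal: $\ker(f_1) \oplus \ker(g_1)$ is a $(Z,Z)$-subbimodule of $E^1$ by (\ref{eq: E^1bimodses}), so $J_1 = \Ann_{Z \otimes_k Z}(E^1) \subseteq \Ann_{Z \otimes_k Z}(\ker f_1) \cap \Ann_{Z \otimes_k Z}(\ker g_1) = \Ann_{Z \otimes_k Z}(B) \cap J_0$. For ``$\supseteq$'' I would show that the $4$-dimensional cokernel $C$ contributes nothing further, using the explicit description of $C$ and of the extension (\ref{eq: E^1bimodses}) from \cite{OS22}. The two points to establish are: (i) the left and right $Z$-actions on $C$ coincide, so that $J_0 = \ker \mult_Z$ annihilates $C$; and (ii) the sequence (\ref{eq: E^1bimodses}) splits as a sequence of $(Z,Z)$-bimodules, so that $E^1 \cong \ker(f_1) \oplus \ker(g_1) \oplus C$ and hence $J_1 = \Ann_{Z \otimes_k Z}(B) \cap J_0 \cap \Ann_{Z \otimes_k Z}(C) = \Ann_{Z \otimes_k Z}(B) \cap J_0$ by (i). This is the crux of the argument: a purely formal argument from (\ref{eq: E^1bimodses}) only gives $(\Ann_{Z \otimes_k Z}(B) \cap J_0)^2 \subseteq J_1$, because an element killing both a submodule of $E^1$ and the matching quotient is a priori only guaranteed to have its square in the annihilator. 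So some honest input on $C$ (and, should (ii) fail, on the extension class it defines in $\Ext^1_{Z \otimes_k Z}(C, \ker f_1 \oplus \ker g_1)$), drawn from the cohomological computations of \cite{OS22}, is unavoidable, and I expect this to be the main difficulty.

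Part b) then follows from a). Applying $\sigma$ gives $\sigma(J_1) = \sigma(J_0) \cap \sigma(\Ann_{Z \otimes_k Z}(B))$. Here $\sigma(J_0) = J_0$ by Lemma \ref{lem: AnnE0}.b, and $\Ann_{Z \otimes_k Z}(B)$ is $\sigma$-stable as well: by Theorem \ref{thm: eauBea} it is the preimage, under the $\sigma$-equivariant localisation map $Z \otimes_k Z \to Z_\zeta \otimes_k Z_\zeta$, of the ideal $(1 \otimes \phi)(\ker \mult_{Z_\zeta})$, and this ideal is $\sigma$-stable. Indeed, writing $\partial(a) = a \otimes 1 - 1 \otimes a$ one has $\sigma(\partial(a)) = -\partial(a)$, so $\ker \mult_{Z_\zeta}$ is $\sigma$-stable; and $\sigma\bigl((1 \otimes \phi)\partial(a)\bigr) = -(1 \otimes \phi)\partial(\phi(a))$ because $\phi$ is an involution, so $\sigma$ carries the generators $\delta(a) := (1 \otimes \phi)\partial(a)$ of $(1 \otimes \phi)(\ker \mult_{Z_\zeta})$ into the ideal itself (up to sign). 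Hence $\sigma(J_1) = J_0 \cap \Ann_{Z \otimes_k Z}(B) = J_1$. (Quoting Lemma \ref{lem: JdPoinc} alone would only yield $J_2 = \sigma(J_1)$, and would still leave $J_1 = J_2$ to be verified separately, so it seems cleaner to deduce b) from a).)
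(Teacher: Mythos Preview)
Your treatment of part b) matches the paper's argument essentially verbatim, and the ``$\subseteq$'' direction of part a) is the same as well. The divergence is in the ``$\supseteq$'' direction of part a), and here the paper takes a much simpler route than the one you sketch.

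You propose to control the contribution of the cokernel $C$ by establishing (i) that $J_0$ kills $C$ and (ii) that the short exact sequence (\ref{eq: E^1bimodses}) splits over $Z \otimes_k Z$. You correctly flag this as the main difficulty, and indeed neither claim is obvious (nor is either established in the paper). The paper bypasses $C$ entirely with the following observation: since $\dim_k C < \infty$, the left $k[\zeta]$-action on $C$ has a nonzero annihilating polynomial $g(\zeta)$, so $g(\zeta)E^1 \subseteq \ker(f_1) \oplus \ker(g_1)$. If $x \in Z \otimes_k Z$ kills $\ker(f_1) \oplus \ker(g_1)$, then $x$ kills $g(\zeta)E^1$; since left multiplication by $g(\zeta)$ commutes with the $Z \otimes_k Z$-action, this gives $g(\zeta)(x \cdot E^1) = 0$. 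Now \cite{OS22} Lemma 5.1 says that $E^1$ is $k[\zeta]$-torsionfree, so $x \cdot E^1 = 0$ and hence $x \in J_1$. Thus $J_1 = \Ann_{Z \otimes_k Z}(\ker f_1) \cap \Ann_{Z \otimes_k Z}(\ker g_1) = \Ann_{Z \otimes_k Z}(B) \cap J_0$ directly, with no need to analyse $C$ or the extension class.

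So the missing idea in your proposal is precisely this torsionfreeness trick: rather than proving the sequence splits, one absorbs the finite-dimensional $C$ into a $k[\zeta]$-torsion phenomenon and then uses that $E^1$ has no such torsion. Your route might be completable with enough explicit computation from \cite{OS22}, but it is harder and unnecessary.
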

\begin{proof} a) Again, write $A = Z \otimes_k Z$. Using (\ref{eq: E^1bimodses}), we see that
\[\Ann_A(E^1) \subseteq \Ann_A(\ker (f_1) \oplus \ker (g_1)).\]
For the reverse inclusion, suppose that $x \in \Ann_A(\ker (f_1) \oplus \ker (g_1))$. Since the $(H,H)$-bimodule $C$ appearing in (\ref{eq: E^1bimodses}) satisfies $\dim_k C < \infty$, there is a non-zero $g(\zeta) \in k[\zeta]$ such that $g(\zeta)E^1 \subseteq \ker (f_1) \oplus \ker (g_1)$. Hence $x \cdot (g(\zeta)E^1) = 0$, so $g(\zeta)(x \cdot E^1) = 0$. Since $E^1$ is $k[\zeta]$-torsionfree by \cite{OS22} Lemma 5.1, this forces $x \cdot E^1 = 0$. Hence $x \in \Ann_A(E^1)$ and
\[\Ann_A(E^1) = \Ann_A(\ker (f_1) \oplus \ker (g_1)) = \Ann_A(\ker(f_1)) \hsp \cap \hsp \Ann_A(\ker(g_1)).\]
Using Cor. \ref{cor: rightZaction} and Lemma \ref{lem: Annkerg1}, we see that
\begin{equation}\label{eq: J1decomp} J_1 = \Ann_A(E^1) = \Ann_A(B) \cap J_0.\end{equation}
b) We know that $B$ is in fact a $(Z_\zeta,Z_\zeta)$-bimodule, so that
\[\Ann_A(B) = A \cap \Ann_{Z_\zeta \otimes_k Z_\zeta}(B).\]
In view of formula (\ref{eq: J1decomp}) and Lemma \ref{lem: AnnE0}.b, it is enough to show that the obvious extension of $\sigma$ to $Z_\zeta \otimes Z_\zeta$ preserves $\Ann_{Z_\zeta \otimes_k Z_\zeta}(B)$.

Recall from Thm. \ref{thm: eauBea} that this ideal is generated by $\{\delta(a) : a \in Z_\zeta\}$, where we define $\delta(a) := a \otimes 1 - 1 \otimes \phi(a)$ for each $a \in A$.  Since $\phi^2(a) = a$ for all $a \in Z_\zeta$, we calculate
\[\sigma(\delta(a)) = \sigma\left(a \otimes 1 - 1 \otimes \phi(a)\right) = -(\phi(a) \otimes 1 - 1 \otimes a) = - \delta(\phi(a)).\]
Therefore $\sigma$ preserves $\Ann_{Z_\zeta \otimes_k Z_\zeta}(B)$ as required.
\end{proof}
Putting everything together, we obtain

\begin{corollary} \label{cor: AnnEstar}  Suppose that $\frF = \bQ_p$, $p \neq 2,3$ and $\pi = p$. Then
\[\Ann_{Z \otimes_kZ}(E^\ast) \hsp = \hsp \ker \mult_Z \hsp \cap \hsp \Ann_{Z \otimes_kZ}(B).\]
\end{corollary}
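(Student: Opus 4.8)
The plan is to obtain the corollary by assembling the degree-$0$ and degree-$1$ annihilator computations already in hand (Lemma \ref{lem: AnnE0} and Theorem \ref{thm: AnnE1}) and using the Poincar\'e duality symmetry of Lemma \ref{lem: JdPoinc} to take care of the higher-degree terms. First I would record that under the running assumptions $\frF = \bQ_p$, $p \neq 2,3$, $\pi = p$, the group $G = \SL_2(\bQ_p)$ has dimension $d = 3$ as a $p$-adic Lie group, and that $I$ has no element of order $p$; hence, exactly as in the verification of \textbf{(A2)} in the introduction, Frobenius reciprocity gives $E^n = \Ext^n_G(X,X) = H^n(I,X) = 0$ for all $n > 3$. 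Consequently $E^\ast = E^0 \oplus E^1 \oplus E^2 \oplus E^3$, and since an element of $Z \otimes_k Z$ annihilates the direct sum iff it annihilates each summand,
\[ \Ann_{Z \otimes_k Z}(E^\ast) \;=\; J_0 \cap J_1 \cap J_2 \cap J_3 . \]

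Next I would apply Lemma \ref{lem: JdPoinc} with $d = 3$, which gives $J_3 = J_{d-0} = \sigma(J_0)$ and $J_2 = J_{d-1} = \sigma(J_1)$. By Lemma \ref{lem: AnnE0}.b we have $\sigma(J_0) = J_0$, and by Theorem \ref{thm: AnnE1}.b we have $\sigma(J_1) = J_1$; so the two ``outer'' and the two ``inner'' degrees each contribute the same ideal, and the intersection collapses to $\Ann_{Z \otimes_k Z}(E^\ast) = J_0 \cap J_1$. Finally, Theorem \ref{thm: AnnE1}.a identifies $J_1 = J_0 \cap \Ann_{Z \otimes_k Z}(B)$, whence $\Ann_{Z \otimes_k Z}(E^\ast) = J_0 \cap \Ann_{Z \otimes_k Z}(B)$, and Lemma \ref{lem: AnnE0}.a rewrites $J_0 = \ker \mult_Z$, which is the asserted formula.

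Since every genuinely substantial ingredient — the explicit description of $Z$, the identification of the relevant piece of $E^1$ with the bimodule $B$, the computation of $\Ann(B)$ inside $Z_\zeta \otimes Z_\zeta$, the Poincar\'e duality symmetry $J_{d-n} = \sigma(J_n)$, and the $\sigma$-invariance of $J_0$ and $J_1$ — has already been established, this final step is essentially bookkeeping. The only points requiring care are that $d$ equals $3$ exactly, so that the self-dual middle degrees $n = 1, 2$ together with the ends $n = 0, 3$ exhaust all nonzero $E^n$, and that the $\sigma$-invariance statements Lemma \ref{lem: AnnE0}.b and Theorem \ref{thm: AnnE1}.b are genuinely available; both are in place, so I do not anticipate any real obstacle here.
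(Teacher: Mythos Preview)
Your proposal is correct and follows essentially the same approach as the paper: reduce to $J_0 \cap J_1 \cap J_2 \cap J_3$ using $d=3$, collapse via the Poincar\'e duality symmetry $J_{d-n}=\sigma(J_n)$ together with the $\sigma$-invariance of $J_0$ and $J_1$, and then invoke Theorem~\ref{thm: AnnE1}.a and Lemma~\ref{lem: AnnE0}.a. The only difference is that you spell out the vanishing of $E^n$ for $n>3$ via Frobenius reciprocity and the cohomological dimension of $I$, which the paper leaves implicit.
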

\begin{proof} Here $d = 3$, so $\Ann_{Z \otimes_kZ}(E^\ast) = J_0 \cap J_1 \cap J_2 \cap J_3$. However $J_2 = \sigma(J_1)$ and $J_3 = \sigma(J_0)$ by Lemma \ref{lem: JdPoinc}. Using Lemma \ref{lem: AnnE0}.b and Thm. \ref{thm: AnnE1}.b, we obtain
\[\Ann_{Z \otimes_kZ}(E^\ast) = J_0 \cap J_1.\]
Now use Thm. \ref{thm: AnnE1}.a and Lemma \ref{lem: AnnE0}.a  to conclude. \end{proof}

\subsection{The computation of the quotient space}
From now on, we assume that $\frF = \bQ_p$, $p \neq 2,3$ and $\pi = p$. Our goal is to compute the quotient locally ringed space $\Xi / \cR$, where
\[\cR := V(\Ann_{Z \otimes_k Z}(E^\ast)) \subset \Xi \times \Xi.\]

\subsubsection{Passing to the normalisation} Cor. \ref{cor: AnnEstar} tells us that
\[\cR = V\left( \ker \mult_Z \hsp \cap \hsp \Ann_{Z \otimes_k Z}(B)\right).\]
Recall from Def. \ref{def: XiXiprimePhi} that $\theta : \Xi' \to \Xi$ is the normalisation of $\Xi$, where $\theta = \Spec(\varphi)$ and $\varphi : Z \to Z'$ is the map sending $X_\lambda \in Z$ to $e_\lambda t \in Z' = k[\Omega][t]$. Because the localisation map $Z \to Z_\zeta$ factors through $Z'$, we see that the $(Z_\zeta,Z_\zeta)$-bimodule $B$ from Def. \ref{def: bimodB} can be viewed as a $(Z',Z')$-bimodule via restriction along $Z' \hookrightarrow Z_\zeta$.
\begin{definition} \label{defn: DefOfRdash} We define $\cR' := V\left(\ker \mult_{Z'} \hsp \cap \hsp \Ann_{Z' \otimes_k Z'}(B)\right) \subset \Xi' \times \Xi'.$
\end{definition}
\begin{lemma} \label{lem: kermultZdash} Write $A' = Z' \otimes_k Z'$. Then we have
\[\ker \mult_{Z'} = A'(t \otimes 1 - 1 \otimes t) + \sum\limits_{\alpha \in \hatOm} A'(e_\alpha \otimes 1 - 1 \otimes e_\alpha).\]
\end{lemma}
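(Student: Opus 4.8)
This is an immediate application of Lemma \ref{lem: BiAnnA}.b, so the only work is to exhibit a convenient finite set of $k$-algebra generators for $Z' = k[\Omega][t]$. The plan is to take the generating set $\{e_\alpha : \alpha \in \hatOm\} \cup \{t\}$.

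First I would observe that the primitive idempotents $\{e_\alpha : \alpha \in \hatOm\}$ form a $k$-basis of $k[\Omega]$ (this is recorded just after equation (\ref{eq: defelambda}), using that $k$ contains $\frf$), so in particular they span $k[\Omega]$ as a $k$-vector space and hence generate $k[\Omega]$ as a $k$-algebra. Adjoining the polynomial variable $t$, we conclude that $Z' = k[\Omega][t]$ is generated as a $k$-algebra by the finite set $\{e_\alpha : \alpha \in \hatOm\} \cup \{t\}$.

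Then I would apply Lemma \ref{lem: BiAnnA}.b with $A = Z'$ and this generating set. Writing $\partial(a) = a \otimes 1 - 1 \otimes a$ as in the proof of that lemma, part b) gives that $\ker \mult_{Z'}$ is generated, as an ideal of $A' = Z' \otimes_k Z'$, by $\{\partial(e_\alpha) : \alpha \in \hatOm\} \cup \{\partial(t)\}$; that is,
\[ \ker \mult_{Z'} = A'(t \otimes 1 - 1 \otimes t) + \sum_{\alpha \in \hatOm} A'(e_\alpha \otimes 1 - 1 \otimes e_\alpha),\]
which is exactly the asserted equality.

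There is no real obstacle here: Lemma \ref{lem: BiAnnA} already does all the algebra, and the only point requiring any comment is that the $e_\alpha$ together with $t$ generate $Z'$, which follows from the $e_\alpha$ being a $k$-spanning set of $k[\Omega]$.
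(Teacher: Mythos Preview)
Your proof is correct and follows exactly the same approach as the paper: the paper's proof simply notes that $Z' = k[\Omega][t]$ is generated as a $k$-algebra by $\{t\} \cup \{e_\alpha : \alpha \in \hatOm\}$ and then invokes Lemma~\ref{lem: BiAnnA}.b.
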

\begin{proof} The $k$-algebra $Z' = k[\Omega][t]$ is generated by $\{t\} \cup \{e_\alpha : \alpha \in \hatOm\}$. Use Lemma \ref{lem: BiAnnA}.b.
\end{proof}
It turns out that $\Ann_{Z' \otimes_k Z'}(B)$ is slightly easier to compute than $\Ann_{Z \otimes_k Z}(B)$.
\begin{proposition} \label{prop: gensofJdash} Write $A' = Z' \otimes_k Z'$. Then we have
\[ \Ann_{A'}(B) = I := A'(t \otimes t - 1 \otimes 1) + \sum\limits_{\alpha \in \hatOm} A'(e_\alpha \otimes 1 - 1 \otimes e_{\mu/\alpha}).\]
\end{proposition}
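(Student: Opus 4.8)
The plan is to compute $\Ann_{A'}(B)$ by relating it to the annihilator of $B$ over the localised ring $Z_\zeta \otimes_k Z_\zeta$, which was already determined in Theorem \ref{thm: eauBea} and Corollary \ref{cor: AnnRBgens}. First I would record the key inclusion: since $B$ is $k[\zeta]$-torsionfree (Corollary \ref{cor: Bk[zeta]tf}) and $\zeta = t$ acts invertibly on $B$ (indeed $v\zeta = \zeta^{-1}v$ by Proposition \ref{prop: epstwist}.a, and likewise on the left), restriction along $Z' \hookrightarrow Z_\zeta$ tells us that an element $x \in A' = Z' \otimes_k Z'$ annihilates $B$ if and only if its image in $Z_\zeta \otimes_k Z_\zeta$ annihilates $B$. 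So $\Ann_{A'}(B) = A' \cap \Ann_{Z_\zeta \otimes_k Z_\zeta}(B)$, and by Theorem \ref{thm: eauBea} the latter equals $(1 \otimes \phi)(\ker \mult_{Z_\zeta})$, with the finite generating set $\{\delta(\zeta)\} \cup \{\delta(\eps_\alpha) : \alpha \in \hatOm\}$ from Corollary \ref{cor: AnnRBgens}, where $\delta(a) = a \otimes 1 - 1 \otimes \phi(a)$.

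Next I would identify these generators inside $A'$. Under the isomorphism $Z_\zeta \cong k[\Omega][t,t^{-1}]$ of Corollary \ref{cor: Zzeta}, which sends $\zeta \mapsto t$ and $\eps_\alpha \mapsto e_\alpha$, and under which $\phi$ becomes the automorphism fixing $k[t,t^{-1}]$ up to $t \mapsto t^{-1}$ and sending $e_\alpha \mapsto e_{\mu/\alpha}$, we get $\delta(\zeta) \leftrightarrow t \otimes 1 - 1 \otimes t^{-1}$ and $\delta(\eps_\alpha) \leftrightarrow e_\alpha \otimes 1 - 1 \otimes e_{\mu/\alpha}$. Now multiply $t \otimes 1 - 1 \otimes t^{-1}$ by the unit $1 \otimes t \in Z_\zeta \otimes_k Z_\zeta$ to obtain $t \otimes t - 1 \otimes 1$, which already lies in $A'$; so $\delta(\zeta)$ and $t \otimes t - 1 \otimes 1$ generate the same ideal of $Z_\zeta \otimes_k Z_\zeta$. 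Each $\delta(\eps_\alpha) = e_\alpha \otimes 1 - 1 \otimes e_{\mu/\alpha}$ already lies in $A'$. Hence $I := A'(t \otimes t - 1 \otimes 1) + \sum_{\alpha} A'(e_\alpha \otimes 1 - 1 \otimes e_{\mu/\alpha})$ maps into $\Ann_{Z_\zeta \otimes_k Z_\zeta}(B)$, giving $I \subseteq \Ann_{A'}(B)$.

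For the reverse inclusion $\Ann_{A'}(B) \subseteq I$, I would argue as follows. Let $x \in \Ann_{A'}(B) = A' \cap \Ann_{Z_\zeta \otimes_k Z_\zeta}(B)$. Then in $Z_\zeta \otimes_k Z_\zeta$ we can write $x = r_0 \cdot (t \otimes t - 1 \otimes 1) + \sum_\alpha r_\alpha \cdot (e_\alpha \otimes 1 - 1 \otimes e_{\mu/\alpha})$ for some $r_0, r_\alpha \in Z_\zeta \otimes_k Z_\zeta$. Since $Z_\zeta \otimes_k Z_\zeta = (Z' \otimes_k Z')[(t\otimes 1)^{-1}, (1 \otimes t)^{-1}]$ is a localisation of $A'$ at the multiplicatively closed set generated by $t \otimes 1$ and $1 \otimes t$, and since $t \otimes t \equiv 1 \otimes 1$ modulo the first generator, clearing denominators one may assume the $r_i$ lie in $A'[(t \otimes 1)^{-1}]$; and modulo $t \otimes t - 1 \otimes 1$ one has $1 \otimes t \equiv (t \otimes 1)^{-1}$ so only powers of $t \otimes 1$ occur. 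Multiplying the identity for $x$ by a sufficiently high power $(t\otimes 1)^N$ and using that $t \otimes 1$ is a nonzerodivisor on $A'/I$ — because $A'/I \cong Z'$ via $a \otimes 1 + I \mapsto a$ (using the generators of $I$ to collapse the second factor, noting $1 \otimes t \equiv (t\otimes 1)^{-1}$ forces $t\otimes 1$ to become invertible, hence $A'/I$ is a localisation of $Z'$ and in particular $t$-torsionfree) — one concludes $x \in I$. I expect the bookkeeping in this last step, namely checking carefully that $t \otimes 1$ acts invertibly and without torsion on $A'/I$ so that clearing the $(t\otimes 1)^{-1}$ denominators is harmless, to be the main obstacle; everything else is a direct translation through the isomorphism of Corollary \ref{cor: Zzeta}. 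An alternative, cleaner route for this step is to observe that $B$ is itself a module over $Z_\zeta \otimes_k Z_\zeta$ on which $t \otimes 1$ acts invertibly, so $\Ann_{A'}(B)$ is automatically saturated with respect to $t \otimes 1$; combined with $I$ being $(t\otimes 1)$-saturated (which follows from $A'/I$ being $t$-torsionfree) and $I$ and $\Ann_{A'}(B)$ having equal localisations at $t \otimes 1$, one gets $I = \Ann_{A'}(B)$.
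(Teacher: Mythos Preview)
Your approach is essentially the paper's: reduce to the already-computed annihilator over $Z_\zeta \otimes_k Z_\zeta$ via Cor.~\ref{cor: AnnRBgens}, then contract back to $A'$ by checking that $A'/I$ has no torsion for the relevant localising element. The paper does this more directly by localising at $t\otimes t$ (rather than $t\otimes 1$) and computing $A'/I \cong S[t,t^{-1}]$ with $S = k[\Omega]^{\otimes 2}/\langle e_\alpha\otimes 1 - 1\otimes e_{\mu/\alpha}\rangle$, which is visibly $(t\otimes t)$-torsionfree; this avoids your denominator-clearing detour. Two small points: $A'/I$ is not $Z'$ but its localisation $Z'_t \cong k[\Omega][t,t^{-1}]$ (as your own parenthetical remark effectively concedes), and your assertion that $I$ and $\Ann_{A'}(B)$ have equal localisations at $t\otimes 1$ is most cleanly justified by noting that $1\otimes t$ is already invertible modulo $I$ (since $t\otimes t \equiv 1$), so the $(t\otimes 1)$- and $(t\otimes t)$-localisations of $A'/I$ coincide and you can appeal directly to Cor.~\ref{cor: AnnRBgens}.
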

\begin{proof} By Cor. \ref{cor: Zzeta}, there is a $k$-algebra isomorphism $Z'_t \cong Z_\zeta$. Under this isomorphism, $t \in Z'$ maps to $\zeta \in Z_\zeta$ and $e_\alpha \in Z'$ maps to $\eps_\alpha = X_\alpha/\zeta \in Z_\zeta$, for each $\alpha \in \hatOm$. This isomorphism also induces $k$-algebra isomorphisms $A'_{t \otimes t} \cong (Z'_t) \otimes_k (Z'_t) \cong (Z_\zeta) \otimes_k (Z_\zeta) \cong R$.  Regarding $B$ as an $A'_{t \otimes t}$-module via this isomorphism, we can apply Cor. \ref{cor: AnnRBgens} to obtain
\[ \Ann_{A'_{t \otimes t}}(B) = A'_{t \otimes t} (t \otimes 1 - 1 \otimes t^{-1}) + \sum\limits_{\alpha \in \hatOm} A'_{t \otimes t}(e_\alpha \otimes 1 - 1 \otimes e_{\mu/\alpha}) = A'_{t \otimes t} \cdot I.\]
Using the definition of $I$, we see that $A'/I$ is isomorphic to $S[t, t^{-1}]$ as a $k$-algebra, where $S := k[\Omega]^{\otimes 2}  / \left\langle e_\alpha \otimes 1 - 1 \otimes e_{\mu/\alpha} : \alpha \in\hatOm\right\rangle$. This ring is $t$-torsionfree. Therefore $A'/I$ is $t \otimes t$-torsionfree, so $A' \cap (I \cdot A'_{t \otimes t}) = I$. Hence $\Ann_{A'}(B) = A' \cap \Ann_{A'_{t \otimes t}}(B) = A' \cap (I \cdot A'_{t \otimes t}) = I.$
\end{proof}
Recall the coequaliser diagram $\xymatrix{\Xi_{\sing} \ar@<0.6ex>[r]^{a}\ar@<-0.6ex>[r]_{b} & \Xi' \ar[r]^\theta & \Xi}$ from Prop. \ref{prop: NormalisationAsQuotient}.
\begin{proposition} \label{prop: XiRXi'R'} Let $q' : \Xi' \to \Xi'/\cR'$ be a coequaliser of  in $\LRS$ of $\xymatrix{\cR' \ar@<0.6ex>[r]^{\pr_1^{\cR'}}\ar@<-0.6ex>[r]_{\pr_2^{\cR'}} & \Xi'}$, and let $s : \Xi'/\cR' \to (\Xi'/\cR')/\Xi_{\sing}$ be a coequaliser in $\LRS$ of $\xymatrix{\Xi_{\sing} \ar@<0.6ex>[r]^{q'a}\ar@<-0.6ex>[r]_{q'b} & \Xi'/\cR'}$. Then
\[\Xi / \cR \quad \cong \quad (\Xi'/\cR')/\Xi_{\sing}.\]
\end{proposition}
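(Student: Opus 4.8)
The plan is to exhibit both $\Xi/\cR$ and $(\Xi'/\cR')/\Xi_{\sing}$ as colimits in $\LRS$ of a single diagram, and then to identify one of the resulting coequalisers with $\Xi/\cR$ directly, using the construction of coequalisers in $\LRS$ recalled in \cite{DG} Prop.~I.1.1.6. First, consider the diagram $D$ in $\LRS$ given by the two parallel pairs $\cR' \rightrightarrows \Xi'$ (the maps $\pr_1^{\cR'},\pr_2^{\cR'}$) and $\Xi_{\sing} \rightrightarrows \Xi'$ (the maps $a,b$). A cocone on $D$ with vertex $T$ is precisely a morphism $\Xi' \to T$ equalising all four arrows; by the universal property of $q'$ followed by that of $s$, this is the same datum as a morphism $(\Xi'/\cR')/\Xi_{\sing} \to T$, so $(\Xi'/\cR')/\Xi_{\sing}$ is a colimit of $D$. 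On the other hand, by (the proof of) Prop.~\ref{prop: NormalisationAsQuotient} the map $\theta:\Xi'\to\Xi$ is a coequaliser of $\Xi_{\sing}\rightrightarrows\Xi'$ \emph{in $\LRS$} (Lemma~\ref{lem: catquot} produces a genuine $\LRS$-coequaliser, not merely a categorical quotient in $\Sch$), so a cocone on $D$ with vertex $T$ is equivalently a morphism $\Xi\to T$ equalising $\theta\circ\pr_1^{\cR'},\theta\circ\pr_2^{\cR'}:\cR'\rightrightarrows\Xi$. Hence it suffices to prove that the coequaliser in $\LRS$ of $\cR'\rightrightarrows\Xi$ along $\theta\circ\pr_i^{\cR'}$ is isomorphic to $\Xi/\cR$.

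Next, I would show that $\theta\times\theta:\Xi'\times\Xi'\to\Xi\times\Xi$ restricts to a (schematically dominant) morphism $\rho:\cR'\to\cR$. By Cor.~\ref{cor: AnnEstar}, $\cR=V(J)$ with $J=\ker\mult_Z\cap\Ann_{Z\otimes_kZ}(B)$, and by Def.~\ref{defn: DefOfRdash}, $\cR'=V(J')$ with $J'=\ker\mult_{Z'}\cap\Ann_{Z'\otimes_kZ'}(B)$; the key identity is $J=(\varphi\otimes\varphi)^{-1}(J')$, which says exactly that $\varphi\otimes\varphi$ descends to an \emph{injective} ring map $(Z\otimes_kZ)/J\hookrightarrow(Z'\otimes_kZ')/J'$, i.e.\ (as $\cR,\cR'$ are affine) that $\rho$ is well defined and schematically dominant. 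To get this identity: $\varphi:Z\to Z'$ is injective (it is the normalisation map of the reduced ring $Z$), so the commutative square $\mult_{Z'}\circ(\varphi\otimes\varphi)=\varphi\circ\mult_Z$ gives $(\varphi\otimes\varphi)^{-1}(\ker\mult_{Z'})=\ker\mult_Z$; and since the $(Z,Z)$- and $(Z',Z')$-bimodule structures on $B$ both factor through its $Z_\zeta\otimes_kZ_\zeta$-module structure (Prop.~\ref{prop: epstwist}.a shows $\zeta$ acts invertibly, and $Z\otimes_kZ\to Z_\zeta\otimes_kZ_\zeta$ factors through $\varphi\otimes\varphi$), we get $\Ann_{Z\otimes_kZ}(B)=(\varphi\otimes\varphi)^{-1}(\Ann_{Z'\otimes_kZ'}(B))$. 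Intersecting the two gives $J=(\varphi\otimes\varphi)^{-1}(J')$.

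Finally, I would compare the two coequalisers through \cite{DG} Prop.~I.1.1.6. The morphism $\theta$ is finite (each component of $Z'=k[\Omega][t]$ is module-finite over the corresponding component of $Z$ by Prop.~\ref{prop: Zcomps}; one could also argue on the nonsingular locus via Lemma~\ref{lem: nslocus}), hence $\rho$ is finite; being finite and schematically dominant, $\rho$ is surjective, so $(\theta\times\theta)(|\cR'|)=|\cR|$. Since $\theta\circ\pr_i^{\cR'}=\pi_i\circ\rho$, the pairs $\cR'\rightrightarrows\Xi$ and $\cR\rightrightarrows\Xi$ have the same image in $|\Xi|\times|\Xi|$ and thus generate the same equivalence relation on $|\Xi|$, so the two coequalisers have the same underlying topological space. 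For the structure sheaves: over the preimage in $\Xi$ of an open subset of the quotient, a local section $s$ of $\cO_\Xi$ satisfies $\pi_1^\hash(s)=\pi_2^\hash(s)$ in $\cO_{\cR}$ if and only if $(\theta\pr_1^{\cR'})^\hash(s)=(\theta\pr_2^{\cR'})^\hash(s)$ in $\cO_{\cR'}$, because $(\theta\pr_i^{\cR'})^\hash=\rho^\hash\circ\pi_i^\hash$ and $\rho^\hash$ is injective on sections ($\rho$ being schematically dominant). Hence the $\cO$-sheaf of the $\cR$-coequaliser coincides with that of the $\cR'$-coequaliser, and combining with the first paragraph yields $\Xi/\cR\cong(\Xi'/\cR')/\Xi_{\sing}$.

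I expect the main obstacle to be the ideal-theoretic identity $J=(\varphi\otimes\varphi)^{-1}(J')$ of the second paragraph — the precise statement that the defining ideals of $\cR$ and $\cR'$ correspond under the normalisation — together with the care required in the last paragraph to feed the surjectivity of $\rho$ and the injectivity of $\rho^\hash$ correctly into the explicit $\LRS$-colimit description, making sure throughout that one uses $\theta$ as an honest coequaliser in $\LRS$ rather than only as a categorical quotient of schemes.
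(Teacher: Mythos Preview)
Your proposal is correct and is essentially the paper's proof with the categorical bookkeeping unpacked inline rather than factored through auxiliary lemmas. The paper packages your first paragraph into an abstract result (Lemma~\ref{lem: TwoCoequalisers}): given the four coequalisers $q,q',\theta,s$ and an \emph{epimorphism} $\theta':\cR'\to\cR$ in $\LRS$ compatible with the projections, one gets $\cY\cong\cZ$. It then packages your third paragraph into Lemma~\ref{lem: epiLRS}: a morphism of affine schemes with surjective underlying map and injective comorphism is an epimorphism in $\LRS$. Your approach instead first rewrites $(\Xi'/\cR')/\Xi_{\sing}$ as the coequaliser of $\cR'\rightrightarrows\Xi$ and then compares directly with $\Xi/\cR$ via the explicit $\LRS$-coequaliser description; this is the same content, just with the abstraction layer removed.

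The crucial second paragraph, namely the identity $J=(\varphi\otimes\varphi)^{-1}(J')$ giving the injective ring map $\cO(\cR)\hookrightarrow\cO(\cR')$, is argued identically in the paper, and the surjectivity of $|\rho|$ is obtained in the paper exactly as you do, from module-finiteness of $Z'$ over $Z$ via going-up.
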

\begin{proof}The situation is summarised in the following diagram:
\begin{equation} \begin{split} \xymatrix{ & & \Xi_{\sing} \ar@<0.6ex>[dd]^b\ar@<-0.6ex>[dd]_a  \ar@<0.6ex>[ddrr]^{q'b}\ar@<-0.6ex>[ddrr]_{q'a} & & \\
 & & & & & \\
\cR' \ar@<0.6ex>[rr]^{\pr_1^{\cR'}}\ar@<-0.6ex>[rr]_{\pr_2^{\cR'}} && \Xi' \ar[rr]_{q'} \ar[d]_\theta & & \Xi'/\cR' \ar[dr]^s & \\
\cR \ar@<0.6ex>[rr]^{\pr_1^{\cR}}\ar@<-0.6ex>[rr]_{\pr_2^{\cR}} && \Xi \ar[rr]_q  & & \Xi/\cR & (\Xi'/\cR')/\Xi_{\sing}. } \end{split} \end{equation}
The result now follows from Lemma \ref{lem: TwoCoequalisers}, provided we can find an epimorphism $\theta' : \cR' \to \cR$ in $\LRS$ making the diagram commute, in the sense that $\theta' \pr_i^{\cR} = \pr_i^{\cR'} \theta$ holds for $i=1,2$.

Write $A = Z \otimes_k Z$ and $A' = Z' \otimes_k Z'$; then by Cor. \ref{cor: AnnEstar} we have
\[\cO(\cR) = \frac{A}{\ker \mult_Z \hsp \cap \hsp \Ann_A(B)}  \qmb{and} \cO(\cR') = \frac{A'}{\ker \mult_{Z'} \hsp \cap \hsp \Ann_{A'}(B)}.\]
The map $\varphi \otimes \varphi : A \to A'$ is an injective $k$-algebra homomorphism; furthermore we have
\[ \ker \mult_Z = (\varphi \otimes \varphi)^{-1}(\ker \mult_{Z'}) \qmb{and} \Ann_A(B) = (\varphi \otimes \varphi)^{-1}(\Ann_{A'}(B)).\]
Therefore $\varphi \otimes \varphi$ descends to a natural \emph{injective} $k$-algebra homomorphism
\begin{equation}\label{eq: thetadashhash} \overline{\varphi \otimes \varphi} : \cO(\cR) \hookrightarrow \cO(\cR').\end{equation}
We define $\theta' := \Spec(\overline{\varphi \otimes \varphi}) : \cR' \to \cR$ to be the corresponding morphism of affine schemes. The following diagram of commutative rings
\[ \xymatrix{
\cO(\cR') && \ar@<0.6ex>[ll]^(0.4){\pr_2^\sharp} \ar@<-0.6ex>[ll]_(0.4){\pr_1^\sharp} Z' \\
\cO(\cR) \ar[u]^{\overline{\varphi\otimes\varphi}} &&  \ar@<0.6ex>[ll]^(0.4){\pr_2^\sharp}\ar@<-0.6ex>[ll]_(0.4){\pr_1^\sharp} Z \ar[u]^{\varphi}
}\]
 is readily checked to be commutative: for example, we have
 \[\pr_1^\sharp(\varphi(a)) = \overline{\varphi(a) \otimes 1} = \overline{\varphi \otimes \varphi}( \overline{a \otimes 1} ) = \overline{\varphi \otimes \varphi}( \pr_1^\sharp(a)) \qmb{for any} a \in Z.\]
 Because the schemes $\Xi,\Xi',\cR,\cR'$ are all affine, it follows that $\theta' \pr_i^{\cR} = \pr_i^{\cR'} \theta$ holds for $i=1,2$. It remains to show that $\theta'$ is an epimorphism in $\LRS$, and this follows from Lemma \ref{lem: epiLRS} below, once we check its two conditions.

a) We note that $Z' = k[\Omega][t]$ is a finitely generated $Z$-module via $\varphi : Z \to Z'$: indeed, $\Omega \subset Z'$ is a finite generating set as a $k[t]$-module, and $k[t] = k[\varphi(\zeta)] \subseteq \varphi(Z)$. Hence $A'$ is a finitely generated $A$-module, so $\cO(\cR')$ is a finitely generated $\cO(\cR)$-module. Hence $|\theta'|$ is surjective by \cite{AM} Prop. 5.1 and Thm. 5.10.

b) The map $(\theta')^\sharp(\cR) = \overline{\varphi \otimes \varphi}$ is injective, as we saw above in equation (\ref{eq: thetadashhash}).
\end{proof}

\begin{lemma} \label{lem: epiLRS} \footnote{Because $|\cdot| : \LRS \to \Top$ is a left adjoint, it must preserve epimorphisms, so condition a) is necessary for Lemma \ref{lem: epiLRS} to hold. The discussion \cite{KStm} gives an explicit example where Lemma \ref{lem: epiLRS} fails if only condition b) is assumed to hold.}
Let $f : Y \to X$ be a morphism of affine schemes. Suppose that
\begin{itemize} \item[a)] $|f| : |Y| \to |X|$ is surjective,
\item[b)] $f^\sharp(X) : \cO(X) \to \cO(Y)$ is injective.
\end{itemize}
Then $f : Y \to X$ is an epimorphism in $\LRS$.
\end{lemma}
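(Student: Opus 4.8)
\end{lemma}

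\begin{proof}
The plan is to verify directly that $f$ has the universal property of an epimorphism in $\LRS$. So suppose $g,h \colon X \to W$ are morphisms in $\LRS$ with $g \circ f = h \circ f$; I will show $g = h$. Recall that a morphism in $\LRS$ is a continuous map together with a morphism of sheaves of rings inducing local homomorphisms on stalks, and that two such morphisms agree precisely when both components do. First, since $|g| \circ |f| = |h| \circ |f|$ and $|f|$ is surjective by (a), we get $|g| = |h| =: \phi$, so the comorphisms are maps of sheaves $g^\sharp, h^\sharp \colon \cO_W \to \phi_* \cO_X$, and it remains to prove $g^\sharp = h^\sharp$.

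The key step will be to show that the comorphism $f^\sharp \colon \cO_X \to f_* \cO_Y$ is a \emph{monomorphism} of sheaves of rings. Here is where affineness enters. Write $X = \Spec A$ and $Y = \Spec B$, and put $\alpha := f^\sharp(X) \colon A \to B$, which is injective by (b). For $a \in A$ one has $f^{-1}(D(a)) = D(\alpha(a))$, and under the canonical identifications $\cO_X(D(a)) = A_a$ and $(f_* \cO_Y)(D(a)) = \cO_Y(D(\alpha(a))) = B_{\alpha(a)} \cong B \otimes_A A_a$, the map $f^\sharp_{D(a)}$ is the localisation at $a$ of $\alpha \colon A \to B$. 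Since $A_a$ is flat over $A$, tensoring the injection $\alpha$ with $A_a$ keeps it injective, so $f^\sharp$ is injective on every basic open subset of $X$. As the $D(a)$ form a basis of the topology and a map of sheaves is a monomorphism exactly when it is injective on the members of a basis of opens, $f^\sharp$ is a monomorphism.

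Finally, applying $(-)^\sharp$ to $g \circ f = h \circ f$ and using $|g| = |h| = \phi$ together with the composition rule $(g \circ f)^\sharp = \phi_*(f^\sharp) \circ g^\sharp$ (and likewise for $h$), we obtain the equality
\[ \phi_*(f^\sharp) \circ g^\sharp \;=\; (g \circ f)^\sharp \;=\; (h \circ f)^\sharp \;=\; \phi_*(f^\sharp) \circ h^\sharp \]
of morphisms $\cO_W \to \phi_* f_* \cO_Y$. Because the pushforward $\phi_*$ is left exact it carries the monomorphism $f^\sharp$ to a monomorphism $\phi_*(f^\sharp)$, which we may therefore cancel on the left; hence $g^\sharp = h^\sharp$ and so $g = h$.

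I expect the one genuinely delicate point to be the middle paragraph: hypothesis (b) only asserts injectivity of $f^\sharp$ on \emph{global} sections, and upgrading this to an honest monomorphism of structure sheaves uses the affineness of $X$ and $Y$ in an essential way, through the flatness of localisation. The remaining steps are formal manipulations with comorphisms and the left-exactness of pushforward.
\end{proof}
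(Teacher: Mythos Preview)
Your proof is correct and follows essentially the same approach as the paper's: both arguments first use surjectivity of $|f|$ to equate the underlying maps, then reduce to showing that $f^\sharp : \cO_X \to f_*\cO_Y$ is a monomorphism of sheaves by checking on basic opens $D(a)$ via the fact that localisation preserves injections, and finally cancel the monomorphism $\phi_*(f^\sharp)$ on the left. Your version is slightly more explicit in invoking flatness of localisation and left-exactness of $\phi_*$, but the structure is identical.
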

\begin{proof} Suppose that $uf = vf$ for some morphisms $u,v : X \to Z$ in $\LRS$. Then $|u| |f| = |v| |f|$ implies that $|u| = |v|$ because $|f|$ is surjective. Hence it remains to show that $u^\sharp = v^\sharp$. Now, $|u|_\ast(f^\sharp) \circ u^\sharp = (uf)^\sharp = (vf)^\sharp = |v|_\ast(f^\sharp) \circ v^\sharp$, so it is enough to show that $f^\sharp : \cO_X \to f_\ast \cO_Y$ is injective. This can be checked on basic open subsets $D(g)$ of $X$, $g \in \cO(X)$. But since $X, Y$ are affine, the map $f^\sharp(D(g)) : \cO(D(g)) \to \cO(f^{-1}D(g))$ is the localisation of the injective map $f^\sharp(X) : \cO(X) \to \cO(Y)$ at $g$ and is therefore also injective.
\end{proof}
\subsubsection{Calculating $\Xi'/\cR'$} For each $\alpha \in \hatOm$, we let $\Xi_\alpha := V(1 - e_\alpha) \subset \Xi'$ be the closed subscheme cut out by the idempotent $1 - e_\alpha \in \cO(\Xi') = k[\Omega][t]$. Let $t_\alpha$  be the image of $t \in \cO(\Xi')$ in $\cO(\Xi'_\alpha)$; then $\cO(\Xi'_\alpha) = k[t_\alpha]$ and $\Xi' = \coprod_{\alpha \in \hatOm} \Xi'_\alpha$. It follows that
\[ \Xi' \times \Xi' = \coprod_{\alpha,\beta \in \hatOm} \Xi'_\alpha \times \Xi'_\beta.\]
We write $\cR'_{\alpha,\beta} := \cR' \cap (\Xi'_\alpha \times \Xi'_\beta)$ for all $\alpha, \beta \in \hatOm$, so that
\[ \cR' = \coprod_{\alpha,\beta \in \hatOm} \cR'_{\alpha,\beta}.\]
For each $\alpha, \beta \in \hatOm$, will identify $\cO(\Xi'_\alpha \times \Xi'_\beta) = k[t_\alpha] \otimes_k k[t_\beta]$ with the polynomial algebra $k[x,y]$, via $x \mapsto t_\alpha \otimes 1$ and $y \mapsto 1 \otimes t_\beta$.  With this notation in hand, we can now calculate the defining equations for each $\cR'_{\alpha,\beta}$.
\begin{proposition} \label{prop: RdashAlphaBeta} For every $\alpha, \beta \in \hatOm$ we have
\[ \cR'_{\alpha,\beta} = V( (x-y)^{\delta_{\alpha,\beta}} (xy-1)^{\delta_{\alpha, \mu/\beta}} ).\]
\end{proposition}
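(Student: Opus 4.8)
The plan is to compute, for each fixed pair $\alpha,\beta\in\hatOm$, the defining ideal of the closed subscheme $\cR'_{\alpha,\beta}$ inside $\cO(\Xi'_\alpha\times\Xi'_\beta)=k[x,y]$, by restricting the two ideals $\ker\mult_{Z'}$ and $\Ann_{Z'\otimes_kZ'}(B)$ that cut out $\cR'$ (Def. \ref{defn: DefOfRdash}) to the relevant block of $Z'\otimes_kZ'$ and intersecting the results.

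First I would set up the block decomposition. Since $k[\Omega]\cong\prod_{\gamma\in\hatOm}k$ via the idempotents $e_\gamma$, we have $Z'=k[\Omega][t]\cong\prod_{\gamma\in\hatOm}k[t_\gamma]$ and hence
\[ A':=Z'\otimes_kZ'\ \cong\ \bigoplus_{\gamma,\delta\in\hatOm}k[t_\gamma]\otimes_kk[t_\delta],\]
the projection onto the $(\alpha,\beta)$-summand sending $e_\gamma\otimes 1\mapsto\delta_{\gamma\alpha}$, $1\otimes e_\gamma\mapsto\delta_{\gamma\beta}$, $t\otimes 1\mapsto x$ and $1\otimes t\mapsto y$ (so $t\otimes t\mapsto xy$ and $1\otimes 1\mapsto 1$). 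Because $A'$ is a finite product of rings, every ideal of $A'$ is the direct sum of its images in the factors and intersection of ideals is computed factorwise; so, writing $J:=\ker\mult_{Z'}\cap\Ann_{A'}(B)$, the scheme $\cR'_{\alpha,\beta}=\cR'\cap(\Xi'_\alpha\times\Xi'_\beta)$ equals $\Spec(k[x,y]/J_{\alpha,\beta})$, where $J_{\alpha,\beta}$ is the intersection in $k[x,y]$ of the image of $\ker\mult_{Z'}$ and the image of $\Ann_{A'}(B)$.

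Next I would evaluate those two images. By Lemma \ref{lem: kermultZdash}, the image of $\ker\mult_{Z'}$ is generated by $x-y$ together with the scalars $\delta_{\gamma\alpha}-\delta_{\gamma\beta}$ for $\gamma\in\hatOm$; these scalars all vanish when $\alpha=\beta$ and at least one of them equals $1$ otherwise, so this image is $\bigl((x-y)^{\delta_{\alpha,\beta}}\bigr)$ in every case (with the convention $(x-y)^0=1$). Likewise, by Prop. \ref{prop: gensofJdash} the image of $\Ann_{A'}(B)$ is generated by $xy-1$ together with the scalars $\delta_{\gamma\alpha}-\delta_{\gamma,\mu/\beta}$, hence equals $\bigl((xy-1)^{\delta_{\alpha,\mu/\beta}}\bigr)$. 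Finally, since $k[x,y]$ is a UFD in which $x-y$ and $xy-1$ are non-associate irreducibles, the intersection of these two principal ideals is generated by their product, so $J_{\alpha,\beta}=\bigl((x-y)^{\delta_{\alpha,\beta}}(xy-1)^{\delta_{\alpha,\mu/\beta}}\bigr)$, which gives the stated formula (the cases in which one or both exponents vanish being trivial).

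The argument is essentially bookkeeping, so I do not expect a serious obstacle; the two places that deserve a word of justification are the compatibility of ideal intersections with the block projections $A'\twoheadrightarrow k[t_\alpha]\otimes_kk[t_\beta]$ (clear since $A'$ is a finite product and each projection is a quotient by an idempotent ideal, hence flat) and the coprimality of $x-y$ and $xy-1$ that turns the intersection of the two principal ideals into their product. The only genuine care needed is in keeping the index calculus among the $e_\gamma$, the twisting character $\mu/\beta$, and the Kronecker deltas straight.
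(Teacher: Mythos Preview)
Your proof is correct and follows essentially the same approach as the paper: project the two ideals $\ker\mult_{Z'}$ and $\Ann_{A'}(B)$ onto the $(\alpha,\beta)$-block via the idempotents, identify the images using Lemma~\ref{lem: kermultZdash} and Prop.~\ref{prop: gensofJdash}, and then intersect. The only cosmetic difference is that the paper packages the compatibility of intersections with the idempotent projection as a separate elementary lemma, whereas you invoke the finite-product structure of $A'$ directly; your UFD argument for $\langle x-y\rangle\cap\langle xy-1\rangle=\langle(x-y)(xy-1)\rangle$ is in fact slightly more explicit than the paper's.
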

\begin{proof} Consider the canonical projection $\pi_{\alpha,\beta} : A' = \cO(\Xi' \times \Xi') \twoheadrightarrow \cO(\Xi'_\alpha \times \Xi'_\beta) = k[x,y]$ with kernel $\ker \pi_{\alpha,\beta} = \langle (1 - e_\alpha) \otimes 1, 1 \otimes (1 - e_\beta) \rangle$. Then we for all $\gamma,\delta \in \hatOm$ we have
\begin{equation}\label{eq: piabcd} \begin{array}{lll}\pi_{\alpha,\beta}( e_{\gamma} \otimes e_{\delta} ) &=& \delta_{\alpha,\gamma} \delta_{\beta,\delta}, \\
\pi_{\alpha,\beta}(e_\gamma \otimes 1) &=& \delta_{\alpha,\gamma}, \\
\pi_{\alpha,\beta}(1 \otimes e_\delta) &=& \delta_{\beta,\delta}. \end{array} \end{equation}
Using Lemma \ref{lem: kermultZdash} together with equations $(\ref{eq: piabcd})$, we have
\begin{eqnarray*} \pi_{\alpha,\beta}(\ker \mult_{Z'}) &=& \langle x - y \rangle + \sum\limits_{\lambda \in \hatOm} \langle \pi_{\alpha,\beta}(e_\lambda \otimes 1) - \pi_{\alpha,\beta}(1 \otimes e_\lambda) \rangle \\
&=& \langle x - y \rangle + \sum\limits_{\lambda \in \hatOm} \langle \delta_{\alpha,\lambda} - \delta_{\beta,\lambda} \rangle \\
&=& \langle (x - y)^{\delta_{\alpha,\beta}} \rangle.\end{eqnarray*}
Similarly, using Prop. \ref{prop: gensofJdash} together with equations $(\ref{eq: piabcd})$, we have
\begin{eqnarray*} \pi_{\alpha,\beta}(\Ann_{A'}(B)) &=& \langle xy-1 \rangle + \sum\limits_{\lambda \in \hatOm} \langle \pi_{\alpha,\beta}(e_\lambda \otimes 1) - \pi_{\alpha,\beta}(1 \otimes e_{\mu/\lambda}) \rangle \\
&=& \langle xy - 1 \rangle + \sum\limits_{\lambda \in \hatOm} \langle \delta_{\alpha,\lambda} - \delta_{\beta,\mu/\lambda} \rangle \\
&=& \langle (xy - 1)^{\delta_{\alpha,\mu/\beta}} \rangle.\end{eqnarray*}
By the elementary Lemma \ref{lem: piIJ} below, we then have
\begin{eqnarray*}  \pi_{\alpha,\beta}(\ker \mult_{Z'} \hsp \cap \hsp \Ann_{A'}(B)) &=& \pi_{\alpha,\beta}(\ker \mult_{Z'}) \hsp \cap \hsp \pi_{\alpha,\beta}(\Ann_{A'}(B)) \\
&=& \langle (x-y)^{\delta_{\alpha,\beta}} \rangle \hsp \cap \hsp \langle (xy-1)^{\delta_{\alpha, \mu/\beta}}\rangle \\
&=& \langle (x-y)^{\delta_{\alpha,\beta}} (xy-1)^{\delta_{\alpha, \mu/\beta}}\rangle.
\end{eqnarray*}
Recalling that $\cR'_{\alpha,\beta} = \cR' \cap (\Xi'_\alpha \times \Xi'_\beta)$, and using Def. \ref{defn: DefOfRdash}, we have
\[\cR'_{\alpha,\beta} = V(\pi_{\alpha,\beta}(\ker \mult_{Z'} \hsp \cap \hsp \Ann_{A'}(B))) \subset \Xi'_\alpha \times \Xi'_\beta.\]
The result follows. \end{proof}
\begin{lemma} \label{lem: piIJ} Let $A$ be a ring, let $e,f \in A$ be two central idempotents and consider the canonical projection $\pi : A \twoheadrightarrow A / \langle 1-e,1-f\rangle$. Then for every pair of ideals $I,J$ of $A$ we have $\pi(I \cap J) = \pi(I) \cap \pi(J)$.
\end{lemma}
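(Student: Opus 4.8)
The plan is to replace the two-generator ideal $N := \langle 1-e, 1-f\rangle = \ker\pi$ by a single central idempotent and then chase elements. First I would set $g := ef$. Since $e$ and $f$ are commuting idempotents, $g$ is again a central idempotent, and $ge = efe = e^2f = ef = g$, and likewise $gf = g$; hence $g(1-e) = g(1-f) = 0$, and since $g$ is central this forces $gn = 0$ for every $n \in N$. On the other hand $1 - g = (1-e) + e(1-f) \in N$, so $(1-g)a \in N$ for all $a \in A$, which is to say $\pi(a) = \pi(ga)$ for all $a \in A$. Thus every residue class modulo $N$ has a canonical representative of the form $ga$, and multiplying an equation modulo $N$ by $g$ ``clears'' the ambiguity.

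With these two observations in hand the lemma is immediate. The inclusion $\pi(I\cap J) \subseteq \pi(I) \cap \pi(J)$ is trivial. For the converse, I would take $\bar{w} \in \pi(I) \cap \pi(J)$ and write $\bar{w} = \pi(a) = \pi(b)$ with $a \in I$ and $b \in J$. Then $a - b \in N$, so $g(a-b) = 0$, i.e.\ $ga = gb$; call this common element $z$. Since $g$ is central, $z = ga$ lies in $I$ and $z = gb$ lies in $J$, so $z \in I \cap J$, while $\pi(z) = \pi(ga) = \pi(a) = \bar{w}$. Hence $\bar{w} \in \pi(I\cap J)$, which completes the proof.

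There is essentially no obstacle here: the one point to be careful about is that $e$ and $f$ are only assumed central, rather than orthogonal or comparable, which is precisely why one takes $g = ef$ (automatically a central idempotent) instead of trying to juggle $1-e$ and $1-f$ directly. An equivalent packaging would be to observe that $N = (1-g)A$, so that $A \cong gA \times (1-g)A$ and $\pi$ becomes the projection onto the factor $gA$, under which $\pi(I) = gI$ and the assertion reduces to $g(I\cap J) = gI \cap gJ$; I would nonetheless present the bare-hands version above, as it is self-contained and avoids invoking the decomposition of $A$ as a product of rings.
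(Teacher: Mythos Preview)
Your proof is correct and follows essentially the same approach as the paper: both arguments hinge on the central idempotent $g = ef$, the observation that $g$ annihilates $\ker\pi$, and the fact that $\pi(a) = \pi(ga)$ for all $a$, from which $ga = gb \in I \cap J$ whenever $\pi(a) = \pi(b)$ with $a \in I$, $b \in J$. Your presentation is slightly more streamlined in naming $g$ and isolating its two key properties upfront, but the substance is identical.
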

\begin{proof} Let $a \in \pi(I) \cap \pi(J)$. Then we can find $x \in I$ and $y \in J$ such that $a = \pi(x) = \pi(y)$. Now, $x = (1-e+e)(1-f+f)x$ implies that $\pi(x) = \pi(efx)$ and similarly $\pi(y) = \pi(efy)$. Hence $\pi(efx) = \pi(x) = \pi(y) = \pi(efy)$, so $efx - efy \in \ker \pi$. But $ef(1-e) = ef(1-f) = 0$, so $ef \ker \pi = 0$. Hence $ef(efx - efy) = 0$, so $efx = e^2f^2x = e^2f^2y = efy$. Since $efx \in I$ and $efy \in J$, we see that $efx = efy \in I \cap J$. Hence $a = \pi(x) = \pi(efx) \in \pi(I \cap J)$, so $\pi(I) \cap \pi(J) \subseteq \pi(I \cap J)$. The reverse inclusion is clear. \end{proof}
We introduce an equivalence relation $\sim$ on $\hatOm$ by setting $\alpha \sim \beta$ if and only if $\beta \in \{\alpha, \mu/\alpha\}$. For an equivalence class $\gamma \in \hatOm/\sim$, we write
\begin{equation}\label{eq: Xi'R'} \Xi'_\gamma := \coprod_{\alpha \in \gamma} \Xi'_\alpha \qmb{and} \cR'_\gamma := \coprod_{\alpha,\beta \in \gamma} \cR'_{\alpha,\beta}.\end{equation}
We use the projection maps $\pr_1^{\Xi'_\gamma}, \pr_2^{\Xi'_\gamma} : \Xi'_\gamma \times \Xi'_\gamma \to \Xi'_\gamma$ to define the map
\[ f_i^\gamma := \pr_i^{\Xi'_\gamma}|_{\cR'_\gamma} : \cR'_\gamma \to \Xi'_\gamma \qmb{for} i=1,2. \]
Then for each $\gamma \in \hatOm/\sim$, we may form the coequaliser diagram in $\LRS$
\begin{equation}\label{eq: Xi'ModR'} \xymatrix{\cR'_\gamma \ar@<0.6ex>[rr]^{f_1^\gamma}\ar@<-0.6ex>[rr]_{f_2^\gamma} && \Xi'_\gamma \ar[r] & \Xi'_\gamma / \cR'_\gamma.}\end{equation}
\begin{lemma} \label{lem: Xi'R'decomp} We have $\Xi' = \coprod\limits_{\gamma \in \hatOm/\sim} \Xi'_\gamma$ and $\cR' = \coprod\limits_{\gamma \in \hatOm/\sim}\cR'_\gamma$.
\end{lemma}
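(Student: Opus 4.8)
The plan is to derive both disjoint-union decompositions from the component decompositions already established, namely $\Xi' = \coprod_{\alpha \in \hatOm} \Xi'_\alpha$ and $\cR' = \coprod_{\alpha,\beta \in \hatOm} \cR'_{\alpha,\beta}$, together with the explicit formula for $\cR'_{\alpha,\beta}$ in Prop. \ref{prop: RdashAlphaBeta}. The one preliminary I would record is that $\sim$ is genuinely an equivalence relation on $\hatOm$: reflexivity is clear; for symmetry note that $\beta = \mu/\alpha$ forces $\mu/\beta = \alpha$; and for transitivity it is enough to observe that the $\sim$-class of $\alpha$ is precisely the set $\{\alpha,\mu/\alpha\}$ and that $\beta \in \{\alpha,\mu/\alpha\}$ implies $\{\beta,\mu/\beta\} = \{\alpha,\mu/\alpha\}$, so that two classes sharing a point coincide. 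Consequently $\hatOm$ is the disjoint union of its $\sim$-classes.

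Given this, the first identity is purely formal: $\Xi' = \coprod_{\alpha \in \hatOm}\Xi'_\alpha = \coprod_{\gamma \in \hatOm/\sim}\coprod_{\alpha \in \gamma}\Xi'_\alpha = \coprod_{\gamma}\Xi'_\gamma$ by the definition of $\Xi'_\gamma$ in \eqref{eq: Xi'R'}.

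For the second identity I would partition the index set of $\cR' = \coprod_{\alpha,\beta}\cR'_{\alpha,\beta}$ according to whether $\alpha \sim \beta$. If $\alpha \not\sim \beta$ then $\beta \neq \alpha$ and $\beta \neq \mu/\alpha$; the second condition is equivalent to $\alpha \neq \mu/\beta$, so the two exponents $\delta_{\alpha,\beta}$ and $\delta_{\alpha,\mu/\beta}$ in Prop. \ref{prop: RdashAlphaBeta} both vanish and hence $\cR'_{\alpha,\beta} = V(1) = \emptyset$. Thus only pairs with $\alpha \sim \beta$ contribute, and collecting these by their common $\sim$-class $\gamma$ gives $\cR' = \coprod_{\alpha \sim \beta}\cR'_{\alpha,\beta} = \coprod_{\gamma}\coprod_{\alpha,\beta \in \gamma}\cR'_{\alpha,\beta} = \coprod_{\gamma}\cR'_\gamma$, the last equality again being the definition in \eqref{eq: Xi'R'}.

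There is no serious obstacle: the whole argument is bookkeeping. The only points requiring a moment's care are the verification that $\sim$ is transitive (equivalently, that the ``gluing datum'' $\{\alpha,\mu/\alpha\}$ is class-invariant) and the observation that $\alpha \not\sim \beta$ kills \emph{both} factors in the defining equation of $\cR'_{\alpha,\beta}$ simultaneously, so that the corresponding piece is empty.
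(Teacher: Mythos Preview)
Your proof is correct and follows essentially the same approach as the paper: both reduce to observing, via Prop.~\ref{prop: RdashAlphaBeta}, that $\cR'_{\alpha,\beta} = \emptyset$ whenever $\beta \notin \{\alpha,\mu/\alpha\}$, so only pairs with $\alpha \sim \beta$ contribute. You add an explicit verification that $\sim$ is an equivalence relation, which the paper simply asserts when introducing $\sim$.
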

\begin{proof} Only the second statement needs proof. For this, we have
\[ \cR' = \coprod\limits_{\alpha,\beta \in \hatOm} \cR'_{\alpha,\beta} = \coprod\limits_{\gamma \in \hatOm/\sim}\coprod\limits_{\alpha \in \gamma} \coprod\limits_{\beta \in \hatOm} \cR'_{\alpha,\beta}.\]
But by Prop. \ref{prop: RdashAlphaBeta}, we have $\cR'_{\alpha,\beta} = \emptyset$ if $\beta \notin \{\alpha, \mu/\alpha\}$. Hence $\cR' = \coprod\limits_{\gamma \in \hatOm/\sim} \cR'_\gamma$.
\end{proof}
\begin{corollary} \label{cor: Xi'R'} We have $\Xi'/\cR' \cong \coprod\limits_{\gamma \in \hatOm/\sim} \Xi'_\gamma / \cR'_\gamma$ in $\LRS$.
\end{corollary}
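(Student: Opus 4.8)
The plan is to deduce Corollary~\ref{cor: Xi'R'} from Lemma~\ref{lem: Xi'R'decomp} by showing that the formation of the categorical quotient $\Xi'/\cR'$ commutes with the coproduct decomposition just established. Concretely, write $\Xi' = \coprod_{\gamma} \Xi'_\gamma$ and $\cR' = \coprod_\gamma \cR'_\gamma$ as in Lemma~\ref{lem: Xi'R'decomp}, and observe that under these identifications the two structure maps $\pr_1^{\cR'},\pr_2^{\cR'} : \cR' \to \Xi'$ restrict, on the summand $\cR'_\gamma$, to the maps $f_1^\gamma, f_2^\gamma : \cR'_\gamma \to \Xi'_\gamma$; in other words the whole diagram $\xymatrix{\cR' \ar@<0.6ex>[r]\ar@<-0.6ex>[r] & \Xi'}$ is the coproduct over $\gamma \in \hatOm/\!\sim$ of the diagrams $\xymatrix{\cR'_\gamma \ar@<0.6ex>[r]^{f_1^\gamma}\ar@<-0.6ex>[r]_{f_2^\gamma} & \Xi'_\gamma}$. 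This is essentially immediate from Prop.~\ref{prop: RdashAlphaBeta}, which shows $\cR'_{\alpha,\beta} = \emptyset$ whenever $\alpha,\beta$ lie in different $\sim$-classes, so that no component of $\cR'$ ``connects'' two distinct $\Xi'_\gamma$.

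The substantive point is then that coproducts commute with coequalisers in $\LRS$. This holds abstractly because $\LRS$ is cocomplete and colimits commute with colimits; but since the excerpt works with explicit constructions of coequalisers, I would instead argue directly, taking for each $\gamma$ the coequaliser $q_\gamma : \Xi'_\gamma \to \Xi'_\gamma/\cR'_\gamma$ from \eqref{eq: Xi'ModR'} and checking that $\coprod_\gamma q_\gamma : \coprod_\gamma \Xi'_\gamma \to \coprod_\gamma (\Xi'_\gamma/\cR'_\gamma)$ satisfies the universal property of the coequaliser of $\pr_1^{\cR'},\pr_2^{\cR'}$. Given a morphism $h : \Xi' \to W$ in $\LRS$ with $h \circ \pr_1^{\cR'} = h \circ \pr_2^{\cR'}$, its restriction $h_\gamma := h|_{\Xi'_\gamma}$ satisfies $h_\gamma \circ f_1^\gamma = h_\gamma \circ f_2^\gamma$ (using the compatibility of the restriction with the $\cR'_\gamma$ decomposition noted above), hence factors uniquely through $q_\gamma$ as $h_\gamma = \bar h_\gamma \circ q_\gamma$; and the morphisms $\bar h_\gamma$ assemble, by the universal property of the coproduct, into a unique $\bar h : \coprod_\gamma (\Xi'_\gamma/\cR'_\gamma) \to W$ with $\bar h \circ (\coprod_\gamma q_\gamma) = h$. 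This identifies $\coprod_\gamma (\Xi'_\gamma/\cR'_\gamma)$ with $\Xi'/\cR'$.

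I do not expect a genuine obstacle here: the only thing to be careful about is bookkeeping, namely making sure that under the coproduct identifications of Lemma~\ref{lem: Xi'R'decomp} the map $\pr_i^{\cR'}$ really is $\coprod_\gamma f_i^\gamma$ and not something subtler — and this is exactly what the vanishing $\cR'_{\alpha,\beta}=\emptyset$ for $\alpha \not\sim \beta$ guarantees, since it forces every point (and every local section) of $\cR'$ to live over a single block $\Xi'_\gamma \times \Xi'_\gamma$. An alternative, if one prefers to avoid re-checking universal properties, is simply to invoke that in any cocomplete category colimits commute with colimits, so that $\mathrm{coeq}\big(\coprod_\gamma (\text{diagram}_\gamma)\big) \cong \coprod_\gamma \mathrm{coeq}(\text{diagram}_\gamma)$, combined with the cocompleteness of $\LRS$; but spelling out the factorisation as above keeps the argument self-contained and consistent with the explicit style of the surrounding section.
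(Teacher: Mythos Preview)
Your proposal is correct and follows essentially the same route as the paper: the paper invokes Lemma~\ref{lem: Xi'R'decomp} for the decompositions, notes that $\pr_i^{\cR'} = \coprod_\gamma f_i^\gamma$, and then applies Lemma~\ref{lem: CoprodOfCoeq} (which is precisely the statement that a coproduct of coequaliser diagrams is a coequaliser diagram). Your direct verification of the universal property is exactly a spelled-out proof of that lemma, so the two arguments coincide.
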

\begin{proof} We have $\Xi' = \coprod\limits_{\gamma \in \hatOm/\sim} \Xi'_\gamma$ and $\cR' = \coprod\limits_{\gamma \in \hatOm/\sim}\cR'_\gamma$ by Lemma \ref{lem: Xi'R'decomp}. Note also that $\pr_i^{\Xi'} = \coprod\limits_{\gamma \in \hatOm/\sim} f_i^\gamma$ for $i = 1,2$. Now the result follows from Lemma \ref{lem: CoprodOfCoeq}.
\end{proof}

Recall \cite{Sta} \href{https://stacks.math.columbia.edu/tag/01JA}{Section 01JA} that a \emph{gluing datum in $\LRS$} consists of the following data:
\begin{itemize}
\item an index set $I$,
\item a locally ringed space $X_i$ for each $i \in I$,
\item an open subspace $\iota_{i,j} : U_{i,j} \hookrightarrow X_i$ for each $i,j \in I$,
\item an isomorphism $\varphi_{i,j} : U_{i,j} \stackrel{\cong}{\longrightarrow} U_{j,i}$ in $\LRS$ for all $i,j \in I$,
\end{itemize}
such that
\be \item $U_{i,i} = X_i$ for all $i \in I$, and
\item $\varphi_{k,j} \circ \varphi_{j,i}|_{U_{i,j} \cap U_{i,k}} = \varphi_{k,i}|_{U_{i,j} \cap U_{i,k}}$ holds for all $i,j,k \in I$.
\ee
Recall from \cite{DG} Prop. I.1.1.6 that $\LRS$ admits all colimits.
\begin{definition} \label{def:glueing} Let $\left(I, \{U_i\}_{i \in I}, \{\iota_{i,j}\}_{i,j \in I}, \{\varphi_{i,j}\}_{i,j\in I}\right)$ be a gluing datum in $\LRS$. Form the coproduct $U := \coprod\limits_{i \in I} U_i$ and let $\ell_i : U_i \hookrightarrow U$ be the canonical inclusions for each $i \in I$. Define
\begin{equation}\label{eq: gluecoeqdiag} \xymatrix{ \coprod\limits_{i,j} U_{i,j}   \ar@<0.6ex>[rr]^u\ar@<-0.6ex>[rr]_v && \coprod\limits_{i \in I} U_i}\end{equation}
by setting $u = (u_{i,j})_{i,j \in I}$ and $v = (v_{i,j})_{i,j \in I}$, where
\begin{equation} \label{eq: uvelliota} u_{i,j} = \ell_i \circ \iota_{i,j} \qmb{and} v_{i,j} = \ell_j \circ \iota_{j,i} \circ \varphi_{i,j}\qmb{for all} i,j \in I.\end{equation}
If $q : U \to X$ is a coequaliser of this diagram in $\LRS$, then we call $X$ the \emph{gluing of the $X_i$'s with respect to the gluing datum}.
\end{definition}

\begin{remark} It is shown in \cite{Sta}, \href{https://stacks.math.columbia.edu/tag/01JA}{Lemma 01JA} that the glued locally ringed space $X$ admits an open covering $\{U_i : i \in I\}$ such that $U_i \cong X_i$ for all $i \in I$. It follows immediately that $X$ is a scheme whenever each $U_i$ is a scheme. \end{remark}


\begin{theorem} \label{thm: gluingpair} Suppose that $\gamma = \{\alpha,\beta\}$, where $\beta = \mu/\alpha$ and $\alpha \neq \beta$. Then $\Xi'_\gamma/\cR'_\gamma$ is isomorphic to the projective line $\bP^1$.
\end{theorem}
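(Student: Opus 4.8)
The strategy is to recognise the coequaliser diagram $\cR'_\gamma \rightrightarrows \Xi'_\gamma$ from \eqref{eq: Xi'ModR'} as precisely the coequaliser diagram \eqref{eq: gluecoeqdiag} attached to the standard gluing datum that presents $\bP^1$ as two affine lines glued along the inversion automorphism of the punctured line, and then appeal to Definition \ref{def:glueing}.

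First I would make $\cR'_\gamma$ explicit. Since $\gamma=\{\alpha,\beta\}$ with $\beta=\mu/\alpha$ and $\alpha\neq\beta$, we have $\mu/\alpha=\beta$ and $\mu/\beta=\alpha$, so Prop.\ \ref{prop: RdashAlphaBeta} gives
$\cR'_{\alpha,\alpha}=V(x-y)\subset\Xi'_\alpha\times\Xi'_\alpha$,
$\cR'_{\beta,\beta}=V(x-y)\subset\Xi'_\beta\times\Xi'_\beta$,
$\cR'_{\alpha,\beta}=V(xy-1)\subset\Xi'_\alpha\times\Xi'_\beta$, and
$\cR'_{\beta,\alpha}=V(xy-1)\subset\Xi'_\beta\times\Xi'_\alpha$.
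In terms of the coordinates $t_\alpha$ on $\Xi'_\alpha=\Spec k[t_\alpha]$ and $t_\beta$ on $\Xi'_\beta=\Spec k[t_\beta]$, the first two components are the diagonals, on which $f_1^\gamma=f_2^\gamma$, while the last two identify a point $P$ of $\Xi'_\alpha$ with a point $Q$ of $\Xi'_\beta$ exactly when $t_\alpha(P)\,t_\beta(Q)=1$; in particular $\pr_1$ identifies $\cR'_{\alpha,\beta}\cong\Spec k[t_\alpha,t_\alpha^{-1}]$ and $\cR'_{\beta,\alpha}\cong\Spec k[t_\beta,t_\beta^{-1}]$.

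Next I would write down the gluing datum in the sense of Definition \ref{def:glueing}, with $I=\{\alpha,\beta\}$, $X_\alpha=\Xi'_\alpha$, $X_\beta=\Xi'_\beta$, $U_{\alpha,\alpha}=X_\alpha$, $U_{\beta,\beta}=X_\beta$, $U_{\alpha,\beta}=\Spec k[t_\alpha,t_\alpha^{-1}]\hookrightarrow X_\alpha$, $U_{\beta,\alpha}=\Spec k[t_\beta,t_\beta^{-1}]\hookrightarrow X_\beta$, and $\varphi_{\alpha,\beta}\colon U_{\alpha,\beta}\xrightarrow{\;\cong\;}U_{\beta,\alpha}$ induced by the $k$-algebra isomorphism $t_\beta\mapsto t_\alpha^{-1}$ (with $\varphi_{\alpha,\alpha}=\varphi_{\beta,\beta}=\id$ and $\varphi_{\beta,\alpha}=\varphi_{\alpha,\beta}^{-1}$); the cocycle condition is automatic since $|I|=2$. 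By the classical $\Proj$ construction the gluing of this datum is $\bP^1$, so by the Remark after Definition \ref{def:glueing} the coequaliser in $\LRS$ of the associated diagram \eqref{eq: gluecoeqdiag}, namely $\coprod_{i,j}U_{i,j}\rightrightarrows\coprod_i X_i$, is $\bP^1$. I would then exhibit an isomorphism of diagrams: the component isomorphism $\coprod_{i,j}U_{i,j}\xrightarrow{\;\cong\;}\cR'_\gamma$ is the diagonal embedding on $U_{\alpha,\alpha}$ and $U_{\beta,\beta}$, sends $U_{\alpha,\beta}\ni P\mapsto(P,t_\alpha(P)^{-1})\in\cR'_{\alpha,\beta}$, and symmetrically on $U_{\beta,\alpha}$; a direct check using the formulas \eqref{eq: uvelliota} shows it intertwines $u$ with $f_1^\gamma$ and $v$ with $f_2^\gamma$ — on the diagonal components both $u_{i,i}=v_{i,i}$ and both projections become the inclusion $X_i\hookrightarrow\Xi'_\gamma$, while on $U_{\alpha,\beta}$ one has $u_{\alpha,\beta}=\ell_\alpha\iota_{\alpha,\beta}\leftrightarrow\pr_1$ and $v_{\alpha,\beta}=\ell_\beta\iota_{\beta,\alpha}\varphi_{\alpha,\beta}\leftrightarrow\pr_2$. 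Since a coequaliser depends only on its diagram, this yields $\Xi'_\gamma/\cR'_\gamma\cong\bP^1$.

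The only delicate point is the bookkeeping in the last step: pairing each of the four components of $\cR'_\gamma$ with the correct $U_{i,j}$, and verifying that the two projections go over to $u$ and $v$ in the right order and orientation (recall that on $\cR'_{\beta,\alpha}$ the roles of the two coordinates are swapped). Once the diagonal components are seen to contribute the trivial relations $u_{i,i}=v_{i,i}$ consistently with \eqref{eq: gluecoeqdiag}, everything else is formal.
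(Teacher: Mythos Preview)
Your proposal is correct and follows essentially the same approach as the paper: both identify the diagram $\cR'_\gamma \rightrightarrows \Xi'_\gamma$ componentwise with the standard gluing diagram $\coprod_{i,j} U_{i,j} \rightrightarrows \Xi'_\alpha \coprod \Xi'_\beta$ for $\bP^1$ (two affine lines glued along $t_\beta \mapsto t_\alpha^{-1}$), using Prop.~\ref{prop: RdashAlphaBeta} to compute the four components of $\cR'_\gamma$ and then matching the projections $f_1^\gamma, f_2^\gamma$ with the maps $u,v$ of \eqref{eq: uvelliota}. The paper carries out exactly this verification, constructing the isomorphism $\tau_{i,j}: \cR'_{i,j} \xrightarrow{\cong} U_{i,j}$ in the opposite direction and checking the two commutative squares \eqref{eq: tauij} explicitly on coordinate rings.
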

\begin{proof} We will first recall the well-known gluing datum that is used in the construction of the projective line $\bP^1$: see, for example \cite{Sta}, \href{https://stacks.math.columbia.edu/tag/01JE}{Example 01JE}. The indexing set is $I := \{\alpha,\beta\}$. The spaces to be glued are $X_\alpha := \Xi'_\alpha = \Spec k[t_\alpha]$ and $X_ \beta := \Xi'_\beta = \Spec k[t_\beta]$. The $U_{i,j}$'s are as follows:
\[U_{\alpha,\alpha} := \Xi'_\alpha, \quad U_{\beta,\beta} = \Xi'_\beta, \quad U_{\alpha,\beta} = \Spec k[t_\alpha, t_\alpha^{-1}] \qmb{and} U_{\beta, \alpha} = \Spec k[t_\beta, t_\beta^{-1}]\]
The inclusions $\iota_{\alpha,\beta}$ are made clear by the notation, and the gluing isomorphisms $\varphi_{i,j}$ are as follows: $\varphi_{\alpha,\alpha} = \id_{U_\alpha}$, $\varphi_{\beta,\beta} = \id_{U_\beta}$, $\varphi_{\beta,\alpha} = \varphi_{\alpha,\beta}^{-1}$, and $\varphi_{\alpha,\beta} : U_{\alpha,\beta} \stackrel{\cong}{\longrightarrow} U_{\beta,\alpha}$ is determined by the corresponding map $\varphi_{\alpha,\beta}^\sharp$ on coordinate rings, which is given by $\varphi_{\alpha,\beta}^\sharp(t_\beta) = t_{\alpha}^{-1}$. Hence we have the coequaliser diagram
\[ U_\gamma := \xymatrix{ \coprod\limits_{i,j \in \{\alpha,\beta\}} U_{i,j} \ar@<0.6ex>[rr]^u\ar@<-0.6ex>[rr]_v && \Xi'_\alpha\coprod \Xi'_\beta \ar[rr]^q && \bP^1}.\]
Recall that $\Xi'_\gamma/\cR'_\gamma$ is defined by the coequaliser diagram (\ref{eq: Xi'ModR'}). We see that to show that $\Xi'_\gamma /\cR'_\gamma \cong \bP^1$ as locally ringed spaces, it will be enough find an isomorphism of schemes
\[ \tau : \cR'_\gamma \stackrel{\cong}{\longrightarrow} U_\gamma \]
such that the following two diagrams of affine schemes are commutative:
\[ \xymatrix{ \cR'_\gamma \ar[rr]\ar[d]^\cong_\tau && \Xi'_\gamma\times\Xi'_\gamma \ar[d]^{\pr_1} \\ U_\gamma \ar[rr]_u && \Xi'_\gamma } \qmb{and} \xymatrix{ \cR'_\gamma \ar[rr]\ar[d]^\cong_\tau && \Xi'_\gamma\times\Xi'_\gamma \ar[d]^{\pr_2} \\ U_\gamma \ar[rr]_v && \Xi'_\gamma. }\]
Here, both of the top horizontal arrows are the closed embedding of $\cR'_\gamma$ into $\Xi'_\gamma \times \Xi'_\gamma$. Now, the relation $\cR'_\gamma$ also decomposes as a disjoint union
\[\cR'_\gamma =\coprod\limits_{i,j \in \{\alpha,\beta\}} \cR'_{i,j} \quad \subset \quad \Xi'_\gamma \times\Xi'_\gamma = \coprod\limits_{i,j \in \{\alpha,\beta\}} \Xi'_i \times \Xi'_j\]
so it will be enough to work componentwise, and for each $i,j \in \gamma$ to find an isomorphism
\[ \tau_{ij} : \cR'_{i,j} \stackrel{\cong}{\longrightarrow} U_{i,j} \]
such that the following two diagrams of affine schemes are commutative:
\begin{equation}\label{eq: tauij} \xymatrix{ \cR'_{i,j} \ar[rr]\ar[d]^\cong_{\tau_{i,j}} && \Xi'_i \times\Xi'_j \ar[d]^{\pr_1} \\ U_{i,j} \ar[rr]_{u_{i,j}} && \Xi'_i } \qmb{and} \xymatrix{ \cR'_{i,j} \ar[rr]\ar[d]^\cong_{\tau_{i,j}} && \Xi'_i\times\Xi'_j \ar[d]^{\pr_2} \\ U_{i,j} \ar[rr]_{v_{i,j}} && \Xi'_j. }\end{equation}
Because $\alpha \neq \beta$, the connected components $\cR'_{i,j}$ of $\cR'_\gamma$ are given by Prop. \ref{prop: RdashAlphaBeta} as follows:
\begin{equation}\label{eq: Rij} \begin{array}{lll}\cR'_{\alpha,\alpha} &=& V(\langle t_\alpha \otimes 1 - 1 \otimes t_\alpha \rangle), \\
\cR'_{\beta,\beta} &=& V(\langle t_\beta \otimes 1 - 1 \otimes t_\beta \rangle), \\
\cR'_{\alpha,\beta} &=& V(\langle t_\alpha \otimes t_\beta - 1 \otimes 1\rangle), \qmb{and} \\
\cR'_{\beta,\alpha} &=& V(\langle t_\beta \otimes t_\alpha - 1 \otimes 1\rangle).\end{array} \end{equation}

Suppose first that $j = i$. In this case $u_{i,i} = \id_{\Xi'_i}$, so we define $\tau_{i,i} := \pr_1|_{\cR'_{i,j}}$; then the first diagram in (\ref{eq: tauij}) commutes by definition. For the second diagram, note that $v_{i,i} = u_{i,i} \circ \varphi_{i,i}$ is also equal to $\id_{\Xi'_i}$, so $v_{i,i} \circ \tau_{i,i}  = \pr_1|_{\cR'_{i,i}} =  \pr_2|_{\cR'_{i,i}}$ in view of the first two equations in $(\ref{eq: Rij})$. Hence the second diagram in (\ref{eq: tauij}) commutes as well.

Suppose now that $j \neq i$. Looking at the last two equations in $(\ref{eq: Rij})$, we see that the element $\pr_1^\sharp(t_i)|_{\cR'_{i,j}} = \overline{t_i \otimes 1}$ is a unit in $\cO(\cR'_{i,j})$. Therefore the $k$-algebra homomorphism $(\pr_1|_{\cR'_{i,j}})^\sharp : \cO(\Xi'_i) \to \cO(\cR'_{i,j})$ extends to the localisation $\cO(U_{i,j})$ of $\cO(\Xi'_i)$, which means that $\pr_1|_{\cR'_{i,j}}$ factors through the Zariski open subset $U_{i,j}$ of $\Xi'_i$. In other words, there exists a morphism $\tau_{i,j} : \cR'_{i,j} \to U_{i,j}$, making the first diagram in $(\ref{eq: tauij})$ commutative. Looking at the last two equations in $(\ref{eq: Rij})$ again, we see that the corresponding map on coordinate rings
\[ \tau_{i,j}^\sharp : k[t_i, t_i^{-1}] \to \frac{k[t_i] \otimes k[t_j]}{\langle t_i \otimes t_j - 1 \otimes 1 \rangle}\]
sends $t_i$ to $\overline{t_i \otimes 1}$ and $t_i^{-1}$ to $\overline{t_i \otimes 1}^{-1} = \overline{1 \otimes t_j}$, and that it is an isomorphism. Therefore $\tau_{i,j}$ is also an isomorphism. It remains to check that the second diagram in $(\ref{eq: tauij})$ is commutative. Since all schemes involved are affine, it will be enough to check on coordinate rings.

Suppose that $(i,j) = (\alpha,\beta)$. Since $v_{\alpha,\beta} =  u_{\beta,\alpha} \circ \varphi_{\alpha,\beta}$ by (\ref{eq: uvelliota}), this diagram is
\[\xymatrix{ \frac{ k[t_\alpha] \otimes k[t_\beta] }{ \langle t_\alpha \otimes t_\beta - 1 \otimes 1 \rangle } &&&& k[t_\alpha] \otimes k[t_\beta] \ar[llll]\\
k[t_\alpha, t_\alpha^{-1}] \ar[u]^{\tau_{\alpha,\beta}^\sharp} && k[t_\beta,t_\beta^{-1}] \ar[ll]^{\varphi_{\alpha,\beta}^\sharp}&& k[t_\beta]\ar[ll]\ar[u]_{\pr_2^\sharp} }\]
and it is commutative because $\tau_{\alpha,\beta}^\sharp(\varphi_{\alpha,\beta}^\sharp(t_\beta)) = \tau_{\alpha,\beta}^\sharp(t_\alpha^{-1}) = \overline{1 \otimes t_\beta} = \overline{\pr_2^\sharp(t_\beta)}$.

Similarly, when $(i,j) = (\beta,\alpha)$, because $v_{\beta,\alpha} = u_{\alpha,\beta} \circ \varphi_{\beta,\alpha}$ by (\ref{eq: uvelliota}), this diagram is
\[\xymatrix{ \frac{ k[t_\beta] \otimes k[t_\alpha] }{ \langle t_\beta \otimes t_\alpha - 1 \otimes 1 \rangle } &&&& k[t_\beta] \otimes k[t_\alpha] \ar[llll]\\
k[t_\beta, t_\beta^{-1}] \ar[u]^{\tau_{\beta,\alpha}^\sharp} && k[t_\alpha,t_\alpha^{-1}] \ar[ll]^{\varphi_{\beta,\alpha}^\sharp}&& k[t_\alpha]\ar[ll]\ar[u]_{\pr_2^\sharp} }\]
and it is commutative because $\tau_{\beta,\alpha}^\sharp(\varphi_{\beta,\alpha}^\sharp(t_\alpha)) = \tau_{\beta,\alpha}^\sharp(t_\beta^{-1}) = \overline{1 \otimes t_\alpha} = \overline{\pr_2^\sharp(t_\alpha)}$.  \end{proof}
Next, we will study the following diagram of $k$-schemes:
\begin{equation}\label{eq: RAPdiagram} \xymatrix{\bR \ar@<0.6ex>[r]^{\pr_1}\ar@<-0.6ex>[r]_{\pr_2} & \bA^1 \ar[r]^\psi & \bP^1}\end{equation}
where $\bA^1 = \Spec k[x]$ is an affine line, $\bR = \Delta \cup \bH = V((x-y)(xy-1)) \subseteq \bA^2 = \Spec k[x,y]$ is the union of the hyperbola $\bH = V(xy-1)$ and the diagonal $\Delta = V(x-y)$, $\pr_1, \pr_2$ are the projection morphisms whose respective comorphisms are determined by $\pr_1^\sharp(x) = x$ and $\pr_2^\sharp(x) = y$, and the morphism $\psi : \bA^1 \to \bP^1$, viewed as a natural transformation between the corresponding functors of points, is given by the rule
\[ \psi(a) = (a^2 + 1 : a) \qmb{for all} a \in \bA^1.\]
\begin{lemma} \label{lem: psipri} We have $\psi \circ \pr_1 = \psi \circ \pr_2$.
\end{lemma}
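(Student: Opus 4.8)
The plan is to reduce the identity to a componentwise check on the two irreducible pieces of $\bR$. Write $\Delta = V(x-y)$ and $\bH = V(xy-1)$, so that $\bR = \Delta \cup \bH$ as closed subschemes of $\bA^2 = \Spec k[x,y]$. First I would record the elementary commutative-algebra fact that $x-y$ and $xy-1$ are non-associate primes in the UFD $k[x,y]$, whence $\langle x-y\rangle \cap \langle xy-1\rangle = \langle (x-y)(xy-1)\rangle$; consequently the canonical $k$-algebra map $\cO(\bR) = k[x,y]/\langle(x-y)(xy-1)\rangle \longrightarrow k[x,y]/\langle x-y\rangle \times k[x,y]/\langle xy-1\rangle = \cO(\Delta)\times\cO(\bH)$ is injective. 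Since localisation is exact this remains true on every basic open of $\bR$, so the induced map of structure sheaves $\cO_{\bR} \hookrightarrow (\text{pushforward of }\cO_\Delta) \times (\text{pushforward of }\cO_{\bH})$ is injective. Combined with $|\bR| = |\Delta| \cup |\bH|$, this shows that any morphism from $\bR$ to a scheme is uniquely determined by its two restrictions to $\Delta$ and to $\bH$. Hence it suffices to prove that $\psi\circ\pr_1$ and $\psi\circ\pr_2$ agree after restriction to $\Delta$ and after restriction to $\bH$.

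The restriction to $\Delta$ is immediate: in $\cO(\Delta)$ one has $\bar x = \bar y$, so both $\pr_1|_\Delta$ and $\pr_2|_\Delta$ coincide with the canonical isomorphism $\Delta \xrightarrow{\;\cong\;} \bA^1$, and therefore $\psi\circ\pr_1|_\Delta = \psi = \psi\circ\pr_2|_\Delta$ trivially.

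For the restriction to $\bH \cong \Spec k[x,x^{-1}]$ (via $\bar y = \bar x^{-1}$) I would use the standard affine chart $U_1 = \Spec k[v] \subseteq \bP^1$, on which $\psi^\sharp(v) = (x^2+1)/x = x + x^{-1}$ as a function on $D(x)\subseteq\bA^1$. Since $\pr_1|_{\bH}$ and $\pr_2|_{\bH}$ pull the coordinate of $\bA^1$ back to the units $\bar x$ and $\bar x^{-1}$ of $\cO(\bH)$ respectively, both $(\psi\circ\pr_1)|_{\bH}$ and $(\psi\circ\pr_2)|_{\bH}$ factor through $U_1$, with comorphisms $k[v]\to\cO(\bH)$ sending $v$ to $\bar x + \bar x^{-1}$ in the first case and to $\bar x^{-1} + \bar x$ in the second — literally the same element. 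Hence the two restrictions to $\bH$ agree, and combining with the $\Delta$ case completes the proof.

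I expect the only genuinely nontrivial point to be the gluing step of the first paragraph, namely justifying that a morphism out of $\bR$ is pinned down by its restrictions to the components $\Delta$ and $\bH$; everything after that is a one-line computation. If one prefers to sidestep this, the alternative is to work directly on the open cover $\bR = D(\bar x\bar y)\cup D(\overline{x^2+1}\cdot\overline{y^2+1})$ (one checks these two opens cover $\bR$ using the defining relation): on $D(\bar x\bar y)$ the relation $(\bar x-\bar y)(\bar x\bar y-1)=0$ gives $\bar x + \bar x^{-1} = \bar y + \bar y^{-1}$ after inverting $\bar x\bar y$, so both composites land in $U_1$ and agree there, while on $D(\overline{x^2+1}\cdot\overline{y^2+1})$ both composites land in the other chart $U_0 = \Spec k[u]$ and the same relation yields $\bar x/(\bar x^2+1) = \bar y/(\bar y^2+1)$, so they agree there too. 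I would present the componentwise version as the cleaner of the two.
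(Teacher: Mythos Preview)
Your argument is correct, but it takes a longer route than the paper. The paper dispatches the lemma in one line using the functor of points: for any $(a,b)\in\bR$ one has $(a^2+1)b - a(b^2+1) = (a-b)(ab-1) = 0$, and since $(a^2+1,a)$ always generates the unit ideal (because $(a^2+1)-a\cdot a = 1$), this single identity already forces $(a^2+1:a)=(b^2+1:b)$ in $\bP^1$. No decomposition of $\bR$ is needed.

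Your componentwise approach is perfectly valid, and the reduction step (injectivity of $\cO_\bR \hookrightarrow (i_\Delta)_*\cO_\Delta \times (i_\bH)_*\cO_\bH$ coming from $\langle x-y\rangle\cap\langle xy-1\rangle=\langle(x-y)(xy-1)\rangle$) is correctly justified. What it buys you is a very concrete picture of why the statement holds on each irreducible piece; what it costs is exactly the ``gluing step'' you flag as nontrivial, which the paper's global identity sidesteps entirely. Your alternative open-cover argument is also fine and is essentially the scheme-theoretic unpacking of the functor-of-points check. Either version would be acceptable, but if brevity is a concern, note that the identity $(a^2+1)b - a(b^2+1) = (a-b)(ab-1)$ does all the work at once.
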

\begin{proof} Let $(a,b) \in \bR$. Then $\psi(\pr_1(a,b)) = \psi(a) = (a^2+1:a)$ and $\psi(\pr_2(a,b)) = \psi(b) = (b^2 + 1 : b)$. However $(a,b) \in \bR$ implies that $a = b$ or $ab = 1$, and in either case we have $(a^2+1)b  = a(b^2+1)$. Hence $\psi(\pr_1(a,b)) = \psi(\pr_2(a,b))$. \end{proof}

We let $y$ be a local coordinate on $\bP^1$, determined by $y( (a:1)) = a$, so that $Y_0 := \Spec k[y]$ and $Y_\infty := \Spec k[1/y]$ form a standard open covering of $\bP^1$ by two affine lines.
\begin{lemma} \label{lem: XiHiaffine} Let $X_i = \psi^{-1}(Y_i)$ and $\bR_i = \pr_1^{-1}(X_i)$ for $i = 0, \infty$. Then $X_i$ is a basic affine open in $\bA^1$, and $\bR_i$ is a basic affine open in $\bR$.
\end{lemma}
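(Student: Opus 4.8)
The plan is to make both computations fully explicit. The key input is the functor‑of‑points description of $\psi$, namely $a \mapsto (a^2+1 : a)$ for $a \in \bA^1(T) = T$; from this I will first identify the open subschemes $X_0 = \psi^{-1}(Y_0)$ and $X_\infty = \psi^{-1}(Y_\infty)$ of $\bA^1$, and then deduce the statement about $\bR_0$ and $\bR_\infty$ from the elementary fact that the preimage of a basic affine open under a morphism of affine schemes is again a basic affine open.

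For the first assertion, write $(u_0:u_1)$ for homogeneous coordinates on $\bP^1$ normalised so that $y = u_0/u_1$, as dictated by the condition $y((a:1)) = a$. Then the chart $Y_0 = \Spec k[y]$ is the open subfunctor of $\bP^1$ consisting of those points at which $u_1$ is invertible, and $Y_\infty = \Spec k[1/y]$ is the open subfunctor of those points at which $u_0$ is invertible. Since $\psi(a) = (a^2+1:a)$, a $T$‑point $a \in T$ satisfies $\psi(a) \in Y_0(T)$ if and only if $a \in T^\times$, and $\psi(a) \in Y_\infty(T)$ if and only if $a^2+1 \in T^\times$. The subfunctor $T \mapsto T^\times$ of $\bA^1$ is represented by the basic affine open $D(x) = \Spec k[x,x^{-1}]$ of $\bA^1 = \Spec k[x]$, and the subfunctor $T \mapsto \{a \in T : a^2+1 \in T^\times\}$ is represented by the basic affine open $D(x^2+1) = \Spec k[x,(x^2+1)^{-1}]$. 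Hence $X_0 = D(x)$ and $X_\infty = D(x^2+1)$, both basic affine opens in $\bA^1$. (As a sanity check, on these charts $\psi$ is the morphism with $\psi^\sharp(y) = x + x^{-1}$ on $D(x)$ and $\psi^\sharp(1/y) = x/(x^2+1)$ on $D(x^2+1)$.)

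For the second assertion, note that $\bR = V((x-y)(xy-1))$ is a closed subscheme of the affine scheme $\bA^2$, hence itself affine, with $\cO(\bR) = k[x,y]/\langle (x-y)(xy-1)\rangle$, and that $\pr_1 : \bR \to \bA^1 = \Spec k[x]$ is the morphism of affine schemes whose comorphism $\pr_1^\sharp : k[x] \to \cO(\bR)$ sends the coordinate $x$ to its image $\bar x$ in $\cO(\bR)$. For any morphism of affine schemes $f : \Spec B \to \Spec A$ and any $g \in A$ one has $f^{-1}(D(g)) = D(f^\sharp(g)) = \Spec B[f^\sharp(g)^{-1}]$; applying this with $g = x$ and with $g = x^2+1$ gives $\bR_0 = D(\bar x) = \Spec \cO(\bR)[\bar x^{-1}]$ and $\bR_\infty = D(\overline{x^2+1}) = \Spec \cO(\bR)[\overline{x^2+1}^{-1}]$, which are basic affine opens in $\bR$.

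The only point requiring care is the bookkeeping in the second paragraph, namely matching $Y_0$ (resp.\ $Y_\infty$) with the correct homogeneous coordinate of $\psi(a) = (a^2+1:a)$ so that one indeed obtains $D(x)$ (resp.\ $D(x^2+1)$) rather than some other locus; once this is pinned down, no genuine obstacle remains.
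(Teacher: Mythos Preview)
The proposal is correct and follows essentially the same approach as the paper: identify $X_0 = D(x)$ and $X_\infty = D(x^2+1)$ by reading off which homogeneous coordinate of $\psi(a) = (a^2+1:a)$ must be invertible, and then use the general fact that $f^{-1}(D(g)) = D(f^\sharp(g))$ for a morphism of affine schemes to conclude for $\bR_i$. Your functor-of-points phrasing is slightly more detailed than the paper's, but the argument is the same.
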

\begin{proof} We have $X_0 = \{a \in \bA^1 : (a^2 + 1 : a) \in Y_0\} = \{a \in \bA^1 :  a \neq 0\}$, which is the basic affine open $D(x) \subset \bA^1$. Similarly, $X_\infty = \{a \in \bA^1 : (a^2+1:a) \in Y_\infty\} = \{a \in \bA^1 : a^2+1\neq 0\}$, which is the basic affine open $D(x^2+1) \subset \bA^1$. The preimage of any basic affine open $D(f) \subset \Spec k[x]$ under the morphism of affine schemes $\pr_1: \bR \to \bA^1$ is $D(\pr_1^\sharp(f))$, which is a basic affine open in $\bR$. Therefore $\bR_0$ and $\bR_\infty$ are basic affine opens in $\bR$.
\end{proof}

\begin{proposition} \label{prop: exactYiXiHi} $\xymatrix{ 0 \ar[r]& \cO(Y_i) \ar[r]^{\psi^\sharp}& \cO(X_i) \ar@<0.6ex>[r]^{\pr_1^\sharp} \ar@<-0.6ex>[r]_{\pr_2^\sharp} & \cO(\bR_i)}$ is an equaliser diagram of commutative rings if $i = 0$ or $i = \infty$.
\end{proposition}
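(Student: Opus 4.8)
The plan is to verify directly that $\psi^\sharp$ is injective and that its image is exactly the subring $\{a\in\cO(X_i):\pr_1^\sharp(a)=\pr_2^\sharp(a)\}$ of $\cO(X_i)$. One inclusion is free: Lemma \ref{lem: psipri} gives $\psi\pr_1=\psi\pr_2$, and since $\bR_i=\pr_1^{-1}(X_i)$ also has $\pr_2(\bR_i)\subseteq\psi^{-1}(Y_i)=X_i$, restricting to $\bR_i$ and applying $\cO(-)$ yields $\pr_1^\sharp\psi^\sharp=\pr_2^\sharp\psi^\sharp$, so $\operatorname{im}(\psi^\sharp)$ lies in the claimed equaliser. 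Hence the real content is (i) injectivity of $\psi^\sharp$, and (ii) that any $a\in\cO(X_i)$ with $\pr_1^\sharp(a)=\pr_2^\sharp(a)$ lies in $\operatorname{im}(\psi^\sharp)$.

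The engine will be an elementary fact about $k[x,x^{-1}]$. Writing $u:=x+x^{-1}$, the relations $x^2=ux-1$ and $x^{-1}=u-x$ give $k[x,x^{-1}]=k[u]\oplus k[u]\,x$ as $k[u]$-modules, the sum being direct because $[k(x):k(u)]=2$; in particular $u$ is transcendental over $k$. The involution $\iota$ with $\iota(x)=x^{-1}$ fixes $k[u]$ and sends $x\mapsto u-x$, so (here we use $p\neq2$) its fixed subring is precisely $k[u]$; after inverting $u$, equivalently inverting $x^2+1=xu$ since $x$ is already a unit, $\iota$ extends to $k[x,x^{-1},(x^2+1)^{-1}]=k[x,x^{-1},u^{-1}]$ with fixed subring $k[u,u^{-1}]$.

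To reduce (ii) to this I will use that the defining relation $(x-y)(xy-1)$ of $\bR$ lies in $\langle xy-1\rangle$, so $\cO(\bR_i)$ surjects onto $\cO(\bR_i)/\langle xy-1\rangle$, which is $k[x,x^{-1}]$ for $i=0$ and $k[x,x^{-1},(x^2+1)^{-1}]$ for $i=\infty$ (put $y=x^{-1}$, which makes $x$ invertible). Since $\pr_1^\sharp$ sends the $\bA^1$-coordinate to the class of $x$ and $\pr_2^\sharp$ sends it to the class of $y$, modulo $xy-1$ the map $\pr_1^\sharp$ becomes the tautological inclusion of $\cO(X_i)$ and $\pr_2^\sharp$ becomes that inclusion followed by $\iota$; hence $\pr_1^\sharp(a)=\pr_2^\sharp(a)$ forces $a=\iota(a)$ in the relevant ring. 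For $i=0$, $\cO(X_0)=k[x,x^{-1}]$ (Lemma \ref{lem: XiHiaffine}), so $a\in k[u]$; and $\psi^\sharp$ sends the coordinate $y$ of $\bP^1$ to $y\circ\psi=(x^2+1)/x=u$, so $a\in\operatorname{im}(\psi^\sharp)=k[u]$, whence also (i) by transcendence of $u$. For $i=\infty$, $\cO(X_\infty)=k[x,(x^2+1)^{-1}]\subseteq k[x,x^{-1},(x^2+1)^{-1}]$, so $a=\iota(a)$ gives $a\in k[u,u^{-1}]$ by the fixed-ring computation; writing $a=\sum_n c_nu^n$ and using the order-of-vanishing valuation $v_x$ at $x=0$ (with $v_x(u)=-1$, so the nonzero terms $c_nu^n$ have the pairwise distinct values $-n$, while $v_x(a)\geq0$ since $a$ has no pole at $x=0$) forces $c_n=0$ for $n>0$, i.e.\ $a\in k[u^{-1}]$; and $\psi^\sharp$ sends the coordinate $y^{-1}$ of $Y_\infty$ to $y^{-1}\circ\psi=x/(x^2+1)=u^{-1}$, so $a\in\operatorname{im}(\psi^\sharp)=k[u^{-1}]$, and again (i) follows.

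The step I expect to be the main obstacle is the $i=\infty$ case of (ii): because $\iota$ does not stabilise $\cO(X_\infty)$, one must first pass to the larger ring $k[x,x^{-1},(x^2+1)^{-1}]$ to compute the fixed subring and then descend back to $\cO(X_\infty)$ via the valuation $v_x$; the hypothesis $p\neq2$ enters precisely at the identification of the fixed subring of $\iota$.
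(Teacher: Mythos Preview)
Your proof is correct and follows essentially the same route as the paper: both arguments reduce to the hyperbola component (you via the quotient modulo $\langle xy-1\rangle$, the paper via restriction to $\bH_i$) and then identify the equaliser with the fixed subring of the involution $x\mapsto x^{-1}$. Two small remarks: your invocation of $p\neq 2$ is unnecessary, since in the decomposition $k[u]\oplus k[u]x$ the fixed-point condition already gives $q(u)u=0$ from the $k[u]$-component, forcing $q=0$ without appeal to $2q=0$; and your valuation argument at $x=0$ for the $i=\infty$ descent is a clean alternative to the paper's explicit manipulation with $a(x)/(x^2+1)^n$, though both amount to the same bookkeeping.
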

\begin{proof} Let $\bH_i = \bR_i \cap \bH$ for $i=0,\infty$. By postcomposing $\pr_1^\sharp, \pr_2^\sharp$ with the restriction maps $\cO(\bR_i) \twoheadrightarrow \cO(\bH_i)$, we can replace $\bR_i$ by $\bH_i$ in this proof.

First we consider the case $i = 0$, where the diagram becomes
\[\xymatrix{ 0 \ar[r]& k[y] \ar[r]^{\psi^\sharp}& k[x,x^{-1}] \ar@<0.6ex>[r]^{\pr_1^\sharp} \ar@<-0.6ex>[r]_{\pr_2^\sharp} & k[x,x^{-1}]}\]
and the maps are given by $\psi^\sharp(y) = x + x^{-1}$, $\pr_1^\sharp(x) = x$, $\pr_2^\sharp(x) = x^{-1}$. Clearly $\psi^\sharp$ is injective, and $\pr_1^\sharp \psi^\sharp = \pr_2^\sharp \psi^\sharp$ by Lemma \ref{lem: psipri}.

Suppose that $a = \sum\limits_{i=-n}^n a_ix^i \in \ker(\pr_1^\sharp - \pr_2^\sharp)$ for some $a_i \in k$. Then $a_i = a_{-i}$ for all $i$, so $a$ lies in the $k$-linear span of $\{x^n + x^{-n} : n \geq 0\}$. Hence $a \in F_m := \sum\limits_{n=0}^m k(x^n + x^{-n})$ for some $m \geq 0$. We will show by induction that $F_r \subseteq k[x + x^{-1}]$ for all $r \geq 0$. The base case $r = 0$ is clear. Assuming inductively that $F_{r-1} \subseteq k[x+x^{-1}]$ for some $r \geq 1$, we see that $x^r + x^{-r} \equiv (x+x^{-1})^r \mod F_{r-1}$, so $x^r + x^{-r} \in k[x+x^{-1}]$ as well. Hence
\begin{equation}\label{eq: kerpr1pr2} \ker(\pr_1^\sharp - \pr_2^\sharp) = k[x + x^{-1}],\end{equation}
and the sequence is exact in the middle as required.

Next, we consider the case where $i = \infty$. The diagram becomes
\[\xymatrix{ 0 \ar[r]& k[y^{-1}] \ar[r]^{\psi^\sharp}& k[x,\frac{1}{x^2+1}] \ar@<0.6ex>[r]^(0.45){\pr_1^\sharp} \ar@<-0.6ex>[r]_(0.45){\pr_2^\sharp} & k\left[x,x^{-1}, \frac{1}{x^2+1}\right]}\]
where the maps are now given by $\psi^\sharp(y^{-1}) = \frac{1}{x + x^{-1}} = \frac{x}{x^2+1}$, $\pr_1^\sharp(x) = x$, and $\pr_2^\sharp(x) = x^{-1}$. It is again clear that $\psi^\sharp$ is injective, and that $\pr_1^\sharp \psi^\sharp = \pr_2^\sharp \psi^\sharp$ by Lemma \ref{lem: psipri}.

Suppose that $\frac{a(x)}{(x^2+1)^n} \in \ker(\pr_1^\sharp - \pr_2^\sharp)$ for some $a(x) \in k[x]$ and some $n \geq 0$. Then
\[ \frac{a(x)}{(x^2+1)^n} = \frac{a(x^{-1})}{(x^{-2} + 1)^n}.\]
Multiplying through by $(x^2+1)^n$ shows that $a(x) = a(x^{-1})x^{2n}$. Hence $\deg(a) \leq 2n$, and dividing through by $x^n$ shows that $a(x)x^{-n} \in k[x,x^{-1}]$ is invariant under the substitution $x \mapsto x^{-1}$. Using $(\ref{eq: kerpr1pr2})$, we see that $a(x)x^{-n} = b(x + x^{-1})$ for some $b(x) \in k[x]$ with $\deg b \leq n$. Write $b(x) = b_0 + b_1 x + \cdots + b_nx^n$ for some $b_0,\cdots,b_n \in k$; then
\[ \frac{a(x)}{(x^2+1)^n} = \frac{x^n}{(x^2+1)^n}\sum\limits_{i=0}^n b_i \left(\frac{x^2+1}{x}\right)^i = \sum_{i=0}^n b_i \left(\frac{x}{x^2+1}\right)^{n-i} \in k\left[\frac{x}{x^2+1}\right].\]
Hence $\frac{a(x)}{(x^2+1)^n} \in k\left[\frac{x}{x^2+1}\right]$ and the sequence is exact in the middle as required.
\end{proof}
\begin{proposition}\label{prop: RAPcoeq} $\xymatrix{\bR \ar@<0.6ex>[r]^{\pr_1}\ar@<-0.6ex>[r]_{\pr_2} & \bA^1 \ar[r]^\psi & \bP^1}$ is a coequaliser diagram in $\LRS$.
 \end{proposition}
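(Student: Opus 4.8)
The plan is to apply Lemma \ref{lem: catquot} to the diagram (\ref{eq: RAPdiagram}), taking the standard open affine cover $\{Y_0, Y_\infty\}$ of $\bP^1$. One has $\psi \circ \pr_1 = \psi \circ \pr_2$ by Lemma \ref{lem: psipri}, and for this cover the relevant fibre products are $X_i := \bA^1 \times_{\bP^1} Y_i = \psi^{-1}(Y_i)$ and $\bR_i := \bR \times_{\bP^1} Y_i = \pr_1^{-1}(X_i)$, which are affine for $i = 0, \infty$ by Lemma \ref{lem: XiHiaffine} (they are basic affine opens of $\bA^1$, resp.\ of $\bR$); so hypothesis (b) of Lemma \ref{lem: catquot} holds, and hypothesis (c) is precisely Proposition \ref{prop: exactYiXiHi}. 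Everything therefore comes down to hypothesis (a): that on underlying topological spaces, (\ref{eq: RAPdiagram}) is a coequaliser diagram in $\Top$.

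To check (a), I would first pin down the equivalence relation $\approx$ on $|\bA^1|$ generated by the pairs $(|\pr_1|(r), |\pr_2|(r))$, $r \in |\bR|$. Decomposing $\bR = \Delta \cup \bH$ with $\Delta = V(x - y) \cong \bA^1$ and $\bH = V(xy-1) \cong \Spec k[x,x^{-1}]$, the diagonal contributes only reflexive pairs, while on $\bH$ the morphism $\pr_1|_\bH$ is the open immersion $D(x) \hookrightarrow \bA^1$ and $\pr_2|_\bH$ is that immersion followed by the inversion automorphism $\sigma\colon x \mapsto x^{-1}$ of $\Spec k[x,x^{-1}]$. Since $\sigma$ is an involution of $D(x)$, the classes of $\approx$ are exactly the singleton $\{(x)\}$ together with the sets $\{p, \sigma(p)\}$ for $p \in D(x)$ (of size one or two). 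By Lemma \ref{lem: psipri}, $|\psi|$ is constant on these classes and hence factors continuously through the topological quotient $|\bA^1|/{\approx}$.

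Next I would verify that $|\psi|$ is surjective with fibres equal to the $\approx$-classes. Over $Y_0$, the map $\psi|_{X_0}\colon \Spec k[x,x^{-1}] \to \Spec k[y]$, $y \mapsto x + x^{-1}$, is finite with injective co-morphism, hence surjective; and computing a fibre as $\Spec \kappa(\mathfrak q)[x]/(x^2 - \overline{y}\,x + 1)$ shows it is a single orbit of $\sigma$ (which acts on it by $x \mapsto \overline{y} - x$). The one point of $\bP^1$ outside $Y_0$ is $(1:0) = |\psi|((x))$, and since $x^2+1$ is a unit on $X_\infty$, any prime of $\cO(X_\infty) = k[x,\tfrac 1{x^2+1}]$ lying over the point $1/y = 0$ must contain $x$ and hence equal the maximal ideal $(x)$; so this fibre is $\{(x)\}$. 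Thus $|\psi|$ induces a continuous bijection $|\bA^1|/{\approx} \to |\bP^1|$. Now both $|\bA^1|$ and $|\bP^1|$ carry the Zariski topology of an irreducible $1$-dimensional Noetherian scheme, whose proper closed subsets are the finite sets of closed points; because the classes of $\approx$ are finite, $|\bA^1|/{\approx}$ carries the analogous topology, and a continuous bijection between two such spaces is automatically a homeomorphism (compare the proof of Proposition \ref{prop: nX}). Hence $|\psi|$ exhibits $|\bP^1|$ as the topological coequaliser of $|\pr_1|, |\pr_2|$, which is (a), and Lemma \ref{lem: catquot} then gives the result.

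I expect the crux to be hypothesis (a), and inside it the fact that $\psi$ is \emph{not} a finite morphism: its extension to a morphism $\bP^1 \to \bP^1$ has a two-point fibre over the point at infinity of the target, one of which is the point at infinity of the source, so $\psi$ simply omits that point and is not proper over $(1:0)$. Consequently one cannot deduce closedness or surjectivity of $|\psi|$ from properness, and the chart $Y_\infty$ --- over which $\sigma$ is only a partially defined self-map of $X_\infty$ --- has to be dealt with by the direct argument above rather than by the symmetry available over $Y_0$.
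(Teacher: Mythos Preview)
Your proof is correct and follows the same overall architecture as the paper: apply Lemma~\ref{lem: catquot} with the standard cover $\{Y_0,Y_\infty\}$ of $\bP^1$, using Lemma~\ref{lem: psipri} for $\psi\pr_1=\psi\pr_2$, Lemma~\ref{lem: XiHiaffine} for hypothesis (b), and Proposition~\ref{prop: exactYiXiHi} for hypothesis (c). The only genuine difference lies in the verification of hypothesis (a).

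The paper handles (a) by base-changing to an algebraic closure $\ok$ and identifying $|X|$ with the set of $\Aut(\ok/k)$-orbits on $|X_{\ok}|$ (citing \cite{MO}); this reduces the surjectivity and fibre computations to an elementary check on $\ok$-points, where $\psi(a)=\psi(b)$ becomes the factorisation $(a-b)(ab-1)=0$. You instead work directly at the level of scheme-theoretic points: you identify $\psi|_{X_0}$ with the finite map $\Spec k[x,x^{-1}]\to\Spec k[y]$, $y\mapsto x+x^{-1}$, and compute each fibre as $\Spec\kappa(\mathfrak q)[x]/(x^2-\bar y\,x+1)$, on which the involution $\sigma$ acts by $x\mapsto \bar y - x$ and hence has a single orbit; the remaining fibre over $(1:0)$ is handled by a direct look at $X_\infty$. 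Both approaches finish with the same cofinite-topology observation for step (iii). Your route has the advantage of being entirely self-contained (no appeal to \cite{MO}, no reduction to $\ok$) and of making transparent why the fibres are exactly the $\sigma$-orbits over any field, while the paper's route is a little quicker once the base-change lemma is accepted.
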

\begin{proof} Noting that $\psi \circ \pr_1 = \psi \circ \pr_2$ by Lemma \ref{lem: psipri}, this follows from Lemma \ref{lem: catquot}, applied with the standard covering $\{Y_0 = \Spec k[y], Y_\infty = \Spec k[y^{-1}]\}$ of $Y = \bP^1$, once we have checked its three conditions. Condition b) follows from Lemma \ref{lem: XiHiaffine} and condition c) follows from Prop. \ref{prop: exactYiXiHi}, so it remains to check condition a). This amounts to checking the following:
\begin{itemize}
\item[(i)] $|\psi| : |\bA^1| \to |\bP^1|$ is surjective,
\item[(ii)] the equivalence classes of the equivalence relation $\sim$ on $|\bA^1|$ defined by $\xymatrix{|\bR| \ar@<0.6ex>[r]^{|\pr_1|}\ar@<-0.6ex>[r]_{|\pr_2|} & |\bA^1|}$ are equal to the fibres of $|\psi|$,
\item[(iii)] the induced map $|\bA^1|/\sim \quad \longrightarrow \quad |\bP^1|$ is a homeomorphism.
\end{itemize}

For (i) and (ii), for any $k$-variety of finite type $X$, we use \cite{MO}, Chapter IV, Thm. 2.3 to identify $|X|$ with the set of $\cG$-orbits in $|X_{\ok}|$, where $\cG = \Aut(\ok/k)$ and $X_{\ok} = X \times_k \ok$ is the base-change of $X$ to $\ok$. Then we have the following commutative diagram
\[\xymatrix{|\bR_{\ok}| \ar@<0.6ex>[r]\ar@<-0.6ex>[r]\ar[d] & |\bA^1_{\ok}| \ar[r]^{|\psi_{\ok}|}\ar[d] & |\bP^1_{\ok}|\ar[d] \\
|\bR| \ar@<0.6ex>[r]\ar@<-0.6ex>[r] & |\bA^1| \ar[r]_{|\psi|} & |\bP^1|}\]
where the vertical arrows are surjective. Chasing this diagram reduces us to the case $k = \overline{k}$.

(i) The map $|\psi|$ sends the generic point in $|\bA^1|$ to the generic point in $|\bP^1|$, and $0 \in \bA^1$ to the point at infinity $(1:0) \in \bP^1$. Any other point in $|\bP^1|$ is of the form $(c : 1)$ for some $c \in k$; since $k$ is algebraically closed, the equation $a + a^{-1} = c$ has a solution, so $(c:1) \in \Im(|\psi|)$.

(ii) Let $a, b \in |\bA^1|$ be such that $\psi(a) = \psi(b)$. If $\psi(a)$ is the generic point of $\bP^1$, then $a$ and $b$ must both be equal to the generic point of $\bA^1$, so assume otherwise. Then necessarily $a,b$ are closed points in $\bA^1$. Since $k = \overline{k}$ we may assume that $a,b \in \bA^1(k) = k$. Now $\psi(a) = \psi(b)$ implies that $(a^2+1:a) = (b^2+1:b)$, hence $(a^2+1)b = a(b^2+1)$, so $(a-b)(ab-1) = 0$ and $(a,b) \in \bR(k)$. Setting $u = (a,b) \in |\bR|$, we see that $a = \pr_1(u)$ and $b = \pr_2(u)$ as required.


(iii) The map $|\psi| : |\bA^1| \to |\bP^1|$ is surjective and has finite fibres. Hence the quotient topology on $|\bP^1|$ induced by this map from the Zariski topology on $|\bA^1|$ is the cofinite topology, and therefore coincides with the Zariski topology on $|\bP^1|$.
\end{proof}

\begin{theorem}\label{thm: selfgluing} Suppose that $\gamma = \{\alpha\}$, where $\alpha = \mu/\alpha$. Then $\Xi'_\gamma/\cR'_\gamma$ is also isomorphic to the projective line $\bP^1$.
 \end{theorem}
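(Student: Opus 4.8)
The plan is to observe that, for $\gamma = \{\alpha\}$ with $\alpha = \mu/\alpha$, the defining coequaliser diagram (\ref{eq: Xi'ModR'}) of $\Xi'_\gamma/\cR'_\gamma$ is, after the evident identifications, literally the diagram (\ref{eq: RAPdiagram}) whose coequaliser in $\LRS$ was determined in Proposition \ref{prop: RAPcoeq}.

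First I would unwind the definitions (\ref{eq: Xi'R'}). Since $\gamma = \{\alpha\}$ consists of a single element, $\Xi'_\gamma = \Xi'_\alpha = \Spec k[t_\alpha]$ and $\cR'_\gamma = \cR'_{\alpha,\alpha}$. Identify $\Xi'_\gamma$ with $\bA^1 = \Spec k[x]$ via $t_\alpha \mapsto x$, and identify $\cO(\Xi'_\alpha \times \Xi'_\alpha) = k[t_\alpha] \otimes_k k[t_\alpha]$ with $k[x,y]$ as in the paragraph preceding Proposition \ref{prop: RdashAlphaBeta}, namely $x \mapsto t_\alpha \otimes 1$ and $y \mapsto 1 \otimes t_\alpha$. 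Because $\alpha = \mu/\alpha$ we have $\delta_{\alpha,\alpha} = \delta_{\alpha,\mu/\alpha} = 1$, so Proposition \ref{prop: RdashAlphaBeta} gives
\[ \cR'_\gamma = \cR'_{\alpha,\alpha} = V\left((x-y)(xy-1)\right) = \bR, \]
the union of the diagonal $\Delta$ and the hyperbola $\bH$. Moreover the comorphism of $\pr_1^{\Xi'_\gamma}$ sends $t_\alpha$ to $t_\alpha \otimes 1 = x$ and that of $\pr_2^{\Xi'_\gamma}$ sends $t_\alpha$ to $1 \otimes t_\alpha = y$, so under these identifications $f_1^\gamma$ and $f_2^\gamma$ become precisely the projections $\pr_1, \pr_2 : \bR \to \bA^1$ appearing in (\ref{eq: RAPdiagram}).

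It then remains to invoke Proposition \ref{prop: RAPcoeq}, which asserts that $\psi : \bA^1 \to \bP^1$ is a coequaliser in $\LRS$ of $\pr_1, \pr_2 : \bR \rightrightarrows \bA^1$. Comparing with the coequaliser diagram (\ref{eq: Xi'ModR'}) and using the uniqueness of coequalisers up to (unique) isomorphism, one concludes that $\Xi'_\gamma/\cR'_\gamma \cong \bP^1$, as required.

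I do not anticipate any real obstacle in this argument: the substantive work --- checking $\psi \circ \pr_1 = \psi \circ \pr_2$, the affineness of the relevant opens, and the exactness of the ring-level equaliser sequences over the standard cover of $\bP^1$ --- has already been carried out in Lemmas \ref{lem: psipri} and \ref{lem: XiHiaffine} and Propositions \ref{prop: exactYiXiHi} and \ref{prop: RAPcoeq}. The only delicate point is to match $f_1^\gamma$ with $\pr_1$ and $f_2^\gamma$ with $\pr_2$ in the correct order, which is immediate from the formulas for the comorphisms.
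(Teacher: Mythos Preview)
Your proposal is correct and follows essentially the same approach as the paper: identify $\Xi'_\gamma$ with $\bA^1$ and $\cR'_\gamma$ with $\bR = V((x-y)(xy-1))$ via Proposition~\ref{prop: RdashAlphaBeta}, check that the two projections match, and then invoke Proposition~\ref{prop: RAPcoeq}. The paper packages these identifications into a commutative diagram with vertical isomorphisms, but the content is identical to what you wrote.
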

\begin{proof}
Since $\gamma = \{\alpha\}$, we have $\Xi'_\gamma = \Xi'_\alpha = \Spec k[t_\alpha]$ and $\cR'_\gamma = \cR'_{\alpha,\alpha}$ in view of equation (\ref{eq: Xi'R'}). Since $\alpha = \mu/\alpha$, Prop. \ref{prop: RdashAlphaBeta} tells us that $\cR'_{\alpha,\alpha} = V((x-y)(xy-1))$, where $x = t_\alpha \otimes 1$ and $y = 1 \otimes t_\alpha$. Hence we have a commutative diagram of schemes
\begin{equation}\label{eq: RAPdiagram2}
\xymatrix{
\cR'_\gamma \ar@<0.6ex>[rr]^{f_1^\gamma}\ar@<-0.6ex>[rr]_{f_2^\gamma}\ar[d]_\cong && \Xi'_\gamma \ar[r]\ar[d]^\cong & \Xi'_\gamma / \cR'_\gamma \\
\bR \ar@<0.6ex>[rr]^{\pr_1}\ar@<-0.6ex>[rr]_{\pr_2} && \bA^1 \ar[r]^\psi & \bP^1
}
\end{equation}
where the top line is the diagram (\ref{eq: Xi'ModR'}), and the vertical arrows are the isomorphisms of schemes whose corresponding comorphisms are
\[  \cO(\bR) = \frac{k[x,y]}{\langle (x-y)(xy-1) \rangle} \stackrel{\cong}{\longrightarrow} \cO(\cR'_{\alpha,\alpha}) \qmb{and} \cO(\bA^1) = k[x] \stackrel{\cong}{\longrightarrow} k[t_\alpha] = \cO(\Xi'_\alpha), \]
given by $x \mapsto t_\alpha \otimes 1, y \mapsto 1 \otimes t_\beta$ and $x \mapsto t_\alpha$, respectively. Since $\psi$ is a coequaliser of $\pr_1,\pr_2$ in $\LRS$ by Prop. \ref{prop: RAPcoeq}, this gives the required isomorphism of schemes $\Xi'_\gamma / \cR'_\gamma \stackrel{\cong}{\longrightarrow} \bP^1$.
\end{proof}

\begin{corollary} \label{cor: sizeHatOm} $\Xi'/\cR'$ is isomorphic to the disjoint union of $\frac{p+1}{2}$ projective lines.
\end{corollary}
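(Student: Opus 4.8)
The plan is to combine Corollary \ref{cor: Xi'R'} with the two structure theorems just established, Theorem \ref{thm: gluingpair} and Theorem \ref{thm: selfgluing}, and then count the equivalence classes of $\sim$ on $\hatOm$. First I would recall that Corollary \ref{cor: Xi'R'} gives a decomposition $\Xi'/\cR' \cong \coprod_{\gamma \in \hatOm/\sim} \Xi'_\gamma/\cR'_\gamma$ in $\LRS$, so it suffices to identify each piece and count the pieces. Every equivalence class $\gamma$ under $\sim$ (where $\alpha \sim \beta$ iff $\beta \in \{\alpha, \mu/\alpha\}$) has either one element (when $\alpha = \mu/\alpha$) or two elements (when $\alpha \neq \mu/\alpha$, in which case $\gamma = \{\alpha, \mu/\alpha\}$). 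In the two-element case Theorem \ref{thm: gluingpair} gives $\Xi'_\gamma/\cR'_\gamma \cong \bP^1$; in the one-element case Theorem \ref{thm: selfgluing} gives $\Xi'_\gamma/\cR'_\gamma \cong \bP^1$ as well. So in every case the summand is a projective line, and the total count is $|\hatOm/\sim|$.

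The remaining step is to compute $|\hatOm/\sim|$ when $\frF = \bQ_p$ and $p \neq 2,3$, i.e.\ $q = p$, so $\hatOm$ is cyclic of order $q - 1 = p-1$ and $\mu = \id^2$. Writing $\hatOm$ additively as $\bZ/(p-1)\bZ$ with $\id$ corresponding to $1$, the involution $\alpha \mapsto \mu/\alpha$ becomes $j \mapsto 2 - j$ on $\bZ/(p-1)\bZ$. The fixed points of $j \mapsto 2-j$ are the solutions of $2j \equiv 2 \pmod{p-1}$; since $p-1$ is even, there are exactly two such $j$ (namely $j = 1$ and $j = 1 + \tfrac{p-1}{2}$). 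Hence among the $p-1$ elements of $\hatOm$, two are fixed (giving two singleton classes) and the remaining $p-3$ are paired up into $\tfrac{p-3}{2}$ two-element classes, for a total of
\[ |\hatOm/\sim| \;=\; 2 + \frac{p-3}{2} \;=\; \frac{p+1}{2}. \]
Therefore $\Xi'/\cR'$ is the disjoint union of $\tfrac{p+1}{2}$ copies of $\bP^1$.

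There is essentially no obstacle here: the real work has been done in Theorems \ref{thm: gluingpair} and \ref{thm: selfgluing} and in Corollary \ref{cor: Xi'R'}; what remains is the elementary orbit count above together with an appeal to the fact that $\LRS$ has coproducts (cf.\ \cite{DG} Prop. I.1.1.6) so that the coproduct of the isomorphisms $\Xi'_\gamma/\cR'_\gamma \cong \bP^1$ is again an isomorphism. The only point requiring a word of care is that the identification of $\hatOm$ with $\bZ/(p-1)\bZ$ sends $\mu = \id^2$ to the element $2$, so that $\mu/\alpha$ corresponds to $2 - j$; this uses the explicit description of $\hatOm$ as the cyclic group generated by $\id$ recorded in $\S$\ref{sec: propIHA}, together with the standing assumption $q = p$ coming from $\frF = \bQ_p$.
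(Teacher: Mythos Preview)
Your proof is correct and follows essentially the same approach as the paper: invoke Cor.~\ref{cor: Xi'R'} together with Theorems~\ref{thm: gluingpair} and~\ref{thm: selfgluing} to see that each $\Xi'_\gamma/\cR'_\gamma$ is a $\bP^1$, and then count $|\hatOm/\!\sim|$ by determining the two fixed points of $j \mapsto 2-j$ on $\bZ/(p-1)\bZ$ (namely $j=1$ and $j=\tfrac{p+1}{2}$) and pairing up the remaining $p-3$ elements. The only cosmetic difference is that the paper phrases the fixed-point condition as $\alpha^2 = \mu$ rather than in additive notation, but the computation is identical.
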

\begin{proof} After Cor. \ref{cor: Xi'R'}, Thm. \ref{thm: gluingpair} and Thm. \ref{thm: selfgluing}, $\Xi'/\cR'$ is isomorphic to the disjoint union of $|\hatOm/\!\!\!\sim\!\!\!|$ projective lines. There are exactly $2$ elements $\alpha \in \hatOm$ that satisfy $\alpha = \mu/\alpha$: if $\alpha = \id^j$ for some $j = 0,\cdots, p-2$, then $\alpha^2 = \id^{2j} = \mu = \id^2$ if and only if $2j \equiv 2 \mod p-1$ which is equivalent to $j = 1$ or $j = \frac{p+1}{2}$. These correspond to the singleton equivalence classes, and all other classes have size two. Therefore $|\hatOm/\!\!\sim\!\!| = 2 + \frac{p-3}{2} = \frac{p+1}{2}$.
\end{proof}

\subsubsection{Re-gluing the projective lines to form $\Xi/\cR$}

At this point we have to introduce more notation.

\begin{definition} \label{def: NotnP1comps} For each $1 \leq r \leq \frac{p+1}{2}$, let $\gamma_r := \{ \id^r, \id^{2-r} \}$ and define
\[Z_r := \Xi'_{\gamma_r} / \cR'_{\gamma_r}\]
\end{definition}
Note that $\Xi'/\cR' \cong Z_1 \coprod Z_2 \coprod \cdots \coprod Z_{\frac{p+1}{2}}$ by Cor. \ref{cor: Xi'R'} and Cor. \ref{cor: sizeHatOm}. After Thm. \ref{thm: gluingpair} and Thm. \ref{thm: selfgluing}, we know that each $Z_r$ is isomorphic to the projective line $\bP^1$.  Note also that if $\alpha = \id^r$, then $\alpha = \mu/\alpha$ if and only if $r = 1$ or $r = \frac{p+1}{2}$.

\begin{definition} \label{def: LabelsOnLines} Let $1 \leq r \leq \frac{p+1}{2}$ and write $\alpha = \id^r$.
\begin{itemize}
\item If $r = 1$, let $z_r$ be the local coordinate on $Z_r$ that pulls back to $\frac{t_{\alpha}}{t_{\alpha}^2+1}$ under the morphism $\Xi'_{\alpha} \twoheadrightarrow \bP^1$ from the proof of Thm. \ref{thm: selfgluing}.
\item If $r \neq 1$ and $r \neq \frac{p+1}{2}$, let $z_r$ be the local coordinate on $Z_r$ that pulls back to $t_{\alpha}$ under the morphism $\Xi'_\alpha \coprod \Xi'_{\id^2/\alpha} \twoheadrightarrow \bP^1$ from the proof of Thm. \ref{thm: gluingpair}.
\item If $r =  \frac{p+1}{2}$, let $z_r$ be the local coordinate on $Z_r$ that pulls back to $t_{\alpha} + t_{\alpha}^{-1}$ under the morphism $\Xi'_\alpha \twoheadrightarrow \bP^1$ from the proof of Thm. \ref{thm: selfgluing}.
\item Let $O_r, \infty_r \in Z_r$ be the closed points defined by $O_r := \{z_r = 0\}$ and $\infty_r := \{z_r = \infty\}$.
\end{itemize}
\end{definition}
Recall that for any $\alpha \in \hatOm$, the origin on the affine line $\Xi'_{\alpha}$ is denoted by $O_\alpha$, and that $q' : \Xi' \twoheadrightarrow \Xi'/\cR'$ denotes the quotient map. After revisiting the proofs of Thm. \ref{thm: gluingpair} and Thm. \ref{thm: selfgluing}, we have the following
\begin{lemma} \label{lem: markedpoints} Let $1 \leq r \leq \frac{p+1}{2}$.
\begin{itemize}
\item If $r = 1$, then $q'(O_{\id^1}) = O_1$.
\item If $r \neq 1$ and $r \neq \frac{p+1}{2}$, then $q'(O_{\id^r}) = O_r$ and $q'(O_{\id^{2-r}}) = \infty_r$.
\item If $r = \frac{p+1}{2}$, then $q'(O_{\id^{\frac{p+1}{2}}}) = \infty_{\frac{p+1}{2}}$.
\end{itemize}
\end{lemma}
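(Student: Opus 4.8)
The statement is a bookkeeping lemma tracking where the origins of the affine components $\Xi'_\lambda$ land under the quotient map $q' : \Xi' \twoheadrightarrow \Xi'/\cR'$. The plan is to reduce to the three cases that already appear in Definition \ref{def: LabelsOnLines} and, in each case, trace the relevant origin through the explicit isomorphisms established in the proofs of Thm. \ref{thm: gluingpair} and Thm. \ref{thm: selfgluing}. Throughout we use that $q' = \coprod_{\gamma} (\text{quotient map } \Xi'_\gamma \to \Xi'_\gamma/\cR'_\gamma)$ by Cor. \ref{cor: Xi'R'}, so for a given $r$ (with $\alpha = \id^r$ and $\gamma = \gamma_r$) we only need to understand the coequaliser map $\Xi'_\gamma \to Z_r$ restricted to the component $\Xi'_\alpha$, and then evaluate the coordinate $z_r$ on the image of $O_\alpha = \langle t_\alpha \rangle$.

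First I would handle the generic case $r \neq 1$, $r \neq \frac{p+1}{2}$. Here $\gamma_r = \{\id^r, \id^{2-r}\}$ with $\id^r \neq \id^{2-r}$, and the proof of Thm. \ref{thm: gluingpair} identifies $\Xi'_{\gamma_r}/\cR'_{\gamma_r}$ with $\bP^1$ built from the standard gluing datum on $\Xi'_{\id^r}$ and $\Xi'_{\id^{2-r}}$; the quotient map restricted to $\Xi'_{\id^r}$ is just the inclusion of one standard affine chart, and $z_r$ pulls back to $t_{\id^r}$ on this chart. Hence $q'(O_{\id^r}) = q'(\langle t_{\id^r}\rangle)$ is the point where $z_r = 0$, i.e. $O_r$. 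By symmetry — switching the roles of $\id^r$ and $\id^{2-r}$ and using that the gluing isomorphism is $t_{\id^{2-r}} \mapsto t_{\id^r}^{-1}$ — the origin $O_{\id^{2-r}}$ on the second chart maps to the point where $t_{\id^r}^{-1} = 0$, i.e. $z_r = \infty$, which is $\infty_r$. This is essentially immediate once one reads off the charts from the proof of Thm. \ref{thm: gluingpair}.

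Next I would treat the two exceptional cases $r = 1$ and $r = \frac{p+1}{2}$, where $\gamma_r = \{\id^r\}$ is a singleton with $\id^r = \mu/\id^r$. Here the proof of Thm. \ref{thm: selfgluing} identifies $\Xi'_{\gamma_r}/\cR'_{\gamma_r}$ with $\bP^1$ via the morphism $\psi : \bA^1 \to \bP^1$, $a \mapsto (a^2+1 : a)$, after matching $\Xi'_{\id^r} \cong \bA^1 = \Spec k[t_{\id^r}]$. For $r = 1$ the coordinate $z_1$ pulls back to $\frac{t_{\id^1}}{t_{\id^1}^2+1}$; evaluating at the origin $t_{\id^1} = 0$ gives $z_1 = 0$, so $q'(O_{\id^1}) = O_1$. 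For $r = \frac{p+1}{2}$ the coordinate $z_r$ pulls back to $t_{\id^r} + t_{\id^r}^{-1}$; at $t_{\id^r} = 0$ this expression has a pole, so $\psi$ sends the origin to the point at infinity $(1:0) \in \bP^1$, which is $\infty_{\frac{p+1}{2}}$ by the convention $\infty_r = \{z_r = \infty\}$. (Concretely: $\psi(0) = (1:0)$ in the functor-of-points description, and $0 \in \bA^1$ corresponds to $O_{\id^r}$ under the chosen identification.)

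The main — and only real — obstacle is purely notational: one must be careful that the identifications $\Xi'_\alpha \cong \bA^1$, the precise direction of the gluing isomorphisms, and the conventions $O_r = \{z_r = 0\}$, $\infty_r = \{z_r = \infty\}$ from Definition \ref{def: LabelsOnLines} are all used consistently, so that "origin" on the source matches the correct marked point on the target. Once the relevant charts and coordinates are extracted verbatim from the proofs of Thm. \ref{thm: gluingpair} and Thm. \ref{thm: selfgluing}, there is no further content; the lemma follows by direct substitution. I would therefore write the proof as three short paragraphs, one per bullet, each saying "by the proof of Thm.\ X, $z_r$ pulls back to [formula]; evaluating at the origin gives [value]; hence the claim."
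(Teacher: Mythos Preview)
Your proposal is correct and matches the paper's approach exactly: the paper does not give a separate proof of this lemma but simply states it as what one obtains ``after revisiting the proofs of Thm.\ \ref{thm: gluingpair} and Thm.\ \ref{thm: selfgluing}'', and your three-case analysis carries out precisely that revisiting, tracing the origins through the explicit charts and coordinates from those proofs and Definition \ref{def: LabelsOnLines}.
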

Recall the coequaliser diagram $\xymatrix{\Xi_{\sing} \ar@<0.6ex>[r]^{a}\ar@<-0.6ex>[r]_{b} & \Xi' \ar[r]^\theta & \Xi}$ from Prop. \ref{prop: NormalisationAsQuotient}. After Prop. \ref{prop: XiRXi'R'}, we wish to better understand the coequaliser diagram
\[\xymatrix{\Xi_{\sing} \ar@<0.6ex>[r]^{q'a}\ar@<-0.6ex>[r]_{q'b} & \Xi'/\cR' \ar[r]^(0.4)s & (\Xi'/\cR')/\Xi_{\sing}}.\]
\begin{lemma} \label{lem: explicitgluing} The pairs $\{\left(q'a(s_r), q'b(s_r)\right) : r = 1, \cdots, \frac{p-3}{2}\}$ are explicitly given as follows:
\[ (q'a(s_r), q'b(s_r)) = (O_r, \infty_{r+2}) \qmb{for all} r = 1,\cdots, \frac{p-3}{2}.\]
\end{lemma}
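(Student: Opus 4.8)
The plan is to deduce Lemma \ref{lem: explicitgluing} directly from Lemma \ref{lem: markedpoints} by unwinding the definitions of the closed embeddings $a,b$ and keeping careful track of the arithmetic in $\hatOm$. Since $\frF = \bQ_p$ we have $q = p$, so by Definition \ref{def: XiSingab} the index $r$ runs over $1, \dots, \frac{p-3}{2}$ and $a(s_r) = O_{\id^r}$, $b(s_r) = O_{\id^{-r}}$. Hence $q'a(s_r) = q'(O_{\id^r})$ and $q'b(s_r) = q'(O_{\id^{-r}})$, and it remains to evaluate these two images of origins under $q'$ using Lemma \ref{lem: markedpoints}.

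For the first component I would argue as follows. Since $1 \le r \le \frac{p-3}{2} < \frac{p+1}{2}$, if $r = 1$ the first bullet of Lemma \ref{lem: markedpoints} gives $q'(O_{\id^1}) = O_1$, while if $r \ge 2$ the second bullet, applied with index $r$ (legitimate because $r \ne 1$ and $r \ne \frac{p+1}{2}$), gives $q'(O_{\id^r}) = O_r$. In either case $q'a(s_r) = O_r$.

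For the second component I would first locate the affine line $\Xi'_{\id^{-r}}$ inside one of the glued components $\Xi'_{\gamma_{r'}}$. Put $r' := r+2$; then $3 \le r' \le \frac{p+1}{2}$, and since $2 - r' = -r$ we have $\id^{-r} = \id^{2-r'}$, so in the notation of Definition \ref{def: NotnP1comps} the character $\id^{-r}$ lies in the block $\gamma_{r'} = \{\id^{r'}, \id^{2-r'}\}$ and $O_{\id^{-r}} = O_{\id^{2-r'}}$. If $r' < \frac{p+1}{2}$ (which forces $p \ge 7$), then $r' \notin \{1, \tfrac{p+1}{2}\}$ and the second bullet of Lemma \ref{lem: markedpoints} with index $r'$ yields $q'(O_{\id^{2-r'}}) = \infty_{r'} = \infty_{r+2}$. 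If instead $r' = \frac{p+1}{2}$, i.e.\ $r = \frac{p-3}{2}$, then $\tfrac{p+1}{2} - (2-r') = \tfrac{p+1}{2} - (-r) = p-1 \equiv 0 \pmod{p-1}$, so $\id^{-r} = \id^{\frac{p+1}{2}}$ and the third bullet of Lemma \ref{lem: markedpoints} gives $q'(O_{\id^{\frac{p+1}{2}}}) = \infty_{\frac{p+1}{2}} = \infty_{r+2}$. In all cases $q'b(s_r) = \infty_{r+2}$, and combining the two computations gives $(q'a(s_r), q'b(s_r)) = (O_r, \infty_{r+2})$ as claimed.

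The argument is essentially bookkeeping once Lemma \ref{lem: markedpoints} is available; the only point requiring care, and the natural place for an off-by-one slip, is the identification $\id^{-r} = \id^{2-(r+2)}$ placing $\id^{-r}$ in the block $\gamma_{r+2}$, together with the degenerate case $r = \frac{p-3}{2}$: there $\gamma_{\frac{p+1}{2}}$ is a singleton block, the single origin $O_{\id^{\frac{p+1}{2}}}$ serves as both marked points, and one must invoke the third rather than the second bullet of Lemma \ref{lem: markedpoints}.
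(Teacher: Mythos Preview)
Your proof is correct and follows essentially the same approach as the paper: both compute $q'a(s_r)$ and $q'b(s_r)$ by invoking Definition \ref{def: XiSingab}.b and then reading off the values from Lemma \ref{lem: markedpoints}, with the key identification $\id^{-r} = \id^{2-(r+2)}$ and a separate treatment of the boundary case $r = \frac{p-3}{2}$. Your version is slightly more explicit in distinguishing the sub-case $r=1$ for the first component, but this is only a cosmetic difference.
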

\begin{proof} Suppose first that $r \neq \frac{p-3}{2}$. Then using Def. \ref{def: XiSingab}.b and Lemma \ref{lem: markedpoints}, we have
\[ q'(a(s_r)) = q'(O_{\id^r}) = O_r \qmb{and} q'(b(s_r)) = q'(O_{\id^{-r}}) = q'( O_{\id^{2 - (r+2)}} ) =  \infty_{r+2},\]
where the last equality holds because $1 \leq r < \frac{p-3}{2}$ implies that $1 < r + 2 < \frac{p+1}{2}$.

Suppose now that $r = \frac{p-3}{2}$. Then we still have $q'(a(s_r)) = q'(O_{\id^r}) = O_r$, but now the last case in Lemma \ref{lem: markedpoints} gives
\[q'(b(s_{\frac{p-3}{2}})) = q'(O_{\id^{-\left(\frac{p-3}{2}\right)}}) = q'(O_{\id^{\frac{p+1}{2}}}) = \infty_{\frac{p+1}{2}}. \qedhere\]
\end{proof}

\begin{theorem} \label{thm: XiRscheme} The locally ringed space $\Xi/\cR$ is a scheme.
\end{theorem}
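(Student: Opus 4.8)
The plan is to deduce this from Proposition~\ref{prop: XiRXi'R'} together with the explicit description of the gluing in Lemma~\ref{lem: explicitgluing}, by exhibiting the quotient space as a gluing datum in $\LRS$ (Definition~\ref{def:glueing}) all of whose constituent pieces are schemes; the Remark following Definition~\ref{def:glueing} then gives the conclusion.

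First I would assemble the ingredients. By Proposition~\ref{prop: XiRXi'R'} it suffices to prove that $(\Xi'/\cR')/\Xi_{\sing}$ is a scheme. By Corollary~\ref{cor: Xi'R'}, Corollary~\ref{cor: sizeHatOm}, Theorem~\ref{thm: gluingpair} and Theorem~\ref{thm: selfgluing} we have $\Xi'/\cR' = Z_1 \coprod \cdots \coprod Z_{(p+1)/2}$, where each $Z_r$ (Definition~\ref{def: NotnP1comps}) is a copy of $\bP^1$ carrying the coordinate $z_r$ and the marked $k$-rational points $O_r = \{z_r = 0\}$ and $\infty_r = \{z_r = \infty\}$ of Definition~\ref{def: LabelsOnLines}. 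By Lemma~\ref{lem: explicitgluing} the coequaliser $s : \Xi'/\cR' \to (\Xi'/\cR')/\Xi_{\sing}$ of $q'a, q'b : \Xi_{\sing} \rightrightarrows \Xi'/\cR'$ is obtained by identifying, for each $r$ with $1 \le r \le \tfrac{p-3}{2}$, the closed point $O_r \in Z_r$ with the closed point $\infty_{r+2} \in Z_{r+2}$. Since $r$ and $r+2$ have the same parity, the two lines involved in a single identification always lie in different connected components; thus every identification takes place between disjoint copies of $\bP^1$, which is the feature that makes the construction possible.

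Next I would construct the gluing datum. The local model near an identified point is supplied by Proposition~\ref{prop: nX}: choosing affine coordinate charts so that $Z_r \setminus \{\infty_r\} = \Spec k[z_r]$ and $Z_{r+2} \setminus \{O_{r+2}\} = \Spec k[z_{r+2}^{-1}]$, the pinch of these two affine lines along the $k$-rational points $O_r$ and $\infty_{r+2}$ is the affine scheme $\cX \cong \Spec(k[x,y]/\langle xy\rangle)$ of Proposition~\ref{prop: nX}, and deleting the node recovers the two punctured lines $\Spec k[z_r, z_r^{-1}] \coprod \Spec k[z_{r+2}, z_{r+2}^{-1}]$. Using one such pinched affine chart $W_r$ around each of the $\tfrac{p-3}{2}$ identified points, together with the affine-line charts $Z_r \setminus \{O_r\}$, resp.\ $Z_r \setminus \{\infty_r\}$, around the free endpoints of the two chains, and with the evident punctured-line overlaps and coordinate-identification transition isomorphisms, one obtains a gluing datum in the sense of Definition~\ref{def:glueing} all of whose pieces are affine schemes. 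Checking that this is a legitimate gluing datum, in particular the cocycle condition on triple overlaps, is routine bookkeeping, since every overlap is contained in a punctured line on a single $Z_r$ where the transition maps are the obvious coordinate identifications. Let $X$ be the resulting glued locally ringed space; by the Remark after Definition~\ref{def:glueing}, $X$ is a scheme.

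Finally I would identify $X$ with $(\Xi'/\cR')/\Xi_{\sing}$. There is a canonical morphism $\coprod_r Z_r \to X$ coming from the chosen charts; by construction it identifies exactly the pairs $(O_r, \infty_{r+2})$, hence coequalises $q'a$ and $q'b$. That $X$ really is this coequaliser in $\LRS$ can be checked chart by chart, where on each pinched affine chart $W_r$ it is precisely the universal property underlying Proposition~\ref{prop: nX} (via Lemma~\ref{lem: catquot}); alternatively one may build the coequaliser one identification at a time, combining the steps via Lemma~\ref{lem: CoprodOfCoeq} and Lemma~\ref{lem: TwoCoequalisers} and invoking Lemma~\ref{lem: catquot} / Proposition~\ref{prop: nX} at each step. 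Hence $X \cong (\Xi'/\cR')/\Xi_{\sing} \cong \Xi/\cR$, so $\Xi/\cR$ is a scheme. The main obstacle is conceptual rather than computational: gluing schemes along \emph{closed} points --- unlike the automatic gluing along open subschemes --- can fail to produce a scheme, and the whole argument hinges on the fact that here every identification is between distinct connected components at $k$-rational points, so that Proposition~\ref{prop: nX} supplies genuine affine (pinched) neighbourhoods of the glued points; the remaining difficulty is the index bookkeeping in the gluing datum together with confirming that the glued space actually computes the coequaliser in $\LRS$.
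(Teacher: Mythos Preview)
Your approach is correct and reaches the same conclusion, but the paper proceeds differently at the final step. After the common reduction via Proposition~\ref{prop: XiRXi'R'}, Corollary~\ref{cor: Xi'R'}, Theorems~\ref{thm: gluingpair} and~\ref{thm: selfgluing}, and Lemma~\ref{lem: explicitgluing}, the paper does not build an open affine cover of the quotient. Instead it observes that partitioning the lines by $r \bmod 4$ into $X_1 = \coprod_{r \equiv 1,2} Z_r$ and $X_2 = \coprod_{r \equiv 3,0} Z_r$ separates every glued pair $(O_r,\infty_{r+2})$: one point lands in $X_1$, the other in $X_2$. Rewriting $(q'a,q'b)$ as a pair of closed embeddings $\theta_1 : \Xi_{\sing} \hookrightarrow X_1$, $\theta_2 : \Xi_{\sing} \hookrightarrow X_2$, the quotient becomes the pushout of two schemes along a common closed subscheme, which is a scheme by Anantharaman \cite{Ana}, Prop.~1.1.1.

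Your route is more self-contained: it uses only Proposition~\ref{prop: nX} and Lemma~\ref{lem: catquot} from the paper and avoids the external reference, at the cost of some index bookkeeping for the charts $W_r$ and the end pieces. The paper's route is slicker---a single mod-$4$ trick plus one citation---but depends on the Anantharaman gluing result. Both arguments exploit the same structural fact (each identification joins two distinct $Z_r$'s at $k$-rational points); they just package the passage from ``coequaliser in $\LRS$'' to ``scheme'' differently. Your verification that the glued space is the coequaliser is best done via Lemma~\ref{lem: catquot} with the cross/end charts as the cover $\{Y_i\}$, exactly as you outline; the alternative iterative suggestion via Lemma~\ref{lem: TwoCoequalisers} is less well-suited, since that lemma compares two coequaliser diagrams rather than composing a sequence of them.
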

\begin{proof} We know that $\Xi'_\gamma/\cR'_\gamma$ is a scheme for each $\gamma \in \hatOm /\!\!\sim\!$ by Thm. \ref{thm: gluingpair} and Thm. \ref{thm: selfgluing}. Hence $\Xi'/\cR'$ is a scheme by Cor. \ref{cor: Xi'R'}.  By Prop. \ref{prop: XiRXi'R'}, it remains to show that the locally ringed space $(\Xi'/\cR') / \Xi_{\sing}$ is a scheme. Now, $\Xi'/\cR' \cong \coprod\limits_{r=1}^{\frac{p+1}{2}} Z_r$ and we define
\[ X_1 := \coprod\limits_{r\hsp \equiv \hsp 1 \hsp \mbox{\tiny or} \hsp 2 \!\!\! \mod 4} Z_r  \qmb{and} X_2 := \coprod\limits_{r\hsp \equiv \hsp 3 \hsp \mbox{\tiny or} \hsp 0 \!\!\! \mod 4}Z_r\]
so that $X_1 \coprod X_2 = \Xi'/\cR'$. We also define maps $\theta_1,\theta_2 : \Xi_{\sing} \to \Xi'/\cR'$ as follows:
\[ \theta_1(s_r) = \left\{ \begin{array}{lll} O_r & \mbox{ if } &r \equiv 1 \hsp \mbox{or} \hsp 2 \mod 4 \\ \infty_{r+2} & \mbox{ if } & r \equiv 3 \hsp \mbox{or} \hsp 0 \mod 4\end{array} \right. \quad \mbox{and}\quad \theta_2(s_r) = \left\{ \begin{array}{lll} \infty_{r+2} & \mbox{ if } &r \equiv 1 \hsp \mbox{or} \hsp 2 \mod 4 \\ O_r & \mbox{ if } & r \equiv 3 \hsp \mbox{or} \hsp 0 \mod 4\end{array} \right.\]
so that $\theta_1(\Xi_{\sing}) \subseteq X_1$ and $\theta_2(\Xi_{\sing}) \subseteq X_2$. Using Lemma \ref{lem: explicitgluing}, for all $s_r \in \Xi_{\sing}$ we have
\[ \left(\theta_1(s_r), \theta_2(s_r)\right) = \left(q'a(s_r), q'b(s_r)\right) \qmb{or} \left(q'b(s_r), q'a(s_r)\right). \]
Let $s : \Xi'/\cR' \to (\Xi'/\cR') / \Xi_{\sing}$ be a coequaliser of $\xymatrix{\Xi_{\sing} \ar@<0.6ex>[r]^(0.4){q'a}\ar@<-0.6ex>[r]_(0.4){q'b} & X_1 \coprod X_2}$; it follows that
\[\xymatrix{\Xi_{\sing} \ar@<0.6ex>[r]^(0.4){\theta_1}\ar@<-0.6ex>[r]_(0.4){\theta_2} & X_1 \coprod X_2 \ar[r]^(0.43)s & (\Xi'/\cR')/\Xi_{\sing}}\]
is a coequaliser diagram in $\LRS$. Hence $(\Xi'/\cR') / \Xi_{\sing}$ is isomorphic to the gluing of $X_1$ and $X_2$ along $\theta_1 : \Xi_{\sing} \to X_1$ and $\theta_2 : \Xi_{\sing} \to X_2$. Since $\theta_1$ and $\theta_2$ are both closed embeddings, this gluing is a scheme by Prop. 1.1.1 \cite{Ana}.\end{proof}
Using Lemma \ref{lem: explicitgluing}, we can also deduce the following
\begin{corollary} \label{cor: ConnCompsOfQuotSpace}Write $P_r := s(Z_r) \subset (\Xi'/\cR')/\Xi_{\sing}$ for $r = 1,\cdots, \frac{p+1}{2}$. \begin{itemize}
\item[a)] Suppose that $p \equiv 1 \mod 4$, so that $\frac{p+1}{2}$ is odd and $\frac{p-1}{2}$ is even. Then the connected component of $P_1$ in $(\Xi'/\cR')/\Xi_{\sing}$ is given by
\[ P_1 \cup P_3 \cup P_5 \cup \cdots \cup P_{\frac{p+1}{2}}\]
and the connected component of $P_2$ in $(\Xi'/\cR')/\Xi_{\sing}$ is given by
\[ P_2 \cup P_4 \cup P_6 \cup \cdots \cup P_{\frac{p-1}{2}}.\]
\item[b)] Suppose that $p \equiv 3 \mod 4$, so that $\frac{p-1}{2}$ is odd and $\frac{p+1}{2}$ is even. Then the connected component of $P_1$ in $(\Xi'/\cR')/\Xi_{\sing}$ is given by
\[ P_1 \cup P_3 \cup P_5 \cup \cdots \cup P_{\frac{p-1}{2}}\]
and the connected component of $P_2$ in $(\Xi'/\cR')/\Xi_{\sing}$ is given by
\[ P_2 \cup P_4 \cup P_6 \cup \cdots \cup P_{\frac{p+1}{2}}.\]
\end{itemize}
\end{corollary}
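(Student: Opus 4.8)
The plan is to trace through the explicit gluing combinatorics encoded in Theorem \ref{thm: XiRscheme}. Recall from Cor.\ \ref{cor: Xi'R'} and Cor.\ \ref{cor: sizeHatOm} that $\Xi'/\cR' \cong Z_1 \sqcup \cdots \sqcup Z_{\frac{p+1}{2}}$, each $Z_r \cong \bP^1$, and that $\Xi/\cR \cong (\Xi'/\cR')/\Xi_{\sing}$ is obtained by gluing, for each $r = 1, \ldots, \frac{p-3}{2}$, the point $O_r \in Z_r$ to the point $\infty_{r+2} \in Z_{r+2}$, via Lemma \ref{lem: explicitgluing} (using also that $s$ restricts to a locally closed immersion of each $Z_r$, which is what was verified in the proof of Thm.\ \ref{thm: XiRscheme} via Prop.\ 1.1.1 of \cite{Ana}). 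So after passing to the quotient $\Xi/\cR$, two lines $P_r$ and $P_{r'}$ become directly attached precisely when $|r - r'| = 2$; there are no other identifications. First I would record this: the combinatorial adjacency graph on the vertex set $\{1, \ldots, \frac{p+1}{2}\}$ with an edge between $r$ and $r+2$ for $1 \le r \le \frac{p-3}{2}$.

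Next I would identify the connected components of this graph. The edges connect $r$ to $r+2$, so two indices lie in the same component iff they have the same parity. Hence the component containing $1$ consists of all odd indices in $\{1, \ldots, \frac{p+1}{2}\}$ and the component containing $2$ consists of all even indices. The precise lists then depend on whether $\frac{p+1}{2}$ is odd or even, i.e.\ on $p \bmod 4$: if $p \equiv 1 \bmod 4$ then $\frac{p+1}{2}$ is odd, so the odd indices run $1, 3, 5, \ldots, \frac{p+1}{2}$ and the even indices run $2, 4, \ldots, \frac{p-1}{2}$, giving part a); if $p \equiv 3 \bmod 4$ then $\frac{p+1}{2}$ is even, so the odd indices run $1, 3, \ldots, \frac{p-1}{2}$ and the even indices run $2, 4, \ldots, \frac{p+1}{2}$, giving part b). To turn these graph-theoretic statements into topological ones I would note that $\bP^1$ is connected and that gluing two connected schemes along a common closed point yields a connected scheme (indeed, the quotient topology on the union of two connected closed pieces sharing a point is connected); an easy induction along a path shows that $P_r \cup P_{r'} \cup \cdots$ taken over all indices of one fixed parity is connected. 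Conversely, the two parity-classes are glued to each other nowhere, so the union over one parity class is open (its complement is the union over the other parity class, which is a union of closed $P_r$'s minus the finitely many glued points — more cleanly: each $P_r$ is a locally closed subset, and since distinct-parity lines share no point, one parity-union is both open and closed in $\Xi/\cR$), hence is a union of connected components; being itself connected, it is a single connected component. This proves the displayed descriptions.

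The main obstacle, such as it is, is bookkeeping rather than mathematics: one has to be careful that the gluing of Lemma \ref{lem: explicitgluing} really does identify \emph{only} pairs of the form $(O_r, \infty_{r+2})$ and that these are the sole identifications in passing from $\Xi'/\cR'$ to $(\Xi'/\cR')/\Xi_{\sing}$ — in particular that the $O_r$ and $\infty_r$ are genuinely distinct closed points of $\bP^1$ (clear from Def.\ \ref{def: LabelsOnLines}, since $z_r = 0$ and $z_r = \infty$ are different), so that no line gets pinched to a nodal curve and no unexpected cycle appears. Once the adjacency structure $r \leftrightarrow r+2$ is pinned down and one observes it is a disjoint union of two paths (one through the odd vertices, one through the even vertices), the corollary is immediate; this also makes transparent the companion statement (part c) of Thm.\ \ref{thm:QuotSpaceIntro}) that each connected component is a chain of projective lines.
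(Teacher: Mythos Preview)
Your proposal is correct and follows the same approach as the paper: the paper's proof consists of the single sentence ``Using Lemma \ref{lem: explicitgluing}, we can also deduce the following'', and your argument is precisely the combinatorial and topological unpacking of that deduction. You have filled in the details the paper leaves implicit, namely the adjacency graph $r \leftrightarrow r+2$ and the parity analysis.
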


\begin{example}\label{ex:p13}When $p=13$, the schemes $\Xi$, $\Xi'$, $\Xi'/\cR'$ and $\Xi/\cR$ look as follows:

\begin{center}
    \begin{tikzpicture}
    \tikzstyle{every node}=[font=\large]
    \node [draw] at (0,1) {$\Xi$};
    \node [draw] at (0,-2) {$\Xi'$};
    \node [draw] at (0,-4.5) {$\Xi'/\cR'$};
    \node [draw] at (0,-6.5) {$\Xi/\cR$};

    \tikzstyle{every node}=[font=\tiny]

    \draw (1,0) -- (1,2);
    \draw (2,0) -- (3,2);
    \draw (3,0) -- (2,2);
    \draw (4,0) -- (5,2);
    \draw (5,0) -- (4,2);
    \draw (6,0) -- (7,2);
    \draw (7,0) -- (6,2);
    \draw (8,0) -- (9,2);
    \draw (9,0) -- (8,2);
    \draw (10,0) -- (11,2);
    \draw (11,0) -- (10,2);
    \draw (12,0) -- (12,2);

    \filldraw[black] (1, 1) circle (2pt);
    \filldraw[black] (2.5, 1) circle (2pt);
    \filldraw[black] (4.5, 1) circle (2pt);
    \filldraw[black] (6.5, 1) circle (2pt);
    \filldraw[black] (8.5, 1) circle (2pt);
    \filldraw[black] (10.5, 1) circle (2pt);
    \filldraw[black] (12, 1) circle (2pt);

    \path (1,-0.25) node {$0$};
    \path (2,-0.25) node {$1$};
    \path (3,-0.25) node {$11$};
    \path (4,-0.25) node {$2$};
    \path (5,-0.25) node {$10$};
    \path (6,-0.25) node {$3$};
    \path (7,-0.25) node {$9$};
    \path (8,-0.25) node {$4$};
    \path (9,-0.25) node {$8$};
    \path (10,-0.25) node {$5$};
    \path (11,-0.25) node {$7$};
    \path (12,-0.25) node {$6$};

    \draw (2,-3) -- (2,-1);
    \draw (2.5,-3) -- (2.5,-1);
    \draw (3.5,-3) -- (3.5,-1);
    \draw (4,-3) -- (4,-1);
    \draw (5,-3) -- (5,-1);
    \draw (5.5,-3) -- (5.5,-1);

    \draw[blue] (7.5,-3) -- (7.5,-1);
    \draw (8.5,-3) -- (8.5,-1);
    \draw (9,-3) -- (9,-1);
    \draw (10,-3) -- (10,-1);
    \draw (10.5,-3) -- (10.5,-1);
    \draw[blue] (11.5,-3) -- (11.5,-1);

    \filldraw[black] (2, -2) circle (2pt);
    \filldraw[black] (2.5, -2) circle (2pt);
    \filldraw[black] (3.5, -2) circle (2pt);
    \filldraw[black] (4, -2) circle (2pt);
    \filldraw[black] (5, -2) circle (2pt);
    \filldraw[black] (5.5, -2) circle (2pt);

    \filldraw[black] (7.5, -2) circle (2pt);
    \filldraw[black] (8.5, -2) circle (2pt);
    \filldraw[black] (9, -2) circle (2pt);
    \filldraw[black] (10, -2) circle (2pt);
    \filldraw[black] (10.5, -2) circle (2pt);
    \filldraw[black] (11.5, -2) circle (2pt);

    \path (2,-3.25) node {$0$};
    \path (2.5,-3.25) node {$2$};
    \path (3.5,-3.25) node {$4$};
    \path (4,-3.25) node {$10$};
    \path (5,-3.25) node {$6$};
    \path (5.5,-3.25) node {$8$};

    \path (7.5,-3.25) node {$1$};
    \path (8.5,-3.25) node {$3$};
    \path (9,-3.25) node {$11$};
    \path (10,-3.25) node {$5$};
    \path (10.5,-3.25) node {$9$};
    \path (11.5,-3.25) node {$7$};

    \draw (2.25,-4.5) circle (0.5cm) node at (2.25,-5.25) {$Z_2$};
    \draw (3.75,-4.5) circle (0.5cm) node at (3.75,-5.25) {$Z_4$};
    \draw (5.25,-4.5) circle (0.5cm) node at (5.25,-5.25) {$Z_6$};

    \draw[blue] (7.5,-4.5) circle (0.5cm) node at (7.5,-5.25) {$Z_1$};
    \draw (8.75,-4.5) circle (0.5cm) node at (8.75,-5.25) {$Z_3$};
    \draw (10.25,-4.5) circle (0.5cm) node at (10.25,-5.25) {$Z_5$};
    \draw[blue] (11.5,-4.5) circle (0.5cm) node at (11.5,-5.25) {$Z_7$};

    \filldraw[black] (1.75, -4.5) circle (2pt) node[right]{$0$};
    \filldraw[black] (2.75, -4.5) circle (2pt) node[left]{$2$};
    \filldraw[black] (3.25, -4.5) circle (2pt) node[right]{$10$};
    \filldraw[black] (4.25, -4.5) circle (2pt) node[left]{$4$};
    \filldraw[black] (4.75, -4.5) circle (2pt) node[right]{$8$};
    \filldraw[black] (5.75, -4.5) circle (2pt) node[left]{$6$};

    \filldraw[black] (8, -4.5) circle (2pt) node[left]{$1$};
    \filldraw[black] (8.25, -4.5) circle (2pt) node[right]{$11$};
    \filldraw[black] (9.25, -4.5) circle (2pt) node[left]{$3$};
    \filldraw[black] (9.75, -4.5) circle (2pt) node[right]{$9$};
    \filldraw[black] (10.75, -4.5) circle (2pt) node[left]{$5$};
    \filldraw[black] (11, -4.5) circle (2pt) node[right]{$7$};

    \draw (3.25,-6.5) circle (0.5cm) node at (3.25,-7.25) {$P_2$};
    \draw (4.25,-6.5) circle (0.5cm) node at (4.25,-7.25) {$P_4$};
    \draw (5.25,-6.5) circle (0.5cm) node at (5.25,-7.25) {$P_6$};

    \draw[blue] (7.5,-6.5) circle (0.5cm) node at (7.5,-7.25) {$P_1$};
    \draw (8.5,-6.5) circle (0.5cm) node at (8.5,-7.25) {$P_3$};
    \draw (9.5,-6.5) circle (0.5cm) node at (9.5,-7.25) {$P_5$};
    \draw[blue] (10.5,-6.5) circle (0.5cm) node at (10.5,-7.25) {$P_7$};

    \filldraw[black] (2.75, -6.5) circle (2pt) node[right]{$0$};
    \filldraw[black] (3.75, -6.5) circle (2pt) node[left]{$2$} node[right]{$10$};
    \filldraw[black] (4.75, -6.5) circle (2pt) node[left]{$4$} node[right]{$8$};
    \filldraw[black] (5.75, -6.5) circle (2pt) node[left]{$6$};

    \filldraw[black] (8, -6.5) circle (2pt) node[left]{$1$} node[right]{$11$};
    \filldraw[black] (9, -6.5) circle (2pt) node[left]{$3$} node[right]{$9$};
    \filldraw[black] (10, -6.5) circle (2pt) node[left]{$5$} node[right]{$7$};

    \end{tikzpicture}
\end{center}
\end{example}

\section*{Appendix}
\setcounter{section}{1}
\setcounter{subsection}{0}
\renewcommand{\thesection}{\Alph{section}}


\subsection{Some categorical results about colimits and coequalisers}

We omit the proof of the following standard result.
\begin{lemma}\label{lem:CoeqEpi} Suppose that $q : \cY \to \cZ$ is a coequaliser of the arrows $f,g : \cX \to \cY$ in a category $\cC$. Then $q$ is an epimorphism.
\end{lemma}


\begin{lemma}\label{lem: TwoCoequalisers} Let $\cC$ be a category, containing the following diagram:
\begin{equation} \label{eq: TwoCoequalisers}\begin{split} \xymatrix{ & & \cS \ar@<0.6ex>[dd]^b\ar@<-0.6ex>[dd]_a  \ar@<0.6ex>[ddrr]^{q'b}\ar@<-0.6ex>[ddrr]_{q'a} & & \\
 & & & & & \\
\cR' \ar@<0.6ex>[rr]^{f'}\ar@<-0.6ex>[rr]_{g'} && \cX' \ar[rr]_{q'} \ar[d]_\theta & & \cY' \ar[dr]^s & \\
\cR \ar@<0.6ex>[rr]^{f}\ar@<-0.6ex>[rr]_{g} && \cX \ar[rr]_q  & & \cY & \cZ. } \end{split} \end{equation}
Suppose that in this diagram, we have
\begin{itemize}
\item[a)] $q$ is a coequaliser of $\xymatrix{\cR \ar@<0.6ex>[r]^{f}\ar@<-0.6ex>[r]_{g} & \cX}$,
\item[b)] $q'$ is a coequaliser of $\xymatrix{\cR' \ar@<0.6ex>[r]^{f'}\ar@<-0.6ex>[r]_{g'} & \cX'}$,
\item[c)] $\theta$ is a coequaliser of $\xymatrix{\cS \ar@<0.6ex>[r]^{a}\ar@<-0.6ex>[r]_{b} & \cX'}$, and
\item[d)] $s$ is a coequaliser of $\xymatrix{\cS \ar@<0.6ex>[r]^{q'a}\ar@<-0.6ex>[r]_{q'b} & \cY'}$.
\end{itemize}
Suppose further that there exists a morphism $\theta' : \cR' \to \cR$ such that:
\begin{itemize}\setcounter{enumi}{4}
\item[e)] $\theta'$ is an epimorphism, and
\item[f)] $\theta f' = f \theta'$ and $\theta g' = g \theta'$.
\end{itemize}
Then there exists an isomorphism $\varphi : \cY \stackrel{\cong}{\longrightarrow} \cZ$ such that $sq' = \varphi q \theta$.
\end{lemma}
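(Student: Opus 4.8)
The plan is to realize both $q'$ followed by $s$ and $q$ followed by a suitable map as coequalisers of one and the same diagram, and then invoke uniqueness of coequalisers. First I would show that $sq' : \cX' \to \cZ$ is a coequaliser of $\xymatrix{\cR' \coprod \cS \ar@<0.6ex>[r]\ar@<-0.6ex>[r] & \cX'}$, where the two parallel arrows are $(f',a)$ and $(g',b)$; this is the standard fact that composing coequalisers corresponds to coequalising the union of the two relations, and it follows formally from (b) and (d) by a short diagram chase (any morphism out of $\cX'$ equalising $f',g'$ and $a,b$ factors uniquely through $q'$ by (b), and then through $s$ by (d), using that $q'a$ and $q'b$ are precisely the images of $a,b$). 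Symmetrically, $q\theta : \cX' \to \cY$ factors: $\theta$ coequalises $a,b$ by (c), and I would like to say $q\theta$ also coequalises $f',g'$ — here is where condition (f) enters, since $q\theta f' = qf\theta' = qg\theta' = q\theta g'$. So $q\theta$ equalises both pairs as well; I then need it to be the \emph{universal} such morphism.

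The key step, then, is to prove that $q\theta$ is itself a coequaliser of $(f',a),(g',b) : \cR'\coprod\cS \rightrightarrows \cX'$. Given any $h : \cX' \to \cW$ with $hf' = hg'$ and $ha = hb$, I would first push through $\theta$: since $h$ coequalises $a,b$, by the universal property (c) there is a unique $\overline{h} : \cX \to \cW$ with $\overline{h}\theta = h$. Now I must check $\overline{h}f = \overline{h}g$, so as to factor through $q$ by (a). Composing with $\theta'$ and using (f): $\overline{h}f\theta' = \overline{h}\theta f' = hf' = hg' = h g' = \overline{h}\theta g' = \overline{h}g\theta'$. Since $\theta'$ is an epimorphism by (e), we conclude $\overline{h}f = \overline{h}g$, hence by (a) there is a unique $k : \cY \to \cW$ with $kq = \overline{h}$, so $kq\theta = \overline{h}\theta = h$. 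Uniqueness of $k$ with $kq\theta = h$ follows from $q\theta$ being an epimorphism (it is a composite of two coequalisers, each an epimorphism by Lemma \ref{lem:CoeqEpi}), or directly by tracing the uniqueness clauses in (c) and (a). This establishes that $q\theta$ is a coequaliser of the same diagram as $sq'$.

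Finally, two coequalisers of the same parallel pair are canonically isomorphic: there is a unique $\varphi : \cY \to \cZ$ with $\varphi (q\theta) = sq'$ and a unique $\varphi' : \cZ \to \cY$ with $\varphi'(sq') = q\theta$, and $\varphi'\varphi$ and $\varphi\varphi'$ are both identities by the uniqueness part of the respective universal properties. This $\varphi$ is the desired isomorphism, satisfying $sq' = \varphi q\theta$. I expect the only mildly delicate point to be the careful bookkeeping in showing $q\theta$ is universal — specifically the use of (e) to cancel $\theta'$ — while everything else is routine manipulation of universal properties; no calculation beyond diagram chasing is involved.
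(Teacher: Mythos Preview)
Your proof is correct and takes a genuinely different route from the paper's. The paper constructs $\varphi$ and its inverse $\psi$ directly: it uses (d) and (c) to factor $sq'$ through $\theta$ as $\alpha\theta$, then (f), (e), (a) to factor $\alpha$ through $q$ as $\varphi q$; symmetrically it uses (f), (a), (b) to factor $q\theta$ through $q'$ as $\tau q'$, then (c), (d) to factor $\tau$ through $s$ as $\psi s$; finally it checks $\varphi\psi$ and $\psi\varphi$ are identities by cancelling the epimorphisms $sq'$ and $q\theta$. Your approach instead identifies a single universal property that both $sq'$ and $q\theta$ satisfy --- being initial among morphisms out of $\cX'$ coequalising both $(f',g')$ and $(a,b)$ --- and then invokes uniqueness of such objects. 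This is more conceptual: it explains \emph{why} the isomorphism exists rather than building it by hand, and the two halves of the argument are pleasingly parallel. The paper's approach, on the other hand, avoids the (harmless but slightly informal) appeal to a coequaliser of a pair of families, and produces the inverse $\psi$ explicitly rather than abstractly. One small remark: your notation $\cR'\coprod\cS$ presupposes that $\cC$ has coproducts, which is not assumed; but since your actual argument only ever speaks of morphisms $h:\cX'\to\cW$ satisfying $hf'=hg'$ and $ha=hb$, this is purely cosmetic and does not affect correctness.
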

\begin{proof}  By d), we have $sq'a = sq'b$. Hence by c), there is a unique morphism $\alpha : \cX \to \cZ$ such that $\boxed{sq' = \alpha \theta}$. Using this, we have $\alpha f \theta' \stackrel{f)}{=} \alpha \theta f' = sq' f' \stackrel{b)}{=} sq'g' = \alpha \theta g' \stackrel{f)}{=} \alpha g \theta'.$ Then $\alpha f = \alpha g$ by e), so by a), there is a unique morphism $\varphi : \cY \to \cZ$ such that $\boxed{\alpha = \varphi q}$.

Next, we have $q \theta f' \stackrel{f)}{=} q f \theta' \stackrel{a)}{=} q g \theta' \stackrel{f)}{=} q \theta g'$, so by b), there is a unique morphism $\tau : \cY' \to \cY$ such that $\boxed{q \theta = \tau q'}$. Then $\tau q' a = q \theta a \stackrel{c)}{=} q \theta b = \tau q' b$, so by d), there is a unique morphism $\psi : \cZ \to \cY$ such that $\boxed{\psi s = \tau}$. We will show that $\varphi$ and $\psi$ are mutually inverse.

Firstly, $\varphi \psi s q' = \varphi \tau q' = \varphi q \theta = \alpha \theta = s q'$. But $q'$ and $s$ are coequalisers by b) and d), hence they are epimorphisms by Lemma \ref{lem:CoeqEpi}. Therefore $\varphi \psi = 1_{\cZ}$. Secondly, $\psi \varphi q \theta = \psi \alpha \theta = \psi sq' = \tau q' = q\theta$. Since $\theta$ and $q$ are coequalisers by c) and a), they are epimorphisms by Lemma \ref{lem:CoeqEpi}. Hence $\psi \varphi = 1_{\cY}$. Finally, $\varphi q \theta = \alpha \theta = sq'$.\end{proof}

We omit the proof of the following standard result.

\begin{lemma} \label{lem: CoprodOfCoeq} Let $\cI$ be a set and let $\cC$ be a category with coproducts. Suppose that
\[ \left\{\xymatrix{ \cR_i \ar@<0.6ex>[r]^{a_i} \ar@<-0.6ex>[r]_{b_i} & X_i \ar[r]^{q_i} & Y_i }, i \in I \right\}\]
is a family of coequaliser diagrams in $\cC$. Then
\[ \xymatrix{ \coprod\limits_{i \in I} \cR_i \ar@<0.6ex>[rr]^{\coprod\limits_{i \in I}a_i} \ar@<-0.6ex>[rr]_{\coprod\limits_{i \in I}b_i}  && \coprod\limits_{i \in I} X_i \ar[rr]^{\coprod\limits_{i \in I} q_i} && \coprod\limits_{i \in I} Y_i }\]
is also a coequaliser diagram in $\cC$.
\end{lemma}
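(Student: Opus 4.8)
The plan is to verify directly that $q := \coprod_{i \in I} q_i$ satisfies the universal property of the coequaliser of $a := \coprod_{i \in I} a_i$ and $b := \coprod_{i \in I} b_i$. At an abstract level this is nothing more than the fact that colimits commute with colimits, so one could simply cite a reference such as \cite{KS}; but since only elementary category theory is involved, a direct diagram chase is cleanest and self-contained. Write $\iota_i^{\cR} : \cR_i \to \coprod_j \cR_j$, $\iota_i^X : X_i \to \coprod_j X_j$ and $\iota_i^Y : Y_i \to \coprod_j Y_j$ for the coproduct inclusions, so that $q \iota_i^X = \iota_i^Y q_i$, $a \iota_i^{\cR} = \iota_i^X a_i$ and $b \iota_i^{\cR} = \iota_i^X b_i$ by definition of the coproduct of morphisms. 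The first step is to check $q a = q b$: since $\coprod_i \cR_i$ is a coproduct it suffices to precompose with each $\iota_i^{\cR}$, and both composites are then seen to equal $\iota_i^Y q_i a_i = \iota_i^Y q_i b_i$ using the displayed identities together with $q_i a_i = q_i b_i$.

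Next I would establish the factorisation property. Given any morphism $g : \coprod_i X_i \to Z$ with $g a = g b$, set $g_i := g \iota_i^X : X_i \to Z$; precomposing $g a = g b$ with $\iota_i^{\cR}$ yields $g_i a_i = g_i b_i$ for every $i$. Since $q_i$ is a coequaliser of $a_i, b_i$, there is a unique $\bar g_i : Y_i \to Z$ with $\bar g_i q_i = g_i$, and the universal property of $\coprod_i Y_i$ assembles the family $(\bar g_i)_i$ into a morphism $\bar g : \coprod_i Y_i \to Z$ with $\bar g \iota_i^Y = \bar g_i$ for all $i$. That $\bar g q = g$ is then checked componentwise: $\bar g q \iota_i^X = \bar g \iota_i^Y q_i = \bar g_i q_i = g_i = g \iota_i^X$, so $\bar g q = g$ by uniqueness in the coproduct $\coprod_i X_i$.

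Finally, for uniqueness of the factorisation, suppose $h : \coprod_i Y_i \to Z$ also satisfies $h q = g$. Then $h \iota_i^Y q_i = h q \iota_i^X = g \iota_i^X = g_i = \bar g_i q_i$ for each $i$, and since every $q_i$ is an epimorphism — being a coequaliser, by Lemma \ref{lem:CoeqEpi} — we deduce $h \iota_i^Y = \bar g_i = \bar g \iota_i^Y$ for all $i$, hence $h = \bar g$ by uniqueness in the coproduct $\coprod_i Y_i$. I do not expect any real obstacle in this argument; it is a routine chase. The only point that needs a small amount of care is the uniqueness clause, where one must invoke that each $q_i$ is epic (and not merely that it is a coequaliser); alternatively, the whole computation can be bypassed by appealing to the general principle that a colimit of a diagram of colimit cocones is again a colimit cocone.
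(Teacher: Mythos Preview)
Your proof is correct. The paper actually omits the proof of this lemma entirely, stating only ``We omit the proof of the following standard result''; your direct verification of the coequaliser universal property is a clean and self-contained way to supply the omitted details.
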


\subsection{An alternative stability proof} By Prop. \ref{prop:Krull-G}, $\Mod_k(G)$ has Krull dimension $1$, so the only non-trivial term in the Krull-dimension filtration of $\Mod_k(G)$ is $\Mod_k(G)_0$. Its stability then follows from our general result, Prop. \ref{prop:KdimStable}. Here we give an alternative, more direct, argument for the stability of $\Mod_k(G)_0$. Our argument is inspired by Pa\v{s}k\={u}nas' proof of the corresponding result for the group $GL_2(\mathbb{Q}_p)$ and representations with a fixed central character (\cite{Pas} Prop.\ 5.16).

An $H$-module is called \emph{locally finite} if each of its elements is contained in a submodule of finite length. By Remark \ref{rem:locnoeth-has-Krull} the locally finite $H$-modules form the localising subcategory $\Mod(\cH)_0$ of objects of Krull dimension $0$.

\begin{lemma}\label{H-stable}
   $\Mod(H)_0$ is stable.
\end{lemma}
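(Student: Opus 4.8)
The plan is to verify stability through the intrinsic criterion of Lemma \ref{lem:stable-1}: it suffices to show that every indecomposable injective $H$-module either lies in $\Mod(H)_0$ or has no nonzero locally finite submodule. By the identification \eqref{f:ass} and the description of its inverse recalled in \S\ref{sec:rings}, every indecomposable injective is of the form $E_\frp$ for a unique $\frp \in \Spec(H)$; write $\frp_Z := \frp \cap Z \in \Spec(Z)$. The whole argument hinges on Lemma \ref{lem:E_P}.a, which says that $z \in Z$ acts locally nilpotently on $E_\frp$ when $z \in \frp_Z$ and invertibly when $z \notin \frp_Z$. I will split into two cases according to whether $\frp_Z$ is a maximal ideal of $Z$ or not; since every prime of the one-dimensional noetherian ring $Z$ (Prop.\ \ref{prop: Zcomps}, \eqref{eq:Zdecomp}, Prop.\ \ref{prop:Krull-G}) is either maximal or minimal, this dichotomy is exhaustive, and in fact only the maximality of $\frp_Z$ is used.

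First, suppose $\frp_Z$ is maximal. I claim $E_\frp$ is locally finite, hence lies in $\Mod(H)_0$. Given $x \in E_\frp$, the submodule $Hx$ is noetherian and $\frp_Z$ acts locally nilpotently on it by Lemma \ref{lem:E_P}.a; since $\frp_Z$ is finitely generated and central, a standard argument (for each generator, the ascending union of kernels of its powers stabilises by noetherianity) shows that $\frp_Z^N$ annihilates $Hx$ for some $N$. Thus $Hx$ is a finitely generated module over $H/\frp_Z^N H$, which by \textbf{(A1)} is module-finite over the Artinian ring $Z/\frp_Z^N$; hence $Hx$ has finite length. As $x$ was arbitrary, $E_\frp \in \Mod(H)_0$.

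Next, suppose $\frp_Z$ is not maximal and, for a contradiction, let $M \subseteq E_\frp$ be a nonzero locally finite submodule. Pick $0 \neq x \in M$; then $Hx$ has finite length, so $Z/\Ann_Z(x)$ is Artinian and $\sqrt{\Ann_Z(x)}$ is an intersection of finitely many maximal ideals. On the other hand Lemma \ref{lem:E_P}.a forces $\Ann_Z(x) \subseteq \frp_Z$, since any $z \in Z \setminus \frp_Z$ acts invertibly on $E_\frp$ and cannot kill $x$. Then $\frp_Z \supseteq \sqrt{\Ann_Z(x)} = \bigcap_j \mathfrak{m}_j$, so $\frp_Z$ contains some $\mathfrak{m}_j$ and hence equals it, contradicting non-maximality. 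Therefore $E_\frp$ has no nonzero locally finite submodule, and Lemma \ref{lem:stable-1} finishes the proof. There is no serious obstacle: the argument reduces entirely to these two cases, the only genuine input beyond routine commutative-algebra bookkeeping being Lemma \ref{lem:E_P}.a; the single point requiring a little care is the passage, in the first case, from local nilpotence of $\frp_Z$ on individual elements to annihilation of a finitely generated submodule by a fixed power of $\frp_Z$.
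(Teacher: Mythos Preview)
Your proof is correct, but takes a genuinely different route from the paper's. The paper's argument is extrinsic and very short: since $H$ is an affine PI $k$-algebra, every simple $H$-module is finite-dimensional over $k$ (by results in \cite{MCR}), so ``locally finite'' coincides with ``locally finite-dimensional over $k$''; stability then follows from Gabriel's criterion \cite{Gab} Prop.\ V.6.12. Your argument instead stays inside the paper's own framework, using the identification \eqref{f:ass} together with Lemma~\ref{lem:E_P}.a to verify the indecomposable-injective criterion of Lemma~\ref{lem:stable-1} by a dichotomy on $\frp_Z$. In effect you are unwinding by hand the special case $\alpha=0$ of Lemma~\ref{lem:varrho-stable}.b (which already records that $\Mod(H)_0=\varrho^{-1}(\Mod(Z)_0)$ is stable), and that route would give an even shorter proof. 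The paper's approach has the side benefit of identifying locally finite with locally finite-dimensional, which is exactly what feeds into the subsequent Lemma~\ref{lem:fl-adm}. One small point worth spelling out: in your second case, the implication ``$Hx$ has finite $H$-length $\Rightarrow$ $Z/\Ann_Z(x)$ is Artinian'' uses that every simple $H$-module has maximal $Z$-annihilator (immediate from Nakayama since $H$ is module-finite over $Z$, or from \cite{MCR} Cor.\ 10.1.10 as in Lemma~\ref{lem:varrho-stable}.b).
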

\begin{proof}
By \cite{MCR} Cor. 13.1.13(iii) and Thm. 13.10.3(i), any simple module over the affine PI $k$-algebra $H$ is finite dimensional over $k$. Hence a locally finite module is the same as a module in which every element is contained in a finite dimensional submodule. It easily follows that $\Mod(H)_0$ satisfies the criterion \cite{Gab} Prop.\ V.6.12.
\end{proof}

Correspondingly, a $G$-representation in $\Mod_k(G)$ is locally finite if each of its elements is contained in a subrepresentation of finite length.

\begin{lemma}\label{lem:adm}
  If the representation $V$ in $\Mod_k(SL_2(\mathbb{Z}_p))$ is admissible then also its injective hull in $\Mod_k(SL_2(\mathbb{Z}_p))$ is admissible.
\end{lemma}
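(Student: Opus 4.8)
The plan is to pass to the Pontryagin dual category and reduce the statement to an elementary fact about projective covers over a semiperfect pseudocompact ring.

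Write $\mathcal{G} = \SL_2(\Zp)$ and let $\Lambda := k[[\mathcal{G}]] = \varprojlim_N k[\mathcal{G}/N]$ be the completed group algebra, $N$ running over the open normal subgroups of $\mathcal{G}$; this is a pseudocompact $k$-algebra. Pontryagin duality $V \mapsto V^\vee := \Hom_k(V,k)$ (with $V$ discrete and $V^\vee$ carrying the profinite topology) is a well-known exact anti-equivalence between $\Mod_k(\mathcal{G})$ and the category $\Mod^{pc}(\Lambda)$ of pseudocompact left $\Lambda$-modules. Being an exact anti-equivalence it carries injective objects to projective objects and, more precisely, injective hulls to projective covers. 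The other dictionary entry I would establish first is that a smooth representation $V$ is \emph{admissible} if and only if $V^\vee$ is finitely generated over $\Lambda$: if $V^\vee$ is topologically generated by $n$ elements, dualising a surjection $\Lambda^n \twoheadrightarrow V^\vee$ embeds $V$ into $(\Lambda^\vee)^n = C(\mathcal{G},k)^n$, whose $K'$-invariants for an open subgroup $K'$ are $k[\mathcal{G}/K']^n$, hence finite-dimensional; conversely, if $V$ is admissible, fix an open normal pro-$p$ subgroup $\mathcal{G}_1$ of $\mathcal{G}$ and let $\mathfrak{a} = \ker(\Lambda \to k[\mathcal{G}/\mathcal{G}_1])$, so that $V^\vee/\mathfrak{a}V^\vee \cong (V^{\mathcal{G}_1})^\vee$ is finite-dimensional, and topological Nakayama (using that $\mathfrak{a}$ is open with $\bigcap_m \mathfrak{a}^m = 0$) shows $V^\vee$ is finitely generated.

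The second ingredient is that $\Lambda$ is semiperfect. Indeed, every irreducible smooth $k$-representation of $\mathcal{G}$ has nonzero, hence full, $\mathcal{G}_1$-invariants (as $\mathcal{G}_1$ is pro-$p$ and $\operatorname{char} k = p$), so it factors through the finite group $\mathcal{G}/\mathcal{G}_1$; therefore every simple $\Lambda$-module is a $k[\mathcal{G}/\mathcal{G}_1]$-module, $\mathfrak{a} \subseteq \mathrm{Jac}(\Lambda)$, and $\Lambda/\mathrm{Jac}(\Lambda)$ is a quotient of the finite-dimensional semisimple $k$-algebra $k[\mathcal{G}/\mathcal{G}_1]$. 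Consequently projective covers exist in $\Mod^{pc}(\Lambda)$, and for a finitely generated pseudocompact module $M$ the projective cover $P \twoheadrightarrow M$ satisfies $P/\mathrm{Jac}(\Lambda)P \cong M/\mathrm{Jac}(\Lambda)M$, which is finite-dimensional since $M$ is finitely generated; topological Nakayama again forces $P$ to be finitely generated.

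Combining the two: given an admissible $V$, the module $V^\vee$ is finitely generated, its projective cover $P$ in $\Mod^{pc}(\Lambda)$ is finitely generated, and $P^\vee$ is then an injective hull of $V$ which is admissible, as required. I expect the only real work to be in the first paragraph — namely carefully verifying that the anti-equivalence carries injective hulls to projective covers and admissibility to finite generation — since once that dictionary is in place the ring-theoretic input (semiperfectness of $\Lambda$ together with topological Nakayama for pseudocompact modules) is routine. It is worth noting that nothing here is special to $\SL_2$: the argument works verbatim for any compact $p$-adic analytic group.
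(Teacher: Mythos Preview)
Your argument is correct and follows the same overall strategy as the paper: pass to Pontryagin duals, use that admissibility corresponds to finite generation over $\Lambda = k[[\SL_2(\Zp)]]$, that injective hulls dualise to projective covers, and then check that the projective cover of a finitely generated pseudocompact $\Lambda$-module is again finitely generated.

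The only noteworthy difference is in this last step. The paper simply invokes that $\Lambda$ is \emph{noetherian} (a consequence of Lazard's theory for compact $p$-adic analytic groups), whereas you argue via semiperfectness of $\Lambda$ together with topological Nakayama. Your route is a little more elementary in that it avoids Lazard, and as you observe it works for any profinite group admitting an open normal pro-$p$ subgroup, not just $p$-adic analytic ones; the paper's appeal to noetherianity is shorter to state but imports a deeper structural fact. Either way, once one knows that a finitely generated pseudocompact module admits a surjection from a finite free module, the superfluousness of the projective cover forces it to be a quotient of that free module, so both justifications are ultimately close in spirit.
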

\begin{proof}
Put $K := SL_2(\mathbb{Z}_p)$. It is well know that a representation $V$ in $\Mod_k(K)$ is admissible if and only if its Pontrjagin dual $V^\vee$ is finitely generated as a module over the completed group ring $k[[K]]$. The inclusion $V \hookrightarrow E(V)$ into an injective hull dualizes to a projective cover $E(V)^\vee \twoheadrightarrow V^\vee$. Since the ring $k[[H]]$ is noetherian (cf.\ the explanations in the proof of \cite{Sch} Prop.\ 5) this cover $E(V)^\vee$ must be finitely generated as a $k[[H]]$-module. It follows that $E(V)$ is admissible.
\end{proof}

\begin{lemma}\label{lem:fl-adm}
  For a representation $V$ in $\Mod_k(G)$ we have:
\begin{itemize}
  \item[a)] $V$ is of finite length if and only if $V$ is finitely generated and admissible.
  \item[b)] $V$ is locally finite if and only if it is locally admissible (in the sense of \cite{EP}).
\end{itemize}
\end{lemma}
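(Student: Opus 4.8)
The plan is to reduce both parts to a single finiteness input and then to the structure theory already developed, organising everything around the observation that admissibility of a smooth $k$-representation can be detected on the $I$-invariants because $I$ is pro-$p$ and $\chara k = p$.

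\emph{Step 1 (a detection lemma).} First I would prove: $V$ in $\Mod_k(G)$ is admissible if and only if $\dim_k V^I < \infty$. The forward direction is trivial since $I$ is compact open. For the converse, given a compact open subgroup $J$ of $G$, pick an open normal subgroup $J_0 \trianglelefteq I$ with $J_0 \subseteq J \cap I$, so that $V^J \subseteq V^{J_0}$; then view $V^{J_0}$ as a module over the finite-dimensional local $k$-algebra $k[I/J_0]$ (local because $I/J_0$ is a finite $p$-group and $\chara k = p$), note that its socle equals $(V^{J_0})^{I/J_0} = V^I$ since the trivial module is the only simple $k[I/J_0]$-module, and conclude that $V^{J_0}$ embeds into the injective hull over $k[I/J_0]$ of the finite-dimensional module $V^I$, which is again finite-dimensional. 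From this detection lemma, together with the equivalence $\Mod_k^I(G) \simeq \Mod(H)$ of Theorem \ref{equivalence}, Lemma \ref{abelian-sub}, and the fact (used in the proof of Lemma \ref{H-stable}) that simple $H$-modules are finite-dimensional over $k$, I obtain the working principle: for $V$ in $\Mod_k^I(G)$ the conditions ``$V$ has finite length'', ``$V^I$ has finite length over $H$'', ``$\dim_k V^I < \infty$'' and ``$V$ is admissible'' are all equivalent.

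\emph{Step 2 (part (a)).} For ``$\Rightarrow$'': finite length obviously implies finitely generated, and for admissibility I would take a composition series of $V$, observe that each simple subquotient has nonzero $I$-invariants (as $I$ is pro-$p$ and $\chara k = p$) hence lies in $\Mod_k^I(G)$ and is therefore admissible by Step 1, and then deduce $\dim_k V^I < \infty$ from the left-exactness of $(-)^I$, so that $V$ is admissible by Step 1 again. For ``$\Leftarrow$'': if $V$ is finitely generated it is noetherian by Proposition \ref{loc-noetherian}, so by Lemma \ref{fin-filt} it has a finite filtration whose subquotients lie in $\Mod_k^I(G)$; each such subquotient is a quotient of a subrepresentation of the admissible representation $V$, hence admissible, hence of finite length by Step 1, and therefore $V$ has finite length.

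\emph{Step 3 (part (b)).} This is formal from part (a): if $V$ is locally finite then each $v \in V$ lies in a finite-length subrepresentation, which is admissible by (a), so $V$ is locally admissible; conversely if each $v$ lies in an admissible subrepresentation $W$, then $k[G]v \subseteq W$ is finitely generated and admissible, hence of finite length by (a), so $V$ is locally finite.

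The main obstacle is the one non-formal ingredient in Step 2, namely that admissibility passes to quotients (equivalently, that admissible smooth $k$-representations form a Serre subcategory of $\Mod_k(G)$). I would establish this from the detection lemma together with finiteness of the cohomology $H^i(I,V')$ for admissible $V'$: since $I$ is a torsion-free compact $p$-adic analytic group, the completed group ring $k[[I]]$ is noetherian, the Pontryagin dual $(V')^\vee$ is a finitely generated $k[[I]]$-module, and $H^i(I,V')^\vee \cong \mathrm{Tor}^{k[[I]]}_i(k,(V')^\vee)$ is finite-dimensional over $k$; feeding this into the long exact sequence attached to $0 \to V' \to V \to V'' \to 0$ bounds $\dim_k (V'')^I$ and hence yields admissibility of $V''$ via the detection lemma. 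Alternatively one may simply cite that admissible smooth $k$-representations form a Serre subcategory, and invoke Lemma \ref{lem:adm}.
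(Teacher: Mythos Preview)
Your proof is correct and follows essentially the same route as the paper: both reduce via the filtration of Lemma~\ref{fin-filt} to objects of $\Mod_k^I(G)$, use the equivalence of Theorem~\ref{equivalence} with $\Mod(H)$, and invoke the finite-dimensionality of simple $H$-modules. The paper's proof is terser: for the forward direction of (a) it reduces directly to the irreducible case, and for the subquotient stability of admissibility it simply cites \cite{Eme} Prop.~2.2.13, whereas you make explicit the detection lemma ``$V$ admissible $\Leftrightarrow \dim_k V^I < \infty$'' and supply a self-contained argument for passage to quotients via Pontryagin duality and noetherianity of $k[[I]]$. These additions are sound and make your version more self-contained, but the underlying strategy is the same.
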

\begin{proof}
a) Suppose that $V$ is of finite length. Then it obviously is finitely generated. In order to see that $V$ is admissible we may assume that it is irreducible. But then, by Thm.\ \ref{equivalence} the $H$-module $V^I$ is finite dimensional, which means that $V$ is admissible. Now assume, vice versa, that $V$ is finitely generated and admissible. Using a filtration of $V$ as in Lemma \ref{fin-filt} and the fact that admissibility is preserved by passing to subquotients (\cite{Eme} Prop.\ 2.2.13) we may assume that $V$ lies in $\Mod^I(G)$. Since $V^I$ is finite dimensional the equivalence in Thm.\ \ref{equivalence} tells us that $V$ is of finite length.

b) If $V$ is locally finite then, by a), it is locally admissible. Suppose therefore that $V$ is locally admissible. But then it is the union of admissible and finitely generated subrepresentations. Again by a) it follows that $V$ is the union of subrepresentations of finite length.
\end{proof}

\begin{lemma}\label{lem:inj}
   If $V$ is an injective object in $\Mod_k(G)_0$ then $V$ is also an injective object in $\Mod_k(SL_2(\mathbb{Z}_p))$.
\end{lemma}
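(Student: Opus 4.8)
The plan is to reduce injectivity over $G = \SL_2(\Qp)$ to injectivity over the compact open subgroup $K = \SL_2(\Zp)$, by exploiting that $\ind_I^G(k)$ is a projective generator of $\Mod_k^I(G)$ and that $\Mod_k(G)_0$ consists of locally finite (equivalently, by Lemma~\ref{lem:fl-adm}, locally admissible) representations. First I would recall that $\Mod_k(G)$ is a thickening of $\cA = \Mod_k^I(G)$, so by Lemma~\ref{lem:ess-hull-indec} the indecomposable injectives of $\Mod_k(G)$ are the injective hulls of the indecomposable injectives of $\cA$, and the latter are described via the equivalence $V \mapsto V^I$ with $\Mod(H)$. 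Since $\Mod_k(G)_0$ is localising, any injective object $V$ in it is a direct sum of indecomposable injectives of $\Mod_k(G)_0$, and it suffices to treat one such indecomposable summand $V$; its $I$-invariants $V^I$ form an indecomposable injective locally finite $H$-module, hence by Lemma~\ref{H-stable} lie in $\Mod(H)_0$, so $V^I$ has finite length and thus $V$ is admissible and of finite length by Lemma~\ref{lem:fl-adm}.

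Next I would argue that $V$, being injective in the localising subcategory $\Mod_k(G)_0$, is in fact injective in all of $\Mod_k(G)$: this is because $\Mod_k(G)_0$ is stable (established just above, via Prop.~\ref{prop:KdimStable} or Lemma~\ref{H-stable} combined with Thm.~\ref{thm:stable-A-L}), and for a stable localising subcategory an injective object of the subcategory has an injective hull inside the subcategory, forcing it to already be injective in the ambient category. (Alternatively: $V$ injective in $\Mod_k(G)_0$ means $V = t_0(E(V))$ where $E(V)$ is its $\Mod_k(G)$-injective hull; stability of $\Mod_k(G)_0$ gives $E(V) \in \Mod_k(G)_0$, so $V = E(V)$.) Then restriction from $G$ to $K$: I would use that $\ind_I^G$ restricted to $K$ is $\ind_I^K(k)$ up to a direct sum over $G/KI$-type double cosets, together with admissibility of $V$, to control $V|_K$. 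The cleanest route is: $V|_K$ is an admissible, hence locally finite, $K$-representation, and I want to show it is injective in $\Mod_k(K)$.

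The key mechanism will be Frobenius reciprocity for the compact induction and coinduction adjunctions between $\Mod_k(K)$ and $\Mod_k(G)$. Since $[G:K]$ is not finite we cannot naively say $\ind_K^G$ is exact with an exact right adjoint; instead I would use that $G$ acts on the tree and $K$ is a maximal compact, so $V|_K$ injective follows from $V$ being injective in $\Mod_k(G)$ via the fact that $\operatorname{res}^G_K$ has an exact left adjoint $\ind_K^G$ (compact induction is exact, as a left adjoint of the exact functor $\operatorname{res}$ it preserves injectives only in the other direction) — so more carefully, I would instead invoke that $\operatorname{res}^G_K$ preserves injectives because it admits an exact \emph{left} adjoint, namely $\ind_K^G$, which is exact since $K$ is open in $G$. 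Thus $V|_K$ is injective in $\Mod_k(K)$. Finally, I would intersect with local finiteness: $V|_K$ is injective in $\Mod_k(K)$ and locally finite, and one checks that the injective hull in $\Mod_k(K)_0$ of a locally finite representation agrees with its injective hull in $\Mod_k(K)$ when that hull happens to be locally finite — but here we directly have $V|_K$ injective in $\Mod_k(K)$, which immediately gives injectivity in the Serre subcategory $\Mod_k(K)_0$ it belongs to. So $V$ is an injective object in $\Mod_k(K)$, which is the claim (with $K = \SL_2(\Zp)$).

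The main obstacle I anticipate is the subtlety around restriction and adjunctions when $K$ has infinite index in $G$: one must be careful that $\operatorname{res}^G_K$ genuinely preserves injectivity, which hinges on $\ind_K^G$ (compact induction, not full coinduction) being exact and being genuinely left adjoint to restriction — this is true because $K$ is open, but it requires the smoothness hypotheses and a short verification that $\ind_K^G$ of an injective resolution stays exact (exactness of $\ind_K^G$ is clear since it is a filtered colimit of finite inductions, each exact as $K$ is open of finite index in the relevant subgroups). A secondary subtlety is making sure the passage between "$\Mod_k(G)_0$-injective" and "$\Mod_k(G)$-injective" is airtight, for which the stability of $\Mod_k(G)_0$ — already available from Prop.~\ref{prop:KdimStable} — is exactly what is needed, together with the characterization of stable localizing subcategories in terms of essential extensions of injective hulls.
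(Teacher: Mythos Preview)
Your argument is mathematically correct but circular in the context where this lemma lives. The lemma appears in the appendix, whose stated purpose is to give an \emph{alternative, direct} proof of the stability of $\Mod_k(G)_0$ (Proposition~\ref{G-stable}) that does not go through Prop.~\ref{prop:KdimStable} or Thm.~\ref{thm:stable-A-L}. Your key step --- promoting $V$ from injective in $\Mod_k(G)_0$ to injective in $\Mod_k(G)$ --- explicitly invokes stability of $\Mod_k(G)_0$, which you obtain from Prop.~\ref{prop:KdimStable}. But Lemma~\ref{lem:inj} is then used in the very next result to \emph{prove} that stability. So your proof is valid as an independent statement, but it collapses the alternative argument into a corollary of the main machinery, defeating the point.

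The paper's proof is genuinely different and avoids this circularity. It follows Emerton--Pa\v{s}k\={u}nas (\cite{EP} Cor.~3.8): one uses that $\SL_2(\Qp)$ is the amalgamated product of $\SL_2(\Zp)$ and a conjugate of it along the Iwahori subgroup, together with the fact (Lemma~\ref{lem:adm}) that injective hulls of admissible $K$-representations remain admissible, and the identification of locally finite with locally admissible (Lemma~\ref{lem:fl-adm}). None of this requires knowing in advance that $\Mod_k(G)_0$ is stable. Your route via the $(\ind_K^G,\operatorname{res}^G_K)$-adjunction is slicker when stability is available, but the amalgam argument is what makes the appendix self-contained. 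Incidentally, most of your first paragraph (reduction to indecomposables, finite length) is unnecessary for your own argument: once stability is granted, the two-line proof ``stable $\Rightarrow$ injective in $\Mod_k(G)$ $\Rightarrow$ injective in $\Mod_k(K)$ by exactness of the left adjoint $\ind_K^G$'' suffices.
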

\begin{proof}
The proof is almost literally the same as for \cite{EP} Cor.\ 3.8. For the convenience of the reader we note:
\begin{itemize}
  \item[--] Use Lemma \ref{lem:fl-adm}.b to replace the assumption that $V$ is locally finite by the assumption that $V$ is locally admissible, which loc.\ cit.\ uses.
  \item[--] The group $SL_2(\mathbb{Q}_p)$ is the amalgam of $SL_2(\mathbb{Z}_p)$ and $(\begin{smallmatrix}
            0 & 1 \\ p & 0 \end{smallmatrix}) SL_2(\mathbb{Z}_p) (\begin{smallmatrix}
            0 & p^{-1} \\ 1 & 0 \end{smallmatrix})$ along the Iwahori subgroup (cf.\ \cite{Ser} II.4.1 Cor.\ 1).
\end{itemize}
We also point out that in loc.\ cit.\ a version of Lemma \ref{lem:adm} is used without mentioning it.
\end{proof}

\begin{proposition}\label{G-stable}
   $\Mod_k(G)_0$ is stable.
\end{proposition}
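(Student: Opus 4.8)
The goal is to show that $\Mod_k(G)_0$, the subcategory of locally finite (equivalently, by Lemma~\ref{lem:fl-adm}.b, locally admissible) smooth representations of $G = \SL_2(\Qp)$, is stable, i.e.\ closed under essential extensions in $\Mod_k(G)$. The plan is to use the criterion of Lemma~\ref{lem:stable-1}: it suffices to show that every indecomposable injective object $E$ of $\Mod_k(G)$ which contains a nonzero subobject lying in $\Mod_k(G)_0$ must itself lie in $\Mod_k(G)_0$. So suppose $E$ is indecomposable injective in $\Mod_k(G)$ and $0 \neq V \subseteq E$ with $V$ locally finite. Let $t_0(E)$ be the largest locally finite subobject of $E$; then $t_0(E) \neq 0$, and since $\Mod_k(G)_0$ is a localising subcategory, $t_0(E)$ is a locally finite, hence locally admissible, subrepresentation, and it is essential in $E$ because $E$ is indecomposable injective (any nonzero subobject of an indecomposable injective is essential).

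\textbf{Key steps.} First I would pass to the injective hull of $t_0(E)$ computed \emph{inside} the category $\Mod_k(G)_0$: write $I_0 := E_{\Mod_k(G)_0}(t_0(E))$. By Lemma~\ref{lem:inj}, any injective object of $\Mod_k(G)_0$ is injective in $\Mod_k(\SL_2(\Zp))$; in particular $I_0$ is injective over $\SL_2(\Zp)$. Next, since $t_0(E) \subseteq I_0$ is an essential extension and $t_0(E) \subseteq E$ is an essential extension with $E$ injective in $\Mod_k(G)$, the universal property of injective hulls gives an embedding $\iota : I_0 \hookrightarrow E$ over $t_0(E)$, and this is again essential. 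The crucial point is then to argue that $\iota$ is an isomorphism, i.e.\ $E = I_0$, which forces $E$ to lie in $\Mod_k(G)_0$ (as $I_0$ does by construction, being an injective hull taken inside $\Mod_k(G)_0$). To see this I would show $E/I_0 = 0$: the quotient $E/\iota(I_0)$ is a smooth $G$-representation, and I want to show it has no nonzero locally admissible part (since $t_0(E) \subseteq I_0$ already exhausts the locally finite part of $E$, the image of any locally finite subobject of $E$ in $E/I_0$ is zero, so $E/I_0$ itself is locally-finite-torsion-free); but more is needed — I must rule out $E/I_0 \neq 0$ entirely. For that, I would use that $I_0$ is injective over $\SL_2(\Zp)$ (via Lemma~\ref{lem:inj}), so the restriction of the inclusion $I_0 \hookrightarrow E$ to $\SL_2(\Zp)$ splits; combined with the amalgam description $G = \SL_2(\Zp) \ast_I \, g\SL_2(\Zp)g^{-1}$ used in Lemma~\ref{lem:inj} (following \cite{EP} and \cite{Ser}), and the fact that $E$, being an essential extension of $I_0$ and injective in $\Mod_k(G)$, is in fact the injective hull of $I_0$ in $\Mod_k(G)$, I would deduce that this forced injective hull cannot properly enlarge $I_0$. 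The cleanest route is: $I_0$ is injective in $\Mod_k(\SL_2(\Zp))$ by Lemma~\ref{lem:inj}; Emerton's / Paškūnas' construction (adapted exactly as in Lemma~\ref{lem:inj}) shows that induction-type functors built from $\SL_2(\Zp)$-injectives along the amalgam produce injectives in $\Mod_k(G)$, and that $t_0(E)$ embeds into such an object compatibly; since $E$ is the injective hull of $t_0(E)$ in $\Mod_k(G)$, it is a direct summand of that object, but indecomposability plus essentiality pin it down to be exactly $I_0$. Hence $E = I_0 \in \Mod_k(G)_0$, and Lemma~\ref{lem:stable-1} gives stability.

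\textbf{Main obstacle.} The genuinely delicate step is showing that passing from the injective hull $I_0$ of $t_0(E)$ \emph{inside} $\Mod_k(G)_0$ to the injective hull $E$ of $t_0(E)$ in the ambient category $\Mod_k(G)$ does not enlarge the object — equivalently, that an injective object of $\Mod_k(G)_0$ which happens to be an essential extension of a locally admissible subrepresentation cannot acquire new, non-locally-finite vectors when one injects it into its $\Mod_k(G)$-injective hull. This is exactly the analogue of \cite{EP} Cor.~3.8 and of \cite{Pas} Prop.~5.16, and the mechanism is the amalgam decomposition of $G$ together with the noetherianness of the completed Iwahori group ring: one shows that $\SL_2(\Zp)$-injectivity of $I_0$ (Lemma~\ref{lem:inj}), propagated across both factors of the amalgam along the common Iwahori subgroup, already forces $G$-injectivity of $I_0$, so that $I_0$ is its own injective hull in $\Mod_k(G)$ and therefore $E \cong I_0$. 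I would present this by invoking Lemma~\ref{lem:inj} and Lemma~\ref{lem:adm} as the technical inputs, citing \cite{EP} for the amalgam gluing argument, rather than redoing that computation; the remaining bookkeeping (that $t_0(E) \neq 0$, that it is essential in $E$, that everything is compatible) is routine given the lemmas already established.
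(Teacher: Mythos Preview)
Your setup is correct and matches the paper's: reduce to an indecomposable injective $E$ with $t_0(E) \neq 0$, observe that $t_0(E)$ is then injective in $\Mod_k(G)_0$, and invoke Lemma~\ref{lem:inj} to get $K$-injectivity (hence $\Ext^1_G(\ind_K^G\sigma, t_0(E))=0$ for all smooth $K$-reps $\sigma$). You also correctly isolate the main obstacle: one must show that the injective hull of $t_0(E)$ in $\Mod_k(G)_0$ is already injective in $\Mod_k(G)$.

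However, your proposed resolution of that obstacle does not work. You assert that ``$\SL_2(\Zp)$-injectivity of $I_0$, propagated across both factors of the amalgam along the common Iwahori subgroup, already forces $G$-injectivity of $I_0$'', and cite \cite{EP} for this. But \cite{EP} (and Lemma~\ref{lem:inj}) prove the \emph{opposite} implication, and in fact $K$- and $K'$-injectivity do \emph{not} imply $G$-injectivity: tensoring the Bass--Serre sequence $0 \to k[G/J] \to k[G/K] \oplus k[G/K'] \to k \to 0$ with any $W \in \Mod_k(G)$ and applying $\Hom_G(-,I_0)$ gives, under your hypotheses, only a surjection $\Hom_J(W,I_0) \twoheadrightarrow \Ext^1_G(W,I_0)$, whose source has no reason to vanish. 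Your alternative formulation (embed $E$ as a summand of an induced injective and ``pin it down to be exactly $I_0$'') fails for the same reason: $\ind_K^G$ of a $K$-injective is enormous and not locally finite, and nothing in your argument prevents the summand $E$ from being strictly larger than $I_0$.

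The paper does not attempt to prove $G$-injectivity of $V_0$ directly. Instead it argues by contradiction: assuming $V_0 \subsetneq V$, one picks an irreducible $K$-subrepresentation $\sigma$ in the $K$-socle of $V/V_0$ and gets a surjection $\bar\psi : \ind_K^G(\sigma) \twoheadrightarrow \bar X \subseteq V/V_0$. The Ext-vanishing you already have lifts $\bar\psi$ to $X \subseteq V$; essentiality then forces $\bar\psi$ to be a \emph{proper} quotient map. The substantive input you are missing is an explicit $H$-module computation: using $\ind_K^G(\sigma)^I \cong H \otimes_{H_0} \chi$ (\cite{Oll}), which is free of rank $2$ over $k[\zeta]$ by \cite{OS18} Cor.~3.4, one checks by a $2\times 2$ matrix calculation over $k(\zeta)$ that $H \otimes_{H_0} \chi$ has no nonzero $H$-submodule of infinite $k$-codimension. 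Hence $\bar X$ has finite length, contradicting maximality of $V_0$. This representation-theoretic finiteness of proper quotients of $\ind_K^G(\sigma)$ is the genuine content of the proof, and your outline does not supply any substitute for it.
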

\begin{proof}
By Gabriel's criterion (cf.\ \cite{Ste} Prop.\ VI.7.1) we have to check the following: Let $V_0 \subseteq V$ be any essential extension in $\Mod_k(G)$ (in particular $V_0 \neq \{0\}$) such that $V_0$ is locally finite. We then have to show that $V$ is locally finite as well. First of all, by replacing $V$ by an injective hull, we may assume that $V$ is an injective object. Secondly since $\Mod_k(G)_0$ is localising, by possibly enlarging $V_0$, we may assume that $V_0$ is the maximal locally finite subrepresentation of $V$. It is then straightforward to see that $V_0$ is an injective object in $\Mod_k(G)_0$. Hence using Frobenius reciprocity we deduce from Lemma \ref{lem:inj} that
\begin{equation}\label{f:zero}
  \Ext_{\Mod_k(G)}^1(\ind_K^G(\sigma), V_0) = \Ext_{\Mod_k(K)}^1(\sigma,V_0) = 0
\end{equation}
for any smooth representation $\sigma$ of the subgroup $K := SL_2(\mathbb{Z}_p)$.

Reasoning by contradiction we assume that $V_0 \neq V$. It suffices to construct a nonzero locally finite subrepresentation of $V/V_0$. For this we pick an irreducible $K$-subrepresentation $\sigma$ in the $K$-socle of $V/V_0$. We also pick a vector $v \in (V/V_0)^I$ which generates $\sigma$ as a $K$-representation. We let $\bar{X} \subseteq V/V_0$ denote the $G$-subrepresentation generated by $v$ and by $X$ its preimage in $V$. We obtain:
\begin{itemize}
  \item[a)] The short exact sequence $0 \rightarrow V_0 \rightarrow X \rightarrow \bar{X} \rightarrow 0$ in $\Mod_k(G)$ does not split since $V_0 \hookrightarrow X$ is an essential extension.
  \item[b)] By the Frobenius reciprocity equality $\Hom_{k[K]}(\sigma,\bar{X}) = \Hom_{k[G]}(\ind_K^G(\sigma),\bar{X})$ the inclusion $\sigma \subseteq \bar{X}$ corresponds to a $G$-homomorphism $\bar{\psi} : \ind_K^G(\sigma) \twoheadrightarrow \bar{X}$, which is surjective since $v$ lies in its image.
\end{itemize}
Applying \eqref{f:zero} to a) shows that the map $\Hom_{k[G]}(\ind_K^G(\sigma),X) \twoheadrightarrow \Hom_{k[G]}(\ind_K^G(\sigma),\bar{X})$ is surjective, so that $\bar{\psi}$ has a preimage $\psi : \ind_K^G(\sigma) \rightarrow X$. If $\bar{\psi}$ would also be injective then $\psi \circ \bar{\psi}^{-1} : \bar{X} \rightarrow X$ would split the sequence in a). It follows that $\bar{\psi}$ is a proper quotient map. Since $\bar{\psi}$ is a map in $\Mod_k^I(G)$ the equivalence of categories in Thm.\ \ref{equivalence} implies that the sequence
$\bar{\psi}^I : \ind_K^G(\sigma)^I \twoheadrightarrow \bar{X}^I$ is a proper quotient map of $H$-modules. We claim that any proper $H$-quotient of $\ind_K^G(\sigma)^I$ has finite $k$-dimension. Again Thm.\ \ref{equivalence} then implies that $\bar{X}$ has finite length, which would contradict the maximality of $V_0$.

In order to determine the $H$-module structure of $\ind_K^G(\sigma)^I$ we introduce the finite dimensional subalgebra $H_0 := \End_{k[K]}(k[K/I])^{\op}$ of $H$. It is well known that the $H_0$-module of invariants $\sigma^I$ is one dimensional and therefore provides a character $\chi : H_0 \rightarrow k$. By \cite{Oll} Lemma 3.6 there is an isomorphism of $H$-modules $\ind_K^G(\sigma)^I \cong H \otimes_{H_0} \chi$.\footnote{In \cite{Oll} the field $k$ is assumed to be an algebraic closure of $\mathbb{F}_p$. But in our situation all irreducible mod $p$ representations of $SL_2(\mathbb{F}_p)$ are defined over $\mathbb{F}_p$, which makes this assumption unnecessary.}

On the other hand recall the notations introduced in $\S \ref{sec: propIHA}$. Note $\tau_0$ and $e_1$ lie in $H_0$ and that $Z(H)$ contains the polynomial ring $k[\zeta]$ by Lemma \ref{lem: zeta}. As a consequence of \cite{OS18}\ Cor.\ 3.4 we have the isomorphism of $k[\zeta]$-modules
\begin{equation*}
  k[\zeta] \oplus k[\zeta] \xrightarrow{\;\cong\;} H \otimes_{H_0} \chi
\end{equation*}
which sends $(1,0)$ to $1 \otimes 1$ and $(0,1)$ to $\tau_1 \otimes 1$.

Let $Q := k(\zeta)$ be the field of fractions of $k[\zeta]$ and let $V := Q \otimes_{k[\zeta]} (H \otimes_{H_0} \chi)$.
Then $V$ is a 2-dimensional vector space over $Q$ with basis ${v,w}$ where $v := 1 \otimes 1 \otimes 1$ and $w := 1 \otimes \tau_1 \otimes 1$. Suppose now that we have a nonzero $H$-submodule of infinite codimension in $H \otimes_{H_0} \chi$. It gives rise to a line in the vector space $V$ which is respected by the $Q$-linear action of $\tau_0$ and $\tau_1$. Let $a := \chi(e_1)$ and $b := \chi(\tau_0)$. So, $a$ is $0$ or $1$ and $b$ is some element of $k$. We calculate that the matrix $A$ of the action of $\tau_0$ with respect to the basis ${v,w}$ is
$(\begin{smallmatrix}
b & \zeta - a(b+1)  \\
0 &    -(a+b)
\end{smallmatrix})$
and the matrix $B$ of the action of $\tau_1$ with respect to the same basis is
$(\begin{smallmatrix}
0 &  0  \\
1 & -a
\end{smallmatrix})$.
The minimum polynomial of $B$ is visibly $X^2 + a X$. If $a = 0$ then $B$ is nilpotent and the only $\tau_1$-stable line in $V$ is spanned by $w$. If $a = 1$ then $B$ is diagonalizable, and there is an additional $\tau_1$-stable line spanned by $v + w$. So we just need to check that these two lines are not $\tau_0$-stable. Passing to the matrices we need to check that the column vectors $(\begin{smallmatrix}0\\1\end{smallmatrix})$ and $(\begin{smallmatrix}a\\1\end{smallmatrix})$ are not $A$-eigenvectors:
\begin{enumerate}
  \item[--] $A(\begin{smallmatrix}0\\1\end{smallmatrix}) = (\begin{smallmatrix}\zeta - a(b+1)\\ -(a+b)\end{smallmatrix})$ is not in $Q(\begin{smallmatrix}0 \\ 1 \end{smallmatrix})$ since $\zeta - a(b+1)$ is not zero in $Q$;
  \item[--] $A(\begin{smallmatrix}a\\1\end{smallmatrix}) = (\begin{smallmatrix}\zeta-a\\-(a+b)\end{smallmatrix}) = c (\begin{smallmatrix}a\\1\end{smallmatrix})$ for some $c \in Q$ implies $c = -(a+b) \in k$, but also $\zeta - a = ca \in k$, which is impossible. \qedhere
\end{enumerate}
\end{proof}


\end{document}